\newcommand{\sym}{\mathfrak{S}}
\newcommand{\perm}{\mathfrak{P}}
\tikzstyle{vertex}=[circle, draw, inner sep=0pt, minimum size=4pt]
\newcommand{\vertex}{\node[vertex]}
\tikzstyle{vinenode}=[circle, draw, inner sep=0pt, minimum size=3pt, fill]
\newcommand{\vnode}{\node[vinenode]}
\definecolor{RawSienna}{cmyk}{0,0.72,1,0.45}
\definecolor{lavgray}{rgb}{0.67, 0.66, 0.72}
\definecolor{darkred}{rgb}{0.5, 0.30, 0.30}
\definecolor{cadmiumgreen}{rgb}{0.0, 0.42, 0.24}
\definecolor{darkgreen}{RGB}{65,117,5}
\definecolor{darkyellow}{RGB}{245,166,35}
\newtheorem{theorem}{Theorem}[section]
\newtheorem{lemma}[theorem]{Lemma}
\newtheorem{proposition}[theorem]{Proposition}
\newtheorem{corollary}[theorem]{Corollary}
\newtheorem{question}[theorem]{Question}
\newtheorem{conjecture}[theorem]{Conjecture}
\theoremstyle{definition}
\newtheorem{remark}[theorem]{Remark}
\newtheorem{definition}[theorem]{Definition}
\newtheorem{example}[theorem]{Example}
\numberwithin{equation}{section}
\newcommand{\defn}[1]{\emph{\color{purple} #1}} 
\newcommand\Cliques{\mathrm{Cliques}}
\newcommand\SatCliques{\mathrm{SatCliques}}
\newcommand\Groves{\mathrm{Groves}}
\newcommand\SatGroves{\mathrm{SatGroves}}
\newcommand\Multicliques{\mathrm{Multicliques}}
\newcommand\SatMulticliques{\mathrm{SatMulticliques}}
\newcommand\Vines{\mathrm{Vines}}
\newcommand\SatVines{\mathrm{SatVines}}
\newcommand\Vineyards{\mathrm{Vineyards}}
\newcommand\SatVineyards{\mathrm{SatVineyards}}
\newcommand\VineyardShuffles{\mathrm{VineyardShuffles}}
\newcommand\SatVineyardShuffles{\mathrm{SatVineyardShuffles}}
\newcommand\GroveShuffles{\mathrm{GroveShuffles}}
\newcommand\SatGroveShuffles{\mathrm{SatGrovesShuffles}}
\newcommand\PermutationFlows{\mathrm{PermutationFlows}}
\newcommand\SatPermutationFlows{\mathrm{SatPermutationFlows}}
\newcommand\PermutationFlowShuffles{\mathrm{PermutationFlowShuffles}}
\newcommand\SatPermutationFlowShuffles{\mathrm{SatPermutationFlowShuffles}}
\newcommand\SplitAncestors{\mathrm{SplitAncestors}}
\newcommand\Covers{\mathrm{Covers}}
\newcommand\DKK{\mathrm{DKK}}
\newcommand\Extensions{\mathrm{Extensions}}
\newcommand\FinalSummaries{\mathrm{FinalSummaries}}
\newcommand\InPerm{\mathrm{InPerm}}  
\newcommand\RouteMatchings{\mathrm{RouteMatchings}}
\newcommand\OutPerm{\mathrm{OutPerm}}  
\newcommand\Prefixes{\mathrm{Prefixes}} 
\newcommand\Routes{\mathrm{Routes}}
\newcommand\Slacks{\mathrm{Slacks}}
\newcommand\Splits{\mathrm{Splits}}
\newcommand\Suffixes{\mathrm{Suffixes}} 
\newcommand\indeg{\mathrm{indeg}} 
\newcommand\outdeg{\mathrm{outdeg}} 
\newcommand\inedge{\mathrm{in}} 
\newcommand\outedge{\mathrm{out}} 
\newcommand\inpath{\mathrm{In}} 
\newcommand\outpath{\mathrm{Out}} 
\newcommand\initial{\mathrm{initial}}
\newcommand\length{\mathrm{length}}
\newcommand\minext{\mathrm{minext}}
\renewcommand\next{\mathrm{next}} 
\newcommand\oru{\mathrm{oru}}
\newcommand\car{\mathrm{car}}
\newcommand\Cat{\mathrm{Cat}}
\newcommand\cie{\mathrm{cie}}
\newcommand\mar{\mathrm{mar}}
\newcommand\zigzag{\mathrm{zigzag}}
\newcommand\PS{\mathrm{PS}}
\newcommand\PF{\mathrm{PF}}
\newcommand\supp{\mathrm{supp}}
\newcommand\terminal{\mathrm{terminal}}
\newcommand{\rank}{\mathrm{rank}}
\newcommand{\tail}{\mathrm{tail}}
\newcommand{\head}{\mathrm{head}}
\newcommand{\conv}{\mathrm{conv}}
\newcommand\des{\mathsf{des}}
\newcommand\asc{\mathsf{asc}}
\newcommand{\calC}{\mathcal{C}} 
\newcommand{\calD}{\mathcal{D}} 
\newcommand{\calF}{\mathcal{F}} 
\newcommand{\calG}{\mathcal{G}} 
\newcommand{\calL}{\mathcal{L}}
\newcommand{\calM}{\mathcal{M}}
\newcommand{\calN}{\mathcal{N}}
\newcommand{\calP}{\mathcal{P}} 
\newcommand{\calQ}{\mathcal{Q}} 
\newcommand{\calR}{\mathcal{R}} 
\newcommand{\calV}{\mathcal{V}} 
\newcommand{\calW}{\mathcal{W}}
\newcommand\ba{\mathbf{a}}
\newcommand\bd{\mathbf{d}}
\newcommand\be{\mathbf{e}}
\newcommand\bo{\mathbf{o}}
\newcommand\bi{\mathbf{i}}
\newcommand\bj{\mathbf{j}}
\newcommand\bs{\mathbf{s}}
\newcommand\bw{\mathbf{w}}
\newcommand\by{\mathbf{y}}
\newcommand\bz{\mathbf{z}}
\newcommand{\bbR}{\mathbb{R}}
\newcommand{\bbZ}{\mathbb{Z}}
\newcommand{\hatE}{\hat{E}}
\newcommand{\hatF}{\hat{F}}
\newcommand{\hatG}{\hat{G}}
\newcommand{\hatH}{\hat{H}}
\newcommand{\precdot}{\prec\mathrel{\mkern-5mu}\mathrel{\cdot}}
\newcommand{\subsetdot}{\subset\mathrel{\mkern-7mu}\mathrel{\cdot}}
\newcommand\meet{\wedge}
\newcommand\join{\vee}
\newcommand{\succSplit}{\mathrm{succSplit}}
\DeclareMathOperator\vol{\mathrm{vol}}
\renewcommand\hat{\widehat} 
\renewcommand{\phi}{\varphi}
\renewcommand{\emptyset}{\varnothing}
\title[Permutation Flows I (Research Announcement)]{Permutation Flows I: Triangulations of Flow Polytopes (Research Announcement)}
\date{\today}
\author[Gonz\'alez D'Le\'on]{Rafael S. Gonz\'alez D'Le\'on }
\address[R.\ S.\ Gonz\'alez D'Le\'on]{Department of Mathematics and Statistics\\Loyola University Chicago\\1032 W. Sheridan Rd., Chicago, IL 60660\\United States} 
\email{\href{mailto:rgonzalezdleon@luc.edu}{\texttt{rgonzalezdleon@luc.edu}}}
\urladdr{\url{https://dleon.combinatoria.co}}
\author[Hanusa]{Christopher R.\ H.\ Hanusa}
\address[C.\ R.\ H.\ Hanusa]{Department of Mathematics \\ Queens College (CUNY) \\ 65-30 Kissena Blvd. \\ Queens, NY 11367\\ United States}
\email{\href{mailto:chanusa@qc.cuny.edu}{\texttt{chanusa@qc.cuny.edu}}}
\urladdr{\url{https://hanusa.xyz/}}
\author[Yip]{Martha Yip}
\address[M.\ Yip]{Department of Mathematics, University of Kentucky, 719 Patterson Office Tower, Lexington, KY 40506-0027, United States}
\email{\href{mailto:mpyi222@uky.edu}{\texttt{mpyi222@uky.edu}}}
\urladdr{\url{https://www.ms.uky.edu/~myip/}}
\subjclass[2020]{Primary: 05C21, 52B12, 52B20, 05A05, 05A19, 06A07, 05E45; Secondary: 05A15, 05C05, 05C20, 05C30, 06B05, 52A38, 52B05, 52B11,  52B22, 52C07}  
\begin{document}

\begin{abstract}
We introduce a new broadly unifying family of combinatorial objects, which we call permutation flows, associated to an acyclic directed graph $G$ together with a framing~$F$. This new family is combinatorially rich and contains as special cases various families of combinatorial objects that are frequently studied in the literature, as is the case of classical permutations, circular permutations, multipermutations, Stirling permutations, alternating permutations, $k$-ary sequences, and Catalan objects and their generalizations. When permutation flows are decorated with compatible shuffles, they also include the combinatorics of parking functions and their generalizations. 

This model is geometrically rich. We show that permutation flow shuffles define a family of unimodular triangulations of the flow polytope $\mathcal{F}_G(\mathbf{a})$ on $G$ with an integer balanced netflow vector $\mathbf{a}$ where only the last entry is negative. As an application we provide a new proof of the Lidskii volume formula of  
Baldoni and Vergne for this family of polytopes and a reformulation of the same formula where every term is explained by the nature of the combinatorial objects involved. Permutation flow triangulations extend the Danilov, Karzanov, and Koshevoy triangulations that were defined for the case where $\mathbf{a}= \mathbf{e}_0- \mathbf{e}_n$. For this particular case we provide a formula for the $h^*$-polynomial of the flow polytope as the descent enumerating polynomial of permutation flows. Permutation flow triangulations include among others, a family of triangulations that refines the subdivisions of a family of polytopes studied by Pitman and Stanley. They also include as special cases triangulations of products of dilated simplices.

The model comes with an order structure induced by intuitive operators on permutation flows which we call the weak order. This order includes as special cases the weak order on permutations, the Tamari lattice, order ideals in Young's lattice, and their generalizations, among others. It was conjectured in 2020 by the three authors, together with Benedetti, Harris, and Morales, that this poset is in general a lattice. This conjecture has been recently established with independent proofs by Bell and Ceballos, and by Berggren and Serhiyenko. Further properties and applications of the weak order will appear in a subsequent article.
\end{abstract}


\maketitle

\renewcommand{\baselinestretch}{0.5}\normalsize
\tableofcontents
\renewcommand{\baselinestretch}{1.0}\normalsize

\section{Introduction}

This preliminary version of this article is presented as a research announcement to serve as supporting material for the research minicourse  ``Permutation Flows'' given by the third author during the CIRM Research School ``Beyond Permutahedra and Associahedra'' taking place December 1--5 at the Centre International de Rencontres Math\'ematiques CIRM, Marseille, France. As such, this version is currently incomplete and under construction; the final version will include additional examples, figures, and applications. 

This article is the first part of a series developing the theory of a new family of combinatorial objects that we call \defn{permutation flows}. This is a combinatorially rich family that contains as special cases various families of objects that are frequently studied in the literature, including the case of classical permutations, circular permutations, multipermutations, Stirling permutations, alternating permutations, $k$-ary sequences, and Catalan objects and their generalizations. When permutation flows are decorated with compatible \defn{shuffles}, they also include the combinatorics of parking functions and their generalizations.

Permutation flows are obtained as lifts of integer flows on an acyclic directed graph $G$ after enriching the graph with the additional information of total orderings on their sets of incoming and outgoing edges at each vertex, which is known as a \defn{framing} $F$ after the work of Danilov, Karzanov, and Koshevoy in \cite{DanilovKarzanovKoshevoy2012} on a family of regular unimodular triangulations of flow polytopes of with netflow $\ba=\be_0-\be_n$.  The family of permutation flows is defined then on a \defn{framed graph} $(G,F)$ adding structure to the set of integer flows that encodes the information on a triangulation. In particular, directing the edges of the dual graph of the triangulation in a coherent manner gives rise to the \defn{weak order} on permutation flows which generalizes known structures on the classical combinatorial objects as is the case of the weak order on permutations, the Tamari lattice on Catalan objects, lower principal ideals in Young's lattice, the Boolean algebra, and some of their generalizations.

\subsection{Combinatorics of flow polytopes}
\phantom{W}

Flow polytopes are a remarkable family of polytopes appearing as feasible sets in the minimization of cost functions on flows in networks.  In this context a natural interest is the study of their extreme flows, explored in particular in the works of Zangwill \cite{Zangwill1968}, Roy \cite{Roy1969}, and Florian, Rossin-Arthiat and deWerra \cite{FlorianRossin-ArthiatdeWerra1971}, and their adjacency relation, as studied by Gallo and Sodini in \cite{GalloSodini1978}. Further understanding of higher dimensional faces starts in the work of Hille \cite{Hille2003}, setting a connection with toric quiver varieties and quiver representations~\cite{AltmannHille1999}.

A renaissance of the study of these objects comes from the (unpublished) work of Postnikov and Stanley \cite{PostnikovStanley2000} and a striking conjectural formula for the volume of a polytope studied by Chan, Robbins and Yuen \cite{ChanRobbinsYuen2000} as the product $\prod_{i=0}^{n-2}\Cat(i)$ of consecutive Catalan numbers (see also \cite{Kirillov2001}). This conjecture was swiftly proven by Zeilberger \cite{Zeilberger1999} using an identity of Morris \cite{Morris1982}, but it is still an open problem to provide a combinatorial proof of this formula.

Given an (acyclic directed) graph $G$ on $n+1$ vertices and $m$ directed edges, the flow polytope $\calF_G(\ba)$ is defined as the set of real nonnegative flows satisfying flow conservation at every vertex and with a balanced (zero sum) netflow $\ba \in \bbR^{n+1}$ on its vertices. The Catalan formula has revealed the rich combinatorics underlying the calculations of volumes and lattice points of these polytopes. A technique to perform such computations is a beautiful formula given by Baldoni and Vergne \cite{BaldoniVergne2008} generalizing an older formula of Lidskii \cite{Lidskii1984} for the Kostant partition function of the root system of type $A_n$ (see also \cite{BaldoniDeLoeraVergne2004}). Their proof uses residue calculations, establishing a further connection with representation theory. It turns out that in the case when $\ba=\be_0-\be_n$ the Lidskii formula reduces to a simpler formula in which the volume is expressed exclusively by the enumeration of lattice points of a closely related flow polytope defined on the same graph. This special case appears earlier in~\cite{PostnikovStanley2000}.

Danilov, Karzanov, and Koshevoy \cite{DanilovKarzanovKoshevoy2012} construct a family of regular unimodular triangulations of flow polytopes where $\ba=\be_0-\be_n$ based on the concept of framing of the graph and of coherence of routes according to such framing. A unimodular triangulation translates the volume computation into the enumerative problem of counting maximal simplicies of the triangulation. M\'esz\'aros, Morales, and Striker \cite{MeszarosMoralesStriker2019} show that this triangulation then can be used to give a geometric proof of the case of the Lidskii formula where $\ba=\be_0-\be_n$. A geometric proof of the general Lidskii formula is given by M\'esz\'aros and Morales, using a subdivision strategy generalizing work of Postnikov and Stanley in \cite{MeszarosMorales2019}. The subdivision is based on their concept of \defn{bipartite noncrossing trees} introduced in \cite{MezsarosMorales2015} in the more general context of flow polytopes of signed graphs.

A combinatorial interpretation of the generalized Lidskii formula is given by the authors together with Benedetti, Harris, Khare, and Morales in \cite{BenedettiGonzalezDleonHanusaHarrisKhareMoralesYip2019}, in which \defn{gravity} and \defn{unified diagrams} are defined based on the combinatorics of decorated lattice paths and generalized parking functions to express the calculation of the Lidskii volume formula as an enumeration in a familiar combinatorial family. They compute the volume of the flow polytope of the \defn{caracol graph} $\car(n)$ in the cases when $\ba=\be_0-\be_n$ and $\ba=\sum_{i=0}^{n-1}(\be_i-\be_n)$. The former case is precisely the Catalan number $\Cat(n-2)$ and the latter is $\Cat(n-2)\cdot n^{n-2}$, a product of a Catalan number and a parking function number. A natural question that arises with such an interpretation is to further understand the connection between the combinatorial objects and the geometric objects involved.

A tool to answer this question is the dual graph of a triangulation of $\calF_G(\ba)$, which is the graph whose vertex set is the set of maximal simplices of the triangulation and includes an edge whenever two maximal simplices intersect on a facet. Additional structure in this dual graph has the potential to match similar structure already known for the combinatorial objects of interest. A particular structure is given by a natural orientation of the dual graph of the triangulation from the work in \cite{DanilovKarzanovKoshevoy2012} which induces a poset structure on maximal simplices. Computational examples led the authors together with Benedetti, Harris, and Morales in 2020 to conjecture that this poset structure, which we like to think of as the \defn{weak order} of $(G,F)$, always has the structure of a \defn{lattice}. Two vivid examples of the lattice conjecture correspond to two particular framings on the graph $\car(n)$ which generate  orders $W(\car(n),F)$ isomorphic to the Tamari lattice and to the lower ideal generated by the staircase partition in Young's lattice, both lattices on Catalan objects. These two lattices were also observed in relation to polytopes in the work of Pitman and Stanley in their study of subdivisions of a polytope related to parking functions \cite{PitmanStanley2002}, however in the Pitman-Stanley polytope these subdivisions are not triangulations. 

The first and third author together with Bell and Mayorga Cetina show in \cite{BellGonzalezMayorgaYip2023} that more generally, for the $\nu$-caracol graph $\car(\nu)$ for a weak composition $\nu$, the similar two framing strategies provide realizations of two well-studied lattices in the literature: the $\nu$-Tamari lattice introduced by
Pr\'eville-Ratelle and Viennot \cite{Preville-RatelleViennot2017}, and further developed by Ceballos, Padrol, and Sarmiento~\cite{CeballosPadrolSarmiento2019, CeballosPadrolSarmiento2020}, and the lower ideal $I(\nu)$ in Young's lattice determined by co-area when $\nu$ is considered as a lattice path. The work in \cite{BellGonzalezMayorgaYip2023} revealed the unifying character of triangulations of flow polytopes with respect to different order structures on combinatorial objects that are relevant  in the literature.  

The first and third authors together with Morales, Philippe, and Tamayo Jim\'enez provide in \cite{GonzalezDleonMoralesPhilippeTamayoYip2025} another relevant example of this phenomenon by building on the techniques developed in \cite{BellGonzalezMayorgaYip2023} to answer, in the case of a strict composition $\bs$, a conjecture of Ceballos and Pons \cite{CeballosPons2024-1,CeballosPons2024-2} on the geometric realization of their $\bs$-permutahedron, whose $1$-skeleton is given by their $\bs$-weak order on $\bs$-trees (or equivalently, on $\bs$-Stirling permutations). The lattice structure in \cite{GonzalezDleonMoralesPhilippeTamayoYip2025} is based on a framing of the $\bs$-\defn{oruga graph} $\oru(\bs)$ and in the particular case when $\bs=\sum_{i=0}^{n-1}(\be_i-\be_n)$, the polytope $\calF_{\oru(n)}$ is the $n$-hypercube; the triangulation corresponding to the framing given by the authors naturally gives the (right) weak order on the set $\sym_n$ of permutations of $[n]:=\{1,2,\dots, n\}$. In the same spirit of \cite{BellGonzalezMayorgaYip2023}, we show in Part II \cite{GonzalezHanusaMoralesYip2026} that a fully twisted framing on $\oru(n)$ realizes the lattice order on circular permutations introduced and studied by Abram, Chapelier-Laget, and Reutenauer \cite{AbramChapelier-LagetReutenauer2021}. (See Figures \ref{fig:222poset} and \ref{fig:222final_summaries}.)

The work of the first and third author together with Morales, Philippe, and Tamayo Jim\'enez shows that the permutree lattices and the permutree lattice quotients of Pilaud and Pons \cite{PilaudPons2018} are also examples of $W(G,F)$ after consecutive applications, starting on the planar framing of $\oru(n)$, of the \defn{M-move} operation on framed graphs found by the third author that replaces an edge $(i,j)$ by the pair of edges $(0,j)$ and $(i,n)$ while respecting the framing orders at $i$ and $j$. It will be shown in Part II \cite{GonzalezHanusaMoralesYip2026} that M-moves always induce lattice quotients of $W(G,F)$. 
A special case of this result appears in the dissertation of Tamayo Jim\'enez~\cite[Chapter 7]{TamayoJimenez2023}. Permutree lattices contain as particular examples the weak order on permutations, the Tamari lattice on binary trees, the boolean algebra on binary sequences, and Reading's Cambrian lattices \cite{Reading2006}.

Two proofs giving a positive answer to the lattice conjecture have been recently independently obtained by Bell and Ceballos~\cite{BellCeballos2024} and by Berggren and Serhiyenko~\cite{BerggrenSerhiyenko2024}. Both proofs use vastly different techniques and provide new surprising connections that support further the unifying character of the combinatorics of flow polytopes. As a consequence of the work announced by the authors in \cite{BellCeballos2024}, $W(G,F)$ is an HH-lattice, which in particular is congruence uniform and therefore semidistributive (see \cite[Theorem 3.2]{BellCeballos2024}). Their work connects triangulations of flow polytopes to further examples of lattices: the alt $\nu$-Tamari lattices introduced by Ceballos and Chenevi\`ere \cite{CeballosCheneviere2023}, the cross-Tamari lattices introduced by the same authors in \cite{BellCeballos2024}, the Grassmann-Tamari lattices of Santos, Stump, and Welker \cite{SantosStumpWelker2017}, McConville's grid Tamari lattices \cite{McConville2017}, and Pilaud's $(\varepsilon,I,J)$-Cambrian lattices, and others. On the other hand, the work in \cite{BerggrenSerhiyenko2024} builds on a connection established by work of the third author together with Bell, Braun, Bruegge, Hanely, Peterson, and  Serhiyenko in~\cite{BellBraunBrueggeHanelyPetersonSerhiyenkoYip2024} between DKK triangulations of graphs with ample framings and $\tau$-tilting posets of gentle algebras. The work in \cite{BerggrenSerhiyenko2024} further develops a theory that connects the DKK triangulations of flow polytopes to the representation theory of special families of gentle algebras whose $\tau$-tilting posets are lattices. Additional properties of $W(G,F)$ will be presented in Part II of this work \cite{GonzalezHanusaMoralesYip2026} including an explicit description of the join and meet of $W(G,F)$ in terms of permutation flows.

Further work of various other authors reveals deep connections between flow polytopes and other interesting areas of mathematics, such as diagonal harmonics \cite{MeszarosMoralesRhoades2017,LiuMeszarosMorales2019}, Schubert polynomials \cite{MeszarosStDizier2020}, Gelfand-Tsetlin polytopes \cite{LiuMeszarosStDizier2019}, among others \cite{Meszaros2015,MeszarosSimpsonWellner2019, BenedettiHanusaHarrisMoralesSimpson2020,KapoorMeszarosSetiabrata2021,MoralesShi2021,CorteelKimMeszaros2021,DuganHegartyMoralesRaymond2025}.

\subsection{Permutation flow triangulations}
\phantom{W}

In this work we develop a combinatorial treatise encoding the information stored on  maximal simplices, in a triangulation of a flow polytope that is based on a framing, in terms of various families of combinatorial objects, all of them in bijective correspondence, each of them showcasing a different aspect of the problem, and, to our judgment, all of them interesting for their own sake. These, in increasing order of encoding, are the families of \defn{cliques}, \defn{vines}, \defn{groves}, 
and \defn{permutation flows} in the simpler case when $\ba=\be_0-\be_n$. In the more general case, for a netflow vector $\ba$ that has its only negative entry last, the families are \defn{cliques}, \defn{multicliques}, \defn{vineyard shuffles}, \defn{grove shuffles}, 
and \defn{permutation flow shuffles}, whose objects involve multiple components (specifically $|a_n|$ of them) and hence carry the additional information of a shuffle that organizes all these components together in a globally coherent manner. 

The model of permutation flows forms the basis for the definition of a family of unimodular triangulations of the flow polytope $\calF_G(\ba)$ in the case where $\ba=(a_0,a_1,\dots, a_n)$ satisfies $a_v\ge 0$ for $v<n$, which we call the \defn{permutation flow triangulation}. 

An \defn{($\ba$-)augmented graph} $\hatG$ is an acyclic directed graph with half-edges such that the set~$X$ of input half-edges satisfies both that $|X|=|a_n|$ and that there are $a_v$ input half-edges at vertex $v$ for every $v<n$. A \defn{framed augmented graph} is a pair $(\hatG,\hatF)$ where $\hatG$ is an augmented graph and $\hatF$ is a framing (see Section~\ref{sec:augmented_graphs}) of $\hatG$ defined at every vertex\footnote[2]{This is a slight departure from the usual definition of Danilov, Karzanov and Koshevoy, where only the internal vertices $\{1,\dots, n -1\}$ are required to have a framing.} and including half-edges. We represent the lattice points of $\calF_G(\ba)$ as \defn{route matchings} on $\calG$---a coherent set of routes in $\calG$, one for each input half-edge (see Definition~\ref{def:route_matchings}). A \defn{clique} is a set of route matchings which is coherent (locally and globally according to Definition~\ref{def:cliques}). We say that a clique is \defn{saturated} if it is maximal with respect to inclusion of route matchings. Denote $\Delta_{\calC}$ the convex hull of the lattice points indexed by the route matchings on $\calC$. We prove the following theorem in Section \ref{sec:triangulation_flow_polytopes}.

\begin{theorem}\label{theorem:triangulation}
	Let $(\hatG,\hatF)$ be a framed $\ba$-augmented graph of $G$. The set
    $$\{\triangle_{\calC}\mid \calC \in \SatCliques(\hatG,\hatF)\}$$
    is a unimodular triangulation of $\calF_G(\ba)$.
\end{theorem}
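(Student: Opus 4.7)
The plan is to reduce Theorem~\ref{theorem:triangulation} to the classical theorem of Danilov, Karzanov, and Koshevoy \cite{DanilovKarzanovKoshevoy2012} for the case $\ba=\be_0-\be_n$. From the framed $\ba$-augmented graph $(\hatG,\hatF)$ I would construct an auxiliary ordinary framed graph $(G^*,F^*)$ by gluing the $|a_n|$ input half-edges of $\hatG$ to a common new source vertex $s$; the framing at $s$ is chosen to respect a fixed global ordering on input half-edges, and the framing at each interior vertex $v$ of $\hatG$ is extended to include the new edges from $s$ in the positions already dictated by $\hatF$ on the half-edges. This construction sets up a natural correspondence between lattice points (equivalently, route matchings) of $\calF_G(\ba)$ and collections of $|a_n|$ routes from $s$ to $n$ in $G^*$.

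The second step is to show that this correspondence carries cliques and saturated cliques of $(\hatG,\hatF)$ to cliques and saturated cliques of $(G^*,F^*)$ in the DKK sense. The local coherence condition in Definition~\ref{def:cliques} translates to DKK-coherence at each interior vertex because the extended framing preserves the $\hatF$-order on half-edges at $v$, while the global coherence condition matches DKK global coherence once the $|a_n|$ components of a matching are reinterpreted as a collection of routes from $s$ to $n$ in $G^*$. Under this correspondence the convex hull $\triangle_{\calC}$ corresponds, via the affine unimodular projection that forgets the edges incident to $s$, to the DKK simplex indexed by the associated saturated clique in $(G^*,F^*)$.

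Having established this correspondence, applying the DKK triangulation theorem to $(G^*,F^*)$ yields a unimodular triangulation of the associated flow polytope whose simplices descend to the desired unimodular triangulation of $\calF_G(\ba)$ indexed by $\SatCliques(\hatG,\hatF)$. Unimodularity is preserved under the projection because the flow on the forgotten edges incident to $s$ is uniquely determined by $\ba$ together with the flow on $G$, so the projection is a lattice isomorphism between the relevant affine spans. Covering of $\calF_G(\ba)$ by the $\triangle_{\calC}$ and disjointness of their interiors both follow directly from the corresponding DKK properties transported through the bijection.

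The main obstacle I anticipate is the careful treatment of interior vertices $v$ with $a_v>1$, whose input half-edges become parallel edges $s\to v$ in $G^*$. These parallel edges must be made distinguishable and assigned consistent framing positions at both endpoints so that DKK-coherence at $v$ refines exactly to the local coherence condition of Definition~\ref{def:cliques}; one must further verify that two distinct route matchings on $\hatG$ never lift to the same multiset of routes on $G^*$, which amounts to treating input half-edges as labeled throughout the bijection. Once this bookkeeping is nailed down, the theorem follows from a direct application of the DKK triangulation result.
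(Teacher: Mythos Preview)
Your reduction does not work as written, and the gap is structural rather than a matter of bookkeeping. The auxiliary polytope $\calF_{G^*}$ (with netflow $\be_s-\be_n$) has dimension $m+|a_n|-n-1=d+|a_n|-1$, while $\calF_G(\ba)$ has dimension $d$; they are not integrally equivalent when $|a_n|>1$. More concretely, every point of $\calF_{G^*}$ has total flow $1$ into the sink $n$, so the projection ``forgetting the edges incident to $s$'' lands in the set of flows with total inflow $1$ at $n$, which is disjoint from $\calF_G(\ba)$ whenever $|a_n|>1$. In particular, the projected vertices of a DKK simplex are indicators of single routes $\bz(P)$, not route-matching indicators $\bz(\calP)=\sum_{x\in X}\bz(P_x)$, so the image is never $\triangle_{\calC}$.

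There is a second, independent obstruction: the map you describe from saturated cliques of $(\hatG,\hatF)$ to DKK saturated cliques of $(G^*,F^*)$, sending $\calC$ to its underlying set of distinct routes, is many-to-one. A DKK saturated clique in $G^*$ has exactly $d+|a_n|$ routes, which matches $|\Routes(\calM(\calC))|$, but it carries no record of the multiplicities---equivalently, of the shuffle $\sigma$. Distinct saturated cliques $\calC,\calC'$ with the same vineyard and different shuffles collapse to the same DKK clique, so no bijection between simplices exists. This is not an accident: by Corollary~\ref{corollary:number_of_permutation_flows}, the number of DKK saturated cliques of $G^*$ equals $|\SatGroves(\hatG,\hatF)|$, which is strictly smaller than $|\SatCliques(\hatG,\hatF)|=|\SatGroveShuffles(\hatG,\hatF)|$ as soon as some grove admits more than one shuffle.

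Your idea can be salvaged, but only through the Cayley trick: $\calF_G(\ba)$ is the Minkowski sum $\sum_{x\in X}\calF_G(\be_{v_x}-\be_n)$ and $\calF_{G^*}$ is the corresponding Cayley polytope, so DKK triangulations of $\calF_{G^*}$ correspond to \emph{fine mixed subdivisions} of $\calF_G(\ba)$. Carrying this out requires identifying the mixed cells with the $\triangle_{\calC}$ (which is exactly where the shuffle data re-enters) and verifying unimodularity of the mixed cells separately. None of this is the simple affine projection you describe. By contrast, the paper proves Propositions~\ref{proposition:triangulation} and~\ref{proposition:unimodularity} directly, without invoking DKK, by constructing for each $\bw\in\calF_G(\ba)$ the unique grove and convex coefficients realizing it; this yields DKK as the special case $|X|=1$ rather than relying on it.
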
 

The triangulation of Theorem \ref{theorem:triangulation} generalizes and coincides with the one of Danilov, Karzanov, and Koshevoy in \cite{DanilovKarzanovKoshevoy2012} in the case where $\ba=\be_0-\be_n$, which we denote from now on as $\DKK(G,F)$. The proof of Theorem \ref{theorem:triangulation} provides an alternative proof to the fact that $\DKK(G,F)$ is a unimodular triangulation of $\calF_G$. 

\subsection{Relation between the combinatorial families in this article}
\phantom{W}

The family that most directly encodes a permutation flow triangulation is then the family of cliques. This particular family has the advantage that naturally has the structure of a simplicial complex and the face structure of this complex can be seen directly from cliques by set inclusion. In Section \ref{sec:combinatorial_families_on_augmented_graphs} we translate the facial structure across the bijective correspondences among combinatorial families, providing the bijections with the additional feature of being poset isomorphisms and giving a way to check within the same family whether two objects are related by face membership (see an example in Section \ref{sec:face_poset}).

Even though the family of cliques directly carry the structure of the triangulation, the family that we highlight the most in this work is the family of \defn{permutation flow shuffles} (and permutation flows in the case when $\ba=\be_0-\be_n$). As we show in Sections \ref{sec:permutationflows} and \ref{sec:combinatorial_families_on_augmented_graphs}, permutation flows are easily constructed by a simple combinatorial procedure on a framed augmented graph and a compatible shuffle can be assigned to this object in a similar manner in which a classical decorated Dyck path (sometimes called a shuffle or a parking function) is related to its underlying Dyck path. In Section \ref{sec:permutationflows} we provide a short discussion on the combinatorics of permutation flows (without the shuffles) to give space for the reader to get acquainted with the basic definitions and to appreciate the relationships with classical combinatorial families. Most of the proofs and details are delayed until Section \ref{sec:combinatorial_families_on_augmented_graphs} where the more general families are defined. In particular, of high relevance is the concept of a \defn{split} which becomes more evident and relevant for the more general framed augmented graphs and where the compatible shuffles are applied. We show in Section \ref{sec:hstarpoly} that the \defn{weak order} $W(G,F)$ can be defined by applying simple \defn{raising} and \defn{lowering operators} on permutation flows, and hence all permutation flows can be constructed recursively starting from the bottom permutation flow $\pi^0$ or the top permutation flow $\pi^1$. Further properties of the weak order on permutation flows, including results on inversions, will be presented in Part II \cite{GonzalezHanusaMoralesYip2026}.

Cliques and \defn{multicliques} are closely related by switching the point of view from indexing route matchings to indexing input half-edges. From this perspective every input half-edge is associated to an equal number of routes after taking multiplicity into account. These two families coincide in the case when $\ba=\be_0-\be_n$. In the work of M\'eszaros, Morales, and Striker~\cite{MeszarosMoralesStriker2019}, a bijection between $\SatCliques(G,F)$ and the lattice points of a closely related polytope~$\calF_G^{\bbZ}(\bd)$ is given by using the concept of a \defn{prefix} (see Lemma \ref{lemma:lemma_MMS}). Moving from the perspective of routes to a perspective of prefixes we encounter the concepts of \defn{vines} and \defn{vineyard shuffles}, where the concept of split arises. The multiplicity of routes in a multiclique can be naturally encoded by a shuffle and such shuffles can be conveniently associated to the set of splits of a vineyard. Vineyards act like genealogical trees by telling the story of the prefixes as flow moves through the vertices of the graph from the sources to the sink.

To construct an example of a valid simplex with the definitions of cliques, multicliques or vineyard shuffles, requires a significant amount of work checking that the coherence of all the pieces is satisfied. The family of \defn{groves} was introduced in the work of the second author and Brunner in \cite{BrunnerHanusa2024} and is based on the sequences of bipartite noncrossing trees developed by M\'eszaros and Morales in \cite{MeszarosMorales2019}. The family of \defn{grove shuffles} stores the information on vineyard shuffles at each vertex, encoding the coherence of routes in the noncrossing property of bipartite noncrossing forests. This makes these objects both, more natural to construct a valid example of a simplex and more suitable to be used in the proofs of theorems that require to check that coherence is satisfied. Most of the arguments in this article are given in terms of grove shuffles for this reason. 

An intermediate step between grove shuffles and permutation flows is provided by a natural labeling of the grove using a subset of the edges of the graph $G$ according to the framing $\hatF$. The fact that the dimension of $\calF_G(\ba)$ is equal to $d:=m-n$ is reflected on a  bipartition of the set of edges of $E=T\sqcup S$, which we call respectively \defn{splits and slacks}. Splits of $G$ are then used to label prefixes, routes, and splits on vineyard and grove shuffles. The encoding of a grove with its natural labeling is not only more efficient in terms of the information that we need to keep in the object, but also directly translates it into the concept of a permutation flow. Thus a permutation flow is the most optimal member of the family to construct and for recording the information of its associated simplex in the triangulation.

\subsection{A new (proof of the) Lidskii volume formula}
\phantom{W}

The model of permutation flows is versatile and elegant in its simplicity. We provide in Section \ref{sec:Lidskii} a new geometric proof of the Lidskii volume formula in \cite{BaldoniVergne2008} using the permutation flow triangulation of $\calF_G(\ba)$. This new proof gives also a direct interpretation of the formula which mirrors the purely combinatorial work of the authors in \cite{BenedettiGonzalezDleonHanusaHarrisKhareMoralesYip2019}. Hence, we obtain a bijection between gravity diagrams and permutation flows and a bijection between unified diagrams and permutation flow shuffles providing the original objects in \cite{BenedettiGonzalezDleonHanusaHarrisKhareMoralesYip2019} with a geometric connection to $\calF_G(\ba)$. This is a significant advance with respect to the interpretation presented in~\cite{BenedettiGonzalezDleonHanusaHarrisKhareMoralesYip2019}.

Moreover, our results give a reformulation of the Lidskii formula in which every term is explained by a unique object in the sum, in some sense, simplifying and illuminating the formula in terms of the combinatorial objects.

Recall that $X$ is the set of input half-edges of the $\ba$-augmented graph $\hatG$ with $|X|=|a_n|$ and $a_v$ input half-edges at every $v<n$. We denote $v_x$ the unique vertex of the input half-edge $x\in X$. A \defn{weak composition} $\bj$ of $d$ with parts indexed by $X$  is a function $\bj:X\rightarrow \bbZ_{\ge 0}$ with $d=\sum_{x\in X}\bj(x)$. Denote by $\hat\bj$ the weak composition of $d$ with $n$ parts that is the coarsening of $\bj$ given by $$\hat \bj_v = \sum_{\substack{x\in X\\v_x=v}} \bj(x). $$

Define $\bo=(o_0,\ldots, o_{n-1}, 0)$ to be the weak composition $o_v = \outdeg(v)-1$ for each $v$. Denote by $\hat \bj \rhd \bo$ the domination order $\sum_{v=1}^l \hat \bj_v \geq \sum_{v=1}^l o_v$ for all $l$. Denote by $K_G$ the Kostant partition function with respect to $G$, where $K_G(\cdot)$ is the number of lattice points of $\calF_G(\cdot)$.

We prove the following theorem.

\begin{theorem}[Reformulated generalized Lidskii formula]\label{theorem:reformulated_lidskii_volume}
    Let $\ba$ be such that $a_i\ge 0$ for $i=0,\dots, n-1$ and $a_n<0$.
The normalized volume of the flow polytope $\calF_G(\ba)$ is
\[
\vol \calF_G(\ba) = \sum_{\bj} \binom{d}{\bj} K_G(\hat \bj-\bo),
\]
where the sum is over weak compositions $\bj$ whose coarsening $\hat\bj$ satisfies $\hat \bj \rhd \bo$.
\end{theorem}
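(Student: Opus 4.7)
The strategy is to use the unimodular triangulation from Theorem \ref{theorem:triangulation} to reduce the volume computation to a count of maximal simplices, and then to stratify that count by a natural invariant (the ``type'' of a permutation flow) into the two factors on the right-hand side. Since the normalized volume of a lattice polytope equipped with a unimodular triangulation equals the number of top-dimensional simplices in the triangulation, Theorem \ref{theorem:triangulation} together with the poset isomorphisms of Section \ref{sec:combinatorial_families_on_augmented_graphs} gives
\[
\vol \calF_G(\ba) = |\SatCliques(\hatG,\hatF)| = |\SatPermutationFlowShuffles(\hatG,\hatF)|.
\]
The rest of the proof shows that the right-hand cardinality equals $\sum_{\bj} \binom{d}{\bj} K_G(\hat\bj - \bo)$.

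\textbf{Step 1: factoring the shuffle.} Each saturated permutation flow shuffle decomposes canonically as a pair: an underlying saturated permutation flow together with a compatible shuffle. To each saturated permutation flow I attach its \emph{type} $\bj : X \to \bbZ_{\geq 0}$, where $\bj(x)$ records the number of splits contributed by the input half-edge $x$. Since $d$ equals the total number of splits in a saturated permutation flow, $\bj$ is a weak composition of $d$ indexed by $X$, and its coarsening $\hat\bj_v$ records the total number of splits occurring at vertex $v$. The number of shuffles compatible with a fixed saturated permutation flow of type $\bj$ is then exactly the multinomial $\binom{d}{\bj}$: a compatible shuffle is a linear order on the $d$ splits that is coherent with the local order imposed at each input half-edge, so it is equivalent to an interleaving of the groups of $\bj(x)$ splits at each $x \in X$.

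\textbf{Step 2: counting saturated permutation flows of a fixed type.} The combinatorial heart of the argument is to establish a bijection between saturated permutation flows of type $\bj$ and lattice points of $\calF_G(\hat\bj - \bo)$. The vector $\bo$ plays the role of the ``saturation skeleton'': at each internal vertex $v$, saturation forces the outgoing splits to populate at least $\outdeg(v)-1$ distinct outgoing edges, consistent with the framing $\hatF$, so $o_v = \outdeg(v)-1$ units of outgoing flow are mandatory at $v$. Subtracting this mandatory contribution converts the data of a saturated permutation flow of type $\bj$ into an unrestricted nonnegative integer flow on $G$ with netflow vector $\hat\bj - \bo$, and conversely every such integer flow lifts uniquely to a saturated permutation flow of type $\bj$ by re-imposing the framing-ordered saturation data. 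This lifting is most transparent in the grove-shuffle encoding, where the noncrossing structure inherited from bipartite noncrossing forests gives automatic global coherence. The domination condition $\hat\bj \rhd \bo$ is exactly the partial-sum positivity that makes $\calF_G(\hat\bj - \bo)$ nonempty, so the sum can be restricted to those $\bj$.

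\textbf{Main obstacle.} The principal obstacle is Step 2, specifically the bijection between saturated permutation flows of a fixed type and lattice points of the shifted flow polytope. One must argue that the framing-ordered distribution of outgoing splits at each internal vertex is rigidly determined once one specifies the multiplicity profile $\hat\bj$ and a base integer flow of netflow $\hat\bj - \bo$, and that no additional coherence condition is created when the input half-edges are merged across multiple components. I expect the cleanest route is through grove shuffles, whose bipartite noncrossing structure lets one check coherence locally at each vertex while the global shuffle emerges from the splits. Once these pieces are in place, combining Steps 1 and 2 yields
\[
\vol \calF_G(\ba) = \sum_{\bj} \binom{d}{\bj} K_G(\hat\bj - \bo),
\]
summed over $\bj$ with $\hat\bj \rhd \bo$, which is the claimed formula.
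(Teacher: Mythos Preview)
Your proposal is correct and follows essentially the same approach as the paper: use the unimodular triangulation to equate the volume with $|\SatGroveShuffles(\hatG,\hatF)|$ (equivalently $|\SatPermutationFlowShuffles(\hatG,\hatF)|$), stratify by the type $\bj$, factor out the $\binom{d}{\bj}$ shuffles, and then biject saturated groves of fixed type $\bj$ with $\calF_G^{\bbZ}(\hat\bj-\bo)$. The paper executes your Step~2 exactly as you anticipate, working directly in the grove model via Lemma~\ref{lemma:bijection}, where the map sends a saturated grove $\Gamma$ to the integer flow $e\mapsto |\Splits(\Gamma;e)|$ and the inverse is built inductively vertex by vertex using the noncrossing structure.
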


Saturated permutation flows are also in bijection with a subset of permutations of $X \cup [d]$ by a map that we call the \defn{final summary}. We prove the following theorem in Section~\ref{sec:permutation_flow_shuffles}.

\begin{theorem}
\label{thm:finalsummary}
    $\FinalSummaries(\hatG,\hatF)$ is in bijection with $\SatPermutationFlows(\hatG,\hatF)$.
\end{theorem}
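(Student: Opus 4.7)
The plan is to construct the bijection explicitly in both directions and then characterize the image as the desired subset of permutations of $X \cup [d]$. Since a saturated permutation flow $\bpi \in \SatPermutationFlows(\hatG,\hatF)$ records both the routing data associated with the $|X|=|a_n|$ input half-edges and a complete system of splits labeling the $d$ interior edges of $G$, the natural attempt is to define $\operatorname{FS}(\bpi)$ by linearly ordering these $|X|+d$ pieces of data according to the order in which they are processed when flow is pushed from sources to sink under the framing $\hatF$. Concretely, I would traverse the vertices of $\hatG$ in a topological order compatible with $\hatF$, and at each vertex $v$ interleave the labels of the incoming flow atoms (elements of $X$ or of $[d]$ indexing splits already fired upstream) and the splits occurring at $v$ (new elements of $[d]$), recording the resulting sequence as a word in $X \cup [d]$. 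Because each element of $X \cup [d]$ appears exactly once (an input half-edge is absorbed into a single route, and each split is fired exactly once in the saturated case), this word is a permutation of $X \cup [d]$.

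To see well-definedness, I would verify that two traversal choices consistent with $\hatF$ produce the same word by a standard local-swap argument: whenever two consecutive vertices $u,v$ in the traversal are framing-incomparable, the splits and route-absorptions at $u$ and $v$ act on disjoint edge sets, so swapping the order of their contributions leaves the interleaved word fixed. Injectivity of $\operatorname{FS}$ follows because one can recover $\bpi$ step-by-step: the relative order of labels entering each vertex in the word uniquely determines which outgoing edge each prefix is routed to at that vertex, using the DKK-style coherence built into the definition of a permutation flow (Section~\ref{sec:combinatorial_families_on_augmented_graphs}). This is essentially the same reconstruction idea used by M\'esz\'aros, Morales and Striker~\cite{MeszarosMoralesStriker2019} when passing between saturated cliques and lattice points of the companion polytope $\calF_G^{\bbZ}(\bd)$.

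For surjectivity, I would identify the image of $\operatorname{FS}$ with a permutation class carved out by explicit local conditions: each prefix in the word $\sigma$ of length $k$ must correspond to a coherent partial permutation flow, i.e., the multiset of labels already used describes flow quantities at each vertex that are $(i)$ realizable as a prefix of a flow and $(ii)$ consistent with $\hatF$ at the next vertex consumed. Taking $\FinalSummaries(\hatG,\hatF)$ to be exactly the permutations of $X \cup [d]$ satisfying these conditions, the inverse map is built by reading $\sigma$ from left to right and applying raising operators (as described in Section~\ref{sec:hstarpoly}) to construct $\bpi$ incrementally from the bottom permutation flow $\pi^0$.

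The main obstacle will be giving a clean, intrinsic characterization of $\FinalSummaries(\hatG,\hatF)$ as a subset of permutations of $X \cup [d]$ — essentially pinning down which interleavings of input half-edges and split labels correspond to a genuine saturated permutation flow, versus merely a sequence of locally plausible moves that fail to extend globally. I expect that this can be packaged as a noncrossing/coherence condition between pairs of labels sharing a common vertex, phrased in terms of the framing $\hatF$, and that verifying the round-trip $\operatorname{FS}^{-1}\circ\operatorname{FS}=\mathrm{id}$ will reduce to checking that the raising operators and the topological traversal commute appropriately. The routine verification that these local conditions suffice globally is the technical content that I would defer to the detailed treatment in Section~\ref{sec:combinatorial_families_on_augmented_graphs}.
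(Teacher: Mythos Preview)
Your proposal misreads what needs to be proved. In the paper, $\FinalSummaries(\hatG,\hatF)$ is \emph{defined} as the image of the map $\rho \mapsto \zeta(\rho)$, where $\zeta(\rho)=\rho(y)=\OutPerm(n)$ is simply the word on the outflow half-edge. There is no topological traversal to choose, no well-definedness issue, and surjectivity onto $\FinalSummaries$ is automatic. The ``main obstacle'' you identify---intrinsically characterizing which permutations of $X\cup[d]$ arise---is an interesting separate question, but it is not part of Theorem~\ref{thm:finalsummary}. The only content of the theorem is injectivity: that $\zeta(\rho)$ determines $\rho$.

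For injectivity, your instinct to reconstruct $\rho$ from $\zeta$ is correct, but your proposed mechanism via raising operators from $\pi^0$ is far more elaborate than what is needed and would require you to first solve the characterization problem you yourself flag as hard. The paper's argument is a direct two-pass reconstruction. First, a forward pass determines the first letter of $\rho(e)$ for every $e\in\hatE$: saturation forces $\rho(x)=x$ for $x\in X$ and forces $\rho(t)$ to begin with $t$ for every split $t$, while for a slack $s_v$ the first letter of $\rho(s_v)$ equals the first letter of $\rho(e_0)$ where $e_0$ is the lowest edge in $\hat\inedge(v)$. Second, a backward pass from $v=n$ down to $v=0$ recovers all of $\rho$: knowing $\InPerm(v)$ (which equals $\OutPerm(v)$ with the split letters deleted) and the first letter of each $\rho(e)$ for $e\in\hat\inedge(v)$, one cuts $\InPerm(v)$ at those first letters to obtain each $\rho(e)$. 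No appeal to coherence, raising operators, or the M\'esz\'aros--Morales--Striker bijection is required.
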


When Theorem \ref{thm:finalsummary} is combined with the information of a shuffle of a permutation flow shuffle we obtain a new family of combinatorial objects indexing the simplices of the permutation flow triangulation of $\calF_G(\ba)$. 

Theorem \ref{thm:finalsummary} also connects with the work of the three authors and Morales in \cite{GonzalezHanusaMoralesYip2023} when~$G$ is a \defn{spinal graph} (a graph with a Hamiltonian path). Two families of $G$-cyclic orders (circular permutations) are given in bijection with the family of integer flows in $\calF_G(\bd)$: the families $\mathcal{A}^\uparrow_G$ and $\mathcal{A}^\downarrow_G$ of \defn{upper} and \defn{lower $G$-cyclic orders}.

\begin{question} \label{question:upper_and_lower_g_cyclic_orders}
    Are there framings $F^{\uparrow}$ and $F^{\downarrow}$ of $G$ such that $\mathcal{A}^\uparrow_G= \FinalSummaries(G,F^{\uparrow})$ and $\mathcal{A}^\downarrow_G=\FinalSummaries(G,F^{\downarrow})$? 
\end{question}

\subsection{\texorpdfstring{Shellability, the $h^*$-polynomial, and the $G$-Eulerian polynomials}{Shellability, the h*-polynomial, and the G-Eulerian polynomials}}
\phantom{W}

In Section \ref{sec:hstarpoly} we derive some properties of the weak order $W(G,F)$ to show the following. 

\begin{theorem}\label{theorem:linear_extensions_W(G,F)_are_shellings}
    Any linear extension of $W(G,F)$ is a shelling of $\DKK(G,F)$.
\end{theorem}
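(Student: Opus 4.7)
The plan is to exploit the correspondence between covering relations in $W(G,F)$ and flips of maximal simplices of $\DKK(G,F)$. First I would use the description (from Section \ref{sec:hstarpoly}) of the covers $\tau \precdot \sigma$ in $W(G,F)$ via the raising/lowering operators on saturated permutation flows. Each such cover corresponds geometrically to a flip: $\Delta_\tau$ and $\Delta_\sigma$ share a codimension-1 face and differ by the exchange of a single vertex. In particular, every interior codimension-1 face of $\Delta_\sigma$ arises as $\Delta_\sigma \cap \Delta_\rho$ for a unique neighbor $\rho$, and the orientation of the dual graph defining $W(G,F)$ partitions these faces into the \emph{down facets} (those with $\rho \precdot \sigma$) and the \emph{up facets} (those with $\sigma \precdot \rho$); the remaining codimension-1 faces of $\Delta_\sigma$ lie on $\partial \calF_G$.

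Given a linear extension $\sigma_1, \ldots, \sigma_t$ of $W(G,F)$, the shelling condition amounts to showing that for each $i \geq 2$ the intersection
\[
\Delta_{\sigma_i} \cap \bigl(\Delta_{\sigma_1} \cup \cdots \cup \Delta_{\sigma_{i-1}}\bigr)
\]
is a pure $(d-1)$-dimensional subcomplex of $\partial \Delta_{\sigma_i}$. The key claim I would prove is that this intersection is precisely the union of the down facets of $\Delta_{\sigma_i}$. The inclusion in one direction is immediate: if $\tau \precdot \sigma_i$ in $W(G,F)$ then $\tau \prec \sigma_i$ in any linear extension, so $\Delta_\tau \cap \Delta_{\sigma_i}$ already appears on the left-hand side. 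For the reverse inclusion, I would argue that any face $G$ of $\Delta_{\sigma_i}$ which also lies in some $\Delta_{\sigma_j}$ with $j<i$ must be contained in a down facet of $\Delta_{\sigma_i}$.

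To establish this reverse inclusion I would leverage regularity of the triangulation $\DKK(G,F)$: there should exist a generic linear functional $\ell$ on the ambient space whose associated sweep hyperplane recovers the DKK orientation of the dual graph, and hence the order $W(G,F)$. Producing such an $\ell$ explicitly---by translating the framing-coherence of routes into a height function on vertices of $\calF_G$---allows one to apply the classical line/sweep shelling argument for regular triangulations, yielding that any face of $\Delta_{\sigma_i}$ visible from an earlier simplex has been crossed by the sweep before reaching $\sigma_i$ and hence lies on a down facet. Alternatively, a purely combinatorial route is available through the raising/lowering operators: for any face $G \subseteq \Delta_{\sigma_i} \cap \Delta_{\sigma_j}$, one would construct a lowering move on $\sigma_i$ that fixes every vertex of $G$, producing a cover $\tau \precdot \sigma_i$ with $G \subseteq \Delta_\tau \cap \Delta_{\sigma_i}$.

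The main obstacle is precisely this reverse inclusion. In the geometric approach one must verify that the linear functional recovering the DKK orientation exists and is generic with respect to the triangulation; in the combinatorial approach one must verify that the lowering operators act richly enough on the facets of $\Delta_{\sigma_i}$ to reach one containing any prescribed shared face $G$. I expect the cleanest path is combinatorial, exploiting the explicit structure of raising and lowering operators on permutation flows together with the bijection from $\SatCliques(G,F)$ to the maximal simplices of $\DKK(G,F)$ provided by Theorem \ref{theorem:triangulation}.
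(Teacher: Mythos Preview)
Your outline correctly identifies the target: for each $\sigma_i$ and each earlier $\sigma_j$, the shared face must lie in a down facet of $\Delta_{\sigma_i}$. But both of your suggested routes have gaps.

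The geometric approach does not prove the theorem as stated. Regularity of $\DKK(G,F)$ gives you a height function, and a generic sweep direction yields \emph{one} shelling order compatible with the dual-graph orientation. But the theorem asserts that \emph{every} linear extension of $W(G,F)$ is a shelling, and distinct linear extensions need not arise from a single linear functional. At best the sweep argument shows that $W(G,F)$ admits some shelling-compatible linear extension, which is strictly weaker.

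Your combinatorial approach is closer, but you are missing the key lemma. The paper does not construct a lowering move directly from the raising/lowering operators. Instead it proves a \emph{nonrevisiting property} (Lemma~\ref{lemma:nonrevisiting_property}): if $\calC \leq \calD \leq \calC'$ in $W(G,F)$ and a route $R$ lies in both $\calC$ and $\calC'$, then $R \in \calD$. Combined with the lattice structure (Theorem~\ref{theorem:lattice_property}), this yields that $\calC_i \cap \calC_j \subseteq \calC_i \wedge \calC_j$ as sets of routes (Corollary~\ref{corollary:intersection_is_in_meet}). Now take any $\calC_r$ on a saturated chain from $\calC_i \wedge \calC_j$ up to $\calC_j$ with $\calC_r \lessdot \calC_j$; the nonrevisiting property forces $\calC_i \cap \calC_j \subseteq \calC_r$, so the shared face lies in the facet $\calC_r \cap \calC_j$, and $r<j$ in any linear extension. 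The lattice meet is what guarantees that such a $\calC_r$ exists \emph{below} $\calC_j$---this is precisely the ``lowering move fixing $G$'' you were looking for, but you did not identify the mechanism (meet plus nonrevisiting) that produces it.
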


We provide in Section \ref{sec:hstarpoly} notions of \defn{descent} and \defn{ascent} on permutation flows which give rise to the definition of the \defn{$(G,F)$-Eulerian polynomial} $A_{(G,F)}(x)$ as the descent enumerator on $\SatPermutationFlows(G,F)$.

Using the fact that $\DKK(G,F)$ is a unimodular triangulation together with Theorem \ref{theorem:linear_extensions_W(G,F)_are_shellings} we prove in Theorem \ref{theorem:h_star_first} that the $h^*$-polynomial of $\calF_G$ is given by $A_{(G,F)}(t)$. Moreover, since the $h^*$-polynomial depends only on $\calF_G$ and not on the additional information of a triangulation we conclude that $A_{(G,F)}(t)$ is independent of the framing $F$, which we then denote as $A_{G}(t)$ and call it the \defn{$G$-Eulerian polynomial}. We have the following theorem.

\begin{theorem}\label{theorem:h_star_second}
    The $h^*$-polynomial of $\calF_G$ is $A_{G}(t)$.
\end{theorem}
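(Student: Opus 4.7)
The plan is to derive Theorem \ref{theorem:h_star_second} immediately from Theorem \ref{theorem:h_star_first} once we observe that the $h^*$-polynomial is an invariant of the lattice polytope $\calF_G$ and does not depend on any auxiliary triangulation. Since Theorem \ref{theorem:h_star_first} establishes $h^*(\calF_G) = A_{(G,F)}(t)$ for every framing $F$, the right-hand side must agree across all framings, and we may consistently define $A_G(t) := A_{(G,F)}(t)$. The stated theorem then packages this well-definedness together with the identification of $A_G(t)$ as the $h^*$-polynomial.

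The substantive content therefore lies in proving Theorem \ref{theorem:h_star_first}, and my plan for that is to combine three inputs. First, I would invoke the standard Ehrhart-theoretic fact that whenever $\Delta$ is a unimodular triangulation of a lattice polytope $P$, the $h^*$-polynomial of $P$ equals the $h$-polynomial of $\Delta$ viewed as an abstract simplicial complex; Theorem \ref{theorem:triangulation} supplies precisely this situation with $\Delta = \DKK(G,F)$ and $P = \calF_G$. Second, by Theorem \ref{theorem:linear_extensions_W(G,F)_are_shellings}, any linear extension of $W(G,F)$ is a shelling of $\DKK(G,F)$, so the $h$-polynomial of the triangulation can be computed as $\sum_{\pi} t^{|\mathcal{R}(\pi)|}$, where $\pi$ ranges over $\SatPermutationFlows(G,F)$ and $\mathcal{R}(\pi)$ is the restriction face of the simplex indexed by $\pi$.

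The main step, and the one I expect to be the main obstacle, is identifying $|\mathcal{R}(\pi)|$ with the descent statistic $\des(\pi)$ on $\SatPermutationFlows(G,F)$. Concretely, the restriction of a facet in a shelling consists of those facets of $\Delta_\pi$ that lie in the union of previously shelled simplices; under a linear extension of $W(G,F)$ these correspond to the covers of $\pi$ in the weak order that sit below $\pi$. I would then match such downward covers with the descents introduced in Section \ref{sec:hstarpoly}, arguing that each descent records a position where a lowering operator may be applied to $\pi$ and that, geometrically, each such lowering is responsible for exactly one facet of $\Delta_\pi$ shared with an earlier simplex in the shelling. Once this local bijection between descents and restriction-face generators is nailed down, the $h$-polynomial becomes the descent enumerator $A_{(G,F)}(t)$, completing the proof of Theorem \ref{theorem:h_star_first} and, by the framing-independence argument above, of Theorem \ref{theorem:h_star_second}.
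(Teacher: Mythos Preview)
Your proposal is correct and follows essentially the same approach as the paper: Theorem~\ref{theorem:h_star_second} is obtained from Theorem~\ref{theorem:h_star_first} by the framing-independence of the $h^*$-polynomial, and Theorem~\ref{theorem:h_star_first} is proved via the unimodular shellable triangulation, identifying $|\calR(\pi)|$ with the number of elements covered by $\pi$ in $W(G,F)$, which equals $\des(\pi)$ by Proposition~\ref{proposition:weak_order_permutationflows}. The identification you flag as the ``main obstacle'' is in fact immediate from the definitions: descents are precisely the positions where a lowering operator acts nontrivially, and each such operator gives a cover relation $L_t(\pi) \lessdot \pi$, so no additional local bijection needs to be constructed.
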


Theorem \ref{theorem:h_star_second} and an answer to Question \ref{question:upper_and_lower_g_cyclic_orders} could answer the following related conjecture. 
\begin{conjecture}[Conjecture 7.6\cite{GonzalezHanusaMoralesYip2023}]
Given a spinal graph $G$, we have that 
\begin{equation*} 
P_{\mathcal{A}^\downarrow_G,\des}(t)\lhd h_{\calF_G}^*(t)\lhd 
P_{\mathcal{A}^{\uparrow}_G,\des}(t),
\end{equation*}
where $P_{\mathcal{A},\des}(t) =\sum_{\gamma\in \mathcal{A}}t^{\des(\pi(\gamma))}$ and $\lhd$ indicates the dominance order of the coefficients.
\end{conjecture}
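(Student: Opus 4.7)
The plan is to reduce the conjecture to an affirmative answer to Question \ref{question:upper_and_lower_g_cyclic_orders} combined with Theorem \ref{theorem:h_star_second}. Indeed, suppose we can exhibit framings $F^{\uparrow}$ and $F^{\downarrow}$ of the spinal graph $G$ such that
\[
\mathcal{A}^{\uparrow}_G = \FinalSummaries(G, F^{\uparrow}) \quad \text{and} \quad \mathcal{A}^{\downarrow}_G = \FinalSummaries(G, F^{\downarrow}).
\]
Then Theorem \ref{thm:finalsummary} provides bijections with $\SatPermutationFlows(G, F^{\uparrow})$ and $\SatPermutationFlows(G, F^{\downarrow})$ respectively, and by Theorem \ref{theorem:h_star_second} the descent enumerator on the latter set equals $A_G(t) = h^*_{\calF_G}(t)$ regardless of framing. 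If under the bijection the descent statistic $\des(\pi(\gamma))$ on $\gamma\in\mathcal{A}^{\uparrow/\downarrow}_G$ is carried to the descent statistic on the associated permutation flow, both outer polynomials in the conjecture equal $h^*_{\calF_G}(t)$, and the dominance inequalities hold trivially (as equalities, which is consistent with~$\lhd$).

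My first step would therefore be to construct natural candidates for $F^{\uparrow}$ and $F^{\downarrow}$ using the Hamiltonian path $H$ of $G$: at each internal vertex $v$, place the incoming (resp.\ outgoing) spinal edge first or last in the framing orderings so that the induced coherence on routes mirrors the ``upper'' or ``lower'' orientation used to define $\mathcal{A}^{\uparrow}_G$ and $\mathcal{A}^{\downarrow}_G$ in \cite{GonzalezHanusaMoralesYip2023}. I would then verify directly from the definitions of a final summary (the permutation of $X\cup [d]$ extracted from a saturated permutation flow) that the resulting permutation, read along the spine, describes precisely a $G$-cyclic order, with its upper/lower type controlled by the choice of framing. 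The integer flow underlying the cyclic order matches the output permutation of the final summary, so the bijection is essentially tautological once the framing is correctly specified.

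The second step is the descent comparison. The key observation to exploit is that the descent of a permutation flow in $W(G,F)$ is defined locally at vertices via the lowering operators described in Section~\ref{sec:hstarpoly}, whereas $\des(\pi(\gamma))$ for a $G$-cyclic order is a descent statistic on its linear reading. I would show that under the proposed bijections, every descent of $\pi(\gamma)$ corresponds to a descent of the permutation flow and vice versa, by tracking how raising/lowering operators act on the terminal edges recorded by the final summary. If the two statistics do not match term-by-term, a refinement argument is required: use the raising operators on $W(G, F^{\uparrow})$ to bound $P_{\mathcal{A}^{\uparrow}_G, \des}$ from above by the descent enumerator on saturated permutation flows, and dually for $F^{\downarrow}$, yielding the dominance inequalities via a standard telescoping along chains in the weak order.

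The main obstacle is Question \ref{question:upper_and_lower_g_cyclic_orders} itself: the equality between $\mathcal{A}^{\uparrow/\downarrow}_G$ and $\FinalSummaries(G, F^{\uparrow/\downarrow})$ is not formal, since the families $\mathcal{A}^{\uparrow/\downarrow}_G$ were defined intrinsically from the graph without reference to framings. One must check that the cyclic noncrossing condition characterizing upper/lower $G$-cyclic orders is precisely the coherence condition on route matchings dictated by the proposed framing. I expect this to require a careful induction along the Hamiltonian path, and a subtlety is that the conjecture is an inequality rather than an equality, suggesting that for some spinal graphs the descent statistics may genuinely diverge; in that case the proof must replace the clean bijective identity by the dominance argument sketched above, which is where the technical heart of the work will lie.
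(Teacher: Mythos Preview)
The statement is a \emph{conjecture}, not a theorem, and the paper does not prove it. The paper only remarks that ``Theorem~\ref{theorem:h_star_second} and an answer to Question~\ref{question:upper_and_lower_g_cyclic_orders} could answer the following related conjecture,'' without supplying any argument. So there is no paper proof to compare against; your proposal is a sketch of a program, and you correctly identify Question~\ref{question:upper_and_lower_g_cyclic_orders} as the central missing ingredient.

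That said, there is a genuine gap in your plan beyond the open question. Even granting an affirmative answer to Question~\ref{question:upper_and_lower_g_cyclic_orders}, your first step would give $P_{\mathcal{A}^{\uparrow}_G,\des}(t) = h^*_{\calF_G}(t) = P_{\mathcal{A}^{\downarrow}_G,\des}(t)$, which is \emph{not} what the conjecture asserts. The polynomial $P_{\mathcal{A},\des}(t)$ counts classical descents of the permutation $\pi(\gamma)$ obtained from a cyclic order, whereas $A_{(G,F)}(t)$ counts permutation-flow descents in the sense of Definition~\ref{def:ascent_descent} (lowering operators in $W(G,F)$). The bijection of Theorem~\ref{thm:finalsummary} is between final summaries and saturated permutation flows, but there is no reason these two distinct descent statistics should coincide under it; indeed, if they did, the conjectured dominance inequalities would collapse to equalities, contradicting the spirit of the conjecture (and known examples where the outer polynomials differ from $h^*$). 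Your second paragraph implicitly assumes this statistic-preserving property, and your final paragraph only partially backs away from it. A correct approach must instead compare the two statistics elementwise or via a sign-reversing/injection argument to obtain genuine dominance, and this comparison is where the real difficulty lies---it is not addressed by either Question~\ref{question:upper_and_lower_g_cyclic_orders} or Theorem~\ref{theorem:h_star_second}.
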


\subsection{Organization of the article}
\phantom{W}

The remainder of this article is arranged as follows. In Section~\ref{sec:flow_polytopes_intro}, we present previous results about flow polytopes that involve the face descriptions, the Lidskii volume formulas, DKK triangulations, and the weak order. Section~\ref{sec:permutationflows} is an overview of the combinatorics of permutation flows and related objects in the case where $\ba=(1,0,\ldots,0,-1)$. We include a discussion about the face structure of the triangulation in terms of these objects and examples.  In Section~\ref{sec:hstarpoly} we derive key properties of the weak order to conclude that any linear extension gives a shelling of the DKK triangulation and establish that the $h^*$-polynomial is the $G$-Eulerian polynomial. In Section~\ref{sec:combinatorial_families_on_augmented_graphs}, we derive the main combinatorial results for the families of objects that support permutation flows when the netflow vector only has its only negative entry last. For each family we present a criterion for face membership within that family. Section~\ref{sec:triangulation_flow_polytopes} provides a proof of Theorem~\ref{theorem:triangulation} and an example of its application. Finally, in Section~\ref{sec:Lidskii}, we provide a new geometric proof of the Lidskii volume formula using the triangulation of Section~\ref{sec:triangulation_flow_polytopes} together with the combinatorial families developed in Section~\ref{sec:combinatorial_families_on_augmented_graphs}. As a corollary, we obtain formulas to count permutation flows and permutation flow shuffles.

\newpage
\section{Flow Polytopes: volumes, faces,  triangulations, and the weak order}
\label{sec:flow_polytopes_intro}

For a pair of integers $a<n$ we will denote \[[a,n]:=\{a,a+1,\dots, n\} \textup{ and }[n]:=\{1,\dots, n\}.\] Let $G=(V,E)$ be an \defn{(acyclic) directed graph} (which we will simply refer as a graph in this work) on the vertex set $V=[0,n]$ with unique \defn{sink} vertex $n$.
$G$ may have multiple edges between the same pair of vertices. We denote the number of edges by $m:=|E|$. 

For an edge $e=(v,w) \in E$ we define $\tail(e)=v$ and $\head(e)=w$. Often we will write $(v,\cdot)$ or $(\cdot,w)$ to indicate an arbitrary edge $e$ with $\tail(e)=v$ or $\head(e)=w$ respectively. Without loss of generality, we may assume that the vertices are labeled so that every edge $e=(v,w)$ satisfies $v<w$. For every vertex $v$, denote by $\inedge(v)=\{e \in E \mid \head(e)=v\}$ the \defn{set of incoming edges at $v$} and $\outedge(v)=\{e \in E \mid \tail(e)=v\}$ the \defn{set of outgoing edges at~$v$}. 
We also define $\indeg(v)=|\inedge(v)|$ and $\outdeg(v)=|\outedge(v)|$, which respectively denote the in-degree and out-degree at vertex $v$. 

Given a \defn{balanced} vector $\ba=(a_0,a_1,\ldots, a_n)\in \bbR^{n+1}$ (in which $\sum_v a_v=0$), an \defn{$\ba$-flow} on~$G$ is a tuple $(\phi(e))_{e\in E}\in \bbR_{\geq0}^E$ such that for all $v\in[0,n]$, it satisfies the \defn{conservation of flow} condition
\begin{equation}\label{eqn:defining_equations}
   a_v + \sum_{e\in \inedge(v)} \phi(e) = \sum_{e\in \outedge(v)} \phi(e). 
\end{equation}

The definition in terms of the non-negativity of its values and the equations \eqref{eqn:defining_equations} implies that the set $\calF_G(\ba)$ of all $\ba$-flows on $G$ is a convex polytope, which is called the \defn{flow polytope of $G$ with netflow $\ba$}. 
An $\ba$-flow is said to be an \defn{integer $\ba$-flow} if $\phi(e)\in \bbZ_{\geq0}$ for all $e\in E$, and we denote the set of integer $\ba$-flows by $\calF_G^{\bbZ}(\ba)$. 
Its cardinality $K_G(\ba):=|\calF_G^{\bbZ}(\ba)|$ is called the \defn{Kostant partition function} of $G$ with netflow $\ba$. It is well-known that the set of vertices of $\calF_G(\ba)$ is a subset of $\calF_G^{\bbZ}(\ba)$ and hence $\calF_G(\ba)$ is a lattice polytope. For simplicity, when $\ba= \be_0- \be_n=(1,0,\ldots, 0,-1)$, we will omit $\ba$ from the notation and write $\calF_G$.  The \defn{dimension} of $\calF_G(\ba)$ is $d:= \dim \calF_G = m-n$ for all $\ba$.

As an example, consider the \defn{Pitman-Stanley graph} $\PS_n$ on vertex set $V=[0,3]$ and edge set $E$ with edges of the form $(i-1,i)$ and $(i-1,n)$ for $i\in E$. This graph was defined in \cite{BaldoniVergne2008} where the authors show that the flow polytope $\calF_{\PS_n}(\ba)$ is integrally equivalent to the polytope $\Pi(\ba)$ defined by Pitman and Stanley in \cite{PitmanStanley2002}. In Figure \ref{fig:PS_polytope_with_integer_flows_versionM} we illustrate the flow polytope $\calF_{\PS_3}(1,1,1,-3)$ and highlight all the integer flows in $\calF^{\bbZ}_{\PS_3}(1,1,1,-3)$.

\begin{figure}[ht!]
    \centering
    \scalebox{0.7}{    \input{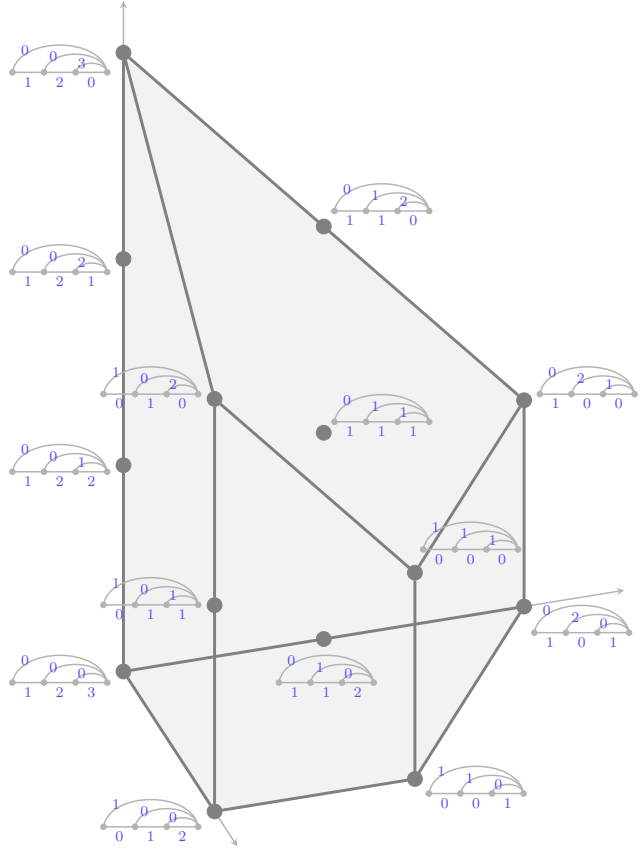}}
    \caption{The Pitman-Stanley polytope $\calF_{\PS_3}(1,1,1,-3)$ and its integer flows.}
    \label{fig:PS_polytope_with_integer_flows_versionM}
\end{figure}


\subsection{The Lidskii volume formulas} 
\label{sec:Lidskii_intro}
\phantom{W}

The \defn{normalized volume} of a $d$-dimensional lattice polytope is $d!$ times its Euclidean volume, which is an integer for any lattice polytope.

The following volume formula, which generalizes the formula of Lidskii \cite{Lidskii1984} for the case of the  \defn{complete graph}  $G=K_{n}$ on $n+1$ vertices, was proven by Baldoni and Vergne in \cite{BaldoniVergne2008}.  
A combinatorial proof using polytopal subdivisions was given by M\'esz\'aros and Morales~\cite{MeszarosMorales2019}.

\begin{theorem}[Generalized Lidskii formula \cite{BaldoniVergne2008,MeszarosMorales2019}]\label{thm.genlidskii}Let $\ba$ be a balanced netflow vector such that $a_i\ge 0$ for $i=0,\dots, n-1$ and $a_n\le 0$. The normalized volume of the flow polytope $\calF_G(\ba)$ is
\[
\vol \calF_G(\ba) = \sum_{\bj\rhd\bo} \binom{d}{\bj} \ba^{\bj} K_G(\bj-\bo),
\]
where $\bo=(o_0,\ldots, o_{n-1}, 0)$ with $o_v = \outdeg(v)-1$ for each $v$, for $\bj = (j_0,\ldots, j_n)\in \bbZ_{\geq0}^{n+1}$, $\bj \rhd \bo$ denotes the \defn{dominance order} defined by $\sum_{v=1}^k j_v \geq \sum_{v=1}^k o_v$ for each $k$, $\ba^{\bj} := a_0^{j_0} \cdots a_{n-1}^{j_{n-1}}$, and $K_G$ is the Kostant partition function with respect to $G$.
\end{theorem}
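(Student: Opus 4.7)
The plan is to deduce Theorem \ref{thm.genlidskii} as a corollary of Theorem \ref{theorem:reformulated_lidskii_volume} after summing out the finer data indexed by input half-edges, with the reformulated version itself being established by counting maximal simplices of the permutation flow triangulation. Concretely, once a framed $\ba$-augmented graph $(\hatG,\hatF)$ is fixed, Theorem \ref{theorem:triangulation} provides a unimodular triangulation of $\calF_G(\ba)$ whose maximal simplices are indexed by $\SatCliques(\hatG,\hatF)$, so $\vol \calF_G(\ba)$ equals the number of saturated cliques; via the bijections announced in Section \ref{sec:combinatorial_families_on_augmented_graphs}, this is the same as $|\SatPermutationFlowShuffles(\hatG,\hatF)|$. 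It then suffices to enumerate saturated permutation flow shuffles in a way that factors as the right-hand side of the generalized Lidskii formula.

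First I would introduce a \emph{profile map} that sends a saturated permutation flow shuffle to a weak composition $\bj:X\to \bbZ_{\ge 0}$ recording, for each input half-edge $x\in X$, the number of splits carried by the route indexed by $x$. Since the total number of splits in any saturated permutation flow shuffle equals $d=m-n=\dim \calF_G(\ba)$, the image lies in compositions of $d$. I would verify that the image is exactly the set of $\bj$ whose coarsening $\hat\bj$ satisfies $\hat\bj\rhd\bo$: at each internal vertex $v$, the local contribution to the grove requires $\outdeg(v)-1$ noncrossing tree edges per unit of accumulated flow, so the cumulative split count up through $v$ must equal at least $\sum_{u\le v}(\outdeg(u)-1)$, which is precisely the dominance condition defining $\hat\bj\rhd \bo$.

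Next I would show that the fiber of the profile map over a fixed $\bj$ has cardinality $\binom{d}{\bj}\, K_G(\hat\bj-\bo)$ by exhibiting a bijection between this fiber and the set of pairs consisting of an integer flow in $\calF_G^{\bbZ}(\hat\bj-\bo)$ together with a shuffle word of type $\bj$. The flow factor would be produced using the prefix description of Lemma \ref{lemma:lemma_MMS}, which converts the route matching underlying a saturated grove shuffle into an integer flow for the shifted netflow $\hat\bj-\bo$; the bipartite noncrossing tree machinery of M\'esz\'aros--Morales \cite{MeszarosMorales2019} guarantees that this conversion is a bijection onto its image once the profile is fixed. The shuffle factor captures, after the flow data is pinned down, the freedom to interleave at each vertex the splits contributed by the distinct input half-edges, contributing the multinomial coefficient $\binom{d}{\bj}$. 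Summing over $\bj$ with $\hat\bj\rhd \bo$ yields Theorem \ref{theorem:reformulated_lidskii_volume}.

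Finally, to pass from the reformulated version to Theorem \ref{thm.genlidskii}, I would apply the multinomial identity
\[
\sum_{\bj:\,\hat\bj=\bi}\binom{d}{\bj}=\binom{d}{\bi}\prod_{v=0}^{n-1}a_v^{i_v},
\]
where the inner sum runs over weak compositions of $d$ indexed by $X$ whose coarsening equals $\bi$; this uses only that there are exactly $a_v$ input half-edges at each $v<n$. Substituting turns the sum over $\bj$ into a sum over $\bi\rhd \bo$ and produces the factor $\ba^{\bi}$, recovering the original Lidskii formula. I expect the main obstacle to be the careful definition of the profile map and the matching verification that its fiber decomposes cleanly as (integer flows) $\times$ (shuffles). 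This requires making precise, within the permutation flow shuffle data, which splits are carried by which input half-edge and confirming that this bookkeeping is consistent with the noncrossing tree structure of the associated grove, so that the netflow shift appearing in the Kostant partition function is exactly $\hat\bj-\bo$; the inequality $\hat\bj\rhd\bo$ then emerges naturally as the feasibility requirement for that Kostant partition function to be nonzero.
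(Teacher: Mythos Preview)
Your proposal is correct and follows essentially the same architecture as the paper: count maximal simplices of the triangulation from Theorem~\ref{theorem:triangulation} via saturated grove (or permutation flow) shuffles, define a profile $\bj$ recording splits per input half-edge, decompose the fiber over $\bj$ as $\binom{d}{\bj}\,K_G(\hat\bj-\bo)$, and then collapse $\bj\mapsto\hat\bj$ via the multinomial identity to obtain the factor $\ba^{\hat\bj}$.

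One small discrepancy worth noting: you propose to produce the integer-flow factor ``using the prefix description of Lemma~\ref{lemma:lemma_MMS},'' but that lemma is specific to the case $\ba=\be_0-\be_n$ and targets $\calF_G^{\bbZ}(\bd)$, not $\calF_G^{\bbZ}(\hat\bj-\bo)$. The paper instead constructs a direct bijection (Lemma~\ref{lemma:bijection}) by assigning to each edge $e$ the number $\bj(\Gamma)(e):=|\Splits(\Gamma;e)|$ and verifying this is a valid $(\hat\bj-\bo)$-flow; it explicitly remarks that this bijection is \emph{different} from the M\'esz\'aros--Morales--Striker one. Your instinct that the noncrossing-tree combinatorics underlies the argument is right, and you correctly flag this step as the main obstacle, but the tool you cite would need to be replaced or suitably generalized. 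Also, the dominance condition $\hat\bj\rhd\bo$ need not be verified separately: it arises automatically as the nonvanishing condition for $K_G(\hat\bj-\bo)$.
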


In the special case when $\ba=\be_0- \be_n$, it was already shown by Postnikov and Stanley \cite{PostnikovStanley2000} that the flow polytope $\calF_G$ has normalized volume
\begin{equation}\label{eqn:lidski_outdegree_formula}
\vol \calF_G=K_G(\tilde \bd),
\end{equation}
where $\tilde\bd = d\be_0 -\bo \in \bbZ^{n+1}$. A similar formula in terms of in-degrees exists if we define $\bi=(0,i_1,\ldots,i_n)$ where $i_v=\indeg(v)-1$ and reverse all the edges of the graph $G$ as
\begin{equation}\label{eqn:lidski_indegree_formula}
\vol \calF_G=K_G(\bd),
\end{equation}
where $\bd = \bi-d\be_n$. 

That is, the normalized volume of $\calF_G$ can be computed either by the number of integer $\tilde \bd$-flows or the number of integer $\bd$-flows on $G$. A computation of volume is then translated into a combinatorial enumeration of integer flows.  Proofs of Equations \eqref{eqn:lidski_outdegree_formula} and \eqref{eqn:lidski_indegree_formula} are obtained as a consequence of the work in \cite{MeszarosMorales2019} and \cite{MeszarosMoralesStriker2019} based on a technique envisioned first in \cite{PostnikovStanley2000}. 

\begin{remark}
    Lidskii formulas to determine the number of integer points $K_G(\ba)$ in $\calF_G^{\bbZ}(\ba)$ are also given and studied in \cite{BaldoniVergne2008, BaldoniDeLoeraVergne2004, MeszarosMorales2019,KapoorMeszarosSetiabrata2021}.
\end{remark}

\subsection{Faces of flow polytopes}
\label{sec:faces_of_flow_polytopes}
\phantom{W}

Faces of flow polytopes have been studied by various authors. Notably, Zangwill~\cite{Zangwill1968} studied extreme points in the context of flow optimization on uncapacitated networks, Roy~\cite{Roy1969} and Florian, Rossin-Arthiat and deWerra~\cite{FlorianRossin-ArthiatdeWerra1971} did it in the capacitated case. Gallo and Sodini~\cite{GalloSodini1978} described the adjacency relations among these extreme points in terms of minimal cycles, which form the $1$-skeleton of the flow polytope. Hille~\cite{Hille2003} described  all the faces of $\calF_G(\ba)$ for $\ba$ in generic position. The combinatorics of faces of particular families of flow polytopes have also been studied including generalized Pitman-Stanley polytopes~\cite{DuganHegartyMoralesRaymond2025} and the complete graph~\cite{Dugan2024}.

We will say that a subgraph $H$ of a connected graph $G$ is \defn{$\ba$-effective} if it is connected and it is $\ba$-flow supporting---that is, if $\calF_H(\ba)$ is not empty.

\begin{theorem}[\cite{Hille2003} Theorem 3.2]\label{Theorem:Hille}
    The faces of $\calF_G(\ba)$ are of the form $\calF_H(\ba)$ where $H$ is an $\ba$-effective subgraph of $G$.
\end{theorem}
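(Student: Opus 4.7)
The plan is to use the H-representation of $\calF_G(\ba)$ directly: it is the intersection of the affine subspace defined by the conservation equations~\eqref{eqn:defining_equations} with the non-negative orthant $\bbR^E_{\geq 0}$, so the only candidate facet-defining inequalities are the edge-nonnegativity constraints $\phi(e)\geq 0$. Consequently every face arises by tightening a subset of these, i.e.\ by setting $\phi(e)=0$ for all $e$ in some $S\subseteq E$ and insisting the resulting slice be non-empty. This is the general H-polytope fact specialized to the flow setting, and it is the engine that drives both directions of the equivalence.

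For the forward direction, given a face $F$ of $\calF_G(\ba)$, I would let $S_F := \{e\in E : \phi(e)=0 \text{ for all } \phi\in F\}$ and take $H$ to be the subgraph of $G$ with edge set $E\setminus S_F$ (retaining the incident vertices together with the sources and sink). The identification $\phi\mapsto \phi|_{E(H)}$ exhibits $F$ as $\calF_H(\ba)$, and non-emptiness of $F$ forces $\calF_H(\ba)$ to be non-empty, so $H$ is $\ba$-flow-supporting. Moreover, maximality of $S_F$ among tight edge subsets on $F$ guarantees that every remaining edge supports positive flow in some $\phi \in F$, which will be useful in the connectedness step. For the converse, any $\ba$-effective $H\subseteq G$ produces $\calF_H(\ba)=\calF_G(\ba)\cap\{\phi(e)=0 : e\notin E(H)\}$, an intersection of $\calF_G(\ba)$ with a coordinate face of the ambient orthant, which is non-empty precisely because $H$ is flow-supporting, and therefore a genuine face of $\calF_G(\ba)$.

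The main obstacle is the connectedness half of the $\ba$-effective condition. The subgraph $H$ produced above need not be connected, but each of its connected components $C$ must independently carry balanced netflow $\sum_{v\in C}a_v=0$, and any component on which every $\phi \in F$ vanishes identically can be pruned without altering $F$. The technical step is then to argue that after this pruning the remaining components are each themselves $\ba$-effective subgraphs that together realize the same face, so by decomposing or restricting to the essential component containing the distinguished sources and the sink, the concentration of the non-zero entries of $\ba$ at these vertices forces that component to be connected. This matching between the maximal tight set $S_F$ and the canonical $\ba$-effective support subgraph is the delicate part of the argument; once it is in place, the bijection between faces of $\calF_G(\ba)$ and (canonical) $\ba$-effective subgraphs follows and implies the stated theorem.
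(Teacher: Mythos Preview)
The paper does not prove this theorem; it is quoted from \cite{Hille2003} as background, so there is no in-paper argument to compare against. Your $H$-representation approach---identifying faces with the zero-sets of subsets of the edge-nonnegativity constraints---is the standard and correct route.

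The gap is in your treatment of connectedness. You correctly flag it as the delicate step, but your resolution (``the concentration of the non-zero entries of $\ba$ at these vertices forces that component to be connected'') is too vague and, as stated, not true in general. What you are missing is the hypothesis: the paper explicitly notes that Hille's description holds ``for $\ba$ in generic position.'' Genericity is exactly what makes connectedness automatic---a disconnected flow-supporting subgraph would require a proper nonempty subset $C\subsetneq V$ with $\sum_{v\in C}a_v=0$, which genericity rules out. Without that assumption (or without the later restriction in the paper that $a_n$ is the unique negative entry, which together with acyclicity forces every flow-carrying component to contain the sink), your canonical $H$ need not be connected, and distinct $\ba$-effective subgraphs can yield the same face, so the face-lattice isomorphism the paper derives as a corollary would fail. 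Your argument becomes complete once you invoke genericity at the connectedness step rather than trying to argue it from first principles.
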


By Theorem \ref{Theorem:Hille}, the face lattice of $\calF_H(\ba)$ is isomorphic to the lattice of $\ba$-effective subgraphs of $G$ where the relation is induced from the subgraph relation.

An important special case of Theorem \ref{Theorem:Hille} (see also \cite{Zangwill1968}) in our context is the characterization of vertices in the flow polytope. 

\begin{corollary}[\cite{Zangwill1968, Hille2003}]%
\label{corollary:vertices_flow_polytope}
The vertices of $\calF_G(\ba)$ are $\ba$-effective trees on $G$. In the case when $\ba=\be_0- \be_n$ the vertices are indexed by paths from the unique source to the unique sink.    
\end{corollary}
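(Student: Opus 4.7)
The plan is to derive both parts of the corollary directly from Theorem~\ref{Theorem:Hille}. A vertex of $\calF_G(\ba)$ is a $0$-dimensional face, so by Hille's characterization it must be of the form $\calF_H(\ba)$ for some $\ba$-effective subgraph $H$ of $G$ with $\dim \calF_H(\ba)=0$. The whole question thus reduces to identifying which $\ba$-effective subgraphs produce a single-point flow polytope; to each such vertex one associates its canonical $H$, namely the support (the edges carrying positive flow) of the corresponding flow.

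The first step is a dimension count on $\calF_H(\ba)$ for an arbitrary $\ba$-effective $H$. Its affine hull is cut out of $\bbR^{E(H)}$ by the flow conservation equations \eqref{eqn:defining_equations} at the vertices of $H$, whose coefficient matrix is the signed vertex--edge incidence matrix of $H$. For a connected graph this incidence matrix has rank $|V(H)|-1$, the only dependence among the rows being their sum, which matches the balance identity $\sum_{v\in V(H)} a_v = 0$ (automatically satisfied because $\calF_H(\ba)\neq\emptyset$). Hence
\[
\dim \calF_H(\ba) \;=\; |E(H)|-|V(H)|+1,
\]
the cyclomatic number of $H$, and this vanishes precisely when $H$ is a tree. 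This establishes the first claim.

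For the specialization $\ba=\be_0-\be_n$, any $\ba$-effective tree must contain both~$0$ and~$n$, since these are the only vertices carrying nonzero netflow. On a tree the flow conservation equations determine $\phi(e)$ uniquely: cutting $e$ splits $H$ into two components and $\phi(e)$ equals the signed sum of $a_v$'s on one side. For edges on the unique $0$-to-$n$ path $P$ inside $H$ this value is $\pm 1$, while for any edge off $P$ the separated subtree contains neither~$0$ nor~$n$, forcing the flow there to be~$0$. Taking the support of the resulting flow point yields exactly the path~$P$, and conversely every $0$-to-$n$ path in $G$ is an $\ba$-effective tree whose flow polytope is a single vertex of $\calF_G$; this gives the claimed bijection between vertices and source-to-sink paths. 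The main technical step in the argument is the rank computation for the conservation matrix; once that is in place, both the tree characterization and the specialization to paths follow by direct analysis of flow on a tree.
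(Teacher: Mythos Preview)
The paper does not supply its own proof of this corollary; it is simply cited from \cite{Zangwill1968,Hille2003} as a special case of Theorem~\ref{Theorem:Hille}. Your argument is the standard one and is essentially correct.

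One point deserves tightening. You assert that for an \emph{arbitrary} $\ba$-effective $H$ the affine hull of $\calF_H(\ba)$ is cut out exactly by the flow conservation equations, giving $\dim\calF_H(\ba)=|E(H)|-|V(H)|+1$. In general this is only an upper bound: if some edge of $H$ carries zero flow in every point of $\calF_H(\ba)$, the affine hull is strictly smaller. What you actually need, and what you set up in the preceding sentence, is this formula for the \emph{canonical} $H$, namely the support of the vertex flow. For that choice the vertex itself is a point of $\calF_H(\ba)$ with every coordinate strictly positive, so one can perturb along any undirected cycle of $H$ and the polytope is full-dimensional in the conservation subspace; hence the cyclomatic number must vanish and $H$ is a tree. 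Conversely, for any $\ba$-effective tree the rank computation alone forces $\calF_H(\ba)$ to be a single point, so Hille's theorem makes it a vertex. With the dimension claim restricted to the support, your argument is complete, and your treatment of the $\ba=\be_0-\be_n$ specialization via the cut-value description of tree flows is correct as written.
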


The reader can verify that in Figure \ref{fig:PS_polytope_with_integer_flows_versionM}, the subgraphs supporting the flow of the vertices of $\PS_3(1,1,1,-3)$ are precisely the $2^{3}=8$ spanning trees of $\PS_3$.

\subsection{Triangulations of flow polytopes}
\label{sec:triangulations_intro}
\phantom{W}

Danilov, Karzanov, and Koshevoy~\cite{DanilovKarzanovKoshevoy2012} introduce a family of triangulations of the flow polytope $\calF_G$ based on the concept of a framing of a graph and the coherence of routes on a framed graph.

\subsubsection{Paths, prefixes, suffixes, and routes}
\label{subsec.paths0}
\phantom{W}

A \defn{path} in $G$ is a sequence of edges $P=(e_1,\ldots,e_j)$ such that $\head(e_i)=\tail(e_{i+1})$ for $1\leq i\leq j-1$; we denote $|P|=j$. We say that $v$ is a vertex in a path $P$ if either $v=\tail(e_i)$ or $v=\head(e_i)$ for some $i\in [j]$. A \defn{route} is a path that cannot be extended by adding an additional, either initial or final, edge. If $v$ is a vertex in a path $P$ we denote by $Pv$ and $vP$ the subpaths of $P$ before $v$ and after $v$ respectively.  If $v$ and $w$ are vertices in a path $P$ with $v<w$ we denote by $vPw$ the subpath of $P$ which starts at $v$ and ends at $w$. If $P$ and $Q$ are paths that share a vertex $v$, we denote by $PvQ$ the path that concatenates the subpaths $Pv$ and $vQ$ into a new path. We denote by $x_0$ (and $y_0$, respectively) the path of length 0 starting and ending at vertex $0$ (or at vertex $n$, respectively). We call a path that starts at~$0$ a \defn{prefix} and a path that ends at $n$ a \defn{suffix}. We denote by $\Prefixes(G)$, $\Suffixes(G)$, and $\Routes(G)$ respectively the set of prefixes, suffixes, and routes of $G$ and by $\Prefixes(v)$ and $\Suffixes(v)$ those prefixes and suffixes that end (or start) at $v$. 

\subsubsection{Framings, coherence, and the DKK triangulation}
\label{subsec.framings0}
\phantom{W}

Danilov, Karzanov, and Koshevoy~\cite{DanilovKarzanovKoshevoy2012} introduce the concept of a framing of $G$. A \defn{framing of $G$ at vertex $v$} is a choice of linear orders $\preceq_{\inedge(v)}$ and $\preceq_{\outedge(v)}$ on the sets $\inedge(v)$ and $\outedge(v)$. In this article, we always assume that $\inedge(v)$ and $\outedge(v)$ are ordered sets, ordered via $\preceq_{\inedge(v)}$ and $\preceq_{\outedge(v)}$ respectively. A \defn{framing of~$G$} is the collection $F$ of framings at every vertex of~$G$.\footnote[2]{This is a slight departure from the usual definition of Danilov, Karzanov and Koshevoy, where only the internal vertices $\{1,\dots, n -1\}$ are required to have a framing.}  A \defn{framed graph} is a graph~$G$ with a framing $F$ and is denoted $(G,F)$. A subgraph $H\subseteq G$ inherits a framing $F_H$ from $F$ which yields the framed graph $(H,F_H)$. In this article, all illustrations of framed graphs are drawn so that the framing at each vertex, both for incoming edges and outgoing edges, is indicated from bottom to top. See Figure~\ref{fig:framed_graph}.

\begin{figure}[bth!]
\begin{tikzpicture}
\begin{scope}[xshift=0, yshift=0, scale=1.5]
    \vertex[fill=orange, minimum size=4pt, label=below:{\tiny\textcolor{orange}{$0$}}](v0) at (0,0) {};
	\vertex[fill=orange, minimum size=4pt, label=below:{\tiny\textcolor{orange}{$1$}}](v1) at (1,0) {};
	\vertex[fill=orange, minimum size=4pt, label=below:{\tiny\textcolor{orange}{$2$}}](v2) at (2,0) {};
	\vertex[fill=orange, minimum size=4pt, label=below:{\tiny\textcolor{orange}{$3$}}](v3) at (3,0) {};
	\vertex[fill=orange, minimum size=4pt, label=below:{\tiny\textcolor{orange}{$4$}}](v4) at (4,0) {};
	\vertex[fill=orange, minimum size=4pt, label=below:{\tiny\textcolor{orange}{$5$}}](v5) at (5,0) {};

    \draw[-stealth, thick] (v0) to [out=45,in=135] (v1);
    \draw[-stealth, thick, color=lavgray] (v0) to [out=-45,in=-135] (v1);
    \draw[-stealth, thick] (v0) to [out=60,in=120] (v2);
    \draw[-stealth, thick] (v1) to [out=0,in=180] (v2);
    \draw[-stealth, thick] (v1) .. controls (2.0, 1.0) and (2.5, -0.5) .. (v3);
    \draw[-stealth, thick, color=lavgray] (v1) .. controls (2.0, -1.0) and (2.5, 0.5) .. (v3);	
    \draw[-stealth, thick, color=lavgray] (v2) to [out=-60,in=-120] (v4);
    \draw[-stealth, thick] (v2) to [out=60,in=120] (v4);
    \draw[-stealth, thick, color=lavgray] (v3) to [out=0,in=180] (v4);
    \draw[-stealth, thick] (v3) to [out=60,in=120] (v5);
    \draw[-stealth, thick, color=lavgray] (v4) to [out=-45,in=-135] (v5);
    \draw[-stealth, thick] (v4) to [out=45,in=135] (v5);

    \node[] at (-0.3,0.7){\footnotesize\textcolor{black}{$(G,F)$}};

    \node[] at (1, .65) {\footnotesize\textcolor{black}{$t_1$}};
    \node[] at (0.55, .32) {\footnotesize\textcolor{black}{$t_2$}};
    \node[] at (0.5, -.15) {\footnotesize\textcolor{cadmiumgreen}{$s_0$}};

    \node[] at (1.9, .45) {\footnotesize\textcolor{black}{$t_3$}};
    \node[] at (1.5, .1) {\footnotesize\textcolor{black}{$t_4$}};
    \node[] at (1.7, -.27) {\footnotesize\textcolor{cadmiumgreen}{$s_1$}};

    \node[] at (3, .65) {\footnotesize\textcolor{black}{$t_5$}};
    \node[] at (3, -.4) {\footnotesize\textcolor{cadmiumgreen}{$s_2$}};
    
    \node[] at (4, .65) {\footnotesize\textcolor{black}{$t_6$}};
    \node[] at (3.5, .1) {\footnotesize\textcolor{cadmiumgreen}{$s_3$}};

    \node[] at (4.5, .3) {\footnotesize\textcolor{black}{$t_7$}};
    \node[] at (4.5, -.15) {\footnotesize\textcolor{cadmiumgreen}{$s_4$}};
\end{scope}
\end{tikzpicture}
\vspace{-.2in}
\caption{A framed graph $(G,F)$. The framing of the incoming and outgoing edges at each vertex is ordered pictorially from bottom to top. 
}
\label{fig:framed_graph}
\end{figure}
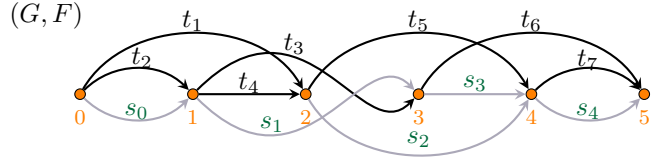

A framing~$F$ extends to total orderings of $\Prefixes(v)$ and $\Suffixes(v)$, as follows. Let $P$ and $P'$ be two distinct prefixes ending at $v$. Proceed backward from $v$ along $P$ and $P'$ to the first vertex $w$ where they diverge and inherit the ordering of $P$ and $P'$ from $\preceq_{\inedge(w)}$. Similarly, if $Q$ and $Q'$ are two distinct suffixes starting at $v$, proceed forward along $Q$ and $Q'$ to the vertex $w'$ where they first diverge and inherit the ordering of $Q$ and $Q'$ from $\preceq_{\outedge(w')}$. 

Following the same strategy we can use a framing $F$ to establish the order on any pair of paths either ending at $v$ or starting at $v$ whenever one is not a subpath of another. We denote by $\inpath(v)$ (and $\outpath(v)$) the sets of paths ending at (or starting at) $v$ together with their respective total orders.

A fundamental concept introduced in \cite{DanilovKarzanovKoshevoy2012} is the notion of coherence of routes, which we extend to paths, and we describe  also in the language of common subpaths used in \cite{GonzalezDleonMoralesPhilippeTamayoYip2025}.
		
\begin{definition}[Path Coherence] 
Two paths $P$ and $Q$ are said to have a \defn{conflict at a vertex~$v$} if $P$ and $Q$ both visit $v$ and the subpaths $Pv$ and $Qv$ have the opposite relative ordering in $\inpath(v)$ from the subpaths $vP$ and $vQ$ in $\outpath(v)$. Otherwise we say that $P$ and $Q$ are \defn{coherent at $v$}.  
We say that $P$ and $Q$ are \defn{coherent} if they are coherent at every vertex. We say that a set of paths is \defn{coherent} if the paths are pairwise coherent. Whenever $P$ and~$Q$ have a conflict at a vertex $v$ and are such that $vP\preceq_{\outpath(v)}vQ$ we say respectively that $P$ is \defn{descending} and $Q$ is \defn{ascending} at the conflict at $v$. 

It is relevant to note that if two paths $P$ and $Q$ are in conflict at a vertex $v$ they will also have a conflict at every vertex on a common subpath $O=uPw=uQw$ that includes $v$. We will say then that $P$ and $Q$ have a \defn{conflict} on the subpath $O$. Finally, every pair of paths $P$ and $Q$ uniquely define a sequence $O_1,O_2,\dots,O_k$ of maximal subpaths $O_1=v_1Pw_1=v_1Qw_1$, $O_2=v_2Pw_2=v_2Qw_2$, $...$ , $O_k=v_kPw_k=v_kQw_k$ where \[v_1\leq w_1<v_2\leq w_2<\cdots <v_k\leq w_k\] and they are in conflict. 
When $k=0$ we have that $P$ and $Q$ are coherent and when $k=1$ we say that they have a \defn{minimal conflict}. 
\end{definition}

Sets of coherent routes play the central role in the triangulation method in~\cite{DanilovKarzanovKoshevoy2012}.

\begin{definition}[Clique]\label{def:cliquesG}
A set $\calC$ of routes whose elements are pairwise coherent is called a \defn{clique}.  We denote by $\Cliques(G,F)$ the set of cliques of $(G,F)$.

A partial order on $\Cliques(G,F)$ is given by the containment order ($\calC \subseteq \calC'$). 
By definition, $\Cliques(G,F)$ is closed under inclusion; as a consequence, it is a simplicial complex and $\rank(\calC):=|\calC|-1$ is the \defn{rank} or \defn{dimension} of a simplex $\calC \in \Cliques(G,F)$. This order is the \defn{face poset} of the simplicial complex $\Cliques(G,F)$. We say that a clique $\calC$ is \defn{saturated} if it is maximal with respect to this order, that is, for every $P \in \Routes(G) \setminus \calC$ there is $Q \in \calC$ such that $P$ and $Q$ are in conflict. These are the top dimensional elements in $\Cliques(G,F)$. We denote by $\SatCliques(G,F)$ the set of saturated cliques. For any pair $\calC \in \Cliques(G,F)$ and $\calC' \in \SatCliques(G,F)$ such that $\calC \subseteq \calC'$ we say that  $\calC'$ is a \defn{saturation} of $\calC$.
\end{definition}

As a corollary of Theorem \ref{theorem:DKK} below, saturated cliques all have the same rank, $d=m-n$, the dimension of $\calF_{G}$. 
Let $\bz(P)\in \bbZ^E$ denote the \defn{indicator function} of a path $P$ and for $\calC \subseteq \Routes (G)$ define $\triangle_{\calC}$ to be the convex hull
\begin{equation}
\label{eq:triangle}
    \triangle_{\calC}:=\conv\big(\{\bz(P)\mid P \in \calC\}\big).
\end{equation}

Danilov, Karzanov, and Koshevoy prove the following pivotal theorem.

\begin{theorem}[\cite{DanilovKarzanovKoshevoy2012}]
\label{theorem:DKK}
Let $(G,F)$ be a framed graph of $G$. The collection
\[\DKK(G,F)=\{\triangle_{\calC}\mid \calC \in \SatCliques(G,F)\}
\]
is a regular unimodular triangulation of $\calF_G$.
\end{theorem}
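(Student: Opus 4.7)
The plan is to establish the four properties that together yield a unimodular triangulation: (1) each $\triangle_\calC$ for a saturated clique $\calC$ is a simplex of the correct dimension $d = m-n$; (2) the union $\bigcup_{\calC \in \SatCliques(G,F)} \triangle_\calC$ equals $\calF_G$; (3) pairwise intersections $\triangle_{\calC_1} \cap \triangle_{\calC_2}$ are common faces; and (4) each $\triangle_\calC$ has normalized volume one with respect to the lattice of integer flows in $\mathrm{aff}(\calF_G)$. Regularity follows by exhibiting an explicit height function at the end. The central device for (2), (3), and (4) is a \emph{canonical framing-driven decomposition} of an arbitrary flow $\phi \in \calF_G$ into a positive combination of coherent route indicators.

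First I would define the canonical decomposition. Given $\phi \in \calF_G$, construct a route $P$ greedily: start at the source $0$ and take the $\preceq_{\outedge(0)}$-maximum outgoing edge carrying positive flow; inductively, upon arriving at an internal vertex $v$ along an incoming edge $e$, exit along the outgoing edge at $v$ whose rank in $\preceq_{\outedge(v)}$ (restricted to edges with positive residual flow) equals the rank of $e$ in $\preceq_{\inedge(v)}$ restricted analogously. Extract the route $P$ with weight $c_P = \min_{e \in P}\phi(e)$, subtract $c_P\bz(P)$ from $\phi$, and iterate until the residual flow is zero. Because $\phi$ is a $(\be_0-\be_n)$-flow of total value $1$, the weights satisfy $\sum c_P = 1$, and applied to an integer flow the procedure produces integer $c_P$'s.

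The crucial step is the \emph{coherence lemma}: the routes appearing in the canonical decomposition of any $\phi$ are pairwise coherent. The argument is by contradiction: if two extracted routes $P$ and $Q$ conflicted at a vertex $v$, so that $Pv \prec_{\inpath(v)} Qv$ but $vP \succ_{\outpath(v)} vQ$ on a common subpath, then at the step when the later of the two was pulled, the rank-preserving matching rule at $v$ would have forced a different outgoing choice; pushing this back to the first vertex where the rule was violated gives a contradiction. This lemma immediately implies (2): every $\phi$ lies in $\triangle_\calC$ for the clique $\calC$ produced, which extends to some saturated clique. It also implies (3) via uniqueness: I would show that among decompositions $\phi = \sum c_P \bz(P)$ with $c_P > 0$ and $\{P\}$ coherent, the canonical one is the unique such decomposition; hence a point in $\triangle_{\calC_1}\cap \triangle_{\calC_2}$ has its support contained in $\calC_1\cap \calC_2$, so the intersection is the face $\triangle_{\calC_1\cap \calC_2}$.

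For (1), I would use that the cycle space of $G$ has dimension $m-n$ and that the affine span of $\{\bz(P) : P \in \Routes(G)\}$ equals $\mathrm{aff}(\calF_G)$, which has dimension $d$. Given a clique $\calC$, affine independence of $\{\bz(P) : P\in \calC\}$ follows because any affine relation would allow a single-conflict exchange (swapping the ascending/descending route pair at a minimal conflict) that contradicts coherence; then maximality of $\calC$ forces $|\calC| = d+1$. Unimodularity (4) is a corollary of the integrality of the canonical decomposition: for any integer flow $\phi$ supported in $\mathrm{aff}(\triangle_\calC)$, the coefficients $c_P$ are integers, so $\{\bz(P):P\in\calC\}$ is a lattice basis of the affine lattice of integer flows. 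Regularity is obtained by lifting each vertex $\bz(P)$ to height $h(P) = \sum_v \mathrm{rank}_{\inpath(v)}(Pv)\cdot M^{n-v}$ for $M$ sufficiently large: the lower envelope has as lower faces exactly the $\triangle_\calC$ with $\calC \in \SatCliques(G,F)$, because the exchange rule induced by $h$ on any non-coherent pair of routes strictly decreases the height.

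The main obstacle is the coherence lemma; making the greedy matching rule compatible with conflicts along arbitrary shared subpaths (not just at a single vertex) requires a careful invariant tracked simultaneously across the in-framing and out-framing at every vertex of the common subpath, and it is here that the rank-preserving matching of $\inedge(v)$ to $\outedge(v)$ restricted to positive-residual edges must be shown to be stable under subtraction of previously extracted routes.
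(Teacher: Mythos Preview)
Your greedy extraction rule is not well-defined as written. At an internal vertex $v$, the number of incoming edges carrying positive residual flow need not equal the number of outgoing edges with positive residual flow (only the total flow values agree), so there is no sensible ``rank-matching'' of one list to the other. For example, if one incoming edge carries flow $1$ and splits into two outgoing edges each carrying $1/2$, the incoming edge has rank $1$ but there are two outgoing candidates. Even under a charitable reinterpretation (e.g.\ matching by cumulative-flow position rather than by discrete rank), the iterative subtract-and-repeat scheme makes your coherence lemma hard: after peeling off one route the residual flow changes, and you must show each newly extracted route is coherent not only with the current residual structure but with \emph{all} previously extracted routes. Your sketch of the contradiction (``the rank-preserving matching rule would have forced a different outgoing choice'') does not control this, because the residual ranks at $v$ shift between extractions.

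The paper avoids this entirely. Rather than extracting routes one at a time, it builds the whole decomposition in a single pass: at each vertex $v$ it forms the noncrossing bipartite forest $\gamma_v$ between incoming prefixes and outgoing edges by the interval-overlap rule (place prefixes and outgoing edges on a common interval weighted by their flow values and join whenever the intervals meet in positive measure). The resulting grove $\Gamma=(\gamma_0,\ldots,\gamma_n)$ encodes all routes simultaneously, and coherence is immediate because the noncrossing condition on each $\gamma_v$ is exactly what rules out conflicts at $v$. Uniqueness of the decomposition then follows from the uniqueness of the noncrossing matching satisfying the local flow constraints (Lemma~\ref{lemma:defining_equations}), and the saturated-clique size $d+1$ drops out of the tree structure of the $\gamma_v$ (Lemma~\ref{lemma:max_grove_bijection}) rather than from an affine-independence-plus-exchange argument, which in your version is only gestured at. Finally, note that the paper's alternative argument does not re-prove regularity; your proposed height function is plausible but would need a separate verification.
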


Combining Theorem \ref{theorem:DKK} with the simple formulas given in equations \eqref{eqn:lidski_outdegree_formula} and \eqref{eqn:lidski_indegree_formula}  obtained from Theorem \ref{thm.genlidskii} when $\ba=\be_0- \be_n$, we conclude that the number of top dimensional simplices in $\DKK(G,F)$ is $K(\bd)=K(\tilde \bd)$. A pair of bijections $\DKK(G,F)\rightarrow \calF_G^{\bbZ}(\bd)$ and  $\DKK(G,F)\rightarrow \calF_G^{\bbZ}(\tilde \bd)$ were obtained in \cite{MeszarosMoralesStriker2019}. 

\begin{lemma}[Lemma 7.9 \cite{MeszarosMoralesStriker2019}]
\label{lemma:lemma_MMS}
The map $\varphi:\SatCliques(G,F)\rightarrow  \calF_G^{\bbZ}(\bd)$ defined by $\varphi(C)(e)=n(e)-1$ where $n(e)$ is defined as the number of distinct prefixes of routes $R \in \calC$ that contain~$e$.
\end{lemma}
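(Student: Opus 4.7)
The plan is to establish that $\varphi$ is a bijection by verifying three facts: (i) $\varphi(\calC) \in \calF_G^{\bbZ}(\bd)$ for every saturated clique $\calC$, (ii) $\varphi$ is injective, and (iii) $|\SatCliques(G,F)| = K_G(\bd)$. Fact (iii) is immediate from Theorem~\ref{theorem:DKK} combined with equation~\eqref{eqn:lidski_indegree_formula}, so once (i) and (ii) are established the bijection follows by cardinalities. I read $n(e)$ as counting the distinct initial segments, one taken from each route of $\calC$ containing $e$, that terminate at $\head(e)$ with $e$ as their last edge.

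For (i), integrality is automatic, and nonnegativity reduces to $n(e) \ge 1$ for every $e$, which is a saturation argument: if no route in $\calC$ contains $e$, splice the $\preceq_{\inpath(\tail(e))}$-maximal prefix ending at $\tail(e)$ with $e$ and the $\preceq_{\outpath(\head(e))}$-maximal suffix starting at $\head(e)$; the resulting route is coherent with every member of $\calC$, contradicting saturation. Flow conservation at each internal $v$, after substituting $d_v = \indeg(v) - 1$, reduces to
\[
\sum_{e \in \outedge(v)} n(e) - \sum_{e \in \inedge(v)} n(e) = \outdeg(v) - 1.
\]
I would recast this via the \emph{prefix tree} of $\calC$, whose nodes are distinct prefixes of routes in $\calC$ and whose edges are one-step extensions. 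Partitioning prefixes ending at $v$ by last edge yields $\sum_{e \in \inedge(v)} n(e) = p(v)$, the number of distinct prefixes at $v$; partitioning their extensions by parent yields $\sum_{e \in \outedge(v)} n(e) = \sum_{P :\, \head(P)=v} k(P)$, where $k(P)$ is the number of distinct outgoing edges used to extend $P$. The identity then reduces to $\sum_{P} (k(P)-1) = \outdeg(v) - 1$.

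The engine for this identity is the \emph{local bipartite graph} $B(v)$, with the prefixes ending at $v$ on one side (ordered by $\preceq_{\inpath(v)}$) and the outgoing edges at $v$ on the other (ordered by $\preceq_{\outedge(v)}$), and an edge $(P,e)$ whenever $Pe$ is a prefix of some route in $\calC$. Coherence of $\calC$ at $v$ forces $B(v)$ to be noncrossing with respect to these orders, and saturation forces $B(v)$ to be \emph{maximal} noncrossing by the same splicing idea used for nonnegativity. Since a maximal noncrossing bipartite graph on $p+q$ vertices is a tree with exactly $p+q-1$ edges, $\sum_P k(P) = |E(B(v))| = p(v) + \outdeg(v) - 1$, which is precisely the desired identity. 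For (ii), $\calC$ is reconstructed from $n$ vertex by vertex: the values $n(e)$ prescribe the outgoing degree sequence of $B(v)$, the maximal noncrossing condition determines $B(v)$ uniquely, and these local bipartite structures glue consistently into the full prefix tree and hence into $\calC$. The main obstacle I expect is justifying that the splicing arguments produce \emph{globally} coherent augmenting routes—the spliced suffix at a single vertex must remain coherent with every route of $\calC$ downstream—which is the combinatorial substance of the DKK coherence formalism and is precisely what the later grove/vine technology is developed to organize.
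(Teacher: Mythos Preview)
The paper does not give its own proof of this lemma; it is stated as a citation to \cite{MeszarosMoralesStriker2019} and used as a black box. So there is no in-paper proof to compare against. That said, your outline is correct and essentially rediscovers the grove machinery that the paper develops in Sections~\ref{sec:groves_10001} and~\ref{sec:grove_shuffles}: your local bipartite graph $B(v)$ is exactly the noncrossing bipartite forest $\gamma_v$ of Definition~\ref{def:noncrossing_bipartite_tree_G}, the tree property of $B(v)$ in the saturated case is Lemma~\ref{lemma:max_grove_bijection}, and your reconstruction of $\calC$ from the outgoing-degree data is the inverse of the map $\psi(\Gamma)$ described at the end of Section~\ref{sec:groves_10001}. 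The cardinality step (iii) is logically sound within the paper's order of presentation, since Theorem~\ref{theorem:DKK} and equation~\eqref{eqn:lidski_indegree_formula} are both stated as prior results.

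The one place your outline is thin is precisely the point you flag: the splicing arguments for nonnegativity and for maximality of $B(v)$ both require producing a route through a prescribed prefix--edge pair that is coherent with \emph{every} route in $\calC$, not just locally at $v$. Taking the $\preceq$-maximal available prefix and suffix is the right instinct, but you must argue that the spliced route does not conflict with any $R\in\calC$ at any later (or earlier) shared vertex. This is where the inductive, vertex-by-vertex construction of groves pays off: once you know each $\gamma_w$ for $w<v$ is already a noncrossing tree, any prefix in $L(\gamma_v)$ is automatically coherent with all of $\calC$ up to $v$, and the extension past $v$ can be handled by the same structure downstream. So your concern is real but resolvable, and the paper's grove formalism is exactly the bookkeeping device that organizes it.
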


The special case of our Lemma \ref{lemma:bijection} in Section \ref{sec:Lidskii} when $\ba=\be_0- \be_n$ gives a pair of bijections $\DKK(G,F)\rightarrow \calF_G^{\bbZ}(\bd)$ and  $\DKK(G,F)\rightarrow \calF_G^{\bbZ}(\tilde \bd)$ that are different from the ones in Lemma~\ref{lemma:lemma_MMS}.

A further consequence of the work in \cite{DanilovKarzanovKoshevoy2012} is a characterization of when two top dimensional simplices in $\DKK(G,F)$ intersect in a facet.

\begin{theorem}[Danilov et al.~\cite{DanilovKarzanovKoshevoy2012}]
\label{theorem:neighboring_simplices}
    Two top dimensional simplices $\triangle_{\calC},\triangle_{\calC'} \in \DKK(G,F)$  intersect in a common facet if their defining saturated cliques differ precisely in two routes $P\in \calC$ and $Q\in \calC'$ which have a minimal conflict at some vertex $v$. Moreover, the routes $PvQ$ and $QvP$ are members of both $\calC$ and~$\calC'$.
\end{theorem}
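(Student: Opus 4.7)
The first assertion follows from unimodularity. By hypothesis $\calC=\calC_0\cup\{P\}$ and $\calC'=\calC_0\cup\{Q\}$ with $\calC_0:=\calC\cap\calC'$ of size $d$, and $P\neq Q$. Since by Theorem~\ref{theorem:DKK} each top-dimensional simplex $\triangle_\calC$ has $d+1$ affinely independent vertices $\{\bz(R):R\in\calC\}$, the two simplices share the facet $\triangle_{\calC_0}$ spanned by the $d$ common vertices. The content of the theorem is therefore the \emph{moreover} claim that $PvQ$ and $QvP$ lie in $\calC\cap\calC'=\calC_0$.

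The main technical step is a \defn{swap lemma}: if $P$ and $Q$ are in minimal conflict on the common subpath $O$ and $v\in O$, and $T$ is any route coherent with both $P$ and $Q$, then $T$ is coherent with both $PvQ$ and $QvP$. I plan to prove this by checking coherence vertex-by-vertex. Let $w$ be a common vertex of $T$ and $PvQ$. Since $PvQ$ decomposes as $P_{\leq v}$ followed by $Q_{\geq v}$, either $w$ lies on $P_{\leq v}$ or on $Q_{\geq v}$. In the former case $(PvQ)w=Pw$, so the incoming comparison at $w$ between $PvQ$ and $T$ is identical to that between $P$ and $T$; the outgoing subpath $w(PvQ)=wPvQ$ first follows $wP$ until $v$ and then switches to $vQ$, and because the conflict of $P$ with $Q$ is confined to $O$, this switch does not flip the order of $w(PvQ)$ versus $wT$ in $\outpath(w)$ relative to that of $wP$ versus $wT$. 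Hence coherence of $T$ with $P$ at $w$ transfers to coherence of $T$ with $PvQ$ at $w$. The case $w\in Q_{\geq v}$ is symmetric and uses coherence of $T$ with $Q$.

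I also plan to check directly that $PvQ$ is coherent with $P$ itself. On the shared prefix $P_{\leq v}$ the incoming subpaths coincide at every common vertex, so the conflict condition is vacuous there. At any common vertex $w$ strictly after $v$, by the minimality of the conflict $P$ and $Q$ are coherent at $w$; a backward trace from $w$ shows that the order of $(PvQ)w$ versus $Pw$ in $\inpath(w)$ is inherited from the order of $Qw$ versus $Pw$, which matches the order of $wQ=w(PvQ)$ versus $wP$ in $\outpath(w)$ by the coherence of $P$ with $Q$ at $w$. Hence $PvQ$ is coherent with $P$, and analogously with $Q$; symmetric statements hold for $QvP$.

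Combining these facts, $PvQ$ is coherent with every route in $\calC=\calC_0\cup\{P\}$: each $T\in\calC_0$ lies in $\calC\cap\calC'$, so is coherent with both $P$ and $Q$, and the swap lemma applies; coherence with $P$ itself was just established. Thus $\calC\cup\{PvQ\}$ is a clique, and by the saturation of $\calC$ we conclude $PvQ\in\calC$; since $PvQ\neq P$, we obtain $PvQ\in\calC_0$. Interchanging the roles of $(\calC,P)$ and $(\calC',Q)$ gives $QvP\in\calC_0$ as well. The hard part is the swap lemma, whose vertex-by-vertex coherence check requires carefully tracking how the framing orders on $\inedge(\cdot)$ and $\outedge(\cdot)$ propagate along shared subpaths; the minimality hypothesis is precisely what prevents a later conflict subpath from reversing an inherited order and destroying coherence.
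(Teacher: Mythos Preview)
The paper does not give its own proof of this statement; it is quoted as a result of Danilov--Karzanov--Koshevoy and used as input for the definition of the weak order. So there is no in-paper argument to compare against.

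Your strategy is the standard one and is sound. The reduction to the ``swap lemma'' is exactly right, and your closing step---$PvQ$ coherent with every $T\in\calC$, hence $PvQ\in\calC$ by saturation, and symmetrically for $\calC'$---is clean. One small point worth making explicit: you also need $PvQ\in\calC'$ (not just $\calC$) to land in $\calC_0$, but this follows by the same swap argument applied to $\calC'=\calC_0\cup\{Q\}$ together with your observation that $PvQ$ is coherent with $Q$.

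The only place where your write-up is genuinely thin is the swap lemma itself. The sentence ``because the conflict of $P$ with $Q$ is confined to $O$, this switch does not flip the order'' hides a real case analysis. The cleanest way to organize it is to fix a common vertex $w$ of $T$ and $PvQ$ and split into (i) $w<v_1$, (ii) $v_1\le w\le v_2$, (iii) $w>v_2$, where $O=[v_1,v_2]$. Case (ii) is the instructive one: using that $(PvQ)w=Pw$ and $w(PvQ)=wQ$, coherence of $T$ with both $P$ and $Q$ at $w$, together with $Qw\prec Pw$ and $wP\prec wQ$ (the conflict), forces $Tw$ to lie on the same side of both $Pw$ and $Qw$ in $\inpath(w)$ (the ``strictly between'' case contradicts coherence), and then transitivity of the total order on $\outpath(w)$ finishes it. Cases (i) and (iii) reduce either to coherence with $P$ alone (when the forward divergence from $T$ occurs before $v_2$) or, when it occurs after $v_2$, to applying coherence of $T$ with $Q$ at $v_1$ after noting that $T$ necessarily takes the $P$-edge into $v_1$ and the $Q$-edge out of $v_2$; the descending/ascending orientation of $P$ then rules out the one sign combination that would otherwise cause trouble. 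This is presumably what you mean by ``carefully tracking how the framing orders propagate,'' but it is worth writing out.
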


\begin{definition}
\label{def:minimal_exchange_quadrangle_cliques}
The set $\{P,Q,PvQ,QvP\}$ that satisfies the conditions of Theorem \ref{theorem:neighboring_simplices} is called a \defn{minimal exchange quadrangle}.  
\end{definition}

\subsection{The (right) weak order and the lattice property}
\label{sec:weakorder}
\phantom{W}

It turns out that Theorem \ref{theorem:neighboring_simplices} hides a suitable partial order on $\SatCliques(G,F))$. 

We first define the cover relations on pairs of elements of $\SatCliques(G,F)$ that have a minimal conflict. From Theorem \ref{theorem:neighboring_simplices}, two adjacent $\calC,\calC' \in \SatCliques(G,F)$ differ precisely at two routes $P\in \calC$ and $Q\in \calC'$ that are in conflict at a vertex $v$ (or, equivalently, in conflict at a maximal common subpath $O=v_1Pv_2=v_1Qv_2\subseteq P\cap Q$ containing $v$) at which $P$ is descending and $Q$ is ascending.  In this situation we define the weak cover relation $\calC \lessdot \calC'$.

\begin{proposition}
    The graph on vertex set $\SatCliques(G,F)$ with directed edge set $\{(\calC,\calC')\mid \calC \lessdot \calC'\}$ is free of cycles. Consequently, the transitive closure on $\lessdot$ defines a poset. 
\end{proposition}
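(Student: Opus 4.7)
The statement reduces to showing that the directed graph $(\SatCliques(G,F),\lessdot)$ is acyclic; the claim about the transitive closure defining a poset follows immediately (transitivity is by construction of transitive closure, antisymmetry is exactly acyclicity of the base relation, and reflexivity is vacuous for the strict order). My plan is to exhibit a statistic $\Phi:\SatCliques(G,F)\to\bbZ$ that strictly increases under every cover $\calC\lessdot\calC'$; this immediately rules out directed cycles.

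By Theorem~\ref{theorem:neighboring_simplices}, a cover $\calC\lessdot\calC'$ is encoded by a minimal exchange quadrangle $\{P,Q,PvQ,QvP\}$ with $\calC'=(\calC\setminus\{P\})\cup\{Q\}$, where $P$ is descending and $Q$ is ascending at some vertex $v$ lying on their unique conflict subpath $O$ from $v_1$ to $v_2$. For any statistic of the form $\Phi(\calC):=\sum_{R\in\calC}h(R)$, this gives $\Phi(\calC')-\Phi(\calC)=h(Q)-h(P)$, so the question reduces to constructing a height function $h:\Routes(G)\to\bbR$ satisfying $h(Q)>h(P)$ for every minimal exchange of this form. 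A natural combinatorial candidate is $h(R)=\sum_v w_v\,\mathrm{pos}^{\mathrm{out}}_v(R)$, where $\mathrm{pos}^{\mathrm{out}}_v(R)$ denotes the framing-rank of $R$'s outgoing edge at $v$ and the weights $w_v$ are tuned so that the positive contribution at $v_2$---guaranteed by the ``ascending'' condition $\mathrm{pos}^{\mathrm{out}}_{v_2}(Q)>\mathrm{pos}^{\mathrm{out}}_{v_2}(P)$---dominates. A geometric alternative is to exploit the regularity of $\DKK(G,F)$ established in Theorem~\ref{theorem:DKK}: a generic lift realizing the triangulation as a lower convex hull automatically induces an acyclic orientation on its dual graph via the sum-of-heights functional, and what must be checked is that this intrinsic flip orientation agrees with the framing-based cover orientation at every exchange quadrangle.

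The main obstacle lies in verifying the sign of $h(Q)-h(P)$ in full generality. In the combinatorial approach, contributions from the arms $P_1,Q_1,P_2,Q_2$ at vertices outside $O$ are a priori uncontrolled, since the ``descending'' and ``ascending'' labels are only defined at $v$, not globally. In the geometric approach, the four vertices $\bz(P),\bz(Q),\bz(PvQ),\bz(QvP)$ of the exchange quadrangle are affinely dependent with a unique (up to scalar) linear relation that is sign-definite by unimodularity; the sign of the height differential $h(Q)-h(P)$ relative to $h(PvQ)-h(QvP)$ therefore determines the flip direction, and the task is to show this matches the framing orientation. A case analysis splitting on whether $P_1=Q_1$ (so that the first divergence happens at $v_2$ and the outgoing edges directly compare) or $P_2=Q_2$ (the symmetric case for $v_1$) handles the base configurations; the general configuration, in which both arms differ, is reduced to these by invoking coherence of $\calC$ with the routes in the shared sub-simplex $\calC\cap\calC'$ of size $d-1$, together with the minimality of the conflict at $O$. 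Once monotonicity of $\Phi$ is established, acyclicity of $\lessdot$ follows at once, completing the proof.
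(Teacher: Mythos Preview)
Your proposal is a plan rather than a proof, and the obstacles you yourself flag are not resolved by the sketch that follows. For the additive statistic $\Phi(\calC)=\sum_{R\in\calC}h(R)$, the increment $h(Q)-h(P)$ depends only on the pair $(P,Q)$ and not on the ambient clique, so the case split ``$P_1=Q_1$ versus $P_2=Q_2$'' followed by ``reduce the general case by invoking coherence with $\calC\cap\calC'$'' cannot work: coherence with the rest of $\calC$ is invisible to $h(Q)-h(P)$. You would need a route-level height $h$ with $h(Q)>h(P)$ simultaneously for \emph{every} minimal descending/ascending pair in the graph, and you have not established that one exists (indeed, with your candidate $h(R)=\sum_v w_v\,\mathrm{pos}^{\mathrm{out}}_v(R)$, already the two-vertex case with two parallel edges at each vertex forces $w_1>w_0$, and extending the graph can force conflicting inequalities). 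The geometric alternative fares no better: regularity of $\DKK(G,F)$ gives \emph{some} acyclic flip orientation, but identifying it with the framing orientation is exactly the content of the proposition and cannot be asserted without argument.

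The paper's proof avoids the additive trap by using a statistic that is \emph{not} a sum over routes. It takes the prefix-count map $\varphi:\SatCliques(G,F)\to\calF_G^\bbZ(\bd)$ of Lemma~\ref{lemma:lemma_MMS}, with $\varphi(\calC)(e)$ equal to one less than the number of distinct prefixes through $e$ among routes of $\calC$, and compares values lexicographically under the edge order that lists smaller tail vertices first and, within a vertex, lists framing-higher outgoing edges first. Because $PvQ$ and $QvP$ lie in $\calC\cap\calC'$, the sets of prefixes through any edge with tail before $v_2$, and through any edge at $v_2$ that is framing-higher than $e'$, are identical in $\calC$ and $\calC'$. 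At $e'$ itself the prefix $Qv_2$ appears in $\calC'$ (via $Q$) but cannot appear in $\calC$ (any route in $\calC$ with that prefix and outgoing edge $e'$ would conflict with $P$), so $\varphi(\calC')(e')=\varphi(\calC)(e')+1$. Thus $\varphi$ strictly increases lexicographically along every cover, ruling out cycles. The essential insight you are missing is that counting \emph{distinct prefixes} rather than summing a height over routes localizes the change entirely to the edge $e'$ where $Q$ peels off, so the arms contribute nothing.
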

\begin{proof}
We use the map $\varphi:\SatCliques(G,F)\rightarrow \calF_G^{\bbZ}(\bd)$ of Lemma \ref{lemma:lemma_MMS} together with the lexicographic order on the integer flow vectors in $\calF_G^{\bbZ}(\bd)$ such that edges are also ordered according to $e'=(v',w')<_{lex} e=(v,w)$ whenever $v'<v$ or $v'=v$ and $e\prec_{\outedge(v)} e'$.

Now let $\calC$ and $\calC'$ have a minimal conflict where $P\in \calC$ and $Q\in \calC'$ are in conflict at the maximal common subpath $O=v_1Pv_2=v_1Qv_2$ at which $P$ is descending and $Q$ is ascending, and $e \prec_{\outedge(v_2)}e'$ are respectively the edges of $P$ and $Q$ leaving $v_2$. In this case, we note that $e'$ and $e$  are consecutive in $<_{lex}$, all prefixes of edges $e''<_{lex}e'$ are equal in both $\calC$ and $\calC'$ and hence $\varphi(\calC)(e'')=\varphi(\calC')(e'')$ but $\varphi(\calC)(e')=\varphi(\calC')(e)+1$, which implies that
$$\varphi(\calC')>\varphi(\calC)$$
in the lexicographic order on $\calF_G^{\bbZ}(\bd)$. We have then shown that the value $\varphi$ always increases strictly lexicographically on $\calF_G^{\bbZ}(\bd)$ whenever $\calC \lessdot \calC'$ and hence cycles cannot exist.
\end{proof}

\begin{definition}[Weak order]\label{def:weak_order_cliques}
    We define the \defn{(right) weak order}  to be the partial order $W(G,F)$ on $\SatCliques(G,F)$ given by the transitive closure of $\lessdot$.
\end{definition}

Recall that a poset $\calL$ is said to be a \defn{lattice} if for every $x,y\in \calL$ there exist
\begin{itemize}
    \item a unique maximal common lower bound called the \defn{meet} of $x$ and $y$, denoted $x\meet y$ and 
    \item a unique minimal common upper bound called the \defn{join} of $x$ and $y$, denoted $x\join y$
\end{itemize}
 
In 2020, the three authors together with Benedetti, Harris and Morales, conjectured the following property which has been announced recently as a theorem with independent proofs by Bell and Ceballos in \cite{BellCeballos2024} and by  Berggren and Serhiyenko in \cite{BerggrenSerhiyenko2024}. 

\begin{theorem}[\cite{BellCeballos2024},\cite{BerggrenSerhiyenko2024}]
\label{theorem:lattice_property}
    $W(G,F)$ is a lattice.
\end{theorem}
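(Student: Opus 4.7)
The plan is to prove the lattice property by constructing explicit meets and joins through an inversion-set description of the weak order. The strategy is to embed $W(G,F)$ order-isomorphically into a lattice of closed subsets of a suitable ground set of \emph{inversion data}, and to verify that the image is closed under the inherited meet and join operations.

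First, I would associate to each $\calC \in \SatCliques(G,F)$ an inversion set $\mathrm{Inv}(\calC)$ whose elements are indexed by the minimal exchange quadrangles in which $\calC$ contains the descending route: concretely, a datum of $\mathrm{Inv}(\calC)$ records a quadruple $\{P,Q,PvQ,QvP\}$ with $P$ descending and $Q$ ascending at a maximal common subpath containing $v$, together with the choice $P\in\calC$. Using Theorem~\ref{theorem:neighboring_simplices}, each weak cover $\calC\lessdot\calC'$ removes exactly one such datum, so $\calC\leq\calC'$ in $W(G,F)$ if and only if $\mathrm{Inv}(\calC)\supseteq\mathrm{Inv}(\calC')$. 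The acyclicity already established via the $\varphi$-embedding of Lemma~\ref{lemma:lemma_MMS} would guarantee that this inversion-set description is well-defined.

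Second, I would characterize which subsets of the ground set can appear as $\mathrm{Inv}(\calC)$. The natural candidate is the collection of \emph{coherently closed} subsets: those stable under a Moore closure operator encoding the consequences of coherence, so that whenever a subset contains inversion data forcing a further inversion via the framing or via propagation of conflict through $G$, it must already contain that further datum. These closed sets form a lattice under intersection, and $\calC\mapsto\mathrm{Inv}(\calC)$ would be an order-embedding of $W(G,F)$ into it. The candidate join $\calC_1\join\calC_2$ would then correspond to $\mathrm{Inv}(\calC_1)\cap\mathrm{Inv}(\calC_2)$, while the candidate meet $\calC_1\meet\calC_2$ corresponds to the closure of $\mathrm{Inv}(\calC_1)\cup\mathrm{Inv}(\calC_2)$.

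The main obstacle is verifying that these candidates are actually realized by saturated cliques, rather than being abstract closed sets with no combinatorial witness. I would address this through a local diamond argument anchored at the minimal exchange quadrangles of Theorem~\ref{theorem:neighboring_simplices}: starting from any common upper bound of $\calC_1$ and $\calC_2$, one performs downward flips that reintroduce inversion data from $\mathrm{Inv}(\calC_1)\cup\mathrm{Inv}(\calC_2)$ until no further such flips are possible. Termination follows from the $\varphi$-embedding, and the essential step is to prove confluence, so that the terminal clique is independent of the order of flips. Since coherence couples distant vertices of $G$, the confluence argument is the delicate point: it requires an induction on the topological order of the acyclic graph, processing each potential conflict at its earliest vertex of divergence and showing that flips at later vertices can always be rearranged around flips at earlier ones without obstruction. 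This is essentially the content behind the HH-lattice property established in~\cite{BellCeballos2024}, and provides an elementary combinatorial route to Theorem~\ref{theorem:lattice_property} that bypasses the $\tau$-tilting machinery used in~\cite{BerggrenSerhiyenko2024}.
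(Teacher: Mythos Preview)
The paper does not prove this theorem at all: it is stated with attribution to \cite{BellCeballos2024} and \cite{BerggrenSerhiyenko2024}, introduced explicitly as a conjecture of the authors and collaborators that was subsequently established by those two independent works. There is no proof in the paper to compare your proposal against.

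As for your proposal itself, it is a plausible outline of the inversion-set approach that underlies the HH-lattice framework, but it is not a proof. You correctly identify the confluence of the flip rewriting as the crux, and then defer it with phrases like ``the essential step is to prove confluence'' and ``this is essentially the content behind the HH-lattice property established in~\cite{BellCeballos2024}.'' That is precisely the nontrivial part: the rest of the setup (defining inversion data, embedding into a poset of closed sets, termination via $\varphi$) is routine, but showing that the closure of $\mathrm{Inv}(\calC_1)\cup\mathrm{Inv}(\calC_2)$ is actually realized by a saturated clique requires the detailed combinatorial or representation-theoretic work carried out in the cited papers. Your induction sketch (``processing each potential conflict at its earliest vertex of divergence'') does not make clear why flips at later vertices can be rearranged around earlier ones, since coherence is a global condition and a single flip can alter the conflict structure at many vertices simultaneously. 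Without that argument, the proposal remains a restatement of what needs to be done rather than a proof.
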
 

\begin{remark} 
The definition that was used in \cite[Corollary 4.9]{BerggrenSerhiyenko2024}, which they call the \emph{DKK lattice}, is isomorphic to the dual order $W(G,F)^*$ of $W(G,F)$. The authors of \cite{BellCeballos2024} call this order the \emph{framing lattice} and as a further result of their proof of \cite[Theorem 3.2]{BellCeballos2024} is that  $W(G,F)$ is an HH-lattice, which in particular is semidistributive and congruence uniform. We believe that the weak order is the right terminology for $W(G,F)$ because, as we will show in part II of this series \cite{GonzalezHanusaMoralesYip2026}, this order has analogous properties for permutation flows as the weak order on classical permutations~\cite{BB05} and their generalizations (see for example the weak order on Stirling permutations defined in \cite{CeballosPons2024-1}).
\end{remark}

\newpage
\section{Permutation Flow combinatorics for the working mathematician}
\label{sec:permutationflows}

The focus of this section is to provide readers with a working knowledge of what a permutation flow is and the combinatorics and geometry behind them. The case when $\ba=\be_0-\be_n$ is treated here to provide the foundations of these rich combinatorial objects, to be developed in more generality for $\ba$ in which only $a_n$ is negative in Section~\ref{sec:augmented_graphs}; most proofs are delayed until then. Permutation flows encode the combinatorics of a triangulation of the flow polytope $\calF_G(\ba)$ that generalizes Theorem \ref{theorem:DKK} and which is developed in Section~\ref{sec:triangulation_flow_polytopes}. The following visualization shows a progression of the combinatorial objects that are introduced in this section.

\vspace{-.1in}
\noindent
\begin{center}
\tcbox[on line,left=2pt,right=2pt,top=3pt,bottom=3pt,colback=black!05,colframe=black!50,arc=3mm,boxrule=0.75pt]{\(\begin{array}{ c c c c c c c }
\mathrm{\begin{array}{ c } \mathrm{Permutation} \\
\mathrm{Flows} \end{array}} & \!\!\longrightarrow\!\! &

\mathrm{~~Cliques~~} & \!\!\longrightarrow\!\! & 
\mathrm{~~Vines~~} & \!\!\longrightarrow\!\! 
& \mathrm{~~Groves~~}   \medskip\\
\pi & \mapsto & \mathcal{C} & \mapsto  & \calV & \mapsto  & \Gamma \\
\end{array}\)}
\end{center}

\subsection{Splits and slacks}
\label{sec:splits_and_slacks}
\phantom{W}

Let $(G,F)$ be a framed graph. We will call an edge $e=(v,w) \in E(G)$ a \defn{split} in $G$ (at~$v$) if there is another $e' \in \outedge(v)$ such that $e' \prec_{\outedge(v)} e$. An edge in $E(G)$ that is not a split in  $G$ is called a \defn{slack} in $G$.  Define $\Splits(G)$ to be the set of splits and $\Slacks(G)$  the set of slacks; these partition $E(G)$ into $\Splits(G)\sqcup\Slacks(G)$. These definitions also apply on any subgraph $H \subseteq G$ with the inherited framing. Splits are a key concept throughout the article.

For notational convenience, we will denote every slack in $G$ leaving vertex $v$ by $s_v$. All splits in $G$ will be denoted by $t_j$ for $j\in [d]$. We use a canonical ordering in which the indices of edges with the same tail vertex increase from the maximal to the next-to-minimal edges in $\outedge(v)$, and increases as the tail vertices of the edges increase, as in Figure~\ref{fig:framed_graph} on page~\pageref{fig:framed_graph}. We will keep this notation fixed for every subgraph $H\subseteq G$. Using this notational convention, for a proper subgraph $H\subsetneq G$ we will always have 
$\Splits(H)\subsetneq \Splits(G)=\{t_j\in E(G)\}$. 

\begin{example}
The acyclic directed graph $G$ in Figure~\ref{fig:framed_graph} has $n+1=6$ vertices and $m=12$ edges, so $d=m-n=7$. 
Slacks are labeled $s_v$ for $v\in[0,4]$ and splits are labeled $t_j$ in the canonical ordering for $j\in[7]$.
An example of a route in $G$ is the sequence of edges $t_2, s_1, s_3, t_7$.
We read the framing $F$ through the relative order of the incoming and outgoing edges at each vertex from bottom to top. As examples, $\outedge(1)=(s_1, t_5, t_4)$ and $\inedge(3)=(t_4,s_1)$. 
\end{example}

\subsection{Permutation Flows on Framed Graphs}
\label{sec:permuflows_10001}
\phantom{W}

An \defn{unshuffle}\footnote[2]{Remark~\ref{remark:shuffle} discusses this language.}  of two words $c$ and $c'$ (say, for simplicity, on disjoint alphabets) is another word~$c''$ whose underlying alphabet is the union of those of $c$ and $c'$ and where each of $c$ and~$c'$ is a subword of $c''$.

Recall that $x_0$ is the path of length 0 starting and ending at $0$.

\begin{definition}[Permutation flow]
\label{def:permutationflow}
A \defn{permutation flow} on a framed graph $(G,F)$ is a tuple $(\pi(e))_{e\in E}$ such that each $\pi(e)$ is a (possibly empty) word without repeated letters from a subset of $\{x_0\}\sqcup\Splits(G)$ that satisfies the conditions (i) and (ii) below, which rely on the following two definitions:

\begin{itemize}[leftmargin=0.3in]
    \item[$\bullet$~] The \defn{support} of $\pi$, denoted $\supp(\pi)$, is the subgraph of $G$ for which $\pi(e)\ne\emptyset$.
    \item[$\bullet$~] At a vertex $v\geq 1$ with $\inedge(v)=(e_0,\ldots,e_i)$, the \defn{$v$-th summary} $\zeta_v$ of $\pi$ is the concatenated word $\pi(e_0)\cdots\pi(e_i)$. When $\pi$ is non-empty, define $\zeta_0=x_0$.
\end{itemize}

\begin{itemize}
    \item[(i)] At a vertex $v<n$ with outgoing edges $\outedge(v)=(e_0',e_1', \ldots, e_j')$,  the concatenated word $\pi(e_0') \pi(e_1') \cdots \pi(e_j')$ is an unshuffle of the $v$-th summary $\zeta_v$ and a (possibly empty) subword of $e_1'\cdots e_j'$.    
    \item[(ii)] If $e_h'\in \pi(e_0')\cdots\pi(e_j')$ for some $1\leq h\leq j$, then $e_h'$ is the first letter of $\pi(e_h')$ and $e_h'$ is a split in $\supp(\pi)$.
\end{itemize}

We denote by $\PermutationFlows(G,F)$ the set of permutation flows on $(G,F)$, including the \defn{empty permutation flow} on $(G,F)$ for which $\pi(e)=\emptyset$ for all $e\in E$.
\end{definition}

\begin{remark}
    Condition (i) is related to the idea of conservation of flow in the sense that at each inner vertex $v$, the sequence of outgoing letters is obtained from the sequence of incoming letters together with the letters from the splits of $\outedge(v)$. 
\end{remark}

As with the combinatorial objects that follow, of particular interest are the maximal objects subject to a natural ordering. For permutation flows, this occurs when the unshuffle in condition (ii) above involves the entire word $t_{j_1}t_{j_2}\cdots t_{j_k}$, which implies the first letter of $\pi(t)$ is $t$ for all $t\in\Splits(G)$. Such a permutation flow will be called \defn{saturated}. We denote by \defn{$\SatPermutationFlows(G,F)$} the set of all saturated permutations flows on $(G,F)$. 

In permutation flows and in the objects that follow, the concept of a split is essential. 

\begin{definition}[Permutation flow split]
Let $\pi\in\PermutationFlows(G,F)$. An edge $t\in E$ is a \defn{split} in $\pi$ if $t$ is the first letter of $\pi(t)$. We denote by $\Splits(\pi)$ the \defn{set of splits} in~$\pi$.

If $t\in\Splits(\pi)$ with $v=\tail(t)$, let $e^*$ be the highest edge satisfying $e^*\prec_{\outedge(v)} t$, $\pi(e^*)\ne e^*$, and $\pi(e^*)\ne \emptyset$. 
Let $e$ be the last letter of $\pi(e^*)$.
We say that \defn{$t$ is a direct split of $e$ at $v$}.
See Figure~\ref{fig:direct_split}.
\end{definition}

This definition extends to general permutation flows directly in Definition~\ref{def:permuflow_splits}.

\begin{figure}[!h]
    \centering
\begin{tikzpicture}
\begin{scope}[scale=1.5, yscale=1.0]

\vertex[fill=orange, minimum size=4pt, label=below:{\tiny\textcolor{orange}{$v$}}](v0) at (0,0) {};
	
\draw[-stealth, thick, color=black!50] (v0) .. controls (.3, .4) and (.6,.6) .. (1,.7);
\draw[-stealth, thick, color=black!50] (-1,0) to (v0);
\draw[-stealth, thick, color=black!50] (v0) to (1,0);

\node[] at (1.2,.7) {\footnotesize \textcolor{black!50}{$t$}};
\node[] at (1.2,0) {\footnotesize \textcolor{black!50}{$e^*$}};
\node[] at (-1.2,0) {\footnotesize \textcolor{black!50}{$e'$}};

\node[] at (.5, 0.75){\footnotesize$t\cdots$};
\node[] at (.5, 0.1){\footnotesize$\cdots e$};
\node[] at (-.5, 0.1){\footnotesize$\cdots e\cdots$};

\end{scope}
\end{tikzpicture}    \caption{$t$ is a direct split of $e$ at $v$.}
    \label{fig:direct_split}
\end{figure}
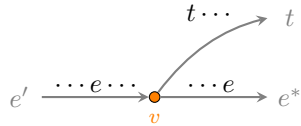

\begin{example}
\label{ex:permutation_flow_10001}
Figure~\ref{fig:permutation_flow_example} shows two examples $\pi$ and $\rho$ of permutation flows on the framed graph in Figure~\ref{fig:framed_graph}. Occurrences of $x_0$ and $t_j$ for $j\in[d]$ are replaced by their indices.

We verify that $\rho$ satisfies the conditions of Definition~\ref{def:permutationflow}.
Condition~(i) is satisfied because the first letter of every split $t_j$ is $t_j$. Condition~(ii) is satisfied at all inner vertices; for example, at vertex $4$, $\inedge(4) = (s_2,s_3,t_5)$, $\outedge(4) = (s_4,t_7)$, and the concatenation of the outgoing words $\rho(s_4) \rho(t_7) = t_4t_2t_3t_7x_0t_5t_1$ is an unshuffle of $\zeta_4=\rho(s_2)\rho(s_3)\rho(t_5) = t_4t_2t_3x_0t_5t_1$ and $\rho(t_7) = t_7$.
The vertex summaries for $\rho$ are 
\[\textup{$\zeta_0=x_0$, $\zeta_1=x_0t_2$, $\zeta_2=t_4t_2t_1$, $\zeta_3=t_3x_0$, $\zeta_4=t_4t_2t_3x_0t_5t_1$, and $\zeta_5=t_4t_2t_3t_7x_0t_5t_1t_6$.}\]
Note that $\rho$ is saturated and $\pi$ is not. For example, $\pi(t_2)$ does not start with $t_2$. 

Note that $\Splits(\pi)=\{t_1,t_3,t_5\}$ and $\Splits(\rho)=\Splits(G)$. In $\rho$, $t_5$ is a direct split of $t_2$ at vertex $2$ and $t_1$ is a direct split of $x_0$ at vertex $0$.
\end{example}

\begin{figure}[ht!]
    \centering
    \begin{tikzpicture}[scale=1.0]

\begin{scope}[scale=1.3, xshift=-180]
\node[] at (-0.3,0.6){\textcolor{blue}{$\pi$}};
	\vertex[fill=orange, minimum size=4pt, label=below:{\tiny\textcolor{orange}{$0$}}](v0) at (0,0) {};
	\vertex[fill=orange, minimum size=4pt, label=below:{\tiny\textcolor{orange}{$1$}}](v1) at (1,0) {};
	\vertex[fill=orange, minimum size=4pt, label=below:{\tiny\textcolor{orange}{$2$}}](v2) at (2,0) {};
	\vertex[fill=orange, minimum size=4pt, label=below:{\tiny\textcolor{orange}{$3$}}](v3) at (3,0) {};
	\vertex[fill=orange, minimum size=4pt, label=below:{\tiny\textcolor{orange}{$4$}}](v4) at (4,0) {};
	\vertex[fill=orange, minimum size=4pt, label=below:{\tiny\textcolor{orange}{$5$}}](v5) at (5,0) {};		

\draw[-stealth, thick] (v0) to [out=45,in=135] (v1);
\draw[-stealth, thick,color=lavgray] (v0) to [out=-45,in=-135] (v1);
\draw[-stealth, thick] (v0) to [out=60,in=120] (v2);
\draw[-stealth, thick] (v1) to [out=0,in=180] (v2);
\draw[-stealth, thick] (v1) .. controls (2.0, 1.0) and (2.5, -0.5) .. (v3);
\draw[-stealth, thick,color=lavgray] (v1) .. controls (2.0, -1.0) and (2.5, 0.5) .. (v3);	
\draw[-stealth, thick,color=lavgray] (v2) to [out=-45,in=-135] (v4);
\draw[-stealth, thick] (v2) to [out=60,in=120] (v4);
\draw[-stealth, thick,color=lavgray] (v3) to [out=0,in=180] (v4);
\draw[-stealth, thick] (v3) to [out=60,in=120] (v5);
\draw[-stealth, thick,color=lavgray] (v4) to [out=-45,in=-135] (v5);
\draw[-stealth, thick] (v4) to [out=45,in=135] (v5);

\node[] at (4, 0.66){\scriptsize\textcolor{blue}{$\emptyset$}};
\node[] at (4.47, 0.3){\scriptsize\textcolor{blue}{$351$}};
\node[] at (4.5, -0.12){\scriptsize\textcolor{blue}{$0$}};
\node[] at (3, 0.66){\scriptsize\textcolor{blue}{$51$}};
\node[] at (3.5, 0.1){\scriptsize\textcolor{blue}{$3$}};
\node[] at (3, -0.35){\scriptsize\textcolor{blue}{$0$}};
\node[] at (1.9, 0.45){\scriptsize\textcolor{blue}{$3$}};
\node[] at (1.5, 0.1){\scriptsize\textcolor{blue}{$0$}};
\node[] at (1.7, -.27){\scriptsize\textcolor{blue}{$\emptyset$}};
\node[] at (1, 0.66){\scriptsize\textcolor{blue}{$1$}};
\node[] at (0.5, 0.32){\scriptsize\textcolor{blue}{$0$}};
\node[] at (0.5, -0.12){\scriptsize\textcolor{blue}{$\emptyset$}};
\end{scope}

\begin{scope}[scale=1.3]
\node[] at (-0.3,0.6){\textcolor{blue}{$\rho$}};
	\vertex[fill=orange, minimum size=4pt, label=below:{\tiny\textcolor{orange}{$0$}}](v0) at (0,0) {};
	\vertex[fill=orange, minimum size=4pt, label=below:{\tiny\textcolor{orange}{$1$}}](v1) at (1,0) {};
	\vertex[fill=orange, minimum size=4pt, label=below:{\tiny\textcolor{orange}{$2$}}](v2) at (2,0) {};
	\vertex[fill=orange, minimum size=4pt, label=below:{\tiny\textcolor{orange}{$3$}}](v3) at (3,0) {};
	\vertex[fill=orange, minimum size=4pt, label=below:{\tiny\textcolor{orange}{$4$}}](v4) at (4,0) {};
	\vertex[fill=orange, minimum size=4pt, label=below:{\tiny\textcolor{orange}{$5$}}](v5) at (5,0) {};		

\draw[-stealth, thick] (v0) to [out=45,in=135] (v1);
\draw[-stealth, thick,color=lavgray] (v0) to [out=-45,in=-135] (v1);
\draw[-stealth, thick] (v0) to [out=60,in=120] (v2);
\draw[-stealth, thick] (v1) to [out=0,in=180] (v2);
\draw[-stealth, thick] (v1) .. controls (2.0, 1.0) and (2.5, -0.5) .. (v3);
\draw[-stealth, thick,color=lavgray] (v1) .. controls (2.0, -1.0) and (2.5, 0.5) .. (v3);	
\draw[-stealth, thick,color=lavgray] (v2) to [out=-45,in=-135] (v4);
\draw[-stealth, thick] (v2) to [out=60,in=120] (v4);
\draw[-stealth, thick,color=lavgray] (v3) to [out=0,in=180] (v4);
\draw[-stealth, thick] (v3) to [out=60,in=120] (v5);
\draw[-stealth, thick,color=lavgray] (v4) to [out=-45,in=-135] (v5);
\draw[-stealth, thick] (v4) to [out=45,in=135] (v5);

\node[] at (4, 0.66){\scriptsize\textcolor{blue}{$6$}};
\node[] at (4.47, 0.3){\scriptsize\textcolor{blue}{$7051$}};
\node[] at (4.5, -0.12){\scriptsize\textcolor{blue}{$423$}};
\node[] at (3, 0.66){\scriptsize\textcolor{blue}{$51$}};
\node[] at (3.5, 0.1){\scriptsize\textcolor{blue}{$30$}};
\node[] at (3, -0.35){\scriptsize\textcolor{blue}{$42$}};
\node[] at (1.9, 0.45){\scriptsize\textcolor{blue}{$3$}};
\node[] at (1.5, 0.1){\scriptsize\textcolor{blue}{$42$}};
\node[] at (1.7, -.27){\scriptsize\textcolor{blue}{$0$}};
\node[] at (1, 0.66){\scriptsize\textcolor{blue}{$1$}};
\node[] at (0.5, 0.32){\scriptsize\textcolor{blue}{$2$}};
\node[] at (0.5, -0.12){\scriptsize\textcolor{blue}{$0$}};
\end{scope}

\end{tikzpicture}\vspace{-.2in}
    \caption{Two permutation flows on the framed graph $(G,F)$ of Figure~\ref{fig:framed_graph}. Note that occurrences of $x_0$ and $t_j$ are replaced by their indices for legibility.}
    \label{fig:permutation_flow_example}
\end{figure}
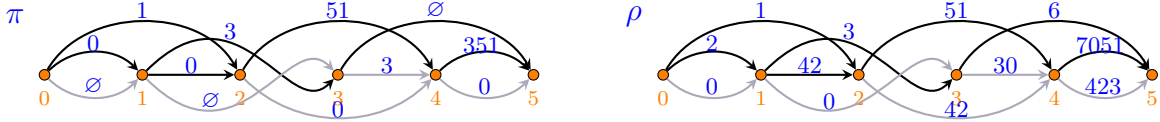

For $\pi\in\SatPermutationFlows(G,F)$, $\zeta(\pi)=\zeta_n$ is a permutation of $\{x_0\}\sqcup\{t_j\}_{j\in[d]}$. We call it the \defn{final summary} of $\pi$ and denote the set of these permutations by $\FinalSummaries(G,F)$. These permutations are all unique, as given by the Theorem \ref{thm:finalsummary_10001}, which is a corollary of Theorem~\ref{thm:finalsummary} when restricted to framed graphs. 

\begin{theorem}
\label{thm:finalsummary_10001}
    $\SatPermutationFlows(G,F)$ is in bijection with $\FinalSummaries(G,F)$.
\end{theorem}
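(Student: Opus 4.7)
The plan is to exhibit an explicit two-sided inverse for $\pi \mapsto \zeta(\pi)$. As a first step I would verify well-definedness: by induction on $v$ from $0$ to $n$, the summary $\zeta_v$ is a permutation of $\{x_0\} \cup \{t \in \Splits(G) : \tail(t) < v\}$. The base case $v=0$ is immediate. For the inductive step, condition (i) and saturation imply the outgoing concatenation at $v-1$ is an unshuffle of $\zeta_{v-1}$ with the full word $e_1^{(v-1)} \cdots e_{j_{v-1}}^{(v-1)}$, and $\zeta_v$ is the reassembly of the outgoing words at the predecessors of $v$ according to the framing on $\inedge(v)$, so each new split letter is added exactly once and no existing letter is repeated.

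The structural observation driving injectivity is that condition (i), at each vertex $v$, forces the outgoing concatenation to partition $\zeta_v$ into consecutive blocks $B_0, B_1, \ldots, B_{j_v}$, one per outgoing edge in framing order, with $\pi(e_0^{(v)}) = B_0$ and $\pi(e_h^{(v)}) = e_h^{(v)} \cdot B_h$ for $h \geq 1$ in the saturated case. This follows because unshuffles preserve within-word order, and the letters of $\pi(e_{h_1}^{(v)})$ all precede the letters of $\pi(e_{h_2}^{(v)})$ in the concatenation whenever $h_1 < h_2$. Consequently a saturated permutation flow is determined by the block-decomposition data at each vertex.

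To construct the inverse, given $\sigma \in \FinalSummaries(G,F)$ I would process vertices $v = 0, 1, \ldots, n-1$ in order, maintaining the invariant that $\zeta_v$ has been determined from previously-assigned values of $\pi(e)$ with $\head(e) \leq v$, and then reading off the block decomposition of $\zeta_v$ from the relative positions in $\sigma$ of the new split letters $e_1^{(v)}, \ldots, e_{j_v}^{(v)}$ with respect to the letters of $\zeta_v$. The hardest step I anticipate is justifying that these relative positions in $\sigma$ pin down a unique, consistent partition at every vertex: the needed consistency rests on the fact that under successive unshuffles along directed paths, the relative order of any two letters traveling together is preserved, so each $\pi(e)$ persists as a contiguous substring of the summaries downstream. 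Once the inverse is constructed, $\zeta(\pi_\sigma) = \sigma$ and $\pi_{\zeta(\pi)} = \pi$ both follow by induction mirroring the construction. An alternative route, which may turn out to be cleaner, is to factor the bijection through $\SatCliques(G,F)$ by assigning to $\pi$ its collection $\calC_\pi = \{R_\ell\}$ of letter-routes, reducing the theorem to the statement that each saturated clique is uniquely determined by the induced linear order on its routes, which one expects to follow from coherence.
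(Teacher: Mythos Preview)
Your forward reconstruction has a genuine gap at the step you flag as hardest. The claim that each $\pi(e)$ persists as a contiguous substring of the downstream summaries is false: once $\pi(e)$ enters $\zeta_{\head(e)}$, the block decomposition there may cut it across several outgoing words whose pieces then travel along different edges and get interleaved in later summaries. More fundamentally, the relative order of letters in $\zeta_v$ need not match their relative order in $\sigma$, because two coherent routes can reverse order between $\Prefixes(v)$ and $\Prefixes(n)$: coherence only forces prefix and suffix orders to agree \emph{at each shared vertex}, not across different shared vertices. In the saturated flow $\rho$ of Figure~\ref{fig:permutation_flow_example} one has $\zeta_1=x_0t_2$, yet in $\zeta_5=t_4t_2t_3t_7x_0t_5t_1t_6$ the letter $x_0$ appears after $t_2$; the routes $R_{x_0}=s_0s_1s_3t_7$ and $R_{t_2}=t_2t_4s_2s_4$ are coherent but swap order between $\Prefixes(1)$ and $\Prefixes(5)$. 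So at $v=1$ the position of $x_0$ in $\sigma$ relative to the new splits $t_4,t_3$ would suggest $x_0\in B_{\ge 2}$, contradicting the actual decomposition $B_0=x_0$, $B_1=t_2$, $B_2=\emptyset$.

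The paper runs the reconstruction backward instead. A forward pass first records only the \emph{first letter} of $\rho(e)$ for every edge: it is $t$ if $e=t$ is a split, and for the slack $s_v$ it is the first letter of $\zeta_v$, namely the first letter of $\rho(\min\inedge(v))$, already known by induction. Then a backward pass from $v=n$ down to $v=1$ recovers all of $\rho$: since every $e'\in\outedge(v)$ has $\head(e')>v$ already processed, one knows each $\rho(e')$, forms $\OutPerm(v)$, strips the split letters at $v$ to obtain $\zeta_v$, and partitions $\zeta_v$ at the known first letters of the incoming words $\rho(e)$ for $e\in\inedge(v)$. The first-letter data is determined without appeal to $\sigma$, and the backward direction makes the outgoing words available exactly when the incoming words are needed.
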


The general case of permutation flows (and their companion permutation flow shuffles) is treated in Section~\ref{sec:permutation_flow_shuffles}.

\subsection{Combinatorial families supporting permutation flows}
\label{sec:objects_10001}
\phantom{W}

In this section we highlight combinatorial objects that are in bijection with permutation flows. 
The correspondences are proven in Section~\ref{sec:augmented_graphs}.

\subsubsection{Integer $\bd$-flows}
\label{sec:integerflows_10001}
\phantom{W}

Recall that 
\[\bd = \left(0, \indeg(1)-1, \ldots, \indeg(n-1)-1, 
    \hbox{$-\sum_{i=1}^{n-1} (\indeg(i)-1)$} \right) \in \bbZ^{n+1}.
\]

Saturated permutation flows are in bijection with integer $\bd$-flows on $G$, as proved in general in Theorem~\ref{thm:integer_flows_a}. Suppose that $\phi\in\SatPermutationFlows(G,F)$. Define $\psi\in\calF_G^\mathbb{Z}(\bd)$ by setting $\psi(e)$ to be one less than the length of $\pi(e)$ for every $e\in E$. The inverse rule involves using $\psi(e)$ for $e\in\outedge(v)$ to determine the number of elements of $\zeta_v$ that are included in each word $\pi(e)$. See Figure~\ref{fig:an_integer_flow}. 

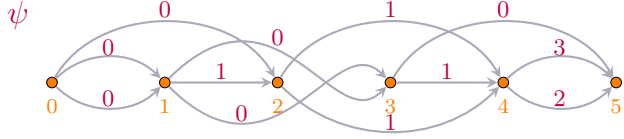
\begin{figure}[!h]
    \centering
    \vspace{-.2in}
    \begin{tikzpicture}
\begin{scope}[xshift=180, yshift=0, scale=1.5]

\node[] at (-0.3,0.6){\textcolor{purple}{$\psi$}};

    \vertex[fill=orange, minimum size=4pt, label=below:{\tiny\textcolor{orange}{$0$}}](v0) at (0,0) {};
    \vertex[fill=orange, minimum size=4pt, label=below:{\tiny\textcolor{orange}{$1$}}](v1) at (1,0) {};
    \vertex[fill=orange, minimum size=4pt, label=below:{\tiny\textcolor{orange}{$2$}}](v2) at (2,0) {};
	\vertex[fill=orange, minimum size=4pt, label=below:{\tiny\textcolor{orange}{$3$}}](v3) at (3,0) {};
	\vertex[fill=orange, minimum size=4pt, label=below:{\tiny\textcolor{orange}{$4$}}](v4) at (4,0) {};
	\vertex[fill=orange, minimum size=4pt, label=below:{\tiny\textcolor{orange}{$5$}}](v5) at (5,0) {};

\draw[-stealth, thick, color=lavgray] (v0) to [out=45,in=135] (v1);
\draw[-stealth, thick, color=lavgray] (v0) to [out=-45,in=-135] (v1);
\draw[-stealth, thick, color=lavgray] (v0) to [out=60,in=120] (v2);
\draw[-stealth, thick, color=lavgray] (v1) to [out=0,in=180] (v2);
\draw[-stealth, thick, color=lavgray] (v1) .. controls (2.0, 1.0) and (2.5, -0.5) .. (v3);
\draw[-stealth, thick, color=lavgray] (v1) .. controls (2.0, -1.0) and (2.5, 0.5) .. (v3);	
\draw[-stealth, thick, thick, color=lavgray] (v2) to [out=-45,in=-135] (v4);
\draw[-stealth, thick, color=lavgray] (v2) to [out=60,in=120] (v4);
\draw[-stealth, thick, color=lavgray] (v3) to [out=0,in=180] (v4);
\draw[-stealth, thick, color=lavgray] (v3) to [out=60,in=120] (v5);
\draw[-stealth, thick, color=lavgray] (v4) to [out=-45,in=-135] (v5);
\draw[-stealth, thick, color=lavgray] (v4) to [out=45,in=135] (v5);

\node[] at (0.5, -.14) {\scriptsize\textcolor{purple}{$0$}};
\node[] at (0.5, .31) {\scriptsize\textcolor{purple}{$0$}};
\node[] at (1, .65) {\scriptsize\textcolor{purple}{$0$}};
\node[] at (2, .4) {\scriptsize\textcolor{purple}{$0$}};
\node[] at (1.5, 0.1) {\scriptsize\textcolor{purple}{$1$}};
\node[] at (1.68, -.27) {\scriptsize\textcolor{purple}{$0$}};

\node[] at (3, .65) {\scriptsize\textcolor{purple}{$1$}};
\node[] at (3.5, 0.1) {\scriptsize\textcolor{purple}{$1$}};
\node[] at (3, -.35) {\scriptsize\textcolor{purple}{$1$}};
\node[] at (4, .65) {\scriptsize\textcolor{purple}{$0$}};
\node[] at (4.5, .31) {\scriptsize\textcolor{purple}{$3$}};
\node[] at (4.5, -.14) {\scriptsize\textcolor{purple}{$2$}};
\end{scope}

\end{tikzpicture}
    \vspace{-.2in}
    \caption{The integer $\bd$-flow $\psi$ on $(G,F)$ corresponding to the permutation flow $\rho$ in Figure \ref{fig:permutation_flow_example}.}
    \label{fig:an_integer_flow}
\end{figure}

\subsubsection{Cliques and the DKK triangulation}
\label{sec:cliques_10001}
\phantom{W}

Permutation flows are in bijection with cliques and saturated permutation flows are in bijection with saturated cliques. 

Starting with a permutation flow $\pi$, we can extract a clique $\calC=\calC(\pi)$ of routes. For all letters $l\in\zeta(\pi)$, form the path $P_l$ that consists of all edges $e$ such that $l\in \pi(e)$. If $P_l$ is a route, include $P_l$ in $\calC$. Otherwise, the first edge $e_l$ of $P_l$ satisfies $e_l\in\Splits(\pi;l',e')$ for some letter $l'$ and some edge $e'$. Using this letter $l'$, replace $P_l$ by $P_{l'}vP_{l}$, and continue backward until $P_l$ is a route; include the result in $\calC$.

A clique is \defn{saturated} if it contains $d+1$ routes.

\begin{example}\label{ex:clique_10001}
The cliques corresponding to the permutation flows $\pi$ and $\rho$ from Example~\ref{ex:permutation_flow_10001} are 
\begin{gather*}
    \calC(\pi)=\{\;
    {\color{brown} t_1}t_5t_7, \;\;
    {\color{brown}s_0} t_4s_2s_4, \;\;
    t_2t_4{\color{brown} t_5}t_7, \;\;
    t_2{\color{brown}t_3}s_3t_7\;\}
\end{gather*}
and
\begin{gather*}
    \calC(\rho)=\{\;
    {\color{brown} s_0}s_1s_3t_7, \;\;
    {\color{brown} t_1}t_5t_7, \;\;
    {\color{brown} t_2}t_4s_2s_4, \;\;
    t_2{\color{brown} t_3}s_3s_4, \;\;
    s_0{\color{brown} t_4}s_2s_4, \;\;
    t_2t_4{\color{brown} t_5}t_7, \;\;
    s_0s_1{\color{brown} t_6}, \;\;
    t_2t_3s_3{\color{brown} t_7}\;\}.
\end{gather*}
The colored letters indicate the letter $l$ that generated route $P_l$. Note that $\calC(\rho)$ has eight routes and is therefore saturated. Further notice that $\calC(\pi)\subseteq\calC(\rho)$, even if the routes do not correspond to the same letters.
\end{example}

The general case of cliques involves both cliques and multicliques, as discussed in Sections~\ref{sec:route_matchings} and \ref{sec:multicliques}.

\subsubsection{Vines} 
\label{sec:vines_10001}
\phantom{W}

Note that both $\Prefixes(G)$ and $\Suffixes(G)$ have a poset structure where the order relation is given by the inclusion of edges. The Hasse diagram of $\Prefixes(G)$ is a tree because every prefix $P\ne x_0$ covers exactly one element---the prefix that deletes the last edge of $P$. In addition, every prefix contains $x_0$, so $x_0$ is the minimal element of $\Prefixes(G)$ in the prefix containment order. We have the following definition.

\begin{definition}[Vine]\label{def:vine_G}
A \defn{vine} on a graph $G$ is a subposet $\calV\subseteq \Prefixes(G)$, when non-empty, its elements satisfy
\begin{enumerate}[label=(\alph*)]
    \item The paths in $\calV$ are coherent, 
    \item $\calV$ is closed under containment of prefixes, and
    \item every element in $\calV$ is extendable to a route in $\calV$; that is, the maximal elements in $\calV$ under containment are routes.
\end{enumerate}

We denote by $\Vines(G,F)$ the set of vines of $(G,F)$. There is a natural partial order on vines given by containment $\calV\subseteq\calW$. Denote by $\SatVines(G,F)$ the vines that are maximal with respect to this order, which we call \defn{saturated}. 
\end{definition}

\begin{definition}[Vine split]
\label{def:vine_splits}
Let $P \in \calV$ be a prefix covered by $\{Q_0,Q_1,\ldots, Q_l\}$ in the prefix containment order. For $i\in[l]$, we say that $Q_i$ is a \defn{direct split of $P$}. We denote by $\Splits(\calV)$ the set of all direct splits in $\calV$.
\end{definition}
We refer the reader to Definition~\ref{def:vineyard_splits} for the formal definition of a split in the language of vines. 

By Definition~\ref{def:vine_G}(c), every vine is generated from its set of routes $\Routes(\calV)$ by taking all their prefixes, giving rise to the following poset isomorphism.

\begin{theorem}
    The map $$\calV:\Cliques(G,F) \rightarrow \Vines(G,F)$$ where $\calV(\calC)$ is defined by $\Routes(\calV(\calC))=\calC$, is a poset isomorphism whose inverse
    $$\calC: \Vines(G,F) \rightarrow \Cliques(G,F)$$ is defined by $\calC(\calV)=\Routes(\calV)$.
\end{theorem}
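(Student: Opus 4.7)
The plan is to verify that both maps are well-defined, are mutually inverse, and preserve the containment order.

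First I would check that $\calV(\calC)$ is actually a vine whenever $\calC \in \Cliques(G,F)$. Set $\calV(\calC) := \{P \in \Prefixes(G) : P \text{ is a prefix of some } R \in \calC\}$. Property (b), closure under prefix containment, is immediate from this definition. Property (c) follows because every maximal element of $\calV(\calC)$ under prefix containment is necessarily a route of $\calC$: if a prefix $P \in \calV(\calC)$ were not a route, it would be a proper prefix of some route $R \in \calC$, and any longer prefix of $R$ is also in $\calV(\calC)$, so $P$ is not maximal. For property (a), coherence, I would argue that coherence of two paths $P$ and $P'$ at a common vertex $v$ depends only on the relative orders of the subpaths $Pv, P'v$ in $\inpath(v)$ and $vP, vP'$ in $\outpath(v)$. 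If $P, P'$ are prefixes of routes $R, R' \in \calC$ respectively, then $Pv = Rv$ and $P'v = R'v$, so the incoming relative order is inherited from $R, R'$. On the outgoing side, any conflict would manifest already at the first vertex where $Pv$ and $P'v$ (and hence $Rv$, $R'v$) separate going forward, contradicting coherence of $R$ and $R'$. Thus the paths in $\calV(\calC)$ are pairwise coherent.

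Next I would show $\calC(\calV) := \Routes(\calV)$ is a clique. By definition, a route in $\calV$ is a path in $\calV$, and all paths in $\calV$ are coherent by property~(a); hence any two routes of $\calV$ are coherent, so $\Routes(\calV)$ is a clique.

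Then I would verify that the two maps are mutually inverse. For $\calC(\calV(\calC)) = \calC$: the maximal prefixes under containment in $\calV(\calC)$ are precisely the routes of $\calC$, as argued above, so $\Routes(\calV(\calC)) = \calC$. For $\calV(\calC(\calV)) = \calV$: by property~(c), every $P \in \calV$ is a prefix of some route $R \in \Routes(\calV) = \calC(\calV)$, so $P \in \calV(\calC(\calV))$; conversely, every prefix of a route of $\calV$ lies in $\calV$ by property~(b).

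Finally, both maps clearly preserve containment: if $\calC \subseteq \calC'$, then every prefix of a route in $\calC$ is also a prefix of a route in $\calC'$, so $\calV(\calC) \subseteq \calV(\calC')$; conversely, if $\calV \subseteq \calV'$, then $\Routes(\calV) \subseteq \Routes(\calV')$ since the route condition is intrinsic. This makes the bijection a poset isomorphism. I expect the main technical point to be the coherence verification in the first step, where one must carefully trace through the definition of coherence for paths (not just routes) and confirm that conflicts between prefixes lift to conflicts between their extending routes; everything else is essentially bookkeeping.
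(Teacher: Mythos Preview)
Your proof is correct and follows the same direct-verification approach the paper has in mind; in fact the paper does not spell out a proof here, only remarking that ``every vine is generated from its set of routes $\Routes(\calV)$ by taking all their prefixes'' before stating the theorem, so your argument supplies exactly the details the paper leaves implicit. One small slip: in your coherence check you write ``the first vertex where $Pv$ and $P'v$ \dots\ separate going forward,'' but you mean $vP$ and $vP'$ (the outgoing subpaths); the reasoning you intend---that a divergence between $vP$ and $vP'$ occurs at the same vertex and with the same outgoing edges as the divergence between $vR$ and $vR'$, so a conflict between the prefixes forces a conflict between the routes---is sound once the notation is corrected.
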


\begin{example}
\label{ex:vines_10001}
    We continue Example~\ref{ex:clique_10001}. From the cliques $\calC(\pi)$ and $\calC(\rho)$, create the vines $\calV(\pi)$ and $\calV(\rho)$ by taking all prefixes of paths. The prefixes that occur are
\begin{gather*}
    x_0,\; 
    {\color{orange} s_0},\; 
    {\color{orange} s_0s_1},\; 
    {\color{orange} s_0s_1s_3},\; 
    {\color{orange} s_0s_1s_3t_7},\;
    {\color{orange} s_0s_1t_6},\;
    {\color{orange} s_0t_4},\;    
    {\color{orange} s_0t_4s_2},\;    
    {\color{orange} s_0t_4s_2s_4},\;  
    t_1,\;
    t_1t_5,\; 
    t_1t_5t_7 \\
    t_2,\; 
    t_2t_3,\;
    t_2t_3s_3,\;
    {\color{orange} t_2t_3s_3s_4},\;
    t_2t_3s_3t_7,\;
    t_2t_4,\;
    t_2t_4s_2,\;
    t_2t_4s_2s_4,\;
    t_2t_4t_5,\;
    t_2t_4t_5t_7,
\end{gather*}
where those that are {\color{orange} orange} occur in $\calV(\rho)$ but not in $\calV(\pi)$.  The vines can be represented visually as in Figure~\ref{fig:vines_1001}. Note that $t_2t_3s_3t_7$ is a direct split of $t_2t_3s_3$ in $\calV(\rho)$ but $t_2t_3s_3t_7\notin\Splits(\calV(\pi))$.
\end{example}

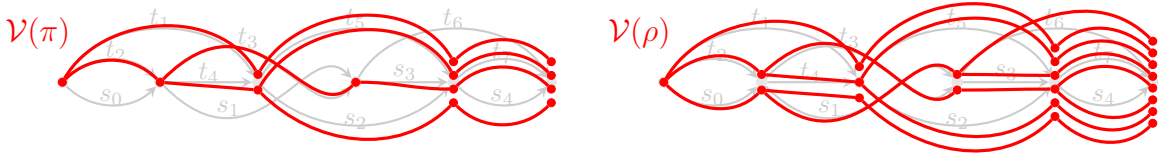
\begin{figure}[ht!]
\begin{tikzpicture}
\begin{scope}[scale=1.3, yshift=0, yscale=1.0]
\node at (-0.25,0.5){\textcolor{red}{$\calV(\pi)$}};

	\vertex[fill, color=black!10, minimum size=4pt](v0) at (0,0) {};
	\vertex[fill, color=black!10, minimum size=4pt](v1) at (1,0) {};
	\vertex[fill, color=black!10, minimum size=4pt](v2) at (2,0) {};
	\vertex[fill, color=black!10, minimum size=4pt](v3) at (3,0) {};
	\vertex[fill, color=black!10, minimum size=4pt](v4) at (4,0) {};
	\vertex[fill, color=black!10, minimum size=4pt](v5) at (5,0) {};		

    \draw[-stealth, thick,color=black!20] (v0) to [out=45,in=135] (v1);
    \draw[-stealth, thick,color=black!20] (v0) to [out=-45,in=-135] (v1);
    \draw[-stealth, thick,color=black!20] (v0) to [out=60,in=120] (v2);
    \draw[-stealth, thick,color=black!20] (v1) to [out=0,in=180] (v2);
    \draw[-stealth, thick,color=black!20] (v1) .. controls (2.0, 1.0) and (2.5, -0.5) .. (v3);
    \draw[-stealth, thick,color=black!20] (v1) .. controls (2.0, -1.0) and (2.5, 0.5) .. (v3);	
    \draw[-stealth, thick,color=black!20] (v2) to [out=-45,in=-135] (v4);
    \draw[-stealth, thick,color=black!20] (v2) to [out=60,in=120] (v4);
    \draw[-stealth, thick,color=black!20] (v3) to [out=0,in=180] (v4);
    \draw[-stealth, thick,color=black!20] (v3) to [out=60,in=120] (v5);
    \draw[-stealth, thick,color=black!20] (v4) to [out=-45,in=-135] (v5);
    \draw[-stealth, thick,color=black!20] (v4) to [out=45,in=135] (v5);

    \node[] at (1, .65) {\footnotesize\textcolor{black!20}{$t_1$}};
    \node[] at (0.55, .32) {\footnotesize\textcolor{black!20}{$t_2$}};
    \node[] at (0.5, -.15) {\footnotesize\textcolor{black!20}{$s_0$}};

    \node[] at (1.9, .45) {\footnotesize\textcolor{black!20}{$t_3$}};
    \node[] at (1.5, .1) {\footnotesize\textcolor{black!20}{$t_4$}};
    \node[] at (1.7, -.27) {\footnotesize\textcolor{black!20}{$s_1$}};

    \node[] at (3, .65) {\footnotesize\textcolor{black!20}{$t_5$}};
    \node[] at (3, -.4) {\footnotesize\textcolor{black!20}{$s_2$}};
    
    \node[] at (4, .65) {\footnotesize\textcolor{black!20}{$t_6$}};
    \node[] at (3.5, .1) {\footnotesize\textcolor{black!20}{$s_3$}};

    \node[] at (4.5, .3) {\footnotesize\textcolor{black!20}{$t_7$}};
    \node[] at (4.5, -.15) {\footnotesize\textcolor{black!20}{$s_4$}};

\vnode[color=red](vv0) at (0,0) {};
\vnode[color=red](vv12) at (1,0) {};
\vnode[color=red](vv23) at (2,0.08) {};
\vnode[color=red](vv22) at (2,-0.08) {};
\vnode[color=red](vv31) at (3,0) {};
\vnode[color=red](vv46) at (4,0.21) {};
\vnode[color=red](vv45) at (4,0.07) {};
\vnode[color=red](vv43) at (4,-0.07) {};
\vnode[color=red](vv42) at (4,-0.21) {};
\vnode[color=red](vv57) at (5,0.21) {};
\vnode[color=red](vv56) at (5,0.07) {};
\vnode[color=red](vv54) at (5,-0.07) {};
\vnode[color=red](vv52) at (5,-0.21) {};

\draw[very thick, color=red] (vv0) 
    to [out=60,in=120] (vv23) 
    to [out=60,in=120] (vv46)
    to [out=45,in=135] (vv57);

\draw[very thick, color=red] (vv12) 
    to [out=-5,in=175] (vv22)
    to [out=-45,in=-135] (vv42)
    to [out=-45,in=-135] (vv52);

\draw[very thick, color=red] (vv0) 
    to [out=45,in=135] (vv12)
    .. controls (2.0, 1.0) and (2.5, -0.5) .. (vv31)
    to [out=0,in=180] (vv43);

\draw[very thick, color=red] (vv43)
    to [out=45,in=135] (vv54);

\draw[very thick, color=red] (vv22) 
    to [out=60,in=120] (vv45)
    to [out=45,in=135] (vv56);

\end{scope}

\begin{scope}[scale=1.3, xshift=175, yscale=1.0]
\node at (-0.25,0.5){\textcolor{red}{$\calV(\rho)$}};

	\vertex[fill, color=black!10, minimum size=4pt](v0) at (0,0) {};
	\vertex[fill, color=black!10, minimum size=4pt](v1) at (1,0) {};
	\vertex[fill, color=black!10, minimum size=4pt](v2) at (2,0) {};
	\vertex[fill, color=black!10, minimum size=4pt](v3) at (3,0) {};
	\vertex[fill, color=black!10, minimum size=4pt](v4) at (4,0) {};
	\vertex[fill, color=black!10, minimum size=4pt](v5) at (5,0) {};		

    \draw[-stealth, thick,color=black!20] (v0) to [out=45,in=135] (v1);
    \draw[-stealth, thick,color=black!20] (v0) to [out=-45,in=-135] (v1);
    \draw[-stealth, thick,color=black!20] (v0) to [out=60,in=120] (v2);
    \draw[-stealth, thick,color=black!20] (v1) to [out=0,in=180] (v2);
    \draw[-stealth, thick,color=black!20] (v1) .. controls (2.0, 1.0) and (2.5, -0.5) .. (v3);
    \draw[-stealth, thick,color=black!20] (v1) .. controls (2.0, -1.0) and (2.5, 0.5) .. (v3);	
    \draw[-stealth, thick,color=black!20] (v2) to [out=-45,in=-135] (v4);
    \draw[-stealth, thick,color=black!20] (v2) to [out=60,in=120] (v4);
    \draw[-stealth, thick,color=black!20] (v3) to [out=0,in=180] (v4);
    \draw[-stealth, thick,color=black!20] (v3) to [out=60,in=120] (v5);
    \draw[-stealth, thick,color=black!20] (v4) to [out=-45,in=-135] (v5);
    \draw[-stealth, thick,color=black!20] (v4) to [out=45,in=135] (v5);

    \node[] at (1, .65) {\footnotesize\textcolor{black!20}{$t_1$}};
    \node[] at (0.55, .32) {\footnotesize\textcolor{black!20}{$t_2$}};
    \node[] at (0.5, -.15) {\footnotesize\textcolor{black!20}{$s_0$}};

    \node[] at (1.9, .45) {\footnotesize\textcolor{black!20}{$t_3$}};
    \node[] at (1.5, .1) {\footnotesize\textcolor{black!20}{$t_4$}};
    \node[] at (1.7, -.27) {\footnotesize\textcolor{black!20}{$s_1$}};

    \node[] at (3, .65) {\footnotesize\textcolor{black!20}{$t_5$}};
    \node[] at (3, -.4) {\footnotesize\textcolor{black!20}{$s_2$}};
    
    \node[] at (4, .65) {\footnotesize\textcolor{black!20}{$t_6$}};
    \node[] at (3.5, .1) {\footnotesize\textcolor{black!20}{$s_3$}};

    \node[] at (4.5, .3) {\footnotesize\textcolor{black!20}{$t_7$}};
    \node[] at (4.5, -.15) {\footnotesize\textcolor{black!20}{$s_4$}};

\vnode[color=red](vv0) at (0,0) {};
\vnode[color=red](vv12) at (1,0.08) {};
\vnode[color=red](vv11) at (1,-0.08) {};
\vnode[color=red](vv23) at (2,0.16) {};
\vnode[color=red](vv22) at (2,0) {};
\vnode[color=red](vv21) at (2,-0.16) {};
\vnode[color=red](vv32) at (3,0.08) {};
\vnode[color=red](vv31) at (3,-0.08) {};
\vnode[color=red](vv46) at (4,0.35) {};
\vnode[color=red](vv45) at (4,0.21) {};
\vnode[color=red](vv44) at (4,0.07) {};
\vnode[color=red](vv43) at (4,-0.07) {};
\vnode[color=red](vv42) at (4,-0.21) {};
\vnode[color=red](vv41) at (4,-0.35) {};
\vnode[color=red](vv58) at (5,0.42) {};
\vnode[color=red](vv57) at (5,0.30) {};
\vnode[color=red](vv56) at (5,0.18) {};
\vnode[color=red](vv55) at (5,0.06) {};
\vnode[color=red](vv54) at (5,-0.06) {};
\vnode[color=red](vv53) at (5,-0.18) {};
\vnode[color=red](vv52) at (5,-0.30) {};
\vnode[color=red](vv51) at (5,-0.42) {};

\draw[very thick, color=red] (vv0) 
    to [out=-45,in=-135] (vv11)
    .. controls (2.0, -1.0) and (2.5, 0.5) .. (vv32)
    to [out=0,in=180] (vv44)
    to [out=45,in=135] (vv55);

\draw[very thick, color=red] (vv11) 
    to [out=-5,in=175] (vv21)
    to [out=-45,in=-135] (vv41)
    to [out=-45,in=-135] (vv51);

\draw[very thick, color=red] (vv32)
    to [out=45,in=135] (vv58);

\draw[very thick, color=red] (vv0) 
    to [out=60,in=120] (vv23) 
    to [out=60,in=120] (vv46)
    to [out=45,in=135] (vv57);

\draw[very thick, color=red] (vv12) 
    to [out=-5,in=175] (vv22)
    to [out=-45,in=-135] (vv42)
    to [out=-45,in=-135] (vv52);

\draw[very thick, color=red] (vv0) 
    to [out=45,in=135] (vv12)
    .. controls (2.0, 1.0) and (2.5, -0.5) .. (vv31)
    to [out=0,in=180] (vv43)
    to [out=-45,in=-135] (vv53);

\draw[very thick, color=red] (vv43)
    to [out=45,in=135] (vv54);

\draw[very thick, color=red] (vv22) 
    to [out=60,in=120] (vv45)
    to [out=45,in=135] (vv56);

\end{scope}

\end{tikzpicture}
    \caption{The vines $\calV(\pi)$ and $\calV(\rho)$ corresponding to the permutation flows $\pi$ and $\rho$ in Figure~\ref{fig:permutation_flow_example}.}
    \label{fig:vines_1001}
\end{figure}

The general case of vines involves vines, vineyards, and vineyard shuffles and is treated in Section~\ref{sec:vineyardshuffles}.

\subsubsection{Groves}
\label{sec:groves_10001}
\phantom{W}

Groves are a family of objects introduced in~\cite{BrunnerHanusa2024} based on the related concept of \textit{noncrossing bipartite trees} that was defined in~\cite{MeszarosMorales2019}. There, it was used to describe a decomposition process of $\calF_G(\ba)$ which extended (unpublished) work of Postnikov and Stanley for the case when $\ba=\be_0- \be_n$, and which allowed the authors to provide a geometric and combinatorial proof of the Lidskii volume formula of Theorem \ref{thm.genlidskii}. The same underlying idea behind noncrossing bipartite trees is also implicit in the concept of \textit{monotone correspondence} in the proof of the triangulation in \cite{DanilovKarzanovKoshevoy2012}. 

The main idea behind a grove is the codification of the information present in a vine but from the perspective of observers standing on the vertices of the graph $G$. The noncrossing property of the bipartite trees in a grove directly captures the coherence of the routes involved, so they can be used to construct a clique inductively across vertices from $v=0,\dots,n$.

\begin{definition}[Noncrossing bipartite tree]\label{def:noncrossing_bipartite_tree_G}
A \defn{noncrossing bipartite forest} $\gamma_v$ of $(G,F)$ on a vertex $v$ is a bipartite graph on a vertex set with bipartition denoted $V(\gamma_v)=L(\gamma_v)\sqcup R(\gamma_v)$ and edge set denoted $E(\gamma_v)$, where 
\begin{enumerate}[label=(\alph*)]
    \item The \defn{left vertices} $L(\gamma_v)\subseteq\Prefixes(v)$ are prefixes of $G$ that end at $v$ and the \defn{right vertices} $R(\gamma_v)\subseteq\hat \outedge(v)$ are edges of $G$; they inherit their corresponding total orders. 
    When $v=n$, define $R(\gamma_v)=\{n\}$.
    \item The edges of $\gamma_v$ are \defn{noncrossing}. That is, whenever $P \prec Q$ and $e \prec e'$ then $(P,e')$ and $(Q,e)$ cannot both be in $E(\gamma_v)$.
    \item Every vertex of $\gamma_v$ is incident to at least one edge.
\end{enumerate}
\end{definition}

\begin{definition}[Grove]\label{def:grove_10001}
 A \defn{grove} of $(G,F)$, when non-empty, is a sequence $\Gamma=(\gamma_v)_{v \in [0,n]}$ of $n+1$ noncrossing bipartite forests satisfying
 \begin{enumerate}[label=(\alph*)]
     \item $L(\gamma_{0})=\{x_0\}$,    
     \item $R(\gamma_n)=\{y_0\}$, and
     \item for every $Pe  \in \Prefixes(G)\setminus \{x_0\}$ with $e=(v,w)$, we have that $(P,e)\in E(\gamma_{v})$ if and only if $Pe \in L(\gamma_{w})$.
 \end{enumerate}
We denote by $\Groves(G,F)$ the set of groves of $(G,F)$. There is a natural partial order on $\Groves(G,F)$ by the relation $\Gamma \subseteq \Theta$ whenever $\Gamma$ is a subgraph of $\Theta$. Denote by $\SatGroves(G,F)$ the set of groves that are maximal with respect to this order, which we call \defn{saturated}.
\end{definition}

We denote by $\Prefixes(\Gamma):=\bigsqcup_{v=0}^nL(\gamma_v)$ the set of all left vertices and $E(\Gamma):=\bigsqcup_{v=0}^nE(\gamma_v)$ the set of all grove edges. 

\begin{definition}[Grove split]
Let $v\in V(G)$ and let $\outedge(v)=\{e_0,\ldots,e_l\}$. When $P\in L(\gamma_v)$, we say that that the (grove) edge $(P,e_i)$ is a \defn{direct split of $P$ at $v$} for all $i\in[l]$. 

We denote by $\Splits(\Gamma)$ the set of all direct splits in $\Gamma$ and by $\Splits(\Gamma,(P,e))$ the set of all direct splits of any prefix $Q$ that extends $Pe$.
\end{definition}

We refer the reader to Definition~\ref{def:grove_splits} for the formal definition of a split in the language of groves. 

\begin{example}
\label{ex:groves_10001}
    We continue Example~\ref{ex:vines_10001}. The groves $\Gamma(\pi)$ and $\Gamma(\rho)$ in Figure~\ref{fig:grove_1001} correspond to the permutation flows $\pi$ and $\rho$ in Figure~\ref{fig:permutation_flow_example}. Exemplifying Definition~\ref{def:grove_10001}(c), we see that in $\Gamma(\rho)$, the three edges in $\gamma_3$ are $(s_0s_1,t_6)$, $(s_0s_1,s_3)$, and $(t_2t_3,s_3)$, and the corresponding prefixes $s_0s_1t_6$, $s_0s_1s_3$, and $t_2t_3s_3$ appear as left vertices in $\gamma_5$, $\gamma_4$, and $\gamma_4$, respectively. 
\end{example}

\begin{figure}[ht!]
    \begin{tikzpicture}[scale=1.8]
\begin{scope}[xshift=-5, yshift=-25, scale=0.5]

    \node[] at (-0.5,1.6) {$\Gamma(\pi)$};
    \node[] at (0.5,-1.1) {$\gamma_0$};
    \node[] at (0.5+2.7,-1.1) {$\gamma_1$};
    \node[] at (0.5+5.6,-1.1) {$\gamma_2$};
     \node[] at (0.5+8.5,-1.1) {$\gamma_3$};
    \node[] at (0.5+11.9,-1.1) {$\gamma_4$};
     \node[] at (0.5+15.5,-1.1) {$\gamma_5$};

    \draw[-,thick] (0,0.5)--(1,1.);
    \draw[-,thick] (0,0.5)--(1,0);
    \vertex[fill=black, minimum size=3pt] at (0,0.5) {};
    \vertex[fill=black, minimum size=3pt] at (1,1.0) {};
    \vertex[fill=black, minimum size=3pt] at (1,0.) {};
\node[anchor=east] at (-0.05,0.5) {\tiny{$x_0$}};
\node[anchor=west] at (1.05,1.0) {\tiny\textcolor{black}{$t_1$}};
\node[anchor=west] at (1.05,0.0) {\tiny\textcolor{black}{$t_2$}};

\begin{scope}[xshift=2.7cm]
    \draw[-,thick] (0,0.5)--(1,1.);
    \draw[-,thick] (0,0.5)--(1,0);
    \vertex[fill=black, minimum size=3pt] at (0,0.5) {};
    \vertex[fill=black, minimum size=3pt] at (1,1.0) {};
    \vertex[fill=black, minimum size=3pt] at (1,0.) {};
\node[anchor=east] at (-0.05,0.5) {\tiny{$t_2$}};
\node[anchor=west] at (1.05,1.0) {\tiny\textcolor{black}{$t_3$}};
\node[anchor=west] at (1.05,0.0) {\tiny\textcolor{black}{$t_4$}};

\end{scope}

\begin{scope}[xshift=5.6cm]
    \draw[-,thick] (0,1)--(1,1);	    
    \draw[-,thick] (0,0)--(1,1);		
    \draw[-,thick] (0,0)--(1,0);		
    \vertex[fill=black, minimum size=3pt] at (0,1) {};
    \vertex[fill=black, minimum size=3pt] at (0,0) {};
    \vertex[fill=black, minimum size=3pt] at (1,1) {};
    \vertex[fill=black, minimum size=3pt] at (1,0) {};
\node[anchor=east] at (-0.05,1) {\tiny{$t_1$}};
\node[anchor=east] at (-0.05,0) {\tiny{$t_2t_4$}};
\node[anchor=west] at (1.05,1) {\tiny\textcolor{black}{$t_5$}};
\node[anchor=west] at (1.05,0) {\tiny\textcolor{black}{$s_2$}};
\end{scope}

\begin{scope}[xshift=8.5cm] 
    \draw[-,thick] (0,0.5)--(1,0.5);	    
    \vertex[fill=black, minimum size=3pt] at (0,0.5) {};
    \vertex[fill=black, minimum size=3pt] at (1,0.5) {};
\node[anchor=east] at (-0.05,0.5) {\tiny{$t_2t_3$}};
\node[anchor=west] at (1.05,0.5) {\tiny\textcolor{black}{$s_3$}};
\end{scope}

\begin{scope}[xshift=11.9cm]
\draw[-,thick] (0.0,1.1)--(1.0,0.9);  
\draw[-,thick] (0.0,0.7)--(1.0,0.9);  
\draw[-,thick] (0.0,0.3)--(1.0,0.9);  
\draw[-,thick] (0.0,-0.1)--(1.0,0.1);  
\vertex[fill=black, minimum size=3pt] at (0.0,1.1) {};
\vertex[fill=black, minimum size=3pt] at (0.0,0.7) {};
\vertex[fill=black, minimum size=3pt] at (0.0,0.3) {};
\vertex[fill=black, minimum size=3pt] at (0.0,-0.1) {};
\vertex[fill=black, minimum size=3pt] at (1.0,0.9) {};
\vertex[fill=black, minimum size=3pt] at (1.0,0.1) {};
\node[anchor=east] at (-0.05,1.1) {\tiny{$t_1t_5$}};
\node[anchor=east] at (-0.05,0.7) {\tiny{$t_2t_4t_5$}};
\node[anchor=east] at (-0.05,0.3) {\tiny{$t_2t_3s_3$}};
\node[anchor=east] at (-0.05,-0.1) {\tiny{$t_2t_4s_2$}};
\node[anchor=west] at (1.05,0.9) {\tiny\textcolor{black}{$t_7$}};
\node[anchor=west] at (1.05,0.1) {\tiny\textcolor{black}{$s_4$}};
\end{scope}

\begin{scope}[xshift=15.5cm]
\draw[-,thick] (0.0,1.1)--(1.0,0.5);  
\draw[-,thick] (0.0,0.7)--(1.0,0.5);  
\draw[-,thick] (0.0,0.3)--(1.0,0.5);  
\draw[-,thick] (0.0,-0.1)--(1.0,0.5);  

\vertex[fill=black, minimum size=3pt] at (0.0,1.1) {};
\vertex[fill=black, minimum size=3pt] at (0.0,0.7) {};
\vertex[fill=black, minimum size=3pt] at (0.0,0.3) {};
\vertex[fill=black, minimum size=3pt] at (0.0,-0.1) {};
\vertex[fill=black, minimum size=3pt] at (1.0,0.5) {};
\node[anchor=east] at (-0.05,1.1) {\tiny{$t_1t_5t_7$}};
\node[anchor=east] at (-0.05,0.7) {\tiny{$t_2t_3t_5t_7$}};
\node[anchor=east] at (-0.05,0.3) {\tiny{$t_2t_3s_3t_7$}};
\node[anchor=east] at (-0.05,-0.1) {\tiny{$t_2t_4s_2s_4$}};

\node[anchor=west] at (1.05,0.5) {\tiny\textcolor{black}{$y_0$}};
\end{scope}

\end{scope}

\begin{scope}[xshift=-5, yshift=-85, scale=0.5]

    \node[] at (-0.5,1.6) {$\Gamma(\rho)$};
    \node[] at (0.5,-1.3) {$\gamma_0$};
    \node[] at (0.5+2.7,-1.3) {$\gamma_1$};
    \node[] at (0.5+5.6,-1.3) {$\gamma_2$};
     \node[] at (0.5+8.5,-1.3) {$\gamma_3$};
    \node[] at (0.5+11.9,-1.3) {$\gamma_4$};
     \node[] at (0.5+15.5,-1.3) {$\gamma_5$};

    \draw[-,thick] (0,0.5)--(1,1.1);
    \draw[-,thick] (0,0.5)--(1,0.5);
    \draw[-,thick] (0,0.5)--(1,-0.1);
    \vertex[fill=black, minimum size=3pt] at (0,0.5) {};
    \vertex[fill=black, minimum size=3pt] at (1,1.1) {};
    \vertex[fill=black, minimum size=3pt] at (1,0.5) {};
    \vertex[fill=black, minimum size=3pt] at (1,-0.1) {};
\node[anchor=east] at (-0.05,0.5) {\tiny{$x_0$}};
\node[anchor=west] at (1.05,1.1) {\tiny\textcolor{black}{$t_1$}};
\node[anchor=west] at (1.05,0.5) {\tiny\textcolor{black}{$t_2$}};
\node[anchor=west] at (1.05,-0.1) {\tiny\textcolor{black}{$s_0$}};

\begin{scope}[xshift=2.7cm]
    \draw[-,thick] (0,0.8)--(1,1.1);
    \draw[-,thick] (0,0.8)--(1,0.5);	    
    \draw[-,thick] (0,0.2)--(1,0.5);		
    \draw[-,thick] (0,0.2)--(1,-0.1);	
    \vertex[fill=black, minimum size=3pt] at (0,0.8) {};
    \vertex[fill=black, minimum size=3pt] at (0,0.2) {};
    \vertex[fill=black, minimum size=3pt] at (1,1.1) {};
    \vertex[fill=black, minimum size=3pt] at (1,0.5) {};
    \vertex[fill=black, minimum size=3pt] at (1,-0.1) {};
\node[anchor=east] at (-0.05,0.8) {\tiny{$t_2$}};
\node[anchor=east] at (-0.05,0.2) {\tiny{$s_0$}};
\node[anchor=west] at (1.05,1.1) {\tiny\textcolor{black}{$t_3$}};
\node[anchor=west] at (1.05,0.5) {\tiny\textcolor{black}{$t_4$}};
\node[anchor=west] at (1.05,-0.1) {\tiny\textcolor{black}{$s_1$}};
\end{scope}

\begin{scope}[xshift=5.6cm]
    \draw[-,thick] (0,1.1)--(1,0.8);
    \draw[-,thick] (0,0.5)--(1,0.8);     
    \draw[-,thick] (0,0.5)--(1,0.2);      
    \draw[-,thick] (0,-0.1)--(1,0.2);    
\vertex[fill=black, minimum size=3pt] at (0,1.1) {};
\vertex[fill=black, minimum size=3pt] at (0,0.5) {};
\vertex[fill=black, minimum size=3pt] at (0,-0.1) {};
\vertex[fill=black, minimum size=3pt] at (1.0,0.8) {};
\vertex[fill=black, minimum size=3pt] at (1.0,0.2) {};
\node[anchor=east] at (-0.05,1.1) {\tiny{$t_1$}};
\node[anchor=east] at (-0.05,0.5) {\tiny{$t_2t_4$}};
\node[anchor=east] at (-0.05,-0.1) {\tiny{$s_0t_4$}};
\node[anchor=west] at (1.05,0.8) {\tiny\textcolor{black}{$t_5$}};
\node[anchor=west] at (1.05,0.2) {\tiny\textcolor{black}{$s_2$}};
\end{scope}

\begin{scope}[xshift=8.5cm]
    \draw[-,thick] (0,1)--(1,1);	    
    \draw[-,thick] (0,1)--(1,0);		
    \draw[-,thick] (0,0)--(1,0);		
    \vertex[fill=black, minimum size=3pt] at (0,1) {};
    \vertex[fill=black, minimum size=3pt] at (0,0) {};
    \vertex[fill=black, minimum size=3pt] at (1,1) {};
    \vertex[fill=black, minimum size=3pt] at (1,0) {};
\node[anchor=east] at (-0.05,1) {\tiny{$s_0s_1$}};
\node[anchor=east] at (-0.05,0) {\tiny{$t_2t_3$}};
\node[anchor=west] at (1.05,1) {\tiny\textcolor{black}{$t_6$}};
\node[anchor=west] at (1.05,0) {\tiny\textcolor{black}{$s_3$}};
\end{scope}

\begin{scope}[xshift=11.9cm]
\draw[-,thick] (0.0,1.5)--(1.0,0.9); 
\draw[-,thick] (0.0,1.1)--(1.0,0.9);  
\draw[-,thick] (0.0,0.7)--(1.0,0.9);  
\draw[-,thick] (0.0,0.3)--(1.0,0.9);  
\draw[-,thick] (0.0,0.3)--(1.0,0.1);  
\draw[-,thick] (0.0,-0.1)--(1.0,0.1);  
\draw[-,thick] (0.0,-0.5)--(1.0,0.1);  
\vertex[fill=black, minimum size=3pt] at (0.0,1.5) {};
\vertex[fill=black, minimum size=3pt] at (0.0,1.1) {};
\vertex[fill=black, minimum size=3pt] at (0.0,0.7) {};
\vertex[fill=black, minimum size=3pt] at (0.0,0.3) {};
\vertex[fill=black, minimum size=3pt] at (0.0,-0.1) {};
\vertex[fill=black, minimum size=3pt] at (0.0,-0.5) {};
\vertex[fill=black, minimum size=3pt] at (1.0,0.9) {};
\vertex[fill=black, minimum size=3pt] at (1.0,0.1) {};
\node[anchor=east] at (-0.05,1.5) {\tiny{$t_1t_5$}};
\node[anchor=east] at (-0.05,1.1) {\tiny{$t_2t_4t_5$}};
\node[anchor=east] at (-0.05,0.7) {\tiny{$s_0s_1s_3$}};
\node[anchor=east] at (-0.05,0.3) {\tiny{$t_2t_3s_3$}};
\node[anchor=east] at (-0.05,-0.1) {\tiny{$t_2t_4s_2$}};
\node[anchor=east] at (-0.05,-0.5) {\tiny{$s_0t_4s_2$}};
\node[anchor=west] at (1.05,0.9) {\tiny\textcolor{black}{$t_7$}};
\node[anchor=west] at (1.05,0.1) {\tiny\textcolor{black}{$s_4$}};
\end{scope}

\begin{scope}[xshift=15.5cm]
\draw[-,thick] (0.0,1.55)--(1.0,0.5); 
\draw[-,thick] (0.0,1.25)--(1.0,0.5); 
\draw[-,thick] (0.0,0.95)--(1.0,0.5);  
\draw[-,thick] (0.0,0.65)--(1.0,0.5);  
\draw[-,thick] (0.0,0.35)--(1.0,0.5);  
\draw[-,thick] (0.0,0.05)--(1.0,0.5);  
\draw[-,thick] (0.0,-0.25)--(1.0,0.5);  
\draw[-,thick] (0.0,-0.55)--(1.0,0.5);  
\vertex[fill=black, minimum size=3pt] at (0.0,1.55) {};
\vertex[fill=black, minimum size=3pt] at (0.0,1.25) {};
\vertex[fill=black, minimum size=3pt] at (0.0,0.95) {};
\vertex[fill=black, minimum size=3pt] at (0.0,0.65) {};
\vertex[fill=black, minimum size=3pt] at (0.0,0.35) {};
\vertex[fill=black, minimum size=3pt] at (0.0,0.05) {};
\vertex[fill=black, minimum size=3pt] at (0.0,-0.25) {};
\vertex[fill=black, minimum size=3pt] at (0.0,-0.55) {};
\vertex[fill=black, minimum size=3pt] at (1.0,0.5) {};
\node[anchor=east] at (-0.05,1.55) {\tiny{$s_0s_1t_6$}};
\node[anchor=east] at (-0.05,1.25) {\tiny{$t_1t_5t_7$}};
\node[anchor=east] at (-0.05,0.95) {\tiny{$t_2t_3t_5t_7$}};
\node[anchor=east] at (-0.05,0.65) {\tiny{$s_0s_1s_3t_7$}};
\node[anchor=east] at (-0.05,0.35) {\tiny{$t_2t_3s_3t_7$}};
\node[anchor=east] at (-0.05,0.05) {\tiny{$t_2t_3s_3s_4$}};
\node[anchor=east] at (-0.05,-0.25) {\tiny{$t_2t_4s_2s_4$}};
\node[anchor=east] at (-0.05,-0.55) {\tiny{$s_0t_4s_2s_4$}};

\node[anchor=west] at (1.05,0.5) {\tiny\textcolor{black}{$y_0$}};
\end{scope}

\end{scope}
\end{tikzpicture}
    \caption{The groves $\Gamma(\pi)$ and $\Gamma(\rho)$, corresponding to the permutation flows $\pi$ and $\rho$ in Figure~\ref{fig:permutation_flow_example}.}
    \label{fig:grove_1001}
\end{figure}
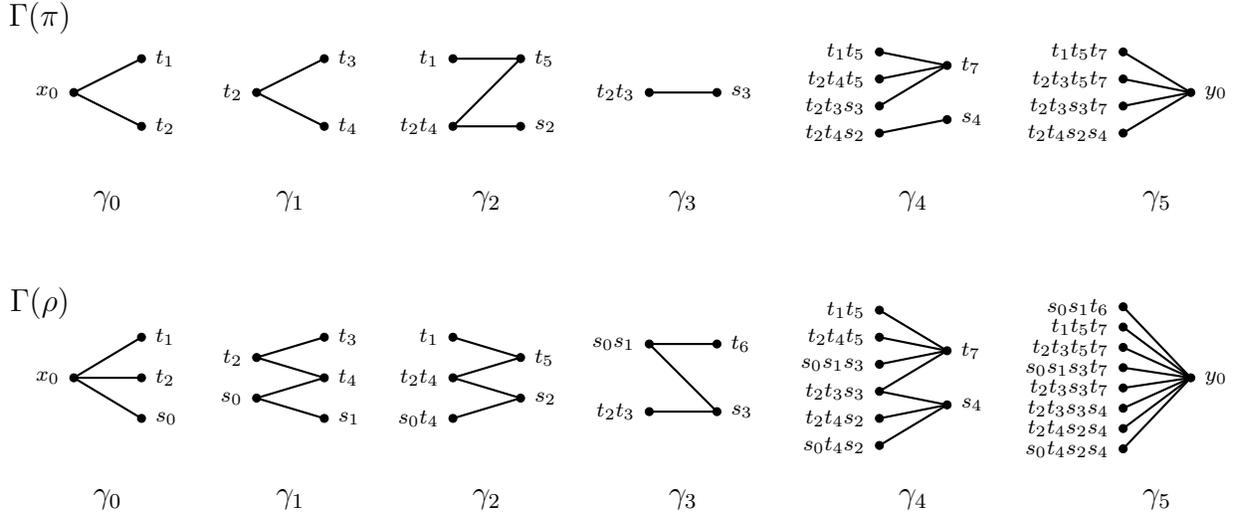

There is a natural labeling $\lambda(\cdot)$ of groves defined on prefixes $P$ and edges $(P,e)$. We build this labeling from $\gamma_0$ to $\gamma_n$ as follows. Initiate the process by defining $\lambda(x_0)=x_0$. When every prefix $P\in L(\gamma_v)$ is labeled, label the edges of $\gamma_v$ by 
\[\lambda((P,e)) := \left.\begin{cases}
        e & \textup{if there is an edge $(P,e')\prec(P,e)$ in $\gamma_v$} \\
        \lambda(P) & \textup{if there is no edge $(P,e')\prec(P,e)$ in $\gamma_v$}
    \end{cases}\right\}.\]
Finally, pass these labels forward to prefixes $Pe\in L(\gamma_{\head(e)})$ by defining $\lambda(Pe)=\lambda((P,e))$. See Figure~\ref{fig:grove_path_labels_1001}.

\begin{figure}[ht!]
    \input{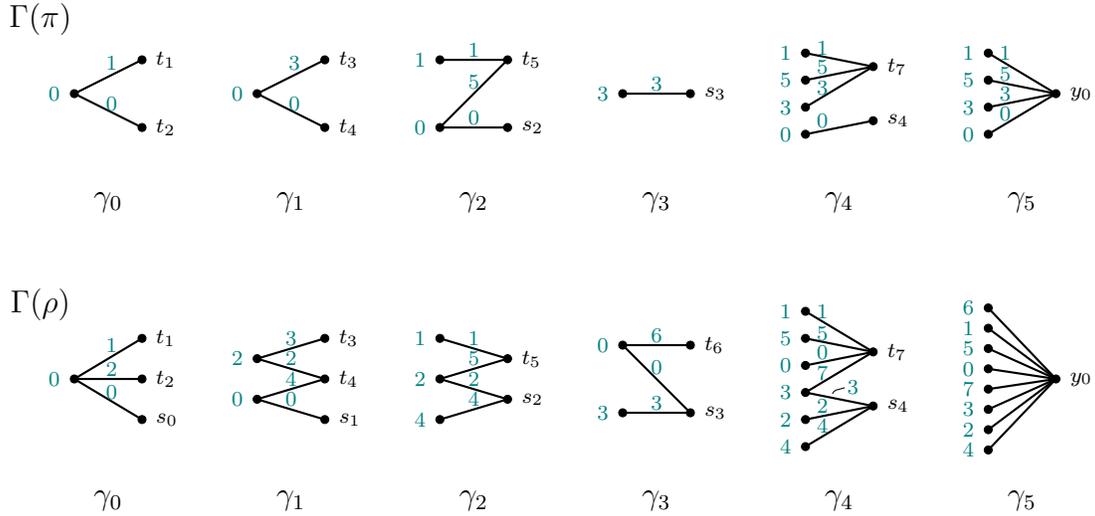}
    \caption{The natural labelings of left vertices and edges of $\Gamma(\pi)$ and $\Gamma(\rho)$. The edge labels on $\gamma_5$ in $\Gamma(\pi)$ have been suppressed for legibility; they are the same as the labels on their incident left vertices.}
    \label{fig:grove_path_labels_1001}
\end{figure}

The correspondences between groves and the other objects in this section are straightforward. Let $\Gamma\in\Groves(G,F)$. The corresponding clique $\calC(\Gamma)$ is exactly $L(\gamma_n)$. The corresponding vine $\calV(\Gamma)$ is $\Prefixes(\Gamma)$. The corresponding permutation flow $\pi=\pi(\Gamma)$ is constructed by assigning to $\pi(e)$ the word formed by reading the labels of the edges incident to $e\in \gamma_{\tail(e)}$ from lowest to highest. As a consequence, when $\Gamma$ is saturated, the corresponding integer flow $\psi=\psi(\Gamma)$ is determined by defining $\psi(e)$ to be one less than the number of incident edges to~$e$ in $\gamma_{\tail(e)}$.

The general case of groves, which also involves grove shuffles, is treated in Section~\ref{sec:grove_shuffles}.

\subsection{The face poset of \texorpdfstring{$DKK(G,F)$}{DKK(G,F)}}
\label{sec:face_poset}
\phantom{W}

Every framing on $G$ induces a unimodular triangulation of $\calF_G$. We provide the combinatorial description of face inclusion in this simplicial complex for each object in Section~\ref{sec:permutationflows}. We use $\subseteq$ to represent inclusion and $\subsetdot$ to represent a covering relation in this face poset. 

Two cliques satisfy $\calC\subseteq \calD$ when $\calC$ is contained in $\calD$; if further $|\calD|=|\calC|+1$, then $\calC\subsetdot \calD$. Apply Equation~\eqref{eq:triangle} to determine the corresponding simplex $\triangle_{\calC}$ to $\calC$.

Two vines satisfy $\calV\subseteq\calW$ when $\calV$ is a subposet of $\calW$. If further $|\Routes(\calW)|=|\Routes(\calV)|+1$, then $\calV\subsetdot \calW$. This is equivalent to pruning the vine by deleting a path connecting a leaf of the vine to its first edge that is a direct split.

Two groves satisfy $\Gamma\subseteq \Theta$ when $\Gamma$ is a subgraph of $\Theta$. If further $|L(\theta_n)|=|L(\gamma_n)|+1$, then $\Gamma\subsetdot \Theta$.

Two permutation flows satisfy $\pi\subseteq \rho$ when $\pi$ is a split reduction of $\rho$, as defined in Definition~\ref{definition:split_reduction}. If further the corresponding final summaries satisfy $|\zeta(\rho)|=|\zeta(\pi)|+1$, then $\pi\subsetdot \rho$.

\begin{example}
Figures~\ref{fig:face_oru} and \ref{fig:face_poset_oru} give two different views of the face poset of the oruga graph $\oru(2)$ with the planar framing. Figure~\ref{fig:face_oru} labels each face of the triangulation of $\calF_G$ by its corresponding permutation flow. The two simplices correspond to the two saturated permutation flows. Figure~\ref{fig:face_poset_oru} gives the face poset and includes the empty permutation flow, corresponding to the empty face of the polytope. When one permutation flow covers another, they differ by a split reduction.
\end{example}

\begin{figure}[h!]
    \centering
    \scalebox{.7}{\input{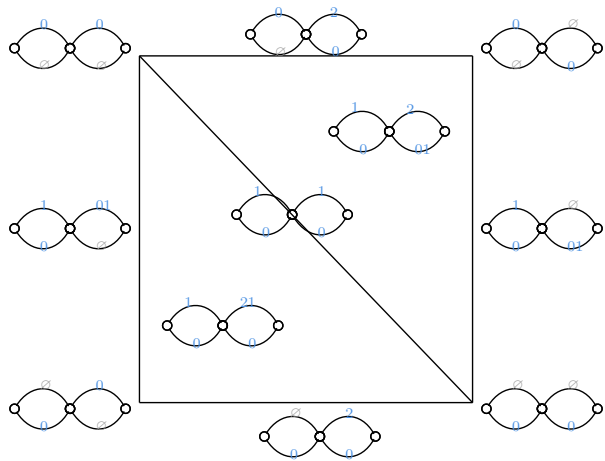}}
    \caption{The faces of the triangulation $\DKK(\oru(2),F)$ in terms of permutation flows, where $F$ is the planar framing.}
    \label{fig:face_oru}
\end{figure}

\begin{figure}[h!]
    \centering
    \scalebox{.7}{\input{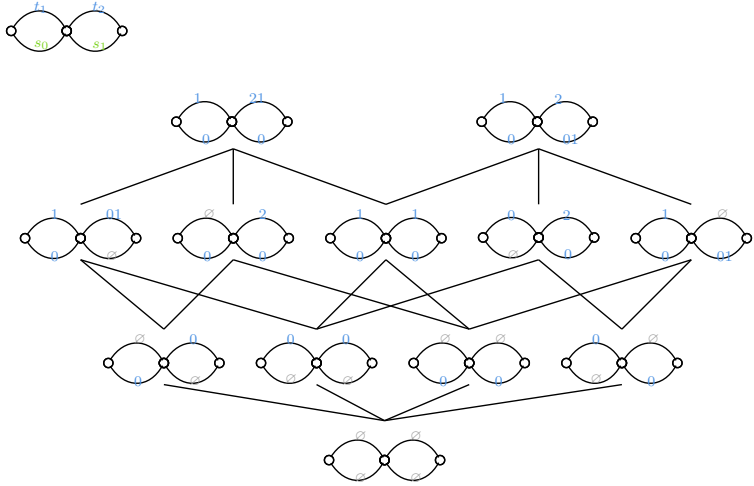}}
    \caption{Face poset of $\DKK(\oru(2),F)$ in terms of permutation flows, where $F$ is the planar framing.}
    \label{fig:face_poset_oru}
\end{figure}

\subsection{An abundance of examples}
\label{sec:permuflow_examples}
\phantom{W}

The Lidskii formula of Theorem \ref{thm.genlidskii} already reveals the rich combinatorics that is hidden behind the calculation of volumes of flow polytopes. The three authors together with Benedetti, Harris, Khare, and Morales developed in \cite{BenedettiGonzalezDleonHanusaHarrisKhareMoralesYip2019} a combinatorial model that gives an interpretation to the Lidskii formula in terms of certain decorated lattice paths, which are also known as generalized parking functions. To specific choices of a graph $G$ and a balanced netflow vector $\ba$, such that $a_i\ge 0$ for $i=0,\dots, n-1$ and $a_n\le 0$, we associate a specific combinatorial subfamily. In the case where $\ba = \be_0-\be_n$ we already recover subfamilies which are central in the combinatorics literature.  Combinatorial models giving an interpretation to the Kostant partition function also give, by Theorem \ref{thm.genlidskii}, an interpretation to the volume formula in this case, see for example \cite{BenedettiHanusaHarrisMoralesSimpson2020} for an interpretation in terms of juggling sequences. 

Among others, the following classical families of combinatorial numbers have appeared in the study of volumes of flow polytopes of the form $\calF_G$ (see for example \cite{MeszarosMorales2019,GonzalezDleonMoralesPhilippeTamayoYip2025,TamayoJimenez2023,BrunnerHanusa2024,GonzalezHanusaMoralesYip2023}):

\begin{itemize}
    \item The number $2^n$ of binary sequences of length $n$,
    \item the factorial numbers $n!=1\cdot 2\cdots n$ which enumerate permutations of $[n]$, 
    \item the Catalan numbers $C_n=\frac{1}{n+1}\binom{2n}{n}$ that enumerate Dyck paths of length $2n$,
    \item the Euler numbers that enumerate up-down permutations of $n$ (see \cite{Stanley2010}).
\end{itemize}
When a more general $\ba$ is allowed, the combinatorics of parking functions $(n+1)^{n-1}$ also comes into play.

All of these families admit a composition generalization in the context of the signature combinatorics studied by Ceballos and the first author in \cite{CeballosGonzalezDLeon2019}. For a composition $\bs=(s_1,\dots,s_n)$ there are volumes of flow polytopes given by the following numbers:
\begin{itemize}
    \item the number $$s_1\cdot s_2 \cdots s_n$$ which enumerates $\bs$-ary sequences of length $n$, where the entry $i$ is in $[s_i]$ for $i\in [n]$,
    \item the $\bs$-factorial number $$n\stackrel {\bs}{!}:=\prod_{i=1}^{n-1}(1+\sum_{j=1}^is_j)=(1+s_1)\cdot(1+s_1+s_2)\cdots (1+s_1+s_2+\cdots+s_{n-1})$$ which enumerates $\bs$-Stirling permutations (see \cite{Gessel2020, CeballosPons2024-1,CeballosGonzalezDLeon2019}),
    \item the $\bs$-multinomial number $$\binom{|\bs|}{\bs}:=\binom{s_1+s_2+\cdots+s_n}{s_1,s_2,\dots,s_n}$$ which enumerates $\bs$-multipermutations,
    \item the $\bs$-Catalan number given by Kreweras' \cite{Kreweras1965}  determinantal formula   $$\Cat(\bs):=\det\left [\binom{s_1+s_2+\dots+s_{n-j}+1}{j-i-1}\right]$$ which enumerates $\bs$-Dyck paths,
    \item the $\bs$-Euler number $E_{\bs}$ of up-down permutations with up-down pattern given by $\bs$, which also enumerates Standard Young Tableaux of the hook determined by $\bs$.
    \item the number $\PF(\bs)$ of generalized $\bs$-parking functions (see \cite{PitmanStanley2002}).
\end{itemize}

The graphs $G$ and netflow vectors $\ba$ which realize all of these are listed in Table \ref{table:signature_combinatorics} and illustrated in Figure \ref{fig:s-graphs}.

\begin{figure}
    \centering
    \scalebox{.75}{\input{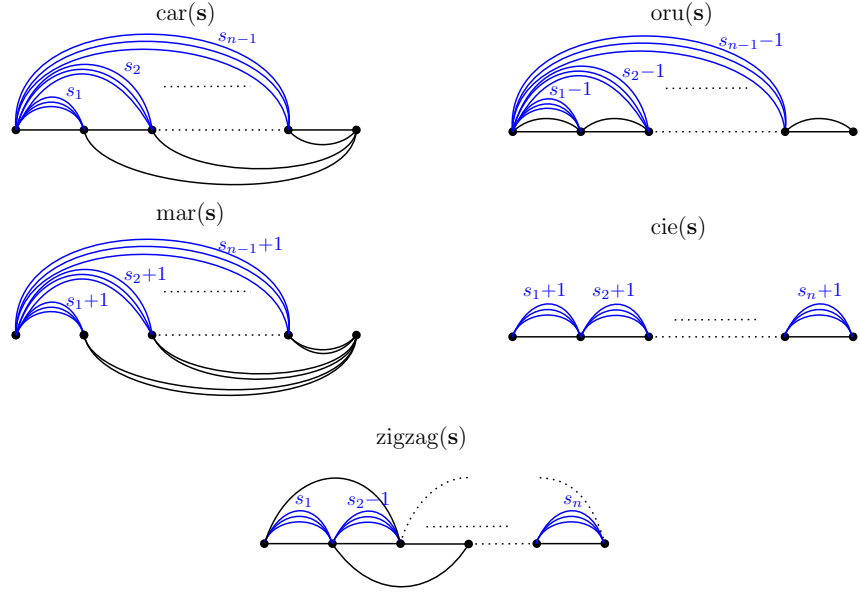}}
    \caption{Graphs from Table \ref{table:signature_combinatorics}}
    \label{fig:s-graphs}
\end{figure}

\begin{table}[]
    \centering
\begin{tabular}{|c|c|c|c|c|}
\hline 
 Graph & Name & $\ba$ & Volume & Reference \\
\hline 
 $\oru(\bs)$ & $\bs$-oruga & $\displaystyle e_{0} -e_{1}$ & $n\stackrel{\bs}{!}$ & \cite{GonzalezDleonMoralesPhilippeTamayoYip2025} \\
\hline 
 $\car(\bs)$ & $\bs$-caracol & $\displaystyle e_{0} -e_{1}$ & $\Cat(\bs)$ & \cite{BellGonzalezMayorgaYip2023} \\
\hline 
 $\mar(\bs)$ & $\bs$-mariposa & $\displaystyle e_{0} -e_{1}$ & $\prod_{i=1}^{s_{n-1}} s_{i}$ &  This work\\
\hline 
 $\cie(\bs)$ & $\bs$-cienpies & $\displaystyle e_{0} -e_{1}$ & $\displaystyle \binom{|\bs|}{\bs}$ &  This work\\
\hline 
 $\zigzag(\bs)$ & $\bs$-zigzag & $\displaystyle e_{0} -e_{1}$ & $\displaystyle E_{\bs}$ &  \cite{GonzalezHerreraLauveNair2025} \\
\hline 
 $\PS_n$ & Pitman-Stanley & $\displaystyle \sum\limits _{i=0}^{n-1} s_{i}( e_{i} -e_{n})$ & $\PF(\bs)$ & \cite{MeszarosMorales2019}  \\
 \hline
\end{tabular}
    \caption{Table of $\bs$-combinatorics appearing in the literature as volumes of flow polytopes.}
    \label{table:signature_combinatorics}
\end{table}

There is another family of graphs which is also of central relevance.
When $G=K_{n}$ is the complete graph and $\ba=\be_1-\be_n$, the volume of the flow polytope $\calF_{K_n}$ is the product of consecutive Catalan numbers 
\begin{equation}\label{equation:product_of_catalan}
  \prod_{i=1}^{n-2}\Cat(i).  
\end{equation}
The polytope $\calF_{K_n}$ was studied by Chan, Robbins, and Yuen \cite{ChanRobbinsYuen2000} who conjecture that their volume was given by \eqref{equation:product_of_catalan}. This conjecture was proved by Zeilberger \cite{Zeilberger1999} using a constant term identity and it is still an open problem to find a combinatorial proof of of this formula. When $\ba=(1,\ldots,1,-n)$ the polytope is known as the Tesler polytope and it was proved in \cite{MeszarosMoralesRhoades2017} that its volume is given by \eqref{equation:product_of_catalan} times the number of standard Young tableaux of staircase shape $(n-2,n-3,\dots,2,1)$. Other related polytopes on the complete graph have also been studied \cite{CorteelKimMeszaros2021}. Other interesting families of flow polytopes connected to many areas are the Gelfand-Tsetlin polytopes studied in \cite{LiuMeszarosStDizier2019} and also the generalized Pitman-Stanley polytopes defined and studied in \cite{DuganHegartyMoralesRaymond2025}. 

When we move to the context of permutation flows, the number $\vol \calF_G$ translates bijectively into a particular family of permutations of $\sym_{[0,d]}$. Indeed, the quantity $\vol \calF_G$ is given by the number of maximal simplices in $\DKK(G,F)$ which are in bijection with the set $\PermutationFlows(G,F))$ by Theorem \ref{theorem:triangulation}. Theorem \ref{thm:finalsummary_10001} gives a bijection of this set  to $\FinalSummaries(G,F)$. Often the final summaries of permutation flows give already an enconding of the numbers $\vol \calF_G$ into a family that is of combinatorial interest, and in some other case a simple bijection will enconde these summaries into such a family. We illustrate this below with the examples of two framings on $\oru(3)$ and two framings on $\car(5)$ (see Figures \ref{fig:222poset} and \ref{fig:caracolposets}).

More generally, in the context of the permutation flow shuffle triangulation of Section \ref{sec:triangulation_flow_polytopes}, we use Theorems \ref{theorem:triangulation} and \ref{thm:finalsummary} to encode the volumes as pairs $(\zeta(\pi),\sigma_{\pi})$ where $\zeta(\pi) \in \FinalSummaries(\hatG,\hatF)$ and $(\pi,\sigma_{\pi}) \in \SatPermutationFlowShuffles(\hatG,\hatF)$. 

In the following, we discuss two fundamental examples where we organize the elements of $\PermutationFlows(G,F)$ according to the structure of the weak order $W(G,F)$.

\begin{example}[The oruga graph]
  One of the most fundamental and visited examples in the literature of flow polytopes is given by the polytope $\calF_{\oru(n)}$ on the oruga graph $\oru(n)$. This graph is defined by an edge set with two copies of an edge
$(i-1,i)$ for $i\in[n]$. The polytope $\calF_{\oru(n)}$ is integrally equivalent to the $n$-hypercube.  

In Figure \ref{fig:222poset} we illustrate the weak order $W(\oru(3),F)$ on $\SatPermutationFlows(\oru(3),F)$ using two framings. In the left a framing that we call \defn{planar} and in the right a framing we call \defn{twisted}.

It was shown in \cite{GonzalezDleonMoralesPhilippeTamayoYip2025} that $W(\oru(3),F)$ with the planar framing is isomorphic to the (right) weak order on the set $\sym_3$ of permutations of $[3]$. Indeed, the bijection is obtained by reading the final summary of the permutation flow without the leading $0$ as is illustrated in Figure \ref{fig:222final_summaries} (left). On the other hand the authors of \cite{GonzalezHanusaMoralesYip2026} found, when $F$ is the twisted framing, that  the final summary map $\zeta$ directly gives an isomorphism between $W(\oru(3),F)$ an the lattice of circular permutations introduced by Abram, Chapelier-Laget, and Reutenauer in \cite{AbramChapelier-LagetReutenauer2021} as is illustrated in Figure \ref{fig:222final_summaries} (right).
A generalization $\oru(s)$ of $\oru(n)$ to a signature $\bs$ was defined and studied in \cite{GonzalezDleonMoralesPhilippeTamayoYip2025}, such that $\oru(n)=\oru(1,1,\dots,1)$.
\end{example}

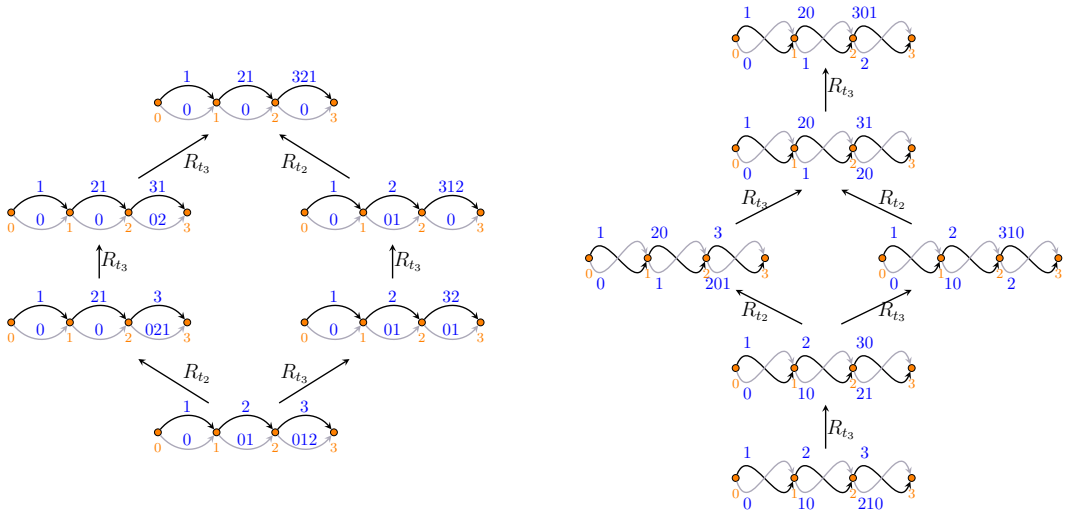
\begin{figure}[t]
    \centering
    \vspace{-.3in}
    \raisebox{0.85cm}{
    \scalebox{0.65}{\begin{tikzpicture}[xscale=0.8]

\begin{scope}
    \draw[-stealth, thick] (1.3,0.6) to (-0.5,1.5);
    \draw[-stealth, thick] (3.1,0.6) to (4.9,1.5);
    \draw[-stealth, thick] (-1.5,3.15) to (-1.5,3.85);
    \draw[-stealth, thick] (6,3.15) to (6,3.85);
    \draw[-stealth, thick] (-0.5,5.2) to (1.3,6.1);
    \draw[-stealth, thick] (4.9,5.2) to (3.1,6.1);
\node[] at (1,1.2) {$R_{t_2}$};
\node[] at (3.5,1.2) {$R_{t_3}$};
\node[] at (6.4,3.5) {$R_{t_3}$};
\node[] at (-1.1,3.5) {$R_{t_3}$};
\node[] at (1,5.5) {$R_{t_3}$};
\node[] at (3.5,5.5) {$R_{t_2}$};
\end{scope}

\begin{scope}[scale=1.5]
    \vertex[fill=orange, minimum size=4pt, label=below:{\tiny\textcolor{orange}{$0$}}](v0) at (0,0) {};
	\vertex[fill=orange, minimum size=4pt, label=below:{\tiny\textcolor{orange}{$1$}}](v1) at (1,0) {};
	\vertex[fill=orange, minimum size=4pt, label=below:{\tiny\textcolor{orange}{$2$}}](v2) at (2,0) {};
	\vertex[fill=orange, minimum size=4pt, label=below:{\tiny\textcolor{orange}{$3$}}](v3) at (3,0) {};

    \draw[-stealth, thick, color=lavgray] (v0) to [out=-45,in=225] (v1);
    \draw[-stealth, thick] (v0) to [out=45,in=135] (v1);
    \draw[-stealth, thick, color=lavgray] (v1) to [out=-45,in=225] (v2);
    \draw[-stealth, thick] (v1) to [out=45,in=135] (v2);
    \draw[-stealth, thick, color=lavgray] (v2) to [out=-45,in=225] (v3);
    \draw[-stealth, thick] (v2) to [out=45,in=135] (v3);

    \node[] at (0.5, .35) {\footnotesize\textcolor{blue}{$1$}};
    \node[] at (0.5, -.1) {\footnotesize\textcolor{blue}{$0$}};

    \node[] at (1.5, .35) {\footnotesize\textcolor{blue}{$2$}};
    \node[] at (1.5, -.1) {\footnotesize\textcolor{blue}{$01$}};
    
     \node[] at (2.5, .35) {\footnotesize\textcolor{blue}{$3$}};
     \node[] at (2.5, -.1) {\footnotesize\textcolor{blue}{$012$}};
\end{scope}

\begin{scope}[scale=1.5,shift={(-2.5,1.5)}]
    \vertex[fill=orange, minimum size=4pt, label=below:{\tiny\textcolor{orange}{$0$}}](v0) at (0,0) {};
	\vertex[fill=orange, minimum size=4pt, label=below:{\tiny\textcolor{orange}{$1$}}](v1) at (1,0) {};
	\vertex[fill=orange, minimum size=4pt, label=below:{\tiny\textcolor{orange}{$2$}}](v2) at (2,0) {};
	\vertex[fill=orange, minimum size=4pt, label=below:{\tiny\textcolor{orange}{$3$}}](v3) at (3,0) {};

    \draw[-stealth, thick, color=lavgray] (v0) to [out=-45,in=225] (v1);
    \draw[-stealth, thick] (v0) to [out=45,in=135] (v1);
    \draw[-stealth, thick, color=lavgray] (v1) to [out=-45,in=225] (v2);
    \draw[-stealth, thick] (v1) to [out=45,in=135] (v2);
    \draw[-stealth, thick, color=lavgray] (v2) to [out=-45,in=225] (v3);
    \draw[-stealth, thick] (v2) to [out=45,in=135] (v3);

    \node[] at (0.5, .35) {\footnotesize\textcolor{blue}{$1$}};
    \node[] at (0.5, -.1) {\footnotesize\textcolor{blue}{$0$}};

    \node[] at (1.5, .35) {\footnotesize\textcolor{blue}{$21$}};
    \node[] at (1.5, -.1) {\footnotesize\textcolor{blue}{$0$}};
    
     \node[] at (2.5, .35) {\footnotesize\textcolor{blue}{$3$}};
     \node[] at (2.5, -.1) {\footnotesize\textcolor{blue}{$021$}};
\end{scope}

\begin{scope}[scale=1.5,shift={(2.5,1.5)}]
    \vertex[fill=orange, minimum size=4pt, label=below:{\tiny\textcolor{orange}{$0$}}](v0) at (0,0) {};
	\vertex[fill=orange, minimum size=4pt, label=below:{\tiny\textcolor{orange}{$1$}}](v1) at (1,0) {};
	\vertex[fill=orange, minimum size=4pt, label=below:{\tiny\textcolor{orange}{$2$}}](v2) at (2,0) {};
	\vertex[fill=orange, minimum size=4pt, label=below:{\tiny\textcolor{orange}{$3$}}](v3) at (3,0) {};

    \draw[-stealth, thick, color=lavgray] (v0) to [out=-45,in=225] (v1);
    \draw[-stealth, thick] (v0) to [out=45,in=135] (v1);
    \draw[-stealth, thick, color=lavgray] (v1) to [out=-45,in=225] (v2);
    \draw[-stealth, thick] (v1) to [out=45,in=135] (v2);
    \draw[-stealth, thick, color=lavgray] (v2) to [out=-45,in=225] (v3);
    \draw[-stealth, thick] (v2) to [out=45,in=135] (v3);

    \node[] at (0.5, .35) {\footnotesize\textcolor{blue}{$1$}};
    \node[] at (0.5, -.1) {\footnotesize\textcolor{blue}{$0$}};

    \node[] at (1.5, .35) {\footnotesize\textcolor{blue}{$2$}};
    \node[] at (1.5, -.1) {\footnotesize\textcolor{blue}{$01$}};
    
     \node[] at (2.5, .35) {\footnotesize\textcolor{blue}{$32$}};
     \node[] at (2.5, -.1) {\footnotesize\textcolor{blue}{$01$}};
\end{scope}

\begin{scope}[scale=1.5,shift={(-2.5,3)}]
    \vertex[fill=orange, minimum size=4pt, label=below:{\tiny\textcolor{orange}{$0$}}](v0) at (0,0) {};
	\vertex[fill=orange, minimum size=4pt, label=below:{\tiny\textcolor{orange}{$1$}}](v1) at (1,0) {};
	\vertex[fill=orange, minimum size=4pt, label=below:{\tiny\textcolor{orange}{$2$}}](v2) at (2,0) {};
	\vertex[fill=orange, minimum size=4pt, label=below:{\tiny\textcolor{orange}{$3$}}](v3) at (3,0) {};

    \draw[-stealth, thick, color=lavgray] (v0) to [out=-45,in=225] (v1);
    \draw[-stealth, thick] (v0) to [out=45,in=135] (v1);
    \draw[-stealth, thick, color=lavgray] (v1) to [out=-45,in=225] (v2);
    \draw[-stealth, thick] (v1) to [out=45,in=135] (v2);
    \draw[-stealth, thick, color=lavgray] (v2) to [out=-45,in=225] (v3);
    \draw[-stealth, thick] (v2) to [out=45,in=135] (v3);

    \node[] at (0.5, .35) {\footnotesize\textcolor{blue}{$1$}};
    \node[] at (0.5, -.1) {\footnotesize\textcolor{blue}{$0$}};

    \node[] at (1.5, .35) {\footnotesize\textcolor{blue}{$21$}};
    \node[] at (1.5, -.1) {\footnotesize\textcolor{blue}{$0$}};
    
     \node[] at (2.5, .35) {\footnotesize\textcolor{blue}{$31$}};
     \node[] at (2.5, -.1) {\footnotesize\textcolor{blue}{$02$}};
\end{scope}

\begin{scope}[scale=1.5,shift={(2.5,3)}]
    \vertex[fill=orange, minimum size=4pt, label=below:{\tiny\textcolor{orange}{$0$}}](v0) at (0,0) {};
	\vertex[fill=orange, minimum size=4pt, label=below:{\tiny\textcolor{orange}{$1$}}](v1) at (1,0) {};
	\vertex[fill=orange, minimum size=4pt, label=below:{\tiny\textcolor{orange}{$2$}}](v2) at (2,0) {};
	\vertex[fill=orange, minimum size=4pt, label=below:{\tiny\textcolor{orange}{$3$}}](v3) at (3,0) {};

    \draw[-stealth, thick, color=lavgray] (v0) to [out=-45,in=225] (v1);
    \draw[-stealth, thick] (v0) to [out=45,in=135] (v1);
    \draw[-stealth, thick, color=lavgray] (v1) to [out=-45,in=225] (v2);
    \draw[-stealth, thick] (v1) to [out=45,in=135] (v2);
    \draw[-stealth, thick, color=lavgray] (v2) to [out=-45,in=225] (v3);
    \draw[-stealth, thick] (v2) to [out=45,in=135] (v3);

    \node[] at (0.5, .35) {\footnotesize\textcolor{blue}{$1$}};
    \node[] at (0.5, -.1) {\footnotesize\textcolor{blue}{$0$}};

    \node[] at (1.5, .35) {\footnotesize\textcolor{blue}{$2$}};
    \node[] at (1.5, -.1) {\footnotesize\textcolor{blue}{$01$}};
    
     \node[] at (2.5, .35) {\footnotesize\textcolor{blue}{$312$}};
     \node[] at (2.5, -.1) {\footnotesize\textcolor{blue}{$0$}};
\end{scope}

\begin{scope}[scale=1.5,shift={(0,4.5)}]
    \vertex[fill=orange, minimum size=4pt, label=below:{\tiny\textcolor{orange}{$0$}}](v0) at (0,0) {};
	\vertex[fill=orange, minimum size=4pt, label=below:{\tiny\textcolor{orange}{$1$}}](v1) at (1,0) {};
	\vertex[fill=orange, minimum size=4pt, label=below:{\tiny\textcolor{orange}{$2$}}](v2) at (2,0) {};
	\vertex[fill=orange, minimum size=4pt, label=below:{\tiny\textcolor{orange}{$3$}}](v3) at (3,0) {};

    \draw[-stealth, thick, color=lavgray] (v0) to [out=-45,in=225] (v1);
    \draw[-stealth, thick] (v0) to [out=45,in=135] (v1);
    \draw[-stealth, thick, color=lavgray] (v1) to [out=-45,in=225] (v2);
    \draw[-stealth, thick] (v1) to [out=45,in=135] (v2);
    \draw[-stealth, thick, color=lavgray] (v2) to [out=-45,in=225] (v3);
    \draw[-stealth, thick] (v2) to [out=45,in=135] (v3);

    \node[] at (0.5, .35) {\footnotesize\textcolor{blue}{$1$}};
    \node[] at (0.5, -.1) {\footnotesize\textcolor{blue}{$0$}};

    \node[] at (1.5, .35) {\footnotesize\textcolor{blue}{$21$}};
    \node[] at (1.5, -.1) {\footnotesize\textcolor{blue}{$0$}};
    
     \node[] at (2.5, .35) {\footnotesize\textcolor{blue}{$321$}};
     \node[] at (2.5, -.1) {\footnotesize\textcolor{blue}{$0$}};
\end{scope}

\end{tikzpicture}}}
    \hspace{0.8cm}
    \scalebox{0.65}{\begin{tikzpicture}[xscale=0.8]

\begin{scope}
    \draw[-stealth, thick] (2.3,0.6) to (2.3,1.5);
    \draw[-stealth, thick] (2.7,3.15) to (4.5,3.85);
    \draw[-stealth, thick] (1.7,3.15) to (0,3.85);
    \draw[-stealth, thick] (0,5.2) to (1.7,5.9);
    \draw[-stealth, thick] (4.5,5.2) to (2.7,5.9);
    \draw[-stealth, thick] (2.3,7.5) to (2.3,8.4);
\node[] at (2.7,1.0) {$R_{t_3}$};
\node[] at (0.5,3.4) {$R_{t_2}$};
\node[] at (4,3.4) {$R_{t_3}$};
\node[] at (0.5,5.7) {$R_{t_3}$};
\node[] at (4,5.7) {$R_{t_2}$};
\node[] at (2.7,8) {$R_{t_3}$};
\end{scope}

\begin{scope}[scale=1.5]
    \vertex[fill=orange, minimum size=4pt, label=below:{\tiny\textcolor{orange}{$0$}}](v0) at (0,0) {};
	\vertex[fill=orange, minimum size=4pt, label=below:{\tiny\textcolor{orange}{$1$}}](v1) at (1,0) {};
	\vertex[fill=orange, minimum size=4pt, label=below:{\tiny\textcolor{orange}{$2$}}](v2) at (2,0) {};
	\vertex[fill=orange, minimum size=4pt, label=below:{\tiny\textcolor{orange}{$3$}}](v3) at (3,0) {};
	
    \draw[-stealth, thick] (v0) .. controls (0.25, .5) and (0.75,-.5) .. (v1);
    \draw[-stealth, thick] (v1) .. controls (1.25, .5) and (1.75,-.5) .. (v2);
    \draw[-stealth, thick] (v2) .. controls (2.25, .5) and (2.75,-.5) .. (v3);
 
    \draw[-stealth, thick, color=lavgray] (v0) .. controls (0.25, -.6) and (0.75,.6) .. (v1);
    \draw[-stealth, thick, color=lavgray] (v1) .. controls (1.25, -.6) and (1.75,.6) .. (v2);
    \draw[-stealth, thick, color=lavgray] (v2) .. controls (2.25, -.6) and (2.75,.6) .. (v3);

    \node[] at (0.2, .35) {\footnotesize\textcolor{blue}{$1$}};
    \node[] at (0.2, -.35) {\footnotesize\textcolor{blue}{$0$}};
    \node[] at (1.2, .35) {\footnotesize\textcolor{blue}{$2$}};
    \node[] at (1.2, -.35) {\footnotesize\textcolor{blue}{$10$}};
    \node[] at (2.2, .35) {\footnotesize\textcolor{blue}{$3$}};
    \node[] at (2.3, -.35) {\footnotesize\textcolor{blue}{$210$}};
\end{scope}

\begin{scope}[scale=1.5, shift={(0,1.5)}]
    \vertex[fill=orange, minimum size=4pt, label=below:{\tiny\textcolor{orange}{$0$}}](v0) at (0,0) {};
	\vertex[fill=orange, minimum size=4pt, label=below:{\tiny\textcolor{orange}{$1$}}](v1) at (1,0) {};
	\vertex[fill=orange, minimum size=4pt, label=below:{\tiny\textcolor{orange}{$2$}}](v2) at (2,0) {};
	\vertex[fill=orange, minimum size=4pt, label=below:{\tiny\textcolor{orange}{$3$}}](v3) at (3,0) {};
	
    \draw[-stealth, thick] (v0) .. controls (0.25, .5) and (0.75,-.5) .. (v1);
    \draw[-stealth, thick] (v1) .. controls (1.25, .5) and (1.75,-.5) .. (v2);
    \draw[-stealth, thick] (v2) .. controls (2.25, .5) and (2.75,-.5) .. (v3);
 
    \draw[-stealth, thick, color=lavgray] (v0) .. controls (0.25, -.6) and (0.75,.6) .. (v1);
    \draw[-stealth, thick, color=lavgray] (v1) .. controls (1.25, -.6) and (1.75,.6) .. (v2);
    \draw[-stealth, thick, color=lavgray] (v2) .. controls (2.25, -.6) and (2.75,.6) .. (v3);

    \node[] at (0.2, .35) {\footnotesize\textcolor{blue}{$1$}};
    \node[] at (0.2, -.35) {\footnotesize\textcolor{blue}{$0$}};
    \node[] at (1.2, .35) {\footnotesize\textcolor{blue}{$2$}};
    \node[] at (1.2, -.35) {\footnotesize\textcolor{blue}{$10$}};
    \node[] at (2.2, .35) {\footnotesize\textcolor{blue}{$30$}};
    \node[] at (2.2, -.35) {\footnotesize\textcolor{blue}{$21$}};
\end{scope}

\begin{scope}[scale=1.5, shift={(-2.5,3)}]
    \vertex[fill=orange, minimum size=4pt, label=below:{\tiny\textcolor{orange}{$0$}}](v0) at (0,0) {};
	\vertex[fill=orange, minimum size=4pt, label=below:{\tiny\textcolor{orange}{$1$}}](v1) at (1,0) {};
	\vertex[fill=orange, minimum size=4pt, label=below:{\tiny\textcolor{orange}{$2$}}](v2) at (2,0) {};
	\vertex[fill=orange, minimum size=4pt, label=below:{\tiny\textcolor{orange}{$3$}}](v3) at (3,0) {};
	
    \draw[-stealth, thick] (v0) .. controls (0.25, .5) and (0.75,-.5) .. (v1);
    \draw[-stealth, thick] (v1) .. controls (1.25, .5) and (1.75,-.5) .. (v2);
    \draw[-stealth, thick] (v2) .. controls (2.25, .5) and (2.75,-.5) .. (v3);
 
    \draw[-stealth, thick, color=lavgray] (v0) .. controls (0.25, -.6) and (0.75,.6) .. (v1);
    \draw[-stealth, thick, color=lavgray] (v1) .. controls (1.25, -.6) and (1.75,.6) .. (v2);
    \draw[-stealth, thick, color=lavgray] (v2) .. controls (2.25, -.6) and (2.75,.6) .. (v3);

    \node[] at (0.2, .35) {\footnotesize\textcolor{blue}{$1$}};
    \node[] at (0.2, -.35) {\footnotesize\textcolor{blue}{$0$}};
    \node[] at (1.2, .35) {\footnotesize\textcolor{blue}{$20$}};
    \node[] at (1.2, -.35) {\footnotesize\textcolor{blue}{$1$}};
    \node[] at (2.2, .35) {\footnotesize\textcolor{blue}{$3$}};
    \node[] at (2.2, -.35) {\footnotesize\textcolor{blue}{$201$}};
\end{scope}

\begin{scope}[scale=1.5, shift={(2.5,3)}]
    \vertex[fill=orange, minimum size=4pt, label=below:{\tiny\textcolor{orange}{$0$}}](v0) at (0,0) {};
	\vertex[fill=orange, minimum size=4pt, label=below:{\tiny\textcolor{orange}{$1$}}](v1) at (1,0) {};
	\vertex[fill=orange, minimum size=4pt, label=below:{\tiny\textcolor{orange}{$2$}}](v2) at (2,0) {};
	\vertex[fill=orange, minimum size=4pt, label=below:{\tiny\textcolor{orange}{$3$}}](v3) at (3,0) {};
	
    \draw[-stealth, thick] (v0) .. controls (0.25, .5) and (0.75,-.5) .. (v1);
    \draw[-stealth, thick] (v1) .. controls (1.25, .5) and (1.75,-.5) .. (v2);
    \draw[-stealth, thick] (v2) .. controls (2.25, .5) and (2.75,-.5) .. (v3);
 
    \draw[-stealth, thick, color=lavgray] (v0) .. controls (0.25, -.6) and (0.75,.6) .. (v1);
    \draw[-stealth, thick, color=lavgray] (v1) .. controls (1.25, -.6) and (1.75,.6) .. (v2);
    \draw[-stealth, thick, color=lavgray] (v2) .. controls (2.25, -.6) and (2.75,.6) .. (v3);

    \node[] at (0.2, .35) {\footnotesize\textcolor{blue}{$1$}};
    \node[] at (0.2, -.35) {\footnotesize\textcolor{blue}{$0$}};
    \node[] at (1.2, .35) {\footnotesize\textcolor{blue}{$2$}};
    \node[] at (1.2, -.35) {\footnotesize\textcolor{blue}{$10$}};
    \node[] at (2.2, .35) {\footnotesize\textcolor{blue}{$310$}};
    \node[] at (2.2, -.35) {\footnotesize\textcolor{blue}{$2$}};
\end{scope}

\begin{scope}[scale=1.5, shift={(0,4.5)}]
    \vertex[fill=orange, minimum size=4pt, label=below:{\tiny\textcolor{orange}{$0$}}](v0) at (0,0) {};
	\vertex[fill=orange, minimum size=4pt, label=below:{\tiny\textcolor{orange}{$1$}}](v1) at (1,0) {};
	\vertex[fill=orange, minimum size=4pt, label=below:{\tiny\textcolor{orange}{$2$}}](v2) at (2,0) {};
	\vertex[fill=orange, minimum size=4pt, label=below:{\tiny\textcolor{orange}{$3$}}](v3) at (3,0) {};
	
    \draw[-stealth, thick] (v0) .. controls (0.25, .5) and (0.75,-.5) .. (v1);
    \draw[-stealth, thick] (v1) .. controls (1.25, .5) and (1.75,-.5) .. (v2);
    \draw[-stealth, thick] (v2) .. controls (2.25, .5) and (2.75,-.5) .. (v3);
 
    \draw[-stealth, thick, color=lavgray] (v0) .. controls (0.25, -.6) and (0.75,.6) .. (v1);
    \draw[-stealth, thick, color=lavgray] (v1) .. controls (1.25, -.6) and (1.75,.6) .. (v2);
    \draw[-stealth, thick, color=lavgray] (v2) .. controls (2.25, -.6) and (2.75,.6) .. (v3);

    \node[] at (0.2, .35) {\footnotesize\textcolor{blue}{$1$}};
    \node[] at (0.2, -.35) {\footnotesize\textcolor{blue}{$0$}};
    \node[] at (1.2, .35) {\footnotesize\textcolor{blue}{$20$}};
    \node[] at (1.2, -.35) {\footnotesize\textcolor{blue}{$1$}};
    \node[] at (2.2, .35) {\footnotesize\textcolor{blue}{$31$}};
    \node[] at (2.2, -.35) {\footnotesize\textcolor{blue}{$20$}};
\end{scope}

\begin{scope}[scale=1.5, shift={(0,6)}]
    \vertex[fill=orange, minimum size=4pt, label=below:{\tiny\textcolor{orange}{$0$}}](v0) at (0,0) {};
	\vertex[fill=orange, minimum size=4pt, label=below:{\tiny\textcolor{orange}{$1$}}](v1) at (1,0) {};
	\vertex[fill=orange, minimum size=4pt, label=below:{\tiny\textcolor{orange}{$2$}}](v2) at (2,0) {};
	\vertex[fill=orange, minimum size=4pt, label=below:{\tiny\textcolor{orange}{$3$}}](v3) at (3,0) {};
	
    \draw[-stealth, thick] (v0) .. controls (0.25, .5) and (0.75,-.5) .. (v1);
    \draw[-stealth, thick] (v1) .. controls (1.25, .5) and (1.75,-.5) .. (v2);
    \draw[-stealth, thick] (v2) .. controls (2.25, .5) and (2.75,-.5) .. (v3);
 
    \draw[-stealth, thick, color=lavgray] (v0) .. controls (0.25, -.6) and (0.75,.6) .. (v1);
    \draw[-stealth, thick, color=lavgray] (v1) .. controls (1.25, -.6) and (1.75,.6) .. (v2);
    \draw[-stealth, thick, color=lavgray] (v2) .. controls (2.25, -.6) and (2.75,.6) .. (v3);

    \node[] at (0.2, .35) {\footnotesize\textcolor{blue}{$1$}};
    \node[] at (0.2, -.35) {\footnotesize\textcolor{blue}{$0$}};
    \node[] at (1.2, .35) {\footnotesize\textcolor{blue}{$20$}};
    \node[] at (1.2, -.35) {\footnotesize\textcolor{blue}{$1$}};
    \node[] at (2.2, .35) {\footnotesize\textcolor{blue}{$301$}};
    \node[] at (2.2, -.35) {\footnotesize\textcolor{blue}{$2$}};
\end{scope}

\end{tikzpicture}}
    
    \caption{The weak orders for the graph $\oru(3)$ corresponding to the framings as depicted.
    The edges are labeled by the corresponding raising operator. }
    \label{fig:222poset}
\end{figure}

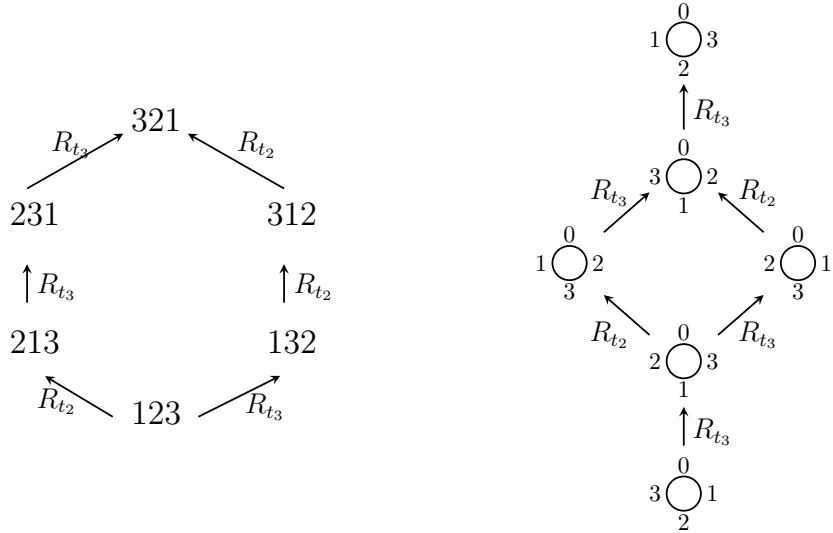
\begin{figure}[t]
    \centering
   
    \scalebox{0.9}{\tikzset{every picture/.style={line width=0.75pt}} 

\begin{tikzpicture}[x=0.75pt,y=0.75pt,yscale=-0.8,xscale=0.8]

\draw[-stealth]    (60,200) -- (127.4,161.49) ;
\draw[-stealth]    (240,200) -- (172.6,161.49) ;
\draw[-stealth]    (240,280) -- (240,253) ;
\draw[-stealth]    (180,360) -- (237.32,331.34) ;
\draw[-stealth]    (120,360) -- (72.57,331.54) ;
\draw[-stealth]    (60,280) -- (60,253) ;

\draw (65,338) node [anchor=north west][inner sep=0.75pt]    {$R_{t_{2}}$};
\draw (211,342) node [anchor=north west][inner sep=0.75pt]    {$R_{t_{3}}$};
\draw (75,158) node [anchor=north west][inner sep=0.75pt]    {$R_{t_{3}}$};
\draw (205,158) node [anchor=north west][inner sep=0.75pt]    {$R_{t_{2}}$};
\draw (245,258) node [anchor=north west][inner sep=0.75pt]    {$R_{t_{2}}$};
\draw (65,258) node [anchor=north west][inner sep=0.75pt]    {$R_{t_{3}}$};

\draw (131,348) node [anchor=north west][inner sep=0.75pt]  [font=\large  ]  {$123$};
\draw (46,298) node [anchor=north west][inner sep=0.75pt]  [font=\large  ]  {$213$};
\draw (226,298) node [anchor=north west][inner sep=0.75pt]  [font=\large  ]  {$132$};
\draw (226,208) node [anchor=north west][inner sep=0.75pt]  [font=\large  ]  {$312$};
\draw (131,142) node [anchor=north west][inner sep=0.75pt]  [font=\large  ]  {$321$};
\draw (46,208) node [anchor=north west][inner sep=0.75pt]  [font=\large  ]  {$231$};

\begin{scope}[scale=0.8,xshift=150,yshift=70]
\draw (450,116-90) circle (15);
\draw (450,91-90) node   [font=\footnotesize]  {$0$};
\draw (425,116-90) node   [font=\footnotesize]  {$1$};
\draw (450,141-90) node   [font=\footnotesize]  {$2$};
\draw (475,116-90) node   [font=\footnotesize]  {$3$};

\draw (450,206-60) circle (15);
\draw (450,181-60) node   [font=\footnotesize]  {$0$};
\draw (450,231-60) node   [font=\footnotesize]  {$1$};
\draw (475,206-60) node   [font=\footnotesize]  {$2$};
\draw (425,206-60) node   [font=\footnotesize]  {$3$};

\draw (350,243-11-10) circle (15);
\draw (350,243-25-11-10) node   [font=\footnotesize]  {$0$};
\draw (325,243-11-10) node   [font=\footnotesize]  {$1$};
\draw (375,243-11-10) node   [font=\footnotesize]  {$2$};
\draw (350,243+25-11-10) node   [font=\footnotesize]  {$3$};

\draw (550,243-11-10) circle (15);
\draw (550,243-25-11-10) node   [font=\footnotesize]  {$0$};
\draw (575,243-11-10) node   [font=\footnotesize]  {$1$};
\draw (525,243-11-10) node   [font=\footnotesize]  {$2$};
\draw (550,243+25-11-10) node   [font=\footnotesize]  {$3$};

\draw (450,298+10) circle (15);
\draw (450,273+10) node   [font=\footnotesize]  {$0$};
\draw (450,323+10) node   [font=\footnotesize]  {$1$};
\draw (425,298+10) node   [font=\footnotesize]  {$2$};
\draw (475,298+10) node   [font=\footnotesize]  {$3$};

\draw (450,384+40) circle (15);
\draw (450,359+40) node [font=\footnotesize] {$0$};
\draw (475,384+40) node [font=\footnotesize] {$1$};
\draw (450,409+40) node [font=\footnotesize] {$2$};
\draw (425,384+40) node [font=\footnotesize] {$3$};


\draw[-stealth]    (450,105) -- (450,65);
\draw (475,90) node {$R_{t_{3}}$};

\draw[-stealth]    (380,195) -- (420,160) ;
\draw (385,160) node {$R_{t_{3}}$};

\draw[-stealth]    (520,195) -- (480,160) ;
\draw (515,160) node {$R_{t_{2}}$};

\draw[-stealth]    (420,285) -- (380,250) ;
\draw (385,285) node {$R_{t_{2}}$};

\draw[-stealth]    (480,285) -- (520,250) ;
\draw (515,285) node {$R_{t_{3}}$};

\draw[-stealth]    (450,382) -- (450,348);
\draw (475,370) node {$R_{t_{3}}$};

\end{scope}

\end{tikzpicture}}
    
    \caption{The weak orders for the graph $\oru(3)$ corresponding to the framings of Figure \ref{fig:222poset} encoded as final summaries.}
    \label{fig:222final_summaries}
\end{figure}

\begin{figure}[t]
    \centering
    \scalebox{0.9}{\input{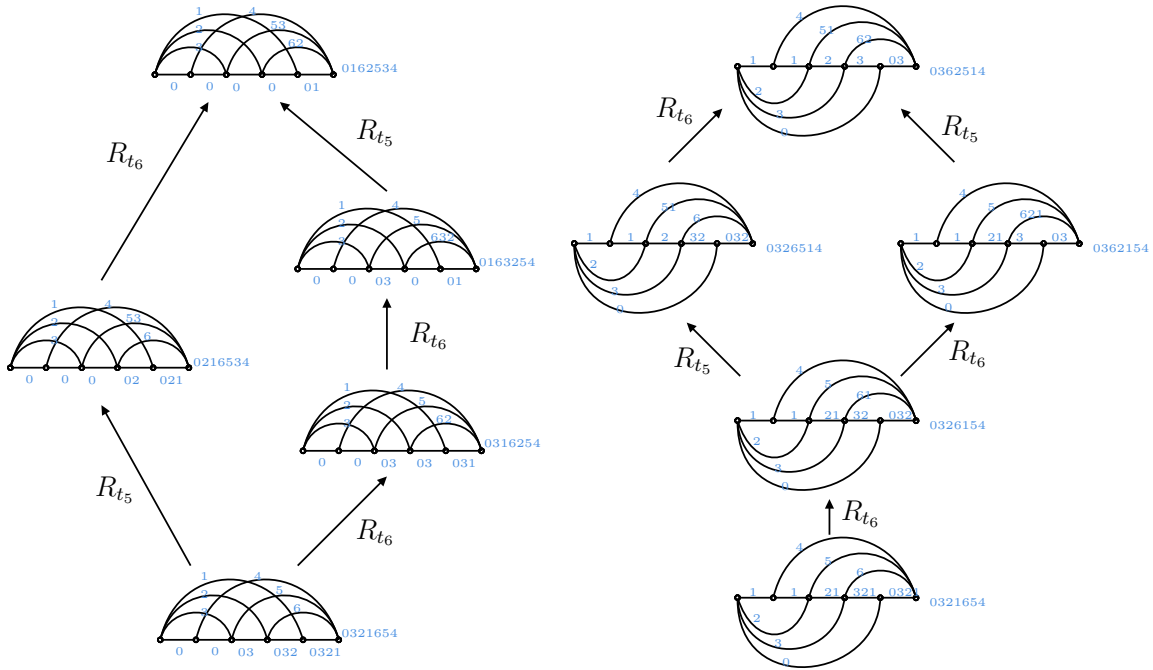}}
    \caption{The weak orders for the graph $\car(3)$ corresponding to two different framings: the length framing and the planar framing.}   
    \label{fig:caracolposets}
\end{figure}

\begin{figure}[t]
    \centering
 
    \scalebox{0.9}{\tikzset{every picture/.style={line width=0.75pt}} 

\begin{tikzpicture}[x=0.75pt,y=0.75pt,yscale=-1,xscale=1]

\begin{scope}[yshift=20]
\draw[-stealth] (60,240) -- (120,162.26);
\draw[-stealth] (240,200) -- (180,161.49);
\draw[-stealth] (240,280) -- (240,240);
\draw[-stealth] (180,360) -- (240,320);
\draw[-stealth] (120,360) -- (60,290);

\draw (65,328.4) node [anchor=north west][inner sep=0.75pt]    {$R_{t_{5}}$};
\draw (211,342.4) node [anchor=north west][inner sep=0.75pt]    {$R_{t_{6}}$};
\draw (71,178.4) node [anchor=north west][inner sep=0.75pt]    {$R_{t_{6}}$};
\draw (205,158.4) node [anchor=north west][inner sep=0.75pt]    {$R_{t_{5}}$};
\draw (245,258.4) node [anchor=north west][inner sep=0.75pt]    {$R_{t_{6}}$};

\draw (136,152.4) node [anchor=north west][inner sep=0.75pt]  [font=\large]  {$321$};
\draw (44,264.4) node [anchor=north west][inner sep=0.75pt]  [font=\large]  {$231$};
\draw (226,212.4) node [anchor=north west][inner sep=0.75pt]  [font=\large]  {$312$};
\draw (226,292.4) node [anchor=north west][inner sep=0.75pt]  [font=\large]  {$132$};
\draw (136,352.4) node [anchor=north west][inner sep=0.75pt]  [font=\large]  {$123$};
\end{scope}

\begin{scope}[scale=17, xshift=20, yshift=20, yscale=-1]
	\draw[fill, color=gray!33] (0,0) rectangle (1,1);
	\draw[fill, color=gray!33] (1,1) rectangle (2,2);
	\draw[fill, color=gray!33] (2,2) rectangle (3,3);
	\draw[step=1.0,very thin, color=gray!100] (0,0) grid (3,3);
					
	\draw[very thick, color=red] (0,0)--(0,3)--(3,3);
\end{scope}

\begin{scope}[scale=17, xshift=20, yshift=16, yscale=-1]
	\draw[fill, color=gray!33] (0,0) rectangle (1,1);
	\draw[fill, color=gray!33] (1,1) rectangle (2,2);
	\draw[fill, color=gray!33] (2,2) rectangle (3,3);
	\draw[step=1.0,very thin, color=gray!100] (0,0) grid (3,3);
					
	\draw[very thick, color=red] (0,0)--(0,2)--(1,2)--(1,3)--(3,3);
\end{scope}

\begin{scope}[scale=17, xshift=16, yshift=12, yscale=-1]
	\draw[fill, color=gray!33] (0,0) rectangle (1,1);
	\draw[fill, color=gray!33] (1,1) rectangle (2,2);
	\draw[fill, color=gray!33] (2,2) rectangle (3,3);
	\draw[step=1.0,very thin, color=gray!100] (0,0) grid (3,3);
					
	\draw[very thick, color=red] (0,0)--(0,2)--(2,2)--(2,3)--(3,3);
\end{scope}

\begin{scope}[scale=17, xshift=24, yshift=12, yscale=-1]
	\draw[fill, color=gray!33] (0,0) rectangle (1,1);
	\draw[fill, color=gray!33] (1,1) rectangle (2,2);
	\draw[fill, color=gray!33] (2,2) rectangle (3,3);
	\draw[step=1.0,very thin, color=gray!100] (0,0) grid (3,3);
					
	\draw[very thick, color=red] (0,0)--(0,1)--(1,1)--(1,3)--(3,3);
\end{scope}

\begin{scope}[scale=17, xshift=20, yshift=8, yscale=-1]
	\draw[fill, color=gray!33] (0,0) rectangle (1,1);
	\draw[fill, color=gray!33] (1,1) rectangle (2,2);
	\draw[fill, color=gray!33] (2,2) rectangle (3,3);
	\draw[step=1.0,very thin, color=gray!100] (0,0) grid (3,3);
					
	\draw[very thick, color=red] (0,0)--(0,1)--(1,1)--(1,2)--(2,2)--(2,3)--(3,3);
\end{scope}

\draw[-stealth] (479+60,277-62) -- (479+30+5,303-116);
\draw[-stealth] (479-60,277-62) -- (479-30-5,303-116);

\draw[-stealth] (479+30+5,303) -- (479+60,277);
\draw[-stealth] (479-30-5,303) -- (479-60,277);

\draw[-stealth] (479,395) -- (479,370);

\draw (479-60,195) node {$R_{t_{6}}$};
\draw (479+60,195) node {$R_{t_{5}}$};

\draw (479-60,300) node {$R_{t_{5}}$};
\draw (479+60,300) node {$R_{t_{6}}$};

\draw (479+20,382) node {$R_{t_{6}}$};

\end{tikzpicture}}
    \caption{The weak orders for the graph $\car(3)$ corresponding to the framings in Figure \ref{fig:caracolposets} encoded with objects directly translated from the final summaries.}   
    \label{fig:caracolposets_objects}
\end{figure}
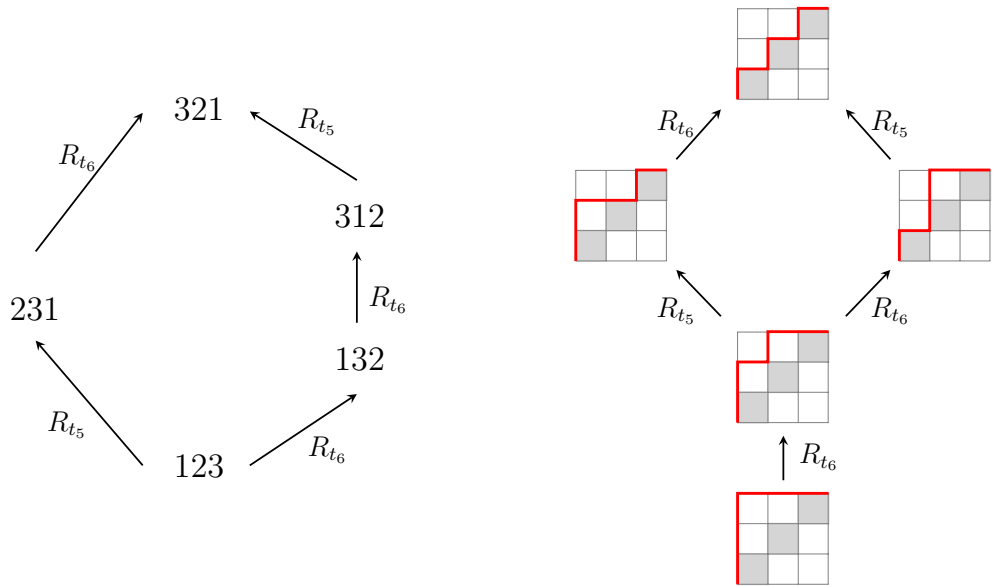

\begin{example}[The Caracol graph] Another example that is fundamental in the recent combinatorial study of flow polytopes is known as the \defn{Caracol} graph $\car(n)$ as was named by the authors of \cite{BenedettiGonzalezDleonHanusaHarrisKhareMoralesYip2019}. It is the graph with edges of the form $(0,i)$, $(i-1,i)$ and $(i,n)$ for $i\in [n-1]$ together with the edge $(n-1,n)$. The authors there show that the polytope $\calF_{\car(n)}$ has volume given by the Catalan number $\Cat(n-2)$. In Figure \ref{fig:caracolposets} two framings of $\car(5)$ are illustrated, the \defn{length} framing on the left and the \defn{planar} framing on the right. In work of the first and third author together with von Bell and Mayorga Cetina
\cite{BellGonzalezMayorgaYip2023} it was shown that $W(\car(n),F)$ with the length framing is isomorphic to the Tamari lattice and with the planar framing is isomorphic to the order ideal determined by the staircase partition $(n-3,n-4,\dots, 2,1)$. Indeed, in terms of permutation flows the isomorphism is given both by the final summary map $\zeta$. In the case of the length framing it is enough to restrict the final summary to the letters in  $\{1,2,3\}$ and then mirror the values of the labels,  $1\mapsto 3$, $2\mapsto 2$, and $3\mapsto 1$, to obtain a description of the Tamari lattice in terms of $231$-avoiding permutations (See Figure \ref{fig:caracolposets_objects} left).  In the planar framing it is enough to replace the elements of the set $\{4,5,6\}$ by north steps $N$ and the elements of the set $\{1,2,3\}$ by east steps $E$ to get a Dyck path that has the corresponding partition in Young's lattice as co-area (See Figure \ref{fig:caracolposets_objects} right).

A generalization $\car(\nu)$ of $\car(n)$ to a weak composition $\nu$ (equivalent to $\car(\bs)$) was defined and studied in \cite{BellGonzalezMayorgaYip2023}, such that $\car(n)=\car(1,1,\dots,1)$.
\end{example}

\begin{example}[Permutree lattices]
In work of the first and third author together with Morales, Philippe, and Tamayo Jim\'enez the authors show that the families of permutree lattices and permutree lattice quotients as defined by Pilaud and Pons in \cite{PilaudPons2018} can be realized by indexing the simplexes of $\DKK(G,F)$ where 
the framed graph $(G,F)$ is obtained from the planar framing of $\oru(n)$ after a series of applications of an operation on framed graphs called \defn{M-move} which  replaces an edge $(i,j)$ by the pair of edges $(0,j)$ and $(i,n)$ while respecting the framing orders at $i$ and $j$. (see the dissertation of Tamayo Jim\'enez~\cite[Chapter 7]{TamayoJimenez2023}).  Permutree lattices contain as particular examples the weak order on permutations, the Tamari lattice on binary trees, the boolean algebra on binary sequences, and the Cambrian lattices.
\end{example}

\newpage
\section{The weak order on permutation flows and the \texorpdfstring{$h^*$}{h*}-polynomial of \texorpdfstring{$\calF_G$}{F sub G}}\label{sec:hstarpoly}

The combinatorics of permutation flows given in Section \ref{sec:permutationflows} translates into useful combinatorial descriptions of the weak order $W(G,F)$ depending on the supporting family of objects we choose to use. In this section we describe how this order works on $\SatGroves(G,F)$ and $\SatPermutationFlows(G,F)$, and combine these together with the original definition on $\SatCliques(G,F)$ to give a combinatorial description of the $h^*$-polynomial of $\calF_G$.

In Definitions~\ref{def:minimal_exchange_quadrangle_cliques} and \ref{def:weak_order_cliques} we define cover relations on minimal exchange quadrangles that lead to the definition of the weak order. We translate these definitions to the context of saturated groves and saturated permutation flows.

\subsection{Raising and lowering operations on Groves}
\label{sec:raising_operators_groves}
\phantom{W}

\begin{definition}\label{def:minimal_exchange_quadrangle_groves}
An \defn{exchange quadrangle} on $\Gamma \in \SatGroves(G,F)$ at an edge $t=(v,w)$ is a tuple $(P,Q,e,t)$ formed by a pair of consecutive prefixes $P\precdot_{L(\gamma_v)} Q$ and a pair of consecutive edges $e\precdot_{L(\gamma_v)}t$ such that $(P,e)$, $(Q,t)$, and exactly one of either $(P,t)$ or $(Q,e)$ are in $\gamma_v$. Furthermore, we say that the exchange quadrangle is \defn{minimal} if $\Splits(\Gamma, (P,t))=\emptyset$ or $\Splits(\Gamma, (Q,e))=\emptyset$ depending on which edge is in $\gamma_n$. We will say that the edge $(P,t)$ is \defn{ascending} and the edge $(Q,e)$ is \defn{descending} at the quadrangle. 
\end{definition}

Let $\Gamma \in \Groves(G,F)$, $P$ be a prefix in $L(\gamma_v)$ that has direct splits at some vertex $v$, and $(P,t)$ be one of the edges in $\gamma_v$. We denote by $\Gamma \setminus (P,t)\in \Groves(G,F)$ to be the grove obtained by removing $(P,t)$ and all prefixes and grove edges that are extensions of $Pt$. 

Observe that the maximality of $\Gamma \in \SatGroves(G,F)$, implies that a minimal exchange quadrangle on $\Gamma$ has the property that $\Gamma \setminus (P,t)$  (or $\Gamma \setminus (Q,e)$) has a unique saturation that contains $(Q,e)$ (or $(P,t)$).
Indeed, if there were a choice for this saturation then $\Splits(\Gamma;(P,t))\neq \emptyset$ ($\Splits(\Gamma;(Q,e))\neq \emptyset$). We will denote by $R_t(\Gamma) \in \SatGroves(G,F)$ the unique grove obtained by exchanging the unique saturation containing $(P,t)$ to the unique saturation containing $(Q,e)$ and $L_t(\Gamma)\in  \SatGroves(G,F)$ the unique grove obtained from exchanging the unique saturation containing $(Q,e)$ to the unique saturation containing $(P,t)$. These define a pair of operations
\begin{align*}
R_{t},L_{t}&:\SatGroves(G,F)\rightarrow \SatGroves(G,F).
\end{align*}

\begin{proposition}\label{proposition:weak_order_groves}
    The poset $W(G,F)$ is isomorphic to the poset on $\SatGroves(G,F)$ where cover relations are of the form $\Gamma \lessdot R_t(\Gamma)$ (equivalently $L_t(\Gamma) \lessdot \Gamma $) for $t \in \Splits(\Gamma)$.
\end{proposition}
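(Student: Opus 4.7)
The plan is to leverage the poset isomorphism $\calC: \SatGroves(G,F) \to \SatCliques(G,F)$ described in Section~\ref{sec:groves_10001}, which sends $\Gamma \mapsto L(\gamma_n)$, and to translate cover relations across this bijection. Since both orders are defined by transitive closure of their cover relations, it suffices to put the cover relations of the two posets in bijection in a way that preserves direction.

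The first step is to establish a bijection between minimal exchange quadrangles in groves and minimal exchange quadrangles in saturated cliques. Given a minimal exchange quadrangle $\{R, R', RuR', R'uR\}$ in a saturated clique $\calC$ (in the sense of Definition~\ref{def:minimal_exchange_quadrangle_cliques}), with minimal conflict along a maximal common subpath $O = v_1 R v_2 = v_1 R' v_2$, let $P := Rv_1 = R'v_1$ be the common prefix up to the first divergence vertex $v=v_1$, and let $e \prec_{\outedge(v)} t$ be the outgoing edges at $v$ taken by the descending and ascending routes, respectively. Let $Q$ be the unique element of $L(\gamma_v)$ that is consecutive with $P$ and carries the other side of the exchange. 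The data $(P, Q, e, t)$ is then an exchange quadrangle at $t=(v,w)$ in the sense of Definition~\ref{def:minimal_exchange_quadrangle_groves}, and I would verify that minimality in the clique sense (single maximal common conflict subpath) is equivalent to $\Splits(\Gamma,(P,t)) = \emptyset$ or $\Splits(\Gamma,(Q,e)) = \emptyset$; in both cases the condition asserts that the exchanged route is determined uniquely on either side of the conflict.

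The second step is to verify that $R_t(\Gamma)$ and $L_t(\Gamma)$ are well-defined and correspond to the covers $\lessdot$ on $\SatCliques(G,F)$. Saturation together with the minimality condition guarantees that removing the ``bad'' edge $(P,t)$ (or $(Q,e)$) from $\Gamma$ admits a \emph{unique} resaturation passing through the replacement edge; this is because the absence of further direct splits means the subgrove above the swap is forced. Under $\calC$, this unique resaturation corresponds to replacing exactly one route (the descending one at the minimal conflict) with exactly one other route (the ascending one), which is precisely the content of Theorem~\ref{theorem:neighboring_simplices}. Matching ascending/descending in Definition~\ref{def:minimal_exchange_quadrangle_groves} to ascending/descending in the clique definition then ensures that $\Gamma \lessdot R_t(\Gamma)$ corresponds to $\calC(\Gamma) \lessdot \calC(R_t(\Gamma))$, and conversely every cover relation in $W(G,F)$ arises in this way.

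The main obstacle will be the careful verification that the notion of ``minimal'' in the grove setting—phrased in terms of the emptiness of $\Splits(\Gamma,(P,t))$ or $\Splits(\Gamma,(Q,e))$—truly corresponds to a \emph{single} maximal common subpath of conflict in the clique picture. In principle, a non-minimal grove exchange quadrangle could involve splits further down the graph that would force multiple route exchanges simultaneously, producing a non-cover comparison in $W(G,F)$. I expect the key technical lemma to be a statement that in a saturated grove, the presence of a split in $\Splits(\Gamma,(P,t))$ is exactly what produces an additional conflict between the routes obtained on either side of the swap, so that the grove minimality condition is equivalent to the clique minimality condition. Once this equivalence is pinned down, the rest of the argument follows by combining it with the already-established poset isomorphism $\calC$.
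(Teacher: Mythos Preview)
The paper is a research announcement and does not supply a proof of this proposition, so there is no reference argument to compare against. Your overall architecture—transporting cover relations across the bijection $\Gamma \mapsto L(\gamma_n)$ between $\SatGroves(G,F)$ and $\SatCliques(G,F)$, and matching minimal exchange quadrangles on each side—is the natural one and should succeed.

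There is, however, a genuine error in how you set up the correspondence. You write ``let $P := Rv_1 = R'v_1$ be the common prefix up to the first divergence vertex $v = v_1$,'' but if $R$ and $R'$ are in conflict along the maximal common subpath $v_1 R v_2 = v_1 R' v_2$, then $Rv_1 \neq R'v_1$: by maximality of the subpath the two routes arrive at $v_1$ via \emph{different} incoming edges, so their prefixes at $v_1$ are distinct. Two routes that share a common prefix and then diverge are coherent along that prefix, not in conflict. The grove exchange quadrangle actually lives at $v_2$, where the routes leave the common subpath via different edges of $\outedge(v_2)$. Concretely, if $R \in \calC$ is descending and $R' \in \calC'$ is ascending, then $R'v_2 \prec Rv_2$ in $\Prefixes(v_2)$ while the outgoing edge of $R$ at $v_2$ precedes that of $R'$; setting $P = R'v_2$, $Q = Rv_2$, and $e \prec t$ to be these two outgoing edges recovers the tuple $(P,Q,e,t)$ of Definition~\ref{def:minimal_exchange_quadrangle_groves}. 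In $\Gamma(\calC)$ the edges $(P,e),(Q,e),(Q,t)$ are present and $(P,t)$ is absent; the grove minimality condition $\Splits(\Gamma;(Q,e)) = \emptyset$ is exactly what guarantees that removing the descending grove edge $(Q,e)$ deletes only the single route $R$ from $L(\gamma_n)$, which is the translation of ``minimal conflict'' you were seeking. Once this identification is corrected, your plan goes through.
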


\begin{remark} Moves of the form $\Gamma \mapsto R_t(\Gamma)$ are referred as to \defn{\textup{S}-to-\textup{Z}} and the ones of the form $\Gamma \mapsto L_t(\Gamma)$ as \defn{\textup{Z}-to-\textup{S}} in the context of the zigzag graph in \cite{BrunnerHanusa2024}.
\end{remark}

\subsection{Raising and lowering operations on Permutation Flows}
\label{sec:raising_operators_permuflows}
\phantom{W}

\begin{definition}
\label{def:ascent_descent}
For $\pi \in \SatPermutationFlows(G,F)$ an \defn{ascent} is a pair $(e,t)$ where $t$ is a direct split of $e$ at a vertex $v$, and if $e^*$ is the outgoing carrier of $e$ at $v$ then $\pi(e^*)\neq e$ and $\Splits(\pi;e,e^*)=\emptyset$.

A \defn{descent} is a pair $(t,e)$ where $t$ is a split, $e$ is the second letter of $\pi(t)$, and $\Splits(\pi;t,t)=\emptyset$. Equivalently, $t$ and $e$ appear consecutively in all carriers of $t$.
\end{definition}

If $(e,t)$ is an ascent of $\pi$ and $e^*$ is as in Definition~\ref{def:ascent_descent}, let $\pi-(e,e^*)$ be the element of $\PermutationFlows(G,F)$ obtained from $\pi$ by removing $e$ from $\pi(e^*)$ and from every edge thereafter. If $(t,e)$ is a descent of $\pi$ we define $\pi-(t,e)$ by removing $e$ from $\pi(t)$ and from every edge thereafter.

\begin{definition}
\label{def:raising}
Let $(G,F)$ be a framed graph and 
define the \defn{raising operator} 
\[
R_{t}:\SatPermutationFlows(G,F)\rightarrow \SatPermutationFlows(G,F)
\] as follows. 
Let $\pi\in\SatPermutationFlows(G,F)$ and $(e,t)$ be an ascent of $\pi$ where $e$ is the last entry of $\pi(e^*)$. Define $R_t(\pi)$ to be the permutation flow $\pi'$ obtained as the unique saturation of $\pi-(e,e^*)$ in which the second letter of $\pi'(t)$ is $e$. Otherwise, if there is no ascent at $t$ then $R_t$ acts trivially and we define $R_t(\pi)=\pi$. 
 
In a similar manner we define the \defn{lowering operator} 
\[L_{t}:\SatPermutationFlows(G,F)\rightarrow \SatPermutationFlows(G,F)\] as the inverse operation that takes $\pi'$ to $\pi$. That is, if $(t,e)$ is a descent of $\pi$ with $e^* \prec_{\hat\outedge(v)} t$ then $L_t(\pi')$ is the unique saturation of $\pi'-(e;t)$ in which the last letter of $\pi(e^*)$ is $e$. Otherwise, if there is no descent at $t$ then $L_t$ acts trivially and we define $L_t(\pi')=\pi$. 
\end{definition}

\begin{remark}
The raising and lowering operators have equivalent descriptions.

Equivalently, delete the letter $e$ in $e^*$ and all edges thereafter; then add the letter $e$ after $t$ in all carriers of $t$. 

This operation can equivalently be described as deleting the letter $e$ in $t$ and all edges thereafter and reinserting the letter $e$ as follows. Let $l$ be the last letter of $\pi(e^*)$. Insert $e$ after $l$ on carriers of $l$ until $l$ splits, and if $t'$ is the highest of these splits, $e$ should be inserted after $t'$ in carriers of $t'$, up until the point where $t'$ splits, and so on.
\end{remark}

\begin{example}
\label{ex:raising}
Consider the action of the raising operators $R_{t_1}$ through $R_{t_7}$ on the permutation flow $\pi$ from Figure~\ref{fig:permutation_flow_example}. 

\begin{itemize}
    \item We see that $R_{t_1}$, $R_{t_2}$, and $R_{t_4}$ act trivially on $\pi$ because the words on the edges preceding $t_1$, $t_2$, and $t_4$ in the order on outgoing edges ($t_2$, $s_0$, and $s_1$, respectively) are all singletons ($t_2$, $x_0$, and $x_0$, respectively).
    \item We see that $R_{t_3}$ acts trivially on $\pi$ because the last letter of $\pi(t_4)$ is $t_2$, and so is the last letter of $\pi(s_2)$, and $s_2$ comes after $t_4$.
    \item The actions of the raising operators $R_{t_5}$, $R_{t_6}$, and $R_{t_7}$ on $\pi$ are given in Figure~\ref{fig:3liftings}. The raising operator $R_{t_5}$ raises the letter $t_2$, deletes every occurrence of $t_2$ after the edge $s_2$, and replaces every occurrence of $t_5$ with $t_5t_2$.
    The operator $R_{t_6}$ raises the letter $x_0$, and $R_{t_7}$ raises the letter $t_3$.  
\end{itemize}
We invite the reader to check that $\pi$ has three non-trivial lowering operators: $L_{t_4}$, $L_{t_5}$, and~$L_{t_7}$.
\end{example}

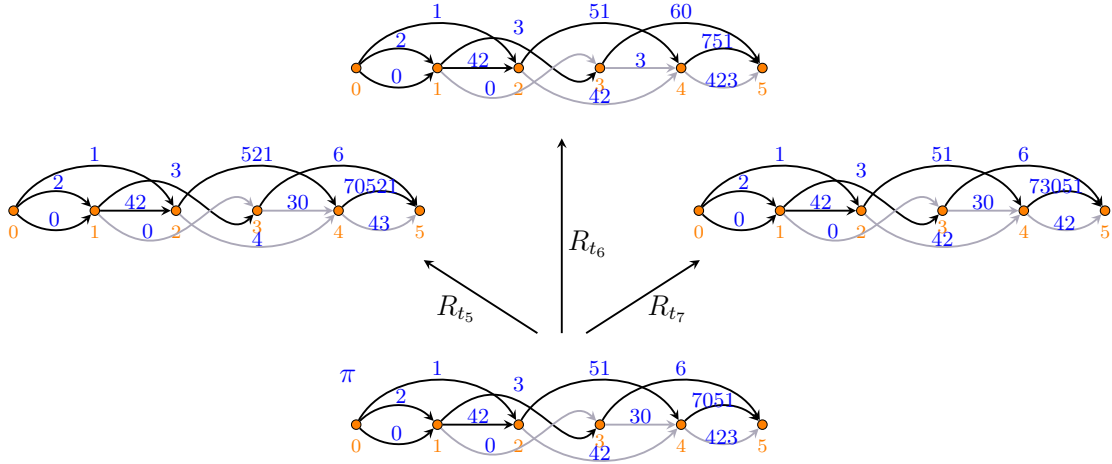
\begin{figure}[t]
    \centering
    \scalebox{0.9}{\begin{tikzpicture}[scale=1.2]
\begin{scope}[xshift=72, yshift=30]
    \draw[-stealth, thick] (0,0.6) to (0,3);
    \draw[-stealth, thick] (0.3,0.6) to (1.7,1.5);
    \draw[-stealth, thick] (-0.3,0.6) to (-1.7,1.5);
\node[] at (-1.3,0.9) {$R_{t_5}$};
\node[] at (0.32,1.7) {$R_{t_6}$};
\node[] at (1.3,0.9) {$R_{t_7}$};
\end{scope}

\begin{scope}[xshift=-120, yshift=90, scale=1.0]
	\vertex[fill=orange, minimum size=4pt, label=below:{\tiny\textcolor{orange}{$0$}}](v0) at (0,0) {};
	\vertex[fill=orange, minimum size=4pt, label=below:{\tiny\textcolor{orange}{$1$}}](v1) at (1,0) {};
	\vertex[fill=orange, minimum size=4pt, label=below:{\tiny\textcolor{orange}{$2$}}](v2) at (2,0) {};
	\vertex[fill=orange, minimum size=4pt, label={[label distance=-2pt]below:{\tiny\textcolor{orange}{$3$}}}](v3) at (3,0) {};
	\vertex[fill=orange, minimum size=4pt, label=below:{\tiny\textcolor{orange}{$4$}}](v4) at (4,0) {};
	\vertex[fill=orange, minimum size=4pt, label=below:{\tiny\textcolor{orange}{$5$}}](v5) at (5,0) {};		

\draw[-stealth, thick] (v0) to [out=45,in=135] (v1);
\draw[-stealth, thick] (v0) to [out=-45,in=-135] (v1);
\draw[-stealth, thick] (v0) to [out=60,in=120] (v2);
\draw[-stealth, thick] (v1) to [out=0,in=180] (v2);
\draw[-stealth, thick] (v1) .. controls (2.0, 1.0) and (2.5, -0.5) .. (v3);
\draw[-stealth, thick, color=lavgray] (v1) .. controls (2.0, -1.0) and (2.5, 0.5) .. (v3);	
\draw[-stealth, thick, color=lavgray] (v2) to [out=-45,in=-135] (v4);
\draw[-stealth, thick] (v2) to [out=60,in=120] (v4);
\draw[-stealth, thick, color=lavgray] (v3) to [out=0,in=180] (v4);
\draw[-stealth, thick] (v3) to [out=60,in=120] (v5);
\draw[-stealth, thick, color=lavgray] (v4) to [out=-45,in=-135] (v5);
\draw[-stealth, thick] (v4) to [out=45,in=135] (v5);

\node[] at (4,0.7){\scriptsize\textcolor{blue}{$6$}};
\node[] at (4.4, 0.3){\scriptsize\textcolor{blue}{$70521$}};
\node[] at (3,0.7){\scriptsize\textcolor{blue}{$521$}};
\node[] at (2, 0.5){\scriptsize\textcolor{blue}{\scriptsize$3$}};
\node[] at (1, 0.7){\scriptsize\textcolor{blue}{\scriptsize$1$}};
\node[] at (1.5, 0.1){\scriptsize\textcolor{blue}{$42$}};
\node[] at (0.55, 0.35){\scriptsize\textcolor{blue}{$2$}};
\node[] at (0.5, -0.1){\scriptsize\textcolor{blue}{$0$}};
\node[] at (1.65, -0.25){\scriptsize\textcolor{blue}{$0$}};
\node[] at (3, -0.35){\scriptsize\textcolor{blue}{$4$}};
\node[] at (3.5, 0.1){\scriptsize\textcolor{blue}{$30$}};
\node[] at (4.5,-0.15){\scriptsize\textcolor{blue}{$43$}};
\end{scope}

\begin{scope}[xshift=0, yshift=140, scale=1.0]
	\vertex[fill=orange, minimum size=4pt, label=below:{\tiny\textcolor{orange}{$0$}}](v0) at (0,0) {};
	\vertex[fill=orange, minimum size=4pt, label=below:{\tiny\textcolor{orange}{$1$}}](v1) at (1,0) {};
	\vertex[fill=orange, minimum size=4pt, label=below:{\tiny\textcolor{orange}{$2$}}](v2) at (2,0) {};
	\vertex[fill=orange, minimum size=4pt, label={[label distance=-2pt]below:{\tiny\textcolor{orange}{$3$}}}](v3) at (3,0) {};
	\vertex[fill=orange, minimum size=4pt, label=below:{\tiny\textcolor{orange}{$4$}}](v4) at (4,0) {};
	\vertex[fill=orange, minimum size=4pt, label=below:{\tiny\textcolor{orange}{$5$}}](v5) at (5,0) {};		

\draw[-stealth, thick] (v0) to [out=45,in=135] (v1);
\draw[-stealth, thick] (v0) to [out=-45,in=-135] (v1);
\draw[-stealth, thick] (v0) to [out=60,in=120] (v2);
\draw[-stealth, thick] (v1) to [out=0,in=180] (v2);
\draw[-stealth, thick] (v1) .. controls (2.0, 1.0) and (2.5, -0.5) .. (v3);
\draw[-stealth, thick, color=lavgray] (v1) .. controls (2.0, -1.0) and (2.5, 0.5) .. (v3);	
\draw[-stealth, thick, color=lavgray] (v2) to [out=-45,in=-135] (v4);
\draw[-stealth, thick] (v2) to [out=60,in=120] (v4);
\draw[-stealth, thick, color=lavgray] (v3) to [out=0,in=180] (v4);
\draw[-stealth, thick] (v3) to [out=60,in=120] (v5);
\draw[-stealth, thick, color=lavgray] (v4) to [out=-45,in=-135] (v5);
\draw[-stealth, thick] (v4) to [out=45,in=135] (v5);

\node[] at (4,0.7){\scriptsize\textcolor{blue}{$60$}};
\node[] at (4.45, 0.33){\scriptsize\textcolor{blue}{$751$}};
\node[] at (3,0.7){\scriptsize\textcolor{blue}{$51$}};
\node[] at (2, 0.5){\scriptsize\textcolor{blue}{$3$}};
\node[] at (1, 0.7){\scriptsize\textcolor{blue}{$1$}};
\node[] at (1.5, 0.1){\scriptsize\textcolor{blue}{$42$}};
\node[] at (0.55, 0.35){\scriptsize\textcolor{blue}{$2$}};
\node[] at (0.5, -0.1){\scriptsize\textcolor{blue}{$0$}};
\node[] at (1.65, -0.25){\scriptsize\textcolor{blue}{$0$}};
\node[] at (3, -0.35){\scriptsize\textcolor{blue}{$42$}};
\node[] at (3.5, 0.1){\scriptsize\textcolor{blue}{$3$}};
\node[] at (4.5,-0.15){\scriptsize\textcolor{blue}{$423$}};
\end{scope}

\begin{scope}[xshift=120, yshift=90, scale=1.0]
	\vertex[fill=orange, minimum size=4pt, label=below:{\tiny\textcolor{orange}{$0$}}](v0) at (0,0) {};
	\vertex[fill=orange, minimum size=4pt, label=below:{\tiny\textcolor{orange}{$1$}}](v1) at (1,0) {};
	\vertex[fill=orange, minimum size=4pt, label=below:{\tiny\textcolor{orange}{$2$}}](v2) at (2,0) {};
	\vertex[fill=orange, minimum size=4pt, label={[label distance=-2pt]below:{\tiny\textcolor{orange}{$3$}}}](v3) at (3,0) {};
	\vertex[fill=orange, minimum size=4pt, label=below:{\tiny\textcolor{orange}{$4$}}](v4) at (4,0) {};
	\vertex[fill=orange, minimum size=4pt, label=below:{\tiny\textcolor{orange}{$5$}}](v5) at (5,0) {};		

\draw[-stealth, thick] (v0) to [out=45,in=135] (v1);
\draw[-stealth, thick] (v0) to [out=-45,in=-135] (v1);
\draw[-stealth, thick] (v0) to [out=60,in=120] (v2);
\draw[-stealth, thick] (v1) to [out=0,in=180] (v2);
\draw[-stealth, thick] (v1) .. controls (2.0, 1.0) and (2.5, -0.5) .. (v3);
\draw[-stealth, thick, color=lavgray] (v1) .. controls (2.0, -1.0) and (2.5, 0.5) .. (v3);	
\draw[-stealth, thick, color=lavgray] (v2) to [out=-45,in=-135] (v4);
\draw[-stealth, thick] (v2) to [out=60,in=120] (v4);
\draw[-stealth, thick, color=lavgray] (v3) to [out=0,in=180] (v4);
\draw[-stealth, thick] (v3) to [out=60,in=120] (v5);
\draw[-stealth, thick, color=lavgray] (v4) to [out=-45,in=-135] (v5);
\draw[-stealth, thick] (v4) to [out=45,in=135] (v5);

\node[] at (4,0.7){\scriptsize\textcolor{blue}{$6$}};
\node[] at (4.4, 0.3){\scriptsize\textcolor{blue}{$73051$}};
\node[] at (3,0.7){\scriptsize\textcolor{blue}{$51$}};
\node[] at (2, 0.5){\scriptsize\textcolor{blue}{$3$}};
\node[] at (1, 0.7){\scriptsize\textcolor{blue}{$1$}};
\node[] at (1.5, 0.1){\scriptsize\textcolor{blue}{$42$}};
\node[] at (0.55, 0.35){\scriptsize\textcolor{blue}{$2$}};
\node[] at (0.5, -0.1){\scriptsize\textcolor{blue}{$0$}};
\node[] at (1.65, -0.25){\scriptsize\textcolor{blue}{$0$}};
\node[] at (3, -0.35){\scriptsize\textcolor{blue}{$42$}};
\node[] at (3.5, 0.1){\scriptsize\textcolor{blue}{$30$}};
\node[] at (4.5,-0.15){\scriptsize\textcolor{blue}{$42$}};
\end{scope}

\begin{scope}[xshift=0, yshift=15, scale=1.0]
\node[] at (-0.1,0.6){\textcolor{blue}{$\pi$}};
	\vertex[fill=orange, minimum size=4pt, label=below:{\tiny\textcolor{orange}{$0$}}](v0) at (0,0) {};
	\vertex[fill=orange, minimum size=4pt, label=below:{\tiny\textcolor{orange}{$1$}}](v1) at (1,0) {};
	\vertex[fill=orange, minimum size=4pt, label=below:{\tiny\textcolor{orange}{$2$}}](v2) at (2,0) {};
	\vertex[fill=orange, minimum size=4pt, label={[label distance=-2pt]below:{\tiny\textcolor{orange}{$3$}}}](v3) at (3,0) {};
	\vertex[fill=orange, minimum size=4pt, label=below:{\tiny\textcolor{orange}{$4$}}](v4) at (4,0) {};
	\vertex[fill=orange, minimum size=4pt, label=below:{\tiny\textcolor{orange}{$5$}}](v5) at (5,0) {};		

\draw[-stealth, thick] (v0) to [out=45,in=135] (v1);
\draw[-stealth, thick] (v0) to [out=-45,in=-135] (v1);
\draw[-stealth, thick] (v0) to [out=60,in=120] (v2);
\draw[-stealth, thick] (v1) to [out=0,in=180] (v2);
\draw[-stealth, thick] (v1) .. controls (2.0, 1.0) and (2.5, -0.5) .. (v3);
\draw[-stealth, thick, color=lavgray] (v1) .. controls (2.0, -1.0) and (2.5, 0.5) .. (v3);	
\draw[-stealth, thick, color=lavgray] (v2) to [out=-45,in=-135] (v4);
\draw[-stealth, thick] (v2) to [out=60,in=120] (v4);
\draw[-stealth, thick, color=lavgray] (v3) to [out=0,in=180] (v4);
\draw[-stealth, thick] (v3) to [out=60,in=120] (v5);
\draw[-stealth, thick, color=lavgray] (v4) to [out=-45,in=-135] (v5);
\draw[-stealth, thick] (v4) to [out=45,in=135] (v5);

\node[] at (4,0.7){\textcolor{blue}{\scriptsize$6$}};
\node[] at (4.4, 0.3){\scriptsize\textcolor{blue}{$7051$}};
\node[] at (3,0.7){\scriptsize\textcolor{blue}{$5$}\textcolor{blue}{$1$}};
\node[] at (2, 0.5){\scriptsize\textcolor{blue}{\scriptsize$3$}};
\node[] at (1, 0.7){\scriptsize\textcolor{blue}{\scriptsize$1$}};
\node[] at (1.5, 0.1){\scriptsize\textcolor{blue}{$42$}};
\node[] at (0.55, 0.35){\scriptsize\textcolor{blue}{$2$}};
\node[] at (0.5, -0.1){\scriptsize\textcolor{blue}{$0$}};
\node[] at (1.65, -0.25){\scriptsize\textcolor{blue}{$0$}};
\node[] at (3, -0.35){\scriptsize\textcolor{blue}{$42$}};
\node[] at (3.5, 0.1){\scriptsize\textcolor{blue}{{\scriptsize}$30$}};
\node[] at (4.5,-0.15){\scriptsize\textcolor{blue}{$423$}};
\end{scope}
\end{tikzpicture}}
    \caption{The effect of the nontrivial raising operators on the permutation flow $\pi$ from Figure~\ref{fig:permutation_flow_example}. See Example~\ref{ex:raising}.}
    \label{fig:3liftings}
\end{figure}

\begin{proposition}\label{proposition:weak_order_permutationflows}
    $W(G,F)$ is isomorphic to the poset on $\SatPermutationFlows(G,F)$ where cover relations are of the form $\pi \lessdot R_t(\pi)$ (equivalently $L_t(\pi) \lessdot \pi $) for $t \in \Splits(\pi)$.
\end{proposition}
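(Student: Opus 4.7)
The plan is to transport Proposition~\ref{proposition:weak_order_groves} through the bijection between $\SatPermutationFlows(G,F)$ and $\SatGroves(G,F)$ described in Section~\ref{sec:groves_10001}, which sends $\pi$ to the grove $\Gamma(\pi)$ whose edge $(P,e)\in \gamma_v$ carries the natural label prescribed by the rule of Figure~\ref{fig:grove_path_labels_1001}. Since this bijection restricts to a bijection on saturated objects (saturation corresponds under this correspondence because in both cases it amounts to the maximality condition that every vertex $v\le n-1$ with more than one outgoing edge contributes all of its splits), it suffices to show that, under this bijection, the raising operator $R_t$ on permutation flows corresponds to the raising operator $R_t$ on groves.

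First I would establish the dictionary between ascents and minimal exchange quadrangles. Let $t=(v,w)$ be a split and $\pi\in \SatPermutationFlows(G,F)$ with associated grove $\Gamma=\Gamma(\pi)$. Given an ascent $(e,t)$ of $\pi$ in the sense of Definition~\ref{def:ascent_descent}, the edge $e^*\in \outedge(v)$ with $e^*\prec_{\outedge(v)} t$ carries a last letter $e$, and in the labeling $\lambda$ on $\gamma_v$ the grove edge terminating at $e^*$ labeled $e$ is of the form $(Q,e^*)$ where $\lambda(Q)=e$. Similarly the grove edge labeled $e$ on $t$ is $(P,t)$ with $Q\precdot_{L(\gamma_v)} P$, and the condition $\Splits(\pi;e,e^*)=\emptyset$ translates into $\Splits(\Gamma,(P,t))=\emptyset$ at the downstream grove, i.e.\ the quadruple $(Q,P,e^*,t)$ is a minimal exchange quadrangle of $\Gamma$ at $t$ in the sense of Definition~\ref{def:minimal_exchange_quadrangle_groves}, with $(P,t)$ ascending. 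The same translation works in reverse, so ascents of $\pi$ at $t$ are in bijection with minimal exchange quadrangles of $\Gamma$ at $t$.

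Next I would show that $R_t$ commutes with the bijection. By Definition~\ref{def:raising}, $R_t(\pi)$ is obtained by deleting the letter $e$ from $\pi(e^*)$ and all its downstream carriers, and inserting it immediately after $t$ in $\pi(t)$ and all downstream carriers. On the grove side, this is precisely the operation of removing the edge $(P,t)$ (and every prefix and grove edge that extends $Pt$), and then taking the unique saturation of the resulting grove that contains $(Q,e^*)$ in its place. Minimality of the exchange quadrangle is exactly what makes both saturations unique, so the comparison reduces to checking that the natural labeling of the new grove reads back the words prescribed by the permutation flow $R_t(\pi)$: every carrier that previously passed through $(P,t)$ now passes through $(Q,e^*)$, and the cascade of label updates upstream/downstream is exactly the cascade of letter deletions/insertions that defines $R_t(\pi)$. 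The analogous statement for $L_t$ is its inverse.

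The hard part will be the bookkeeping in the last paragraph: one needs to track how the natural labeling $\lambda$ of $\gamma_w$ and all subsequent $\gamma_u$ changes when an ascending edge is exchanged for a descending one, and verify that the new labels agree with the recursive recipe that builds the word $\pi'(e)$ on each edge of $G$. This is the technical heart; the induction proceeds vertex by vertex from $v$ to $n$, using the minimality hypothesis $\Splits(\Gamma,(P,t))=\emptyset$ to guarantee that no later exchange quadrangle interferes. Once this is established, the cover relation $\pi\lessdot R_t(\pi)$ corresponds exactly to $\Gamma\lessdot R_t(\Gamma)$, and Proposition~\ref{proposition:weak_order_groves} gives the desired isomorphism of posets.
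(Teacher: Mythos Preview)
The paper is a research announcement and does not supply a proof of this proposition; Proposition~\ref{proposition:weak_order_groves}, on which you rely, is likewise stated without proof. Your overall strategy of transporting the grove description of $W(G,F)$ through the grove--permutation-flow bijection is the natural one and is exactly what the paper's organization suggests.

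However, your central claim that ``the raising operator $R_t$ on permutation flows corresponds to the raising operator $R_t$ on groves'' is false under the paper's current definitions. Work through the $\oru(3)$ example (Figure~\ref{fig:222poset}, planar framing): the bottom element has $\gamma_1$ with edges $(s_0,s_1),(t_1,s_1),(t_1,t_2)$, which is the $Z$-shape with descending edge $(t_1,s_1)$. Applying the permutation-flow $R_{t_2}$ (which goes \emph{up} in Figure~\ref{fig:222poset}) yields the grove with $(s_0,s_1),(s_0,t_2),(t_1,t_2)$, the $S$-shape containing the ascending edge $(s_0,t_2)$. But the grove $R_t$ is defined as the $S$-to-$Z$ move (see the remark after Proposition~\ref{proposition:weak_order_groves}): it removes the ascending $(P,t)$ and inserts the descending $(Q,e)$. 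Thus the permutation-flow $R_t$ matches the grove $L_t$, not the grove $R_t$. This appears to be an internal inconsistency in the paper (one of the grove-$R_t$ definition or the orientation in Proposition~\ref{proposition:weak_order_groves} is reversed), but your argument does not detect it and so cannot be correct as written.

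Your dictionary is also garbled in ways that obscure this. You assert ``the grove edge labeled $e$ on $t$ is $(P,t)$ with $Q\precdot P$'', but before raising there is no grove edge on $t$ carrying the letter $e$; the lowest edge on $t$ is labeled $t$ and the second is labeled $\lambda(Q)$ (paper's convention has $P\precdot Q$). You also silently assume that the carrier $e^*$ from Definition~\ref{def:ascent_descent} coincides with the graph edge $e$ of the exchange quadrangle, but the definition of $e^*$ skips edges with $\pi(e')=e'$, so these need not agree. The ``bookkeeping'' you flag as the hard part is precisely where the orientation issue above would surface, and it has to be carried out carefully enough to catch and repair it.
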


A consequence of the description of $W(G,F)$ in terms of raising and lowering operators is that we can easily identify unique bottom and top elements in $\SatPermutationFlows(G,F)$ by lowering or raising all possible letters.
The \defn{bottom element} of $\SatPermutationFlows(G,F)$, denoted~$\pi^0$ satisfies $\pi^0(s_0)=x_0$, $\pi^0(t)=t$ for each $t \in \Splits(G,F)$, and for every inner vertex $v$, $\pi^0(s_v)=\zeta_v$ is equal to the summary at $v$. On the other end, the \defn{top element} of $\SatPermutationFlows(G,F)$, denoted~$\pi^1$ satisfies $\pi^1(s_v)$ equals the first letter of $\zeta_v$, $\pi^1(t)=t$ for $t \in \Splits(G,F)$ if $t\neq \max(\outedge(v))$, and if $t=\max(\outedge(v))$ then $\pi^1(t)$ is obtained from~$\zeta_v$ by replacing its first letter by $t$.

\begin{figure}[ht!]
    \centering
    \begin{tikzpicture}[scale=1.0]

\begin{scope}[scale=1.3, xshift=-180]
\node[] at (-0.3,0.6){\textcolor{blue}{$\pi^0$}};
	\vertex[fill=orange, minimum size=4pt, label=below:{\tiny\textcolor{orange}{$0$}}](v0) at (0,0) {};
	\vertex[fill=orange, minimum size=4pt, label=below:{\tiny\textcolor{orange}{$1$}}](v1) at (1,0) {};
	\vertex[fill=orange, minimum size=4pt, label=below:{\tiny\textcolor{orange}{$2$}}](v2) at (2,0) {};
	\vertex[fill=orange, minimum size=4pt, label=below:{\tiny\textcolor{orange}{$3$}}](v3) at (3,0) {};
	\vertex[fill=orange, minimum size=4pt, label=below:{\tiny\textcolor{orange}{$4$}}](v4) at (4,0) {};
	\vertex[fill=orange, minimum size=4pt, label=below:{\tiny\textcolor{orange}{$5$}}](v5) at (5,0) {};		

\draw[-stealth, thick] (v0) to [out=45,in=135] (v1);
\draw[-stealth, thick,color=lavgray] (v0) to [out=-45,in=-135] (v1);
\draw[-stealth, thick] (v0) to [out=60,in=120] (v2);
\draw[-stealth, thick] (v1) to [out=0,in=180] (v2);
\draw[-stealth, thick] (v1) .. controls (2.0, 1.0) and (2.5, -0.5) .. (v3);
\draw[-stealth, thick,color=lavgray] (v1) .. controls (2.0, -1.0) and (2.5, 0.5) .. (v3);	
\draw[-stealth, thick,color=lavgray] (v2) to [out=-45,in=-135] (v4);
\draw[-stealth, thick] (v2) to [out=60,in=120] (v4);
\draw[-stealth, thick,color=lavgray] (v3) to [out=0,in=180] (v4);
\draw[-stealth, thick] (v3) to [out=60,in=120] (v5);
\draw[-stealth, thick,color=lavgray] (v4) to [out=-45,in=-135] (v5);
\draw[-stealth, thick] (v4) to [out=45,in=135] (v5);

\node[] at (4, 0.66){\scriptsize\textcolor{blue}{$6$}};
\node[] at (4.47, 0.3){\scriptsize\textcolor{blue}{$7$}};
\node[] at (4.5, -0.12){\scriptsize\textcolor{blue}{$413025$}};
\node[] at (3, 0.66){\scriptsize\textcolor{blue}{$5$}};
\node[] at (3.5, 0.1){\scriptsize\textcolor{blue}{$302$}};
\node[] at (3, -0.35){\scriptsize\textcolor{blue}{$41$}};
\node[] at (1.9, 0.45){\scriptsize\textcolor{blue}{$3$}};
\node[] at (1.5, 0.1){\scriptsize\textcolor{blue}{$4$}};
\node[] at (1.7, -.27){\scriptsize\textcolor{blue}{$02$}};
\node[] at (1, 0.66){\scriptsize\textcolor{blue}{$1$}};
\node[] at (0.5, 0.32){\scriptsize\textcolor{blue}{$2$}};
\node[] at (0.5, -0.12){\scriptsize\textcolor{blue}{$0$}};
\end{scope}

\begin{scope}[scale=1.3]
\node[] at (-0.3,0.6){\textcolor{blue}{$\pi^1$}};
	\vertex[fill=orange, minimum size=4pt, label=below:{\tiny\textcolor{orange}{$0$}}](v0) at (0,0) {};
	\vertex[fill=orange, minimum size=4pt, label=below:{\tiny\textcolor{orange}{$1$}}](v1) at (1,0) {};
	\vertex[fill=orange, minimum size=4pt, label=below:{\tiny\textcolor{orange}{$2$}}](v2) at (2,0) {};
	\vertex[fill=orange, minimum size=4pt, label=below:{\tiny\textcolor{orange}{$3$}}](v3) at (3,0) {};
	\vertex[fill=orange, minimum size=4pt, label=below:{\tiny\textcolor{orange}{$4$}}](v4) at (4,0) {};
	\vertex[fill=orange, minimum size=4pt, label=below:{\tiny\textcolor{orange}{$5$}}](v5) at (5,0) {};		

\draw[-stealth, thick] (v0) to [out=45,in=135] (v1);
\draw[-stealth, thick,color=lavgray] (v0) to [out=-45,in=-135] (v1);
\draw[-stealth, thick] (v0) to [out=60,in=120] (v2);
\draw[-stealth, thick] (v1) to [out=0,in=180] (v2);
\draw[-stealth, thick] (v1) .. controls (2.0, 1.0) and (2.5, -0.5) .. (v3);
\draw[-stealth, thick,color=lavgray] (v1) .. controls (2.0, -1.0) and (2.5, 0.5) .. (v3);	
\draw[-stealth, thick,color=lavgray] (v2) to [out=-45,in=-135] (v4);
\draw[-stealth, thick] (v2) to [out=60,in=120] (v4);
\draw[-stealth, thick,color=lavgray] (v3) to [out=0,in=180] (v4);
\draw[-stealth, thick] (v3) to [out=60,in=120] (v5);
\draw[-stealth, thick,color=lavgray] (v4) to [out=-45,in=-135] (v5);
\draw[-stealth, thick] (v4) to [out=45,in=135] (v5);

\node[] at (4, 0.66){\scriptsize\textcolor{blue}{$620$}};
\node[] at (4.47, 0.3){\scriptsize\textcolor{blue}{$7351$}};
\node[] at (4.5, -0.12){\scriptsize\textcolor{blue}{$4$}};
\node[] at (3, 0.66){\scriptsize\textcolor{blue}{$51$}};
\node[] at (3.5, 0.1){\scriptsize\textcolor{blue}{$3$}};
\node[] at (3, -0.35){\scriptsize\textcolor{blue}{$4$}};
\node[] at (1.9, 0.45){\scriptsize\textcolor{blue}{$32$}};
\node[] at (1.5, 0.1){\scriptsize\textcolor{blue}{$4$}};
\node[] at (1.7, -.27){\scriptsize\textcolor{blue}{$0$}};
\node[] at (1, 0.66){\scriptsize\textcolor{blue}{$1$}};
\node[] at (0.5, 0.32){\scriptsize\textcolor{blue}{$2$}};
\node[] at (0.5, -0.12){\scriptsize\textcolor{blue}{$0$}};
\end{scope}
\end{tikzpicture}\vspace{-.2in}
    \caption{Top and bottom permutation flows on the framed graph $(G,F)$ of Figure~\ref{fig:framed_graph}.
    }
    \label{fig:permutation_flow_bottom_top}
\end{figure}
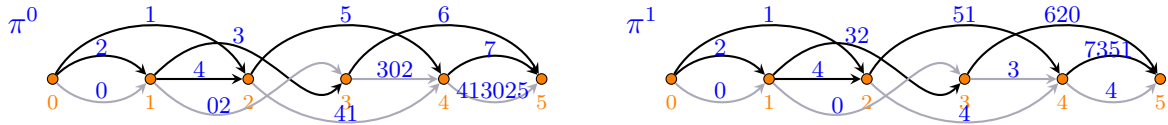

\begin{example}
The weak orders for different framings of $\oru(3)$ are shown in Figure~\ref{fig:222poset}.
\end{example}

\subsection{Shellability of \texorpdfstring{$DKK(G,F)$}{DKK(G,F)}}
\label{sec:shellability}
\phantom{W}

We show that every linear extension of $W(G,F)$ indexes a shelling of the triangulation of $\calF_G$ given by $DKK(G,F)$. We use the different combinatorial interpretations of $W(G,F)$ on $\SatCliques(G,F)$, $\SatGroves(G,F)$, and $\SatPermutationFlows(G,F)$, whenever we see one of the descriptions to be more convenient.

\begin{definition}[Shelling c.f. \cite{Ziegler1995}] A \defn{shelling} of a pure simplicial complex  is an ordering $F_1,\dots,F_s$ of its facets such that for every $i<j$ there exists a $k < j$ such that
$F_i\cap F_j \subseteq F_k\cap F_j$ and $|F_k\cap F_j|=|F_j|-1$.
\end{definition}

To prove shellability we will use a few lemmas. In particular, the following lemma can be interpreted as a convexity of intervals of $W(G,F)$ with respect of the inclusion of coherent routes.

\begin{lemma}
\label{lemma:nonrevisiting_property}    Suppose that $\calC \preceq \calD\preceq \calC'$ and $R\in \calC\cap \calC'$. Then $R\in \calD$.
\end{lemma}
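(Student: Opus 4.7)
The plan is to prove the statement by contradiction, with the critical step being the length-$2$ case of a saturated chain, from which the general case follows by strong induction on chain length.

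For the length-$2$ case, suppose $\calC\lessdot \calD\lessdot \calC'$ with $R\in \calC\cap \calC'$ and, for contradiction, $R\notin \calD$. By Theorem~\ref{theorem:neighboring_simplices}, the cover $\calC\lessdot \calD$ is a minimal exchange quadrangle (MEQ) in which $R$ is the descending route at some conflict vertex $v_{+}$, swapped for an ascending route $R_{+}$; thus $\calD=(\calC\setminus\{R\})\cup\{R_{+}\}$. Likewise, $\calD\lessdot \calC'$ is a MEQ in which $R$ is the ascending route at some vertex $v''$, paired with a descending $R''$; thus $\calC'=(\calD\setminus\{R''\})\cup\{R\}$. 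If $R''=R_{+}$, the same pair $R,R_{+}$ would be MEQ-conflicting with opposite framing orientations: when $v_{+}=v''$, the framing at that vertex forces both $v_{+}R\prec v_{+}R_{+}$ (from cover $1$) and $v_{+}R_{+}\prec v_{+}R$ (from cover $2$), a contradiction; when $v_{+}\neq v''$, the routes $R$ and $R_{+}$ possess at least two distinct maximal conflict subpaths, violating the minimality hypothesis of either MEQ. If $R''\neq R_{+}$, then $R''\in \calD\setminus\{R_{+}\}=\calC\setminus\{R\}\subseteq\calC$, so $R$ and $R''$ both lie in the clique $\calC$; but the MEQ at the second cover forces them to minimally conflict at $v''$, contradicting clique coherence of $\calC$.

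For the general case, I would fix a saturated chain $\calC=\calC_0\lessdot\cdots\lessdot \calC_k=\calC'$ with $\calD=\calC_j$. If $R$ is absent from some intermediate clique, choose the smallest index $l$ with $R\notin\calC_l$ and the smallest index $m\geq l$ with $R\in\calC_m$, producing a sub-chain in which $R$ appears only at the endpoints $\calC_{l-1}$ and $\calC_m$. When $m-l+1=2$, the length-$2$ sub-lemma provides the contradiction directly. When $m-l+1\geq 3$, I would trace the descending route $R''$ of the re-adding cover $\calC_{m-1}\lessdot\calC_m$ backward through the chain: by the coherence argument used in the length-$2$ case, $R''\notin\calC_{l-1}$ (else $R,R''\in\calC_{l-1}$ would minimally conflict), so $R''$ must enter at some interior cover $\calC_{p-1}\lessdot\calC_p$ with $l\leq p\leq m-1$, introducing a new descending partner to track. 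Iterating this clique-coherence bookkeeping within a finite chain eventually localizes a length-$2$ sub-configuration violating the sub-lemma. The main obstacle is the length-$2$ case, where the contradiction hinges on careful use of MEQ-minimality and framing antisymmetry; extending to longer chains is notationally heavier but proceeds by the same coherence argument, and could likely be streamlined using the lattice property of $W(G,F)$ (Theorem~\ref{theorem:lattice_property}) or the integer-flow bijection of Lemma~\ref{lemma:lemma_MMS}.
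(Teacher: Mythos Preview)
Your length-$2$ argument is correct: the case $R''\neq R_+$ is clean, and the case $R''=R_+$ works because the descending/ascending roles in a minimal conflict are intrinsic to the pair $\{R,R_+\}$ and the framing, so $R$ cannot play both roles. (Your split into $v_+=v''$ versus $v_+\neq v''$ is slightly off: the two vertices could be distinct points on the \emph{same} maximal conflict subpath, in which case there is still only one conflict subpath; but along a single conflict subpath the roles are constant, so the contradiction persists.)

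The genuine gap is in the general case. Your iteration replaces $R''$ by the descending partner $R'''$ at the cover where $R''$ enters, but you have no reason to believe $R'''$ conflicts with $R$: conflict is not transitive, so knowing $R\!-\!R''$ and $R''\!-\!R'''$ conflict tells you nothing about $R\!-\!R'''$. Without that, you cannot conclude $R'''\notin\calC_{l-1}$, and the bookkeeping stalls. The same problem arises tracking $R_+$ forward. Invoking the lattice property or Lemma~\ref{lemma:lemma_MMS} does not obviously repair this.

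The paper avoids this by reformulating the lemma as an invariant that \emph{does} propagate: once $P$ is removed at the cover $\calC\precdot\calD$, every later clique $\calD'\succeq\calD$ contains a route $Q$ conflicting with $P$ with $P$ descending. For the inductive step $\calD'\precdot\calC'$, if $Q\in\calC'$ we are done; otherwise $Q$ is swapped for some $R$ via a minimal exchange quadrangle, and the crucial point is that the quadrangle places not only $R$ but also $QwR$ and $RwQ$ in $\calC'$. A short geometric case analysis (does the $P$--$Q$ conflict subpath meet the $Q$--$R$ conflict subpath?) shows that one of these three routes still conflicts with $P$ in the oriented way. This is the missing idea: exploit all four routes of the quadrangle, not just the swapped pair.
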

\begin{proof}

We will prove instead the following statement: if $\calC \precdot \calD\preceq \calC' $ and if $P\in \calC$ is such that $P\not \in \calD$, then there exist $Q \in\calC'$ such that $P$ and $Q$ are in conflict at some vertex $v$ in which $vP\prec_{\Suffixes(v)} vQ$.

Since $P$ is fixed we note that  $\calC \precdot \calD$ implies that there is a vertex $v$ and $Q \in \calD$ with a minimal conflict with $P$ at $v$ such that $\{P, Q, P vQ, QvP \}$ is the minimal exchange quadrangle between $\calC$ and $\calD$ where $P$ is descending and $Q$ is ascending. The base case is then satisfied when $\calC'=\calD$.  

By the induction assumption we know for  $\calC \precdot \calD\prec \calD' \precdot \calC'$ that there is such a vertex $v$ and $Q \in \calD'$ in which $P$ and $Q$ are in conflict at $v$.  Furthermore, without loss of generality we assume that $v$ is the minimal vertex of such a conflict and $v'$ the maximal satisfying that the conflict happens at the maximal subpath $vPv'=vQv'$. In particular, for every vertex~$u$ in $vPv'=vQv'$ we have $Qu\prec_{\Prefixes(u)}Pu$ and $uP\prec_{\Suffixes(u)}uQ$. We will show that the same holds for $\calC'$.

If $Q\in \calC'$ then $Q$ satisfies the desired conditions. So we will consider when  $Q\not \in \calC'$ and hence there is a minimal exchange quadrangle $\{Q, R, QwR, RwQ \}$ between $\calD'$ and $\calC'$ in which $Q$ is descending and $R\in \calC'$ is ascending. We show that at least one of the routes~$R$, $QwR$ or $RwQ$ (all which belong to $\calC'$) is in conflict with $P$ as we wanted. Consider $wQw'=wRw'$ to be the maximal subpath of this conflict. We break down the argument into the following cases, represented visually in Figure~\ref{fig:routesinconflict_v2}.
\begin{itemize}
    \item[Case 1.] When $vQv'\cap wQw' =\emptyset$, there are two subcases: 
    \begin{itemize}
        \item[Case 1a.] $v<v'<w<w'$. In this case we have that 
        $QwRv=Qv\prec_{\Prefixes(v)}Pv$ and $v'P\prec_{\Suffixes(v')}v'Q\prec_{\Suffixes(v')} v'QwR$, and so
        $P$ and $QwR$ are in conflict at $v$ with the desired conditions.
        \item[Case 1b.] $w<w'<v<v'$. In this case we have that $ RwQvP\prec_{\Prefixes(v)}Qv\prec_{\Prefixes(v)}v$ and $v'P\prec_{\Suffixes(v')}v'Q=v'RwQ$, and so $P$ and $RwQ$ are in conflict at $v$ with the desired conditions.
    \end{itemize}
    \item[Case 2.] When $vQv'\cap wQw' \neq \emptyset$, let $uQu'=vQv'\cap wQw'$. Hence, we have that $Ru\prec_{\Prefixes(u)}Qu\prec_{\Prefixes(u)}Pu$ and $u'P\prec_{\Suffixes(u')}u'Q\prec_{\Suffixes(u')}u'R$, and so $P$ and $R$ are in conflict at $u$ with the desired conditions. \qedhere
\end{itemize}
\end{proof}

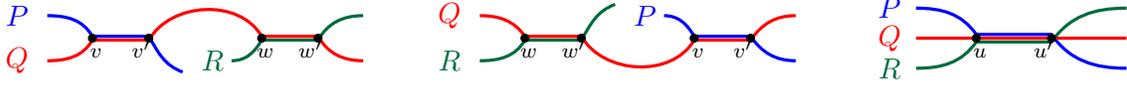
\begin{figure}[!t]
    \centering
    \begin{tikzpicture}



\begin{scope}[scale=1,shift={(0.25,0)}]

    \node at (1,-0.3){\small\textcolor{red}{$Q$}};
    \node at (1,0.3){\small\textcolor{blue}{$P$}};
    \node at (3.6,-0.3){\small\textcolor{cadmiumgreen}{$R$}};

    \draw[-, very thick, color=blue] (1.4,0.3) to [out=0,in=120] (2,0.025) to [out=0,in=180] (2.75,0.025) to [out=-60,in=160] (3.2,-0.45); 
    \draw[-, very thick, color=red] (1.4,-0.3) to [out=0,in=240] (2,-0.025) to [out=0,in=180] (2.75,-0.025) to [out=60,in=120] (4.25,0.025) to [out=0,in=180] (5,0.025) to [out=-60,in=180] (5.6,-0.3);
    \draw[-, very thick, color=cadmiumgreen] (3.85,-0.3) to [out=0,in=240] (4.25,-0.025) to [out=0,in=180] (5,-0.025) to [out=60,in=180] (5.6,0.3);

	\vertex[fill=black, minimum size=3pt](v2) at (2,0) {};
	\vertex[fill=black, minimum size=3pt](v3) at (2.75,0) {};
    \node at (2.05,-0.2){\tiny\textcolor{black}{$v$}};
    \node at (2.65,-0.15){\tiny\textcolor{black}{$v'$}};
	\vertex[fill=black, minimum size=3pt](v4) at (4.25,0) {};
	\vertex[fill=black, minimum size=3pt](v5) at (5,0) {};
    \node at (4.3,-0.2){\tiny\textcolor{black}{$w$}};
    \node at (4.9,-0.15){\tiny\textcolor{black}{$w'$}};
\end{scope}

\begin{scope}[scale=1,shift={(6,0)}]

    \node at (1,0.3){\small\textcolor{red}{$Q$}};
    \node at (1,-0.3){\small\textcolor{cadmiumgreen}{$R$}};
    \node at (3.6,0.3){\small\textcolor{blue}{$P$}};

    \draw[-, very thick, color=cadmiumgreen] (1.4,-0.3) to [out=0,in=240] (2,-0.025) to [out=0,in=180] (2.75,-0.025) to [out=60,in=200] (3.2,0.45); 
    \draw[-, very thick, color=red] (1.4,0.3) to [out=0,in=125] (2,0.025) to [out=0,in=180] (2.75,0.025) to [out=-60,in=240] (4.25,-0.025) to [out=0,in=180] (5,-0.025) to [out=55,in=180] (5.6,0.3);
    \draw[-, very thick, color=blue] (3.85,0.3) to [out=0,in=120] (4.25,0.025) to [out=0,in=180] (5,0.025) to [out=-55,in=180] (5.6,-0.3);

	\vertex[fill=black, minimum size=3pt](v2) at (2,0) {};
	\vertex[fill=black, minimum size=3pt](v3) at (2.75,0) {};
    \node at (2.05,-0.2){\tiny\textcolor{black}{$w$}};
    \node at (2.65,-0.15){\tiny\textcolor{black}{$w'$}};
	\vertex[fill=black, minimum size=3pt](v4) at (4.25,0) {};
	\vertex[fill=black, minimum size=3pt](v5) at (5,0) {};
    \node at (4.3,-0.2){\tiny\textcolor{black}{$v$}};
    \node at (4.9,-0.15){\tiny\textcolor{black}{$v'$}};
\end{scope}

\begin{scope}[scale=1,shift={(12,0)}]

    \node at (0.85,0.4){\small\textcolor{blue}{$P$}};
    \node at (0.85,0.0){\small\textcolor{red}{$Q$}};
    \node at (0.85,-0.4){\small\textcolor{cadmiumgreen}{$R$}};

    \draw[-, very thick, color=blue] (1.2,0.4) to [out=0,in=125] (2,0.05) to [out=0,in=180] (3,0.05) to [out=-55,in=180] (4,-0.4);
    \draw[-, very thick, color=red] (1.2,0) to [out=0,in=180] (2,0.0) to [out=0,in=180] (3,0.0) to [out=0,in=180] (4,0);
    \draw[-, very thick, color=cadmiumgreen] (1.2,-0.4) to [out=0,in=235] (2,-0.05) to [out=0,in=180] (3,-0.05) to [out=55,in=180] (4,0.4); 

	\vertex[fill=black, minimum size=3pt](v2) at (2,0) {};
	\vertex[fill=black, minimum size=3pt](v3) at (3,0) {};
    \node at (2.05,-0.2){\tiny\textcolor{black}{$u$}};
    \node at (2.9,-0.15){\tiny\textcolor{black}{$u'$}};

\end{scope}
\end{tikzpicture}
    \caption{(Left) In Case 1a, $P$ and $QwR$ are in conflict. (Center) In Case 1b, $P$ and $RwQ$ are in conflict. (Right) In Case 2, $P$ and $R$ are in conflict.}
    \label{fig:routesinconflict_v2}
\end{figure}

\begin{definition}[Minimal and maximal saturations]
Let $\Gamma \in \Groves(G,F)$. We define the \defn{minimal saturation} of $\Gamma$ to be $\underline \Gamma \in \SatGroves(G,F)$ by uniquely constructing $\underline \gamma_v$ as follows. Inductively at every $v=0,1,\dots,n$, add:
\begin{enumerate}
    \item $x_0$ to $L(\underline \gamma_0)$ if $x_0\not \in L(\underline \gamma_0)$ and at each $v\neq 0$ all prefixes of the form $Pe$ whenever $e=(w,v)$ and $(P,e)\in E(\underline \gamma_w)$;
    \item $e$ to $R(\underline \gamma_v)$ for every $e\in \outedge(v)\setminus \in R(\gamma_v)$;
    \item to $E(\underline \gamma_v)$ every edge of the form $(P,e)$ for every $P\in L(\underline \gamma_v)$ and where $e$ is the maximal edge in $\outedge(v)$ that is connected with the maximal $Q \in L(\gamma_v)$ such that $Q\prec_{\Prefixes(v)}P$;
    \item $(P,e)$ for every $e\in  R(\underline \gamma_v)$ where $P\in L(\gamma_v)$ is the minimal edge in $L(\gamma_v)$ connected to the minimal $e' \in R(\gamma_v)$ with $e' \succeq_{\outedge(v)}e$.
\end{enumerate}

Similarly, define the \defn{maximal saturation} of $\Gamma$ to be $\overline \Gamma \in \SatGroves(G,F)$ by uniquely constructing $\overline{\gamma}_v$ as follows: inductively at every $v=0,1,\dots,n$, add
\begin{enumerate}
    \item $x_0$ to $L(\overline \gamma_0)$ if $x_0\not \in L(\overline \gamma_0)$ and at each $v\neq 0$ all prefixes of the form $Pe$ whenever $e=(w,v)$ and $(P,e)\in E(\overline \gamma_w)$;
    \item $e$ to $R(\overline \gamma_v)$ for every $e\in \outedge(v)\setminus \in R(\gamma_v)$;
    \item to $E(\overline \gamma_v)$ every edge of the form $(P,e)$ for every  $P\in L(\overline \gamma_v)$ where $e$ is the minimal edge in $\outedge(v)$ that is connected with the minimal $Q \in L(\gamma_v)$ such that $P\prec_{\Prefixes(v)}Q$;
    \item $(P,e)$ for every $e\in R(\overline \gamma_v)$ where $P\in L(\gamma_v)$ is the maximal edge in $L(\gamma_v)$ connected to the maximal $e' \in R(\gamma_v)$ with $e \succeq_{\outedge(v)}e'$.
\end{enumerate}

We define \defn{minimal and maximal saturations} on the families  $\Cliques(G,F)$, $\Vines(G,F)$, and  $\PermutationFlows(G,F)$ to be the corresponding images of the minimal and maximal saturations under the corresponding bijections to $\Groves(G,F)$.
\end{definition}

\begin{proposition}\label{proposition:saturations_sec4} Let $\Theta \in \SatGroves(G,F)$ and $\Gamma \in \Groves(G,F)$ such that  $\Gamma \subseteq \Theta$. We have that $\underline \Gamma \leq \Theta \leq \overline \Gamma.$ 

Similarly, for  $\calC \in Cliques(G,F)$ and $\calD \in \SatCliques(G,F)$ such that  $\calC \subseteq \calD$, we have that $\underline \calC \leq \calD \leq \overline \calC.$
\end{proposition}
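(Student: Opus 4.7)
First I would reduce the two halves of the proposition to a single claim using the poset isomorphisms from Section~3. The correspondence sending a grove $\Gamma$ to the clique $\calC(\Gamma) = L(\gamma_n)$ is a bijection that carries saturation to saturation and is compatible with the weak order. Moreover, it is direct from the definitions that $\calC(\underline{\Gamma}) = \underline{\calC(\Gamma)}$ and $\calC(\overline{\Gamma}) = \overline{\calC(\Gamma)}$, since the inductive prescriptions (1)--(4) in the constructions of $\underline{\Gamma}$ and $\overline{\Gamma}$ at each vertex translate prefix-by-prefix into the corresponding selection of routes. So it suffices to prove the clique version of the statement.

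The second step is to observe that the set of saturations of $\calC$,
\[
S_\calC := \{ \calE \in \SatCliques(G,F) : \calC \subseteq \calE \},
\]
is order-convex in $W(G,F)$. This is a direct consequence of Lemma~\ref{lemma:nonrevisiting_property}: if $\calC_0, \calC_1 \in S_\calC$ with $\calC_0 \leq \calE \leq \calC_1$ in the weak order, then each $R \in \calC$ lies in $\calC_0 \cap \calC_1$, hence in $\calE$ by the lemma. Consequently, the proposition reduces to identifying $\underline{\calC}$ as the unique minimum and $\overline{\calC}$ as the unique maximum of $S_\calC$.

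The heart of the proof is therefore to show these extrema are realized by the explicit constructions. I would argue by induction on the lex-value $\varphi(\calE)$ from Lemma~\ref{lemma:lemma_MMS} within $S_\calC$. Starting from any $\calE \in S_\calC$ with $\calE \neq \underline{\calC}$, I would exhibit a minimal exchange quadrangle $\{P, Q, PvQ, QvP\}$ in $\calE$ producing a cover $\calE' \lessdot \calE$ with $\calE' \in S_\calC$; iterating, the chain terminates at $\underline{\calC}$. The existence of such a lowering uses the defining rule that in $\underline{\Gamma}$ each new left vertex at an internal vertex $v$ is connected to the \emph{maximal} edge carrying the maximal lower prefix (items (3)--(4) of the minimal saturation construction). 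Translating this rule back through the clique correspondence, each route in $\underline{\calC} \setminus \calC$ is the descending partner of a minimal exchange quadrangle whose ascending partner already lies in $\calC$; any discrepancy between $\calE$ and $\underline{\calC}$ at some vertex then produces the required descent in $\calE$. The dual argument, using raisings and the minimal-edge choice rule in $\overline{\Gamma}$, yields $\overline{\calC}$ as the maximum of $S_\calC$.

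The main obstacle is the inductive step of locating the required lowering in any $\calE \neq \underline{\calC}$ (or raising in $\calE \neq \overline{\calC}$). Concretely, one must transport the grove-level construction rule across the bijection with cliques and track how a change at a single vertex $v$ of the underlying noncrossing bipartite forest manifests as the exchange of exactly two routes. The noncrossing constraint at $v$, combined with the maximal-edge choice defining $\underline{\gamma}_v$, should guarantee that such an exchange always exists and stays within $S_\calC$, but a careful case analysis distinguishing whether the offending edge was added via (3) or (4) of the construction of $\underline{\Gamma}$ is needed to verify this in all configurations.
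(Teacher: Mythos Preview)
Your approach is essentially the same as the paper's at its core, though you add scaffolding that is not needed. The paper works directly with groves and argues by minimal counterexample in $W(G,F)$: given a minimal $\Theta$ with $\Gamma \subseteq \Theta$ but $\underline{\Gamma} \not\leq \Theta$, it locates a minimal exchange quadrangle $(P,Q,e,t)$ in $\Theta$ whose ascending edge $(P,t)$ lies in $E(\Theta)\setminus E(\Gamma)$; then $\Gamma \subseteq L_t(\Theta) \lessdot \Theta$, contradicting minimality. The justification for the existence of such a quadrangle is the one-line observation that if no such ascending edge existed outside $E(\Gamma)$, the vertex-by-vertex construction of $\underline{\Gamma}$ would force $\underline{\Gamma} = \Theta$.

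Your reduction to cliques is harmless but unnecessary, and the order-convexity of $S_\calC$ via Lemma~\ref{lemma:nonrevisiting_property}, while correct, plays no role here: once you show $\underline{\calC}$ is a lower bound of $S_\calC$ the desired inequality is immediate without convexity. Your ``main obstacle'' is exactly the point the paper dispatches with the observation above; the more elaborate case analysis you propose, distinguishing rules (3) and (4) of the construction, is not required, since the argument only needs \emph{some} ascending edge of a minimal exchange quadrangle to lie outside $E(\Gamma)$, not a match to a specific rule.
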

\begin{proof} We only show, inductively on $W(G,F)$, that $\underline \Gamma \leq \Theta$ since the dual argument is similar. Consider a possible counterexample in $W(G,F)$ such that $\Gamma \subseteq \Theta$ but $\underline \Gamma \not \leq \Theta$. 
Since $\Gamma \neq \Theta$ we can choose a minimal exchange quadrangle $(P,Q,e,t)$ at some $t=(v,w)$ in $\Theta$ such that $(P,t) \in E(\Theta)$ but $(P,t) \not \in E(\Gamma)$. This is possible since otherwise inductively from $v=0,\dots,n$ we would conclude that $\underline \Gamma = \Theta$. So using such a minimal exchange quadrangle it is still true that $\Gamma \subseteq L_t(\Theta)$ since $(P,t)\not \in E(\Gamma)$. Since $\Theta$ was a minimal counterexample we have that $\underline \Gamma \leq L_t(\Theta) \lessdot \Theta$, so such a counterexample does not exist. 
\end{proof}

\begin{corollary}\label{corollary:intersection_is_in_meet}
    Let $\calC,\calD \in W(G,F)$. Then
    $\calC \cap \calD \subseteq \calC \wedge \calD$.
\end{corollary}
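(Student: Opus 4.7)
Set $\calE := \calC \cap \calD$. The plan is to (i) observe that $\calE$ is itself a clique, (ii) use the minimal saturation to produce a common lower bound of $\calC$ and $\calD$ in $W(G,F)$ that already contains $\calE$, and (iii) sandwich the meet between this lower bound and $\calC$ (or $\calD$) and invoke the non-revisiting Lemma~\ref{lemma:nonrevisiting_property} to conclude that every route of $\calE$ survives into $\calC \wedge \calD$.

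First, because any two routes in $\calE$ lie in the coherent set $\calC$, they are pairwise coherent, so $\calE \in \Cliques(G,F)$ by Definition~\ref{def:cliquesG}. Next, applying the minimal saturation construction to $\calE$ yields $\underline{\calE} \in \SatCliques(G,F)$ with $\calE \subseteq \underline{\calE}$ (the construction only adds routes). Since $\calE \subseteq \calC$ and $\calE \subseteq \calD$, Proposition~\ref{proposition:saturations_sec4} applied in both directions gives
\[
\underline{\calE} \leq \calC \quad \text{and} \quad \underline{\calE} \leq \calD
\]
in $W(G,F)$. By Theorem~\ref{theorem:lattice_property}, the meet $\calC \wedge \calD$ exists, and it is the greatest lower bound of $\calC$ and $\calD$, so
\[
\underline{\calE} \leq \calC \wedge \calD \leq \calC.
\]

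Now let $R \in \calE$ be arbitrary. Since $\calE \subseteq \underline{\calE}$ and $\calE \subseteq \calC$, we have $R \in \underline{\calE} \cap \calC$. The chain $\underline{\calE} \leq \calC \wedge \calD \leq \calC$ together with Lemma~\ref{lemma:nonrevisiting_property} then forces $R \in \calC \wedge \calD$. As $R \in \calE$ was arbitrary, this establishes $\calC \cap \calD \subseteq \calC \wedge \calD$.

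I do not foresee a real obstacle here: the corollary is essentially a formal consequence of (a) the containment $\calE \subseteq \underline{\calE}$, (b) the monotonicity of the minimal saturation under inclusion (Proposition~\ref{proposition:saturations_sec4}), and (c) the convexity-type statement of Lemma~\ref{lemma:nonrevisiting_property}. The only subtle point worth spelling out is that $\calC \cap \calD$ really is a clique (and therefore a legitimate input to the minimal saturation construction), which is immediate from the definition but is what makes the argument tick.
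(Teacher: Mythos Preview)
Your proof is correct and follows essentially the same approach as the paper: both take the minimal saturation $\underline{\calC \cap \calD}$, use Proposition~\ref{proposition:saturations_sec4} to see it is a common lower bound, sandwich the meet between it and $\calC$, and then apply Lemma~\ref{lemma:nonrevisiting_property}. Your version is slightly more explicit (you spell out that $\calC\cap\calD$ is a clique and that it sits inside its minimal saturation), but the argument is the same.
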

\begin{proof}
    By Proposition \ref{proposition:saturations_sec4} we have that $\underline{\calC \cap \calD} \leq \calC$ and $\underline{\calC \cap \calD} \leq \calD$. Since a meet is a maximal common lower bound, we have $\underline{\calC \cap \calD}  \leq \calC \wedge \calD \leq \calC$ and $\underline{\calC \cap \calD}  \leq \calC \wedge \calD \leq  \calD$. Then Lemma \ref{lemma:nonrevisiting_property} implies the corollary.
\end{proof}

\begin{proof}[Proof of Theorem \ref{theorem:linear_extensions_W(G,F)_are_shellings} ]
    Let $\calC_1,\calC_2,\dots,C_k$ be a linear extension of $W(G,F)$. Then for $i<j$ we know by Corollary \ref{corollary:intersection_is_in_meet} that $\calC_i \cap \calC_j \subseteq \calC_i \wedge \calC_j$. Lemma \ref{lemma:nonrevisiting_property}  then implies that for any $\calC_r$ such that $\calC_i \wedge \calC_j\leq \calC_r \lessdot \calC_j$ we have that $\calC_i \cap \calC_j \subseteq \calC_r$ and so  $\calC_i \cap \calC_j \subseteq \calC_r \cap \calC_j $. Such $\calC_r$ also satisfies that $r<j$ because the listing is a linear extension and $|\calC_r \cap \calC_j|=|\calC_j|-1$ because of the definition of $W(G,F)$.
\end{proof}

\subsection{The \texorpdfstring{$G$}{G}-Eulerian Polynomial}
\label{sec:G-Eulerian}
\phantom{W}

We review some results from Ehrhart theory, which are found for example in~\cite{BeckRobins2015}.
Ehrhart showed~\cite{Ehrhart1962} that for a lattice $d$-dimensional polytope $P$, the function $|tP\cap \mathbb{Z}^d|$ that enumerates lattice points of the $t$-th dilation of $P$ is a polynomial in $t$.  This is the \defn{Ehrhart polynomial} of $P$. This polynomial is related via a transformation to the \defn{$h^*$-polynomial} of $P$ which carries the same information as the Ehrhart polynomial. The $h^*$-polynomial is known to have nonnegative coefficients~\cite{Stanley1980} and coincides with the $h$-polynomial of a shellable unimodular triangulation (see for example \cite[Theorem 10.3]{BeckRobins2015}). This polynomial can then be computed from a shelling $\calC_0,\calC_1,\dots, \calC_s$ of the triangulation as
\begin{equation}\label{equation:h_star_from_shelling}
    h^*(t)=\sum_{i=0}^s t^{|\calR(\calC_i)|},
\end{equation}
where $\calR(\calC_k)$ is the minimal face in the simplicial complex $\calC_k \setminus \cup_{i=0}^{k-1} \calC_i$, which coincides with the number of faces $\calC_j$ with $j<k$ and $|\calC_j\cap \calC_k|=|\calC_k|-1$. In our case, where the shelling comes from a linear extension of $W(G,F)$, $|\calR(\calC_k)|$ is the number of elements covered by $\calC_k$ in $W(G,F)$.

\begin{definition}[Eulerian polynomial of a framed graph]
Let $(G,F)$ be a framed graph. 
The \defn{$(G,F)$-Eulerian polynomial} is
\[A_{(G,F)}(t)=\sum_{\pi \in \SatPermutationFlows(G,F)}t^{\des(\pi)}.
\]
\end{definition}

\begin{theorem}\label{theorem:h_star_first}
    Let $(G,F)$ be a framed graph. The $h^*$-polynomial of $\calF_G$ is $A_{(G,F)}(t)$.
\end{theorem}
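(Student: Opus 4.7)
The plan is to combine three ingredients already available in the paper: the unimodularity of the triangulation $\DKK(G,F)$ (Theorem~\ref{theorem:DKK}), the shellability provided by Theorem~\ref{theorem:linear_extensions_W(G,F)_are_shellings}, and the explicit cover-count description of $W(G,F)$ on permutation flows from Proposition~\ref{proposition:weak_order_permutationflows}.

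First I would invoke the standard fact from Ehrhart theory that, for a lattice polytope admitting a unimodular triangulation, the $h^*$-polynomial equals the $h$-polynomial of the triangulation; hence by Theorem~\ref{theorem:DKK} we have $h^*_{\calF_G}(t) = h_{\DKK(G,F)}(t)$. Fix any linear extension $\calC_0, \calC_1, \dots, \calC_s$ of $W(G,F)$; by Theorem~\ref{theorem:linear_extensions_W(G,F)_are_shellings} this is a shelling, so by the shelling formula \eqref{equation:h_star_from_shelling} we get
\[
h^*_{\calF_G}(t) \;=\; \sum_{i=0}^{s} t^{|\calR(\calC_i)|},
\]
where $|\calR(\calC_i)|$ counts the facets $\calC_j$ with $j<i$ satisfying $|\calC_j\cap\calC_i| = |\calC_i|-1$.

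Second, I would identify $|\calR(\calC_i)|$ with the number of elements covered by $\calC_i$ in $W(G,F)$. By Theorem~\ref{theorem:neighboring_simplices}, two saturated cliques share a codimension-one face exactly when they are related by a minimal exchange quadrangle; by Definition~\ref{def:weak_order_cliques}, such pairs are precisely the cover relations of $W(G,F)$ (one of the two is covered by the other). Consequently, for $j<i$ in the linear extension, $\calC_j$ and $\calC_i$ share a codimension-one face if and only if $\calC_j \lessdot \calC_i$ in $W(G,F)$, because the reverse covering $\calC_i \lessdot \calC_j$ would force $i<j$.

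Third, I would transport this enumeration to $\SatPermutationFlows(G,F)$ via the poset isomorphism between $W(G,F)$ on $\SatCliques(G,F)$ and on $\SatPermutationFlows(G,F)$ (Proposition~\ref{proposition:weak_order_permutationflows}). Under this isomorphism, the elements covered by $\pi\in\SatPermutationFlows(G,F)$ are exactly the permutation flows of the form $L_t(\pi)$, where $L_t$ acts nontrivially; by Definition~\ref{def:raising}, $L_t$ acts nontrivially on $\pi$ if and only if $(t,e)$ is a descent of $\pi$ in the sense of Definition~\ref{def:ascent_descent}, and distinct descents involve distinct splits $t$. Hence the number of elements covered by $\pi$ equals $\des(\pi)$, and summing yields
\[
h^*_{\calF_G}(t) \;=\; \sum_{\pi\in\SatPermutationFlows(G,F)} t^{\des(\pi)} \;=\; A_{(G,F)}(t),
\]
as required.

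The routine but delicate step is the second one, verifying that the linear extension respects the orientation of cover relations coming from Theorem~\ref{theorem:neighboring_simplices}; this is automatic once one checks that the shelling order aligns with the weak order's direction, which is essentially what the proof of Theorem~\ref{theorem:linear_extensions_W(G,F)_are_shellings} already guarantees via Lemma~\ref{lemma:nonrevisiting_property}. No additional combinatorial work should be needed beyond bookkeeping.
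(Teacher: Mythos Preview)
Your proposal is correct and follows essentially the same approach as the paper's proof: invoke the unimodular shellable triangulation via Theorems~\ref{theorem:DKK} and~\ref{theorem:linear_extensions_W(G,F)_are_shellings}, apply the shelling formula~\eqref{equation:h_star_from_shelling}, identify $|\calR(\calC_i)|$ with the number of elements covered by $\calC_i$ in $W(G,F)$, and use Proposition~\ref{proposition:weak_order_permutationflows} to rewrite this count as $\des(\pi(\calC_i))$. The paper compresses your three steps into a few lines, taking for granted (from the discussion just before the theorem) the identification of $|\calR(\calC_k)|$ with the down-degree in $W(G,F)$ that you spell out via Theorem~\ref{theorem:neighboring_simplices}.
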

\begin{proof}
By Theorem \ref{theorem:linear_extensions_W(G,F)_are_shellings}, every linear extension of $W(G,F)$ induces a shelling of $\DKK(G,F)$.  We can use Equation \eqref{equation:h_star_from_shelling} to read the $h^*$-polynomial from any such linear extension by determining for every $\calC \in W(G,F)$ the number $|\calR(\calC)|$ of elements covered by $\calC$ in $W(G,F)$. By Proposition \ref{proposition:weak_order_permutationflows} this is precisely $\des(\pi(\calC))$.
\end{proof}

Since the $h^*$-polynomial is a geometric invariant of $\calF_G$, the $(G,F)$-Eulerian polynomial does not depend on the framing $F$.

\begin{corollary}\label{corollary:same_Eulerian_for_different_framings}
    For any two framings $F$ and $F'$ of $G$ we have that 
    $$A_{(G,F)}(t)=A_{(G,F')}(t).$$
\end{corollary}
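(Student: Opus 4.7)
The plan is to deduce this corollary directly from Theorem \ref{theorem:h_star_first} by exploiting the fact that the $h^*$-polynomial is an intrinsic invariant of a lattice polytope, independent of the choice of unimodular triangulation used to compute it.

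First I would invoke Theorem \ref{theorem:h_star_first} twice, once for the framing $F$ and once for $F'$. This yields the two equalities
\[
h^*_{\calF_G}(t) \;=\; A_{(G,F)}(t) \qquad \text{and} \qquad h^*_{\calF_G}(t) \;=\; A_{(G,F')}(t).
\]
The left-hand side of each identity is the $h^*$-polynomial of the lattice polytope $\calF_G$, and is a well-defined invariant attached to $\calF_G$ via its Ehrhart polynomial (as recalled at the beginning of Section~\ref{sec:G-Eulerian}). In particular, it does not depend on the choice of framing $F$ that was used to construct the DKK triangulation $\DKK(G,F)$ and therefore does not depend on which linear extension of $W(G,F)$ was chosen to compute it via Equation~\eqref{equation:h_star_from_shelling}.

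Combining the two identities immediately gives $A_{(G,F)}(t) = A_{(G,F')}(t)$, which is the desired conclusion. There is no real obstacle here: all of the nontrivial content has already been packaged into Theorem~\ref{theorem:linear_extensions_W(G,F)_are_shellings} (which guarantees shellability of $\DKK(G,F)$ for every framing) and Theorem~\ref{theorem:h_star_first} (which identifies the resulting $h$-polynomial of the shelling with the descent enumerator $A_{(G,F)}(t)$). The corollary simply extracts the framing-independence that is implicit in those two statements, and permits us to denote the common polynomial by $A_G(t)$, the \emph{$G$-Eulerian polynomial}.
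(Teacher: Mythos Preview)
Your proof is correct and follows exactly the approach the paper takes: the sentence immediately preceding the corollary states that ``since the $h^*$-polynomial is a geometric invariant of $\calF_G$, the $(G,F)$-Eulerian polynomial does not depend on the framing $F$,'' which is precisely your argument of applying Theorem~\ref{theorem:h_star_first} to both framings and using that $h^*_{\calF_G}(t)$ is framing-independent.
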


Corollary \ref{corollary:same_Eulerian_for_different_framings} motivates the following definition.

\begin{definition}[Eulerian polynomial]\label{def:G-Eulerian_polynomial}
    The \defn{$G$-Eulerian polynomial} $A_{G}(t)$ is defined to be $A_{G}(t)=A_{(G,F)}(t)$ for any framing $F$ of $G$.
\end{definition}

\begin{corollary}
    For any framed graph $(G,F)$ we have that
    $$A_{G}(t)=\sum_{\pi \in \SatPermutationFlows(G,F)}t^{\asc(\pi)}.$$
\end{corollary}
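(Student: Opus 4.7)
The plan is to read $h^*_{\calF_G}(t)$ off a second shelling of $\DKK(G,F)$—the reverse of any linear extension of $W(G,F)$—yielding the ascent enumerator, in exact duality with the descent enumerator produced by Theorem \ref{theorem:h_star_first}. Since the $h^*$-polynomial is a geometric invariant of $\calF_G$, the two expressions must agree, both equalling $A_G(t)$ by Theorem \ref{theorem:h_star_second}.

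The essential ingredient is the dual of Corollary \ref{corollary:intersection_is_in_meet}: for all $\calC, \calD \in \SatCliques(G,F)$,
\[
\calC \cap \calD \subseteq \calC \vee \calD.
\]
I would prove this by mirroring the proof of Corollary \ref{corollary:intersection_is_in_meet}, using the \emph{maximal} saturation in place of the minimal one. Proposition \ref{proposition:saturations_sec4} gives $\calC \leq \overline{\calC \cap \calD}$ and $\calD \leq \overline{\calC \cap \calD}$, so $\calC \vee \calD \leq \overline{\calC \cap \calD}$. For any $R \in \calC \cap \calD$ one has $R \in \calC$ and $R \in \overline{\calC \cap \calD}$, so Lemma \ref{lemma:nonrevisiting_property} applied along $\calC \leq \calC \vee \calD \leq \overline{\calC \cap \calD}$ places $R$ in $\calC \vee \calD$.

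With this in hand, the proof of Theorem \ref{theorem:linear_extensions_W(G,F)_are_shellings} dualizes directly. Given a pair $\calC, \calC'$ with $\calC$ appearing before $\calC'$ in the reverse linear extension (so $\calC > \calC'$ in $W(G,F)$), pick any $\calC''$ with $\calC' \lessdot \calC'' \leq \calC \vee \calC'$, which exists since $W(G,F)$ is a lattice by Theorem \ref{theorem:lattice_property}. Theorem \ref{theorem:neighboring_simplices} gives $|\calC'' \cap \calC'| = |\calC'| - 1$, and Lemma \ref{lemma:nonrevisiting_property} applied along $\calC' \leq \calC'' \leq \calC \vee \calC'$ forces $\calC \cap \calC' \subseteq \calC'' \cap \calC'$ via the displayed inclusion. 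Since $\calC'' > \calC'$ also precedes $\calC'$ in the reverse order, this verifies the shelling condition.

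Finally, Equation \eqref{equation:h_star_from_shelling} applied to the reverse shelling expresses $h^*_{\calF_G}(t) = \sum_\calC t^{|\calR(\calC)|}$, where now $|\calR(\calC)|$ counts facet-neighbors of $\calC$ that precede it in the reverse order—equivalently, the up-degree of $\calC$ in $W(G,F)$. By Proposition \ref{proposition:weak_order_permutationflows} and Definition \ref{def:ascent_descent}, this up-degree equals $\asc(\pi(\calC))$; combining with Theorem \ref{theorem:h_star_second} yields the corollary. The main obstacle is just carefully running the dualization; the real content is the observation that the saturation-and-non-revisiting framework underlying Theorem \ref{theorem:linear_extensions_W(G,F)_are_shellings} is symmetric under reversing $W(G,F)$, so reversing a shelling produces a shelling.
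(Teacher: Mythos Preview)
Your argument is correct and self-contained, but the paper's proof is a one-liner using a different idea: take the \emph{reverse framing} $F'$ (reverse every order $\preceq_{\inedge(v)}$ and $\preceq_{\outedge(v)}$), observe that this interchanges ascents and descents of permutation flows, and then invoke Corollary~\ref{corollary:same_Eulerian_for_different_framings} (the framing-independence of $A_{(G,F)}$) to conclude
\[
A_G(t) = A_{(G,F')}(t) = \sum_{\pi'\in \SatPermutationFlows(G,F')} t^{\des(\pi')} = \sum_{\pi\in \SatPermutationFlows(G,F)} t^{\asc(\pi)}.
\]
So the paper exploits an external symmetry (change the framing), while you exploit an internal one (reverse the weak order). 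Your route has the advantage of not appealing to a second framing and of making explicit that the reverse of a linear extension of $W(G,F)$ is again a shelling---a fact of independent interest that the paper does not state. The paper's route is shorter because the heavy lifting (framing-independence via the $h^*$-polynomial) was already done in Theorem~\ref{theorem:h_star_first} and Corollary~\ref{corollary:same_Eulerian_for_different_framings}. One minor slip: when you write ``so $\calC > \calC'$ in $W(G,F)$'', the two cliques may be incomparable; what you actually need (and use) is only $\calC \not\leq \calC'$, which does follow from the reverse ordering and suffices to guarantee $\calC' < \calC\vee\calC'$.
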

\begin{proof}
 The reverse framing of $F$ interchanges $\des$ and $\asc$. Then use Corollary \ref{corollary:same_Eulerian_for_different_framings}. 
\end{proof}

Theorem \ref{theorem:h_star_first} can then be reformulated as Theorem \ref{theorem:h_star_second} after Definition \ref{def:G-Eulerian_polynomial}.

\newpage
\section{Combinatorial objects on framed augmented graphs}
\label{sec:combinatorial_families_on_augmented_graphs}

In this section we discuss several families of combinatorial objects which are in bijection and form the combinatorial backbone for describing a triangulation of the flow polytope $\calF_{G}(\ba)$ in Section \ref{sec:triangulation_flow_polytopes}. Each family is interesting in its own right and allows one to directly see different aspects of the structure of the triangulation. 

The following schematic diagram shows the order in which we will introduce the families of combinatorial objects associated to an augmented graph $(\hatG,\hatF)$. The geometric information that is encoded in each object is represented in an increasingly compact manner as we follow the arrows to the right.

\vspace{-.1in}
\noindent
\begin{center}
\tcbox[on line,left=2pt,right=2pt,top=3pt,bottom=3pt,colback=black!05,colframe=black!50,arc=3mm,boxrule=0.75pt]{\(~\begin{array}{ c c c c c c c c c }
\mathrm{Cliques} & \!\!\longrightarrow\!\! & 
\mathrm{Multicliques} & \!\!\longrightarrow\!\! & 
\mathrm{\begin{array}{ c }\mathrm{Vineyard}\\
\mathrm{Shuffles} \end{array}} & \!\!\longrightarrow\!\! 
& \mathrm{\begin{array}{ c } \mathrm{Grove}\\
\mathrm{Shuffles} \end{array}} & \!\!\longrightarrow\!\! 
& \mathrm{\begin{array}{ c } \mathrm{Permutation\, Flow}\\
\mathrm{Shuffles} \end{array}}\\
&&&&&&&&
\\
\mathcal{C} & \mapsto  & \mathcal{M} & \mapsto  & (\calV ,\sigma_{\calV} ) & \mapsto  & ( \Gamma,\sigma_{\Gamma} ) & \mapsto  & ( \pi,\sigma_{\pi} )
\end{array}\)}
\end{center}

The combinatorial structure that is most directly related to the triangulation of $\calF_G(\ba)$ is a clique. 
A clique is defined as a subset of lattice points of $\calF_G(\ba)$ that satisfy some pairwise coherency conditions, hence the set of cliques is closed under inclusion and naturally will carry the structure of a simplicial complex. The simple inclusion check for membership of faces translates to a partial order on the set of cliques. Moving through the bijections from the left to the right allows us to define a partial order on each set of combinatorial objects, so these bijections are in fact poset isomorphisms.

We choose to build the combinatorial objects from left-to-right from vertices $v=0$ to $v=n$. An alternative approach would be to build vineyard shuffles, grove shuffles, and permutation flow shuffles from right-to-left, using a perspective of suffixes instead of prefixes. 

A novel feature for the story for more general netflow vectors is the concept of shuffle. There is only one shuffle, the identity shuffle, when $\ba=\be_0-\be_n$. The diagram of diagram of bijections reduces to the one in Section~\ref{sec:permutationflows}. In addition, the concepts of cliques and multicliques coincide.

\subsection{Framed augmented graphs}
\label{sec:augmented_graphs}
\phantom{W}

The new combinatorial objects are defined on a framed augmented graph $(\hatG,\hatF)$, which extends the notion of a framed graph to include the information of the more general netflow vector $\ba=(a_0,a_1,\ldots, a_n)\in \bbZ^{n+1}$, where $\sum_v a_v=0$ and $a_v\geq 0$ for $v=0,\ldots, n-1$. We refer the reader back to Section~\ref{subsec.paths0} for the basics on framed graphs, which we extend in this section.

\begin{definition}[Framed augmented graph]\label{def:framed_augmented_graph}
Given a graph $G=(V,E)$ and a netflow vector~$\ba$, define the \defn{($\ba$-)augmented graph} $\hatG(\ba)=(V,\hat{E})$, where $\hat{E}=E\cup X\cup Y$ is an edge set including the edges $E$ of $G$, a set $X$ of $|a_n|=\sum_{v=0}^{n-1} a_v$ \defn{inflow half-edges}, and the set $Y=\{y\}$ consisting of one \defn{outflow half-edge} $y$. The inflow half-edges should satisfy $\big\lvert\{x\in X \mid \head(x)=v\}\big\rvert=a_v$ for $v=0,\ldots, n-1$ and the unique outflow half-edge satisfies $\tail(y)=n$. In most of what follows, whenever $\ba$ is understood from the context, we will use $\hatG$ instead of $\hatG(\ba)$ to simplify the notation.

The sets $\hat{\inedge}(v)$ and $\hat{\outedge}(v)$ of incoming and outgoing incidences in $\hatG$ are defined in a similar manner as $\inedge(v)$ and $\outedge(v)$ for $G$ but now include the half-edges from $X\cup Y$. 
In a parallel manner, a \defn{framing} of $\hatG$ is a collection of total orders of the sets $\hat{\inedge}(v)$ and $\hat{\outedge}(v)$ for $v \in [0,n]$. A framing of $\hatG$ induces a total order on $X$ respecting $x\prec x'$ when $\head(x)<\head(x')$, and so we will label the inflow half-edges $x_1,x_2,\dots,x_{|a_n|}$ to reflect this choice of total order. For a framed graph $(G,F)$, a framing $\hatF$ of $\hatG$ is \defn{consistent with~$F$}  if the elements of $E$ in $\hatF$ have the same relative orders as in $F$. We call $(\hatG,\hatF)$ a \defn{framed augmented graph} of $G$.
\end{definition}

\begin{example}
Figure~\ref{fig:auggraph} shows an example of a framed graph $(G,F)$ and a framed $\ba$-augmented graph $(\hatG,\hatF)$ consistent with $F$ with inflow half-edge set $X=\{x_1, x_2, x_3, x_4\}$ and outflow half-edge $y$. The inflow half-edges are incident with vertices $0$, $0$, $1$, and $3$, respectively consistent with netflow vector $\ba=(2,1,0,1,0,-4)$.
\end{example}

\begin{figure}[t!]
    \centering
    \vspace{-.6in}
    \begin{tikzpicture}
\begin{scope}[scale=1.5]
\node at (-1.5,0){$(G,F)$};

\vertex[fill=orange, minimum size=4pt, label=below:{\tiny\textcolor{orange}{$0$}}](v0) at (0,0) {};
\vertex[fill=orange, minimum size=4pt, label=below:{\tiny\textcolor{orange}{$1$}}](v1) at (1,0) {};
\vertex[fill=orange, minimum size=4pt, label=below:{\tiny\textcolor{orange}{$2$}}](v2) at (2,0) {};
\vertex[fill=orange, minimum size=4pt, label=below:{\tiny\textcolor{orange}{$3$}}](v3) at (3,0) {};
\vertex[fill=orange, minimum size=4pt, label=below:{\tiny\textcolor{orange}{$4$}}](v4) at (4,0) {};
\vertex[fill=orange, minimum size=4pt, label=below:{\tiny\textcolor{orange}{$5$}}](v5) at (5,0) {};		

\draw[-stealth, thick, color=black!30] (v0) .. controls (1.2, 1.6) and (2.5, -0.3) .. (v3);
\draw[-stealth, thick, color=black!30] (v0) .. controls (0.9, 1.0) and (1.5, -0.7) .. (v2);
\draw[-stealth, thick, color=black!30] (v0) to [out=30,in=150] (v1);
\draw[-stealth, thick, color=black!30] (v0) to [out=-30,in=-150] (v1);

\draw[-stealth, thick, color=black!30] (v1) .. controls (1.9, 1.0) and (2.5, -0.7) .. (v3);
\draw[-stealth, thick, color=black!30] (v1) to [out=30,in=150] (v2);
\draw[-stealth, thick, color=black!30] (v1) .. controls (2.0, -1.2) and (2.5, 0.7) .. (v3);	

\draw[-stealth, thick, color=black!30] (v2) to [out=45,in=135] (v4);
\draw[-stealth, thick, color=black!30] (v2) .. controls (3.0, -1.0) and (3.1, 0.3) .. (v4);	

\draw[-stealth, thick, color=black!30] (v3) to [out=60,in=120] (v5);
\draw[-stealth, thick, color=black!30] (v3) to [out=-30,in=-150] (v4);
\draw[-stealth, thick, color=black!30] (v3) .. controls (4.0, -1.0) and (4.5, 0.0) .. (v5);

\draw[-stealth, thick, color=black!30] (v4) to [out=30,in=150] (v5);
\draw[-stealth, thick, color=black!30] (v4) to [out=-30,in=-150] (v5);

\end{scope}

\begin{scope}[scale=1.5, yshift=-50]
\node at (-1.5,0){$(\hatG,\hatF)$};

\vertex[fill=black!30, minimum size=4pt, label=below:{\tiny\textcolor{black!30}{$0$}}](v0) at (0,0) {};
\vertex[fill=black!30, minimum size=4pt, label=below:{\tiny\textcolor{black!30}{$1$}}](v1) at (1,0) {};
\vertex[fill=black!30, minimum size=4pt, label=below:{\tiny\textcolor{black!30}{$2$}}](v2) at (2,0) {};
\vertex[fill=black!30, minimum size=4pt, label=below:{\tiny\textcolor{black!30}{$3$}}](v3) at (3,0) {};
\vertex[fill=black!30, minimum size=4pt, label=below:{\tiny\textcolor{black!30}{$4$}}](v4) at (4,0) {};
\vertex[fill=black!30, minimum size=4pt, label=below:{\tiny\textcolor{black!30}{$5$}}](v5) at (5,0) {};		

\draw[-stealth, thick, color=black!30] (v0) .. controls (1.2, 1.6) and (2.5, -0.3) .. (v3);
\draw[-stealth, thick, color=black!30] (v0) .. controls (0.9, 1.0) and (1.5, -0.7) .. (v2);
\draw[-stealth, thick, color=black!30] (v0) to [out=30,in=150] (v1);
\draw[-stealth, thick, color=black!30] (v0) to [out=-30,in=-150] (v1);

\draw[-stealth, thick, color=black!30] (v1) .. controls (1.9, 1.0) and (2.5, -0.7) .. (v3);
\draw[-stealth, thick, color=black!30] (v1) to [out=30,in=150] (v2);
\draw[-stealth, thick, color=black!30] (v1) .. controls (2.0, -1.2) and (2.5, 0.7) .. (v3);	

\draw[-stealth, thick, color=black!30] (v2) to [out=45,in=135] (v4);
\draw[-stealth, thick, color=black!30] (v2) .. controls (3.0, -1.0) and (3.1, 0.3) .. (v4);	

\draw[-stealth, thick, color=black!30] (v3) to [out=60,in=120] (v5);
\draw[-stealth, thick, color=black!30] (v3) to [out=-30,in=-150] (v4);
\draw[-stealth, thick, color=black!30] (v3) .. controls (4.0, -1.0) and (4.5, 0.0) .. (v5);

\draw[-stealth, thick, color=black!30] (v4) to [out=30,in=150] (v5);
\draw[-stealth, thick, color=black!30] (v4) to [out=-30,in=-150] (v5);

\draw[-stealth, thick, color=black] (-0.5,0.2) .. controls (-0.4, 0.2) and (-0.15, 0.1) .. (v0);
\draw[-stealth, thick, color=black] (-0.5, -.4) .. controls (-0.4, -.4) and (-0.25, -.3) .. (v0);
\draw[-stealth, thick, color=black] (0.5, -.4) .. controls (0.6, -.4) and (0.75, -.3) .. (v1);
\draw[-stealth, thick, color=black] (2.5, -.5) .. controls (2.6, 0) and (2.7, 0.1) .. (v3);

\draw[-stealth, thick, color=black] (v5) to [out=20, in=160] (5.5, 0);

\node[] at (-0.6, -0.4){\scriptsize\textcolor{red}{$x_1$}};
\node[] at (-0.6, 0.2){\scriptsize\textcolor{darkyellow}{$x_2$}};
\node[] at (0.4, -0.4){\scriptsize\textcolor{cadmiumgreen}{$x_3$}};
\node[] at (2.5, -0.6){\scriptsize\textcolor{blue}{$x_4$}};
\node[] at (5.6, 0){\scriptsize\textcolor{black}{$y$}};

\node[] at (.5, .6){\scriptsize\textcolor{black}{$t_1$}};
\node[] at (.4, .3){\scriptsize\textcolor{black}{$t_2$}};
\node[] at (.3, .1){\scriptsize\textcolor{black}{$t_3$}};
\node[] at (1.5, .38){\scriptsize\textcolor{black}{$t_4$}};
\node[] at (1.4, .12){\scriptsize\textcolor{black}{$t_5$}};
\node[] at (2.4, .4){\scriptsize\textcolor{black}{$t_6$}};
\node[] at (3.6, .55){\scriptsize\textcolor{black}{$t_7$}};
\node[] at (3.5, -0.2){\scriptsize\textcolor{black}{$t_8$}};
\node[] at (4.3, .15){\scriptsize\textcolor{black}{$t_9$}};
\node[] at (.3, -.2){\scriptsize\textcolor{black}{$s_0$}};
\node[] at (1.5, -.45){\scriptsize\textcolor{black}{$s_1$}};
\node[] at (2.8, -.45){\scriptsize\textcolor{black}{$s_2$}};
\node[] at (3.7, -.5){\scriptsize\textcolor{black}{$s_3$}};
\node[] at (4.3, -.1){\scriptsize\textcolor{black}{$s_4$}};

\end{scope}

\end{tikzpicture}
    \vspace{-.15in}
     \caption{A framed graph $(G,F)$, and a framed $\ba$-augmented graph $(\hatG,\hatF)$ associated to the netflow vector $\ba=(2,1,0,1,0,-4)$.}
    \label{fig:auggraph}
\end{figure}
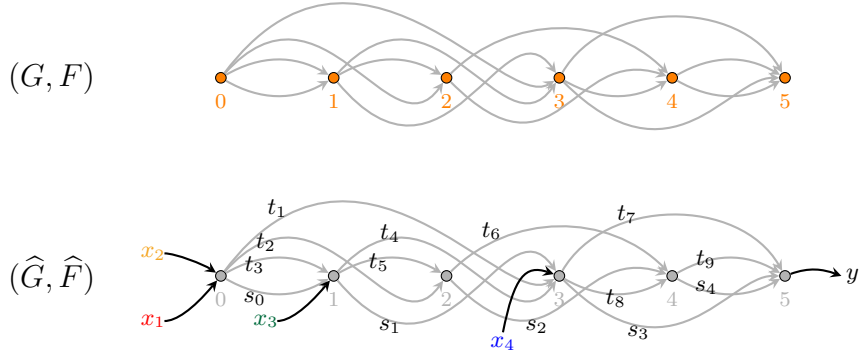

\subsection{Routes, route matchings, and cliques}
\label{sec:route_matchings}
\phantom{W}

A \defn{prefix} is any path that starts with an inflow half-edge, and a \defn{suffix} is any path that ends with the outflow half-edge $y$. A \defn{route} $P$ in $\hatG$ is a path that is simultaneously a prefix and a suffix. We denote by $\Prefixes(\hatG)$, $\Suffixes(\hatG)$, and $\Routes(\hatG)$ the sets of all prefixes, suffixes, and routes in $\hatG$, respectively. 

We say that a prefix (suffix) $P$ is a \defn{prefix (suffix) of $Q$} if every edge of $P$ is an edge of~$Q$. As such, $\Prefixes(\hatG)$ and $\Suffixes(\hatG)$ are posets under containment. Starting with a route~$P$ and an inner vertex $v$ in $P$, we can divide $P$ at $v$ into the prefix $Pv$ and the suffix $vP$. We denote by $\Prefixes(v)$ and $\Suffixes(v)$ the set of prefixes (suffixes) that end (begin) at vertex~$v$.

Define the \defn{initial edge} $\initial(P)$ and the \defn{terminal edge} $\terminal(P)$ of any path $P$ to be its first and last (full or half) edge, respectively. Whenever~$P$ is a prefix that ends at a vertex~$v$ and $e=(v,\cdot)$ is a (full or half) edge of $\hatG$, we denote by $Pe$ the \defn{extension of $P$ by $e$}.

The \defn{length} $\length(P)$ of a path $P$ is defined to be its number of full edges. For any inflow half-edge $x$, denote by $v_x$ to be its unique vertex. 

At any vertex $v$, the framing $\hatF$ extends to total orders on the sets $\Prefixes(v)$ and $\Suffixes(v)$, as in the case of a framed graph $(G,F)$. 

For any half-edge $x$, we denote by $\Routes(\hatG,x)$ the set of routes that start with $x$. By ignoring the common half-edge $x$ in each route, $\Routes(\hatG,x)$ inherits a total order $\prec_x$ from $\Suffixes(v_x)$.

\begin{definition}[Route matching] \label{def:route_matchings}
A \defn{route matching} is a set $\calP=\{P_x\mid x\in X\}$ of coherent routes with exactly one route starting at each inflow half-edge $x\in X$. 
We denote the set of route matchings by $\RouteMatchings(\hatG,\hatF)$.    
\end{definition}

Figure~\ref{fig:FSM_routematching} shows five examples of route matchings on the augmented graph $\hatG$ of Figure~\ref{fig:auggraph}.

\begin{figure}[ht!]
    \centering
    \vspace{-.3in}
    \scalebox{0.9}{\input{pictures/FSM_routematching}}
     \caption{A rank $4$ clique $\calC = \{\calP^0, \ldots, \calP^4\}$ of route matchings on the framed augmented graph $(\hatG, \hatF)$ from Figure~\ref{fig:auggraph}.
     }
    \label{fig:FSM_routematching}
\end{figure}

Route matchings are in correspondence with the lattice points in $\calF_G^\bbZ(\ba)$. 
Indeed, for a route $P$, define its \defn{indicator flow} $\bz(P)\in[0,1]^E$ to be the flow for which $\bz(P)(e)=1$ when $e\in P$ and $\bz(P)(e)=0$ if $e\notin P$. 
For a route matching $\calP=\{P_x\mid x\in X\}$, define its \defn{indicator flow} $\bz(\calP)\in\bbZ^E_{\geq 0}$ to be the flow $\bz(\calP)=\sum_{x\in X} \bz(P_x)$.
We have the following proposition.

\begin{proposition}\label{proposition:matchings_are_integer_flows}
The map $\bz$  is a bijection $$\bz:\RouteMatchings(\hatG,\hatF)\rightarrow \calF_G^\bbZ(\ba).$$
\end{proposition}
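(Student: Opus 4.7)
My plan is to establish the bijection by constructing an explicit inverse map. Well-definedness---that $\bz(\calP) \in \calF_G^\bbZ(\ba)$ for every route matching $\calP$---is a direct verification of conservation of flow. At any vertex $v$, each route $P_x \in \calP$ passing through $v$ as an interior vertex contributes $+1$ to both the incoming and outgoing totals at $v$, so these cancel. A route starting at $v$ via an inflow half-edge contributes only to the outgoing sum, and a route reaching the sink $n$ via the outflow half-edge $y$ contributes only to the incoming sum. Since there are exactly $a_v$ inflow half-edges incident at each $v < n$ and $|a_n|$ routes in $\calP$, the netflow vector of $\bz(\calP)$ is exactly $\ba$.

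For surjectivity and injectivity I would construct the inverse map $\phi \mapsto \calP_\phi$. Processing vertices in topological order $v=0,1,\ldots,n$, I view $\phi(e)$ as specifying $\phi(e)$ \emph{tokens} that traverse edge $e$. The defining conservation equation for $\phi \in \calF_G^\bbZ(\ba)$ at $v$ ensures that the total number of incoming tokens at $v$ (one per inflow half-edge incident at $v$, plus $\phi(e)$ on each $e \in \inedge(v)$) equals the total number of outgoing tokens. I order the incoming tokens at $v$ first by grouping them according to the edge or half-edge on which they arrived---with groups in the order $\prec_{\hat\inedge(v)}$---and within a group by an order inherited from $\tail(e)$, defined inductively. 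I order outgoing tokens analogously, grouped by $\prec_{\hat\outedge(v)}$. I then match the $i$-th incoming token with the $i$-th outgoing token, extending each token's trajectory by one edge. Iterating from the inflow half-edges $X$ traces out one route per $x \in X$, each ending at $y$; let $\calP_\phi$ denote this set.

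The crux of the argument---and the main technical obstacle---is verifying that the routes in $\calP_\phi$ are pairwise coherent. I would prove by induction on $v$ the following invariant: the order in which incoming tokens are processed at $v$ coincides with the framing order of the corresponding prefixes in $\inpath(v)$, and likewise the outgoing tokens are ordered according to $\outpath(v)$. For two prefixes $P, Q$ ending at $v$ via the same edge $e=(w,v)$, the definition of the framing on $\inpath(v)$ gives $P \prec Q$ if and only if $Pw \prec Qw$ in $\inpath(w)$; by the inductive hypothesis this matches the order assigned at $w$, and because matching at $w$ sends a contiguous block of consecutive incoming tokens onto each outgoing edge in framing order, this relative order is preserved along $e$. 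Prefixes ending via different edges of $\hat\inedge(v)$ are ordered by $\prec_{\hat\inedge(v)}$, matching our grouping convention. The same reasoning on the outgoing side establishes the invariant. Coherence of any two routes $P_x, P_{x'}$ at a shared vertex $v$ then follows immediately: the token indices they receive at $v$ coincide on the incoming and outgoing sides, so $P_x v \prec P_{x'} v$ in $\inpath(v)$ if and only if $v P_x \prec v P_{x'}$ in $\outpath(v)$.

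Finally, verifying that the two maps are mutually inverse is essentially immediate. The equality $\bz(\calP_\phi) = \phi$ holds by construction, since exactly $\phi(e)$ tokens traverse each edge $e$. Conversely, to see that $\calP_{\bz(\calP)} = \calP$: the coherence of $\calP$ together with the invariant above ensure that the tokens of $\calP$ are arranged at each vertex $v$ exactly in the framing orders on $\inpath(v)$ and $\outpath(v)$, which uniquely determines the matching used in the inverse construction. Hence the inverse construction reproduces $\calP$, and $\bz$ is a bijection.
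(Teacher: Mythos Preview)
Your proof is correct and follows essentially the same approach as the paper: construct the inverse by processing vertices in topological order and using the framing to match incoming tokens to outgoing edges, with conservation of flow ensuring the counts balance. The paper's proof is considerably terser and leaves the coherence verification implicit in the phrase ``unique way to distribute these prefixes,'' whereas you spell out the invariant and the coherence argument explicitly; but the underlying idea is the same.
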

\begin{proof}
    Let $\calP\in \RouteMatchings(\hatG,\hatF)$.
    It has $a_0$ inflow half-edges incoming to vertex $0$ and the routes in $\calP$ dictate a unique way to distribute these prefixes across the set $\outedge(0)$ satisfying the  conservation of flow of Equation~\eqref{eqn:defining_equations}. Inductively at every vertex $v\in [0,n]$, there are $\bz(e)$ many prefixes for every $e\in \inedge(v)$ plus the $a_v$ prefixes corresponding to inflow half-edges, which have a unique extension to $\outedge(v)$ satisfying Equation~\eqref{eqn:defining_equations}. 
\end{proof}

An example illustrating the bijection of Proposition \ref{proposition:matchings_are_integer_flows} is given in Figure~\ref{fig:PS_polytope_with_route_matchings_versionM}.  Each lattice point of the flow polytope $\calF_{\PS_3}(1,1,1,-3)$ is represented as a route matching for the specific extended framing of the Pitman--Stanley graph $\PS_3$. The map $\bz$ can be visualized comparing  Figure~\ref{fig:PS_polytope_with_integer_flows_versionM} and Figure~\ref{fig:PS_polytope_with_route_matchings_versionM} side-by-side.

\begin{figure}[ht!]
    \centering
    \scalebox{0.8}{\input{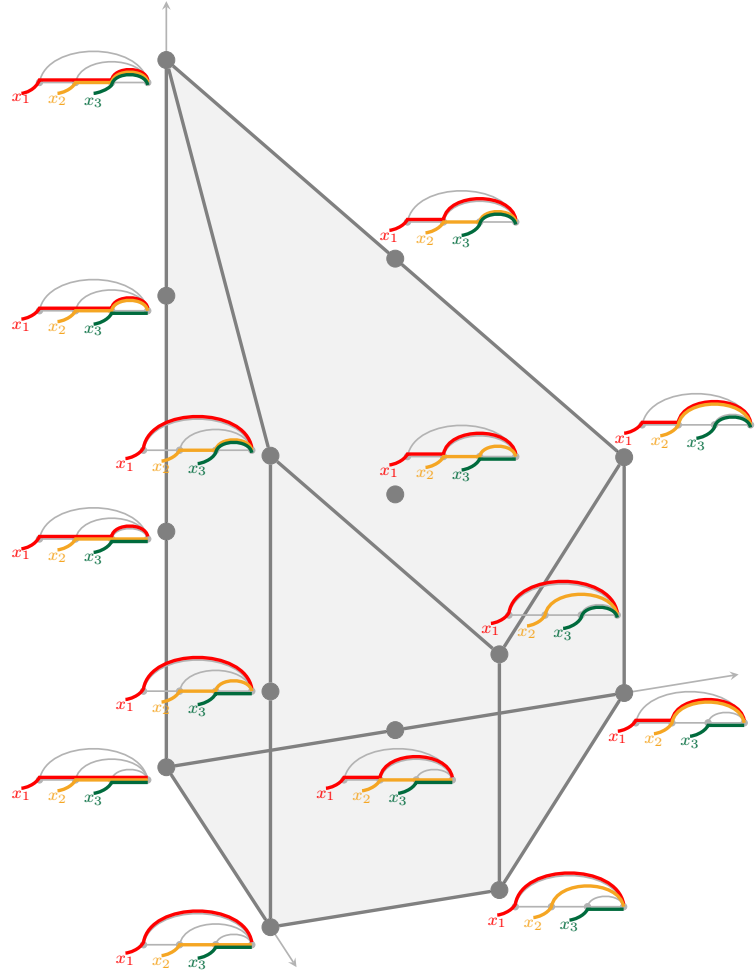}}
    \caption{The Pitman--Stanley polytope $\calF_{\PS_3}(1,1,1,-3)$ with lattice points represented as route matchings.
    }
    \label{fig:PS_polytope_with_route_matchings_versionM}
\end{figure}

We now extend the notion of coherence of routes to the notion of coherence of route matchings.
\begin{definition}[Coherence of route matchings] 
Two route matchings \[\calP=\{P_x \mid x\in X\} \textup{ and } \calQ=\{Q_x \mid x\in X\}\]  are said to be \defn{locally coherent} if the set of routes $\calP \cup \calQ$ is coherent.

$\calP$ and $\calQ$ are said to have a \defn{global conflict (between $x$ and $x'$)} if there exist two distinct inflow half-edges $x$ and $x'$ satisfying $P_x\preceq_{x}Q_x$ and $Q_{x'}\preceq_{x'}P_{x'}$. They are said to be \defn{globally coherent} if they do not have any global conflicts. 

Two route matchings $\calP$ and $\calQ$ are \defn{coherent} if they are both locally and globally coherent.
\end{definition}

\begin{definition}[Clique]
\label{def:cliques}
A set $\calC$ of route matchings whose elements are pairwise coherent is a \defn{clique}.  
We denote by $\Cliques(\hatG,\hatF)$ the set of cliques of $(\hatG,\hatF)$.
\end{definition}

A partial order on $\Cliques(\hatG,\hatF)$ is defined by containment $\calC' \subseteq \calC$.  By definition, $\Cliques(\hatG,\hatF)$ is closed under inclusion; as a consequence, it is a simplicial complex and the $\rank(\calC):=|\calC|-1$ is the \defn{rank} or \defn{dimension} of a simplex $\calC \in \Cliques(\hatG,\hatF)$. Note that $\rank(\calC')\le \rank(\calC)$ whenever $\calC' \subseteq \calC$. We denote by $\SatCliques(\hatG,\hatF)$ the set of cliques that are maximal with respect to the inclusion order, which we call \defn{saturated}. 
We will prove in Corollary~\ref{cor:dimension_d} that saturated cliques all have the same rank, $d=m-n$, the dimension of $\calF_{G}(\ba)$.

The definition of global coherence induces a total order $\preceq_{\calC}$ on the route matchings in a clique $\calC \in \Cliques(\hatG,\hatF)$ as follows. For two route matchings $\calP, \calQ \in \calC$, we say that $\calP \preceq_{\calC} \calQ$ if the routes $P_x$ and $Q_x$ satisfy  $P_x\preceq_{x}Q_x$ for each $x\in X$. As a consequence of coherence, any clique of rank $k$ can be written in \defn{standard form} $\calC=\{\calP^0,\ldots,\calP^k\}$ where $\calP^i\prec_{\calC} \calP^j$ whenever $i<j$.

\begin{example}\label{eg.cliques}
The five route matchings shown in Figure~\ref{fig:FSM_routematching} are pairwise coherent, so they form a rank $4$ clique $\calC=\{\calP^0, \ldots, \calP^4\}$, expressed in standard form.  
\end{example}

\begin{remark}
In the case $\ba = \be_0- \be_n$, each route matching consists of exactly one route and the notion of coherence of route matchings is the same as the notion of coherence of routes in the sense of~\cite{DanilovKarzanovKoshevoy2012}. The standard form $\calC = \{\calP^0,\ldots, \calP^k \}$ for a clique of route matchings then translates to a total order on the routes in a clique.
\end{remark}

\subsection{Multicliques} 
\label{sec:multicliques}
\phantom{W}

We can say more about the combinatorial structure of cliques when we consider all routes beginning at the same inflow half-edge together across the different route matchings in the clique. We introduce a second type of combinatorial object that can be used to describe cliques more efficiently in this way.

Let $\calM$ be a multiset of routes in $(\hatG, \hatF)$. The \defn{canonical partition} of $\calM$ is the multiset partition $\{\calM_x\}_{x \in X}$ where $\calM_x:=\{M\in \calM\mid \initial(M)=x\}$ is the multiset of routes starting at the inflow half-edge $x$.
Since each $\calM_x$ is totally ordered it can be written in \defn{standard form} as $\calM_x=\{M_x^0,\dots,M_x^k\}$ where $M_x^i\preceq_{x} M_x^j$ for $i<j$. 

Given a clique $\calC=\{\calP^0,\ldots,\calP^k\}$, consider the multiset of coherent routes $\calM(\calC)=\bigcup_{i=0}^k \calP^i$ that appear in the route matchings of $\calC$. Note that the canonical partition  $\{\calM(\calC)_x\}_{x \in X}$ satisfies $|\calM(\calC)_x|=k+1$ since $\calP^i$ contains a unique route starting at $x$ for every $x \in X$ and for every $i\in[0,k]$.
Furthermore, for each $i=1,\ldots,k$, there exists an $x\in X$ such that $M_x^{i-1} \neq M_x^i$ since all the route matchings $\calP^0,\ldots, \calP^k$ are distinct.
These properties are captured by the next definition.

\begin{definition}[Multiclique]\label{def:multicliques}
A multiset $\calM$ of routes is called a \defn{multiclique} if
\begin{enumerate}[label=(\alph*)]
    \item $\calM$ is coherent, 
    \item for some fixed $k\ge 0$ and for each $x \in X$ we have $|\calM_x|=k+1$, and
    \item for each $i=1,\dots,k$, there exists $x \in X$ such that $M_x^{i-1} \neq M_x^{i}$.
\end{enumerate}
The quantity $k$ is the \defn{rank} of $\calM$.
We denote by $\Multicliques(\hatG,\hatF)$ the set of multicliques of $(\hatG,\hatF)$. 
\end{definition}

For $\calM \in \Multicliques(\hatG,\hatF)$ we will use $\Routes(\calM)$ to denote the underlying set of distinct routes of $\calM$.

\begin{proposition}
The map \[\calM:\Cliques(\hatG,\hatF)\rightarrow \Multicliques(\hatG,\hatF),
\]
where $\calM(\calC) = \bigcup_{i=0}^k \calP^i$ as a multiset, is a bijection.
\end{proposition}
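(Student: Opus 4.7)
The plan is to construct an explicit inverse map $\calC:\Multicliques(\hatG,\hatF)\to\Cliques(\hatG,\hatF)$ by recovering the route matchings of a clique from the canonical partition of a multiclique. Given $\calM\in\Multicliques(\hatG,\hatF)$ of rank $k$, use the canonical partition $\{\calM_x\}_{x\in X}$ written in standard form $\calM_x=\{M_x^0,\ldots,M_x^k\}$ and define $\calP^i:=\{M_x^i\mid x\in X\}$ for each $i\in[0,k]$, and then set $\calC(\calM):=\{\calP^0,\ldots,\calP^k\}$.

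The first step is to verify that each $\calP^i$ is a route matching: each $\calP^i$ is a subset of the coherent multiset $\calM$ and hence is locally coherent, and it contains exactly one route starting at each $x\in X$ by construction. The second step is to verify pairwise coherence of the $\calP^i$'s. Local coherence is again inherited from $\calM$. For global coherence, observe that by the standard form of each $\calM_x$, we have $M_x^i\preceq_x M_x^j$ for all $x\in X$ and all $i<j$; this means $\calP^i\preceq_\calC \calP^j$ uniformly, which directly rules out the existence of a global conflict. The third step is to check that the $\calP^i$'s are pairwise distinct, which is precisely condition~(c) of Definition~\ref{def:multicliques}. Hence $\calC(\calM)$ is a clique of rank $k$.

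The remaining step is to check that $\calM\circ\calC=\mathrm{id}$ and $\calC\circ\calM=\mathrm{id}$. If $\calC=\{\calP^0,\ldots,\calP^k\}$ is already in standard form, then by definition $P_x^i\preceq_x P_x^j$ whenever $i<j$ and for every $x\in X$, so the canonical partition of the multiset $\calM(\calC)=\bigcup_i \calP^i$ has standard form $\calM(\calC)_x=\{P_x^0,P_x^1,\ldots,P_x^k\}$. Consequently the reconstruction described above gives back $\calP^i=\{P_x^i\mid x\in X\}$, and so $\calC(\calM(\calC))=\calC$. Conversely, for $\calM\in\Multicliques(\hatG,\hatF)$, the multiset union $\bigcup_i \calP^i$ of the reconstructed route matchings returns exactly $\calM$, since $(\calP^i)_x=M_x^i$ by construction and $\calM_x=\{M_x^0,\ldots,M_x^k\}$.

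The only delicate point in the argument is the verification of global coherence of the reconstructed route matchings, because a priori the index $i$ in the standard form of $\calM_x$ is assigned independently at each inflow half-edge $x$. The key observation resolving this is that the standard form orderings at different $x\in X$ are automatically compatible: pairing the $i$-th smallest route at each $x$ yields a totally ordered chain $\calP^0\preceq_\calC \calP^1\preceq_\calC\cdots\preceq_\calC \calP^k$ of route matchings, and a totally ordered chain cannot contain any pair with a global conflict. Once this is established, both the well-definedness of the inverse and the bijectivity follow from the standard form description with no further work.
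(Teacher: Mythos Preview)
Your proof is correct and follows essentially the same approach as the paper: both construct the explicit inverse by setting $\calP^i=\{M_x^i\mid x\in X\}$ from the standard form of the canonical partition, and both use the fact that the standard-form ordering at each $x$ forces $\calP^0\preceq_\calC\cdots\preceq_\calC\calP^k$, which guarantees global coherence. Your write-up is in fact more detailed than the paper's (you explicitly verify distinctness via condition~(c) and check both compositions), but the underlying argument is the same.
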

\begin{proof}
$\calM(\calC)$ is a multiclique by definition.
We construct the inverse map 
\[\calC:\Multicliques(\hatG,\hatF)\rightarrow \Cliques(\hatG,\hatF)
\]
as follows. Let $\calM \in \Multicliques(\hatG,\hatF)$, with canonical partition $\{\calM_x\}_{x \in X}$ where $\calM_x=\{M_x^0,\ldots,M_x^{k}\}$ is written in standard form. Then
\[\calC(\calM):=\{\calP^0(\calM),\ldots,\calP^{k}(\calM)\}\] where  $\calP^i(\calM):=\{M_x^i\mid x \in X\}$ for $i=0,\dots,k$.
This map is well defined because for every $0\le i<j \le k$ and every $x,x'\in X$ we have $M_x^i\preceq_{x} M_x^j$ and $M_{x'}^i\preceq_{x'} M_{x'}^j$. This ensures $\calC(\calM)$ is globally coherent, and hence is a valid clique. The corresponding total orders in $\calC$ and $\calM$ directly imply that these two maps are inverses of each other.
\end{proof}

Given a clique $\calC = \{\calP^0,\ldots, \calP^k\}$ on $(\hatG,\hatF)$, we can represent the multiclique $\calM=\calM(\calC)$ as an $|X|\times (k+1)$ array whose columns are the route matchings in $\calC$ and whose rows are the parts in the canonical partition of $\calM$ in standard form.
That is, the entry $M_x^i$ in row $\calM_x$ and column $\calM^i$ is the unique route in $\calP^i$ starting at the inflow half-edge $x$. See Figure~\ref{fig:cliques_and_multicliques} for the multiclique array representation of the clique $\calC$ of Figure~\ref{fig:FSM_routematching}.

\begin{figure}[h!]
\begin{center}
\scalebox{0.8}
{$
\begin{array}{ c|c|c|c|c|c }
&
\cellcolor{black!10} \calM^{0}& 
\cellcolor{black!14} \calM^{1} & 
\cellcolor{black!18} \calM^{2} & 
\cellcolor{black!22} \calM^{3} & 
\cellcolor{black!26} \calM^{4}\\
\rowcolor{red!30}
\calM_{x_1}& 
x_1s_0s_1t_7y & - & x_1t_3t_5t_6t_9y & x_1t_3t_4s_3y & x_1t_2s_2t_9y\\
\rowcolor{darkyellow!30}
\calM_{x_2}&
x_2t_2s_2t_9y & x_2t_2t_6t_9 y & - & x_2t_1s_3y & -\\
\rowcolor{darkgreen!30}
\calM_{x_3}&
x_3s_1t_8t_9y & - & - & - & -\\
\rowcolor{blue!30}
\calM_{x_4}&
x_4t_8t_9y & - &- & - & -
\end{array}
$}
\end{center}
    
    \caption{The multiclique $\calM(\calC)$ representing the clique $\calC$ from Figure~\ref{fig:FSM_routematching}. 
    A dash ($-$) indicates an entry that is equal to the one to its left.}
    \label{fig:cliques_and_multicliques}
\end{figure}

\begin{figure}[h!]
\begin{center}
\scalebox{0.78}
{$
\begin{array}{ c|c|c|c|c|c|c|c|c|c|c }
 & 
\calN^{0} & 
\calN^{1} & 
\cellcolor{black!10} \calN^{2} & 
\cellcolor{black!14} \calN^{3} & 
\calN^{4} & 
\cellcolor{black!18} \calN^{5} & 
\calN^{6} & 
\cellcolor{black!22} \calN^{7} & 
\cellcolor{black!26} \calN^{8} & 
\calN^{9}\\
\hline
\rowcolor{red!30}  \calN_{x_{1}} &  x_{1} s_{0} s_{1} t_{8} t_{9} y & x_{1} s_{0} s_{1} t_{7} y & - & - & x_{1} s_{0} t_{5} t_{6} t_{9} y & x_{1} t_{3} t_{5} t_{6} t_{9} y & - & x_{1} t_{3} t_{4} s_{3} y & x_{1} t_{2} s_{2} t_{9} y & -\\
\rowcolor{darkyellow!30}
\calN_{x_{2}} & x_{2} t_{2} s_{2} t_{9} y & - & - & x_{2} t_{2} t_{6} t_{9} y & - & - & x_{2} t_{1} s_{3} y & - & - & x_{2} t_{1} t_{8} s_4 y\\
\rowcolor{darkgreen!30}
\calN_{x_{3}} & x_{3} s_{1} t_{8} t_{9} y & - & - & - & - & - & - & - & - & -\\
\rowcolor{blue!30}
\calN_{x_4} & x_4t_8s_4y & - & x_4t_8t_9y & - & - & - & - & - & - & -
\end{array}
$}
\end{center}

    \caption{A saturated multiclique $\calN$ containing $\calM$ in Figure \ref{fig:cliques_and_multicliques} as a submulticlique. The shading of the column headings indicates the containment.   
    A dash ($-$) indicates an entry that is equal to the one to its left.}
    \label{fig:SatCliques_and_multicliques}
\end{figure}

Recall that the set $\Cliques(\hatG, \hatF)$ is naturally ordered by containment.
We can define the corresponding order relation on  $\Multicliques(\hatG,\hatF)$ consistent with the bijection $\calM$ as follows. Let $\calM,\calN\in \Multicliques(\hatG,\hatF)$. We say that $\calM\subseteq \calN$ whenever there exists a strictly increasing function 
\begin{equation}\label{eqn:strictly_increasing}
f:[0,\rank(\calM)]\rightarrow [0,\rank(\calN)]    
\end{equation}
such that for all $x \in X$ and for every $j\in [0,\rank(\calM)]$ and every $x\in X$, 
\begin{equation}\label{equation:face_membership}
    M_x^j=N_x^{f(j)}.
\end{equation}
We note that Equation \eqref{equation:face_membership} implies, in particular, that $\Routes(\calM) \subseteq \Routes(\calN)$. The strictly increasing function $f$ acts by selecting the route matchings that remain in the submulticlique $\calM$. 
Hence, we conclude with the following proposition.

\begin{theorem} \label{proposition:CtoM}
The map 
\[\calM:\Cliques(\hatG,\hatF)\rightarrow \Multicliques(\hatG,\hatF)
\]
is a poset isomorphism.
\qed
\end{theorem}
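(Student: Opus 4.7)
The plan is straightforward, since $\calM$ is already established as a bijection: what remains is to verify that containment in $\Cliques(\hatG,\hatF)$ corresponds exactly to containment in $\Multicliques(\hatG,\hatF)$ as defined via a strictly increasing map $f$ satisfying \eqref{equation:face_membership}. I would argue both implications separately, with the key observation being that the total order $\preceq_\calC$ on a clique $\calC$ is inherited by any larger clique $\calC'$ containing $\calC$, since $\preceq_\calC$ and $\preceq_{\calC'}$ are both determined by the same pointwise comparisons on the underlying routes indexed by $X$.

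For the forward implication, suppose $\calC = \{\calP^0, \ldots, \calP^k\} \subseteq \calC' = \{\calQ^0, \ldots, \calQ^l\}$ with both expressed in standard form. Since $\calP^i \in \calC'$, there is a unique index $j_i$ with $\calP^i = \calQ^{j_i}$, and because the ordering $\preceq_{\calC'}$ restricts to $\preceq_\calC$, the assignment $i \mapsto j_i$ is strictly increasing. Setting $f(i) := j_i$ and writing $\calM = \calM(\calC)$, $\calN = \calM(\calC')$, the equality of route matchings $\calP^i = \calQ^{f(i)}$ means their unique routes starting at each $x \in X$ coincide, which is precisely $M_x^i = N_x^{f(i)}$. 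Thus $\calM \subseteq \calN$ in $\Multicliques(\hatG,\hatF)$.

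For the reverse implication, suppose $\calM \subseteq \calN$ witnessed by a strictly increasing $f$ satisfying \eqref{equation:face_membership}. Using the inverse bijection $\calC$, we have $\calP^i(\calM) = \{M_x^i \mid x \in X\} = \{N_x^{f(i)} \mid x \in X\} = \calP^{f(i)}(\calN)$, so every route matching of $\calC(\calM)$ appears in $\calC(\calN)$, giving $\calC(\calM) \subseteq \calC(\calN)$.

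There is no substantial obstacle to this proof; the content is entirely bookkeeping, but the argument hinges on the fact that the standard-form orderings on cliques and on the canonical partitions of their associated multicliques are defined from the same data (pointwise $\preceq_x$ comparison of routes indexed by inflow half-edges), so they are automatically compatible under containment. The only mild subtlety is to confirm that the strictly increasing function $f$ produced from a clique containment is uniquely determined and matches the definition in \eqref{eqn:strictly_increasing}, but this follows from the uniqueness of the standard-form expression of each clique.
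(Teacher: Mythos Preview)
Your proposal is correct and follows essentially the same approach as the paper. The paper treats the result as immediate from the definitions (the theorem is stated with a \qed\ symbol after the preceding discussion, which simply notes that ``the strictly increasing function $f$ acts by selecting the route matchings that remain in the submulticlique''), while you have written out explicitly the bookkeeping in both directions; the key observation in both is that the standard-form ordering on a clique and the ordering on the canonical partition of its multiclique are determined by the same pointwise $\preceq_x$ comparisons.
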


The poset $\Multicliques(\hatG,\hatF)$ is then graded with rank function given by $\rank(\calM)=k$, one less than the common multiplicity of all the sets $\calM_x$.
We denote by $\SatMulticliques(\hatG,\hatF)$ the set of multicliques that are maximal with respect to this partial order, which we call \defn{saturated}. 

\begin{example}\label{eg.multiclique}
Let $\calC = \{\calP^0,\ldots, \calP^4\}$ be the rank $4$ clique from Example~\ref{eg.cliques}.
Figure~\ref{fig:cliques_and_multicliques} shows the array that represents the multiclique $\calM=\calM(\calC)$.
Let $\calD = \{\calQ^0, \ldots, \calQ^9 \}$ be the clique corresponding to the multiclique $\calN = \calM(\calD)$ shown in Figure~\ref{fig:SatCliques_and_multicliques}.

The strictly increasing function $f:[0,4]\rightarrow [0,9]$ given by 
\[f(0) = 2, \ f(1) = 3, \ f(2) = 5, \ f(3) = 7, \ f(4) = 8
\]
satisfies $M_x^j = N_x^{f(j)}$ for $j\in [0,4]$ and $x\in X$, thus $\calM \leq \calN$. 
The clique $\calD$ is saturated since it contains $d+1=10$ route matchings. 
\end{example}

\begin{remark}\label{rem.multicliques} 
In the case $\ba = \be_0- \be_n$, $\hatG$ has a unique inflow half-edge $x$, thus the notions multiclique (of routes) and the notion of clique (of routes) coincide.
\end{remark}

\subsection{Vineyard shuffles}
\label{sec:vineyardshuffles}
\phantom{W}

The combinatorial information that is contained in a multiclique can be condensed while keeping track of the multiplicity of repeated routes, giving rise to the concepts of vineyards and vineyard shuffles. Recall from Section \ref{sec:route_matchings} that $\Prefixes(\hatG)$ has the structure of a poset given by containment of prefixes. Notice also that the Hasse diagram of $\Prefixes(\hatG)$ is a forest because every prefix $P\notin X$ covers exactly one element---the prefix that deletes the last edge of $P$. The connected components of $\Prefixes(\hatG)$ are trees in correspondence with the elements of $X$.

\begin{definition}[Vineyard]\label{def:vineyard}
Let $\hatG$ be an augmented graph with $X$ its set of inflow half-edges.
A \defn{vineyard} in $\hatG$ is a subposet $\calV\subseteq \Prefixes(\hatG)$, when non-empty, its elements satisfy
\begin{enumerate}[label=(\alph*)]
    \item $X\subseteq \calV$,
    \item $\calV$ is coherent, 
    \item $\calV$ is closed under containment of prefixes, and
    \item every element in $\calV$ is extendable to a route in $\calV$; that is, the maximal elements in $\calV$ under containment are routes.
\end{enumerate}
The Hasse diagram of $\calV$ is also a forest (a subforest of $\Prefixes(\hatG)$) whose trees are indexed by $X$. We call these connected components \defn{vines}, which we denote by $\calV_x$ for each $x\in X$. We denote by $\Vineyards(\hatG,\hatF)$ the set of vineyards of $(\hatG,\hatF)$.
\end{definition}

Note that every vineyard $\calV$ is defined by its set of routes, which we denote $\Routes(\calV)$. 
By part (d) of Definition \ref{def:vineyard}, each prefix $P\in \calV$ that is not a route has at least one route that extends it.  
For each $x\in X$, denote by
\begin{equation}\label{equation:order_of_routes_in_vine}
    \Routes(\calV;x):=\left\{V_x^0,V_x^1,\dots,V_x^{l_x} \right\}
\end{equation}
the set of routes that start with $x$, which are totally ordered by $\prec_x$ at $v_x$. Keeping this order for every $x\in X$, we can then further define for a $P \in \calV_x$ the set of routes that contain $P$ as
$$\Routes(\calV;P):=\left \{V_x^i,V_x^{i+1}\dots,V_x^{j}\right \}.$$
They form a consecutive interval in $\Routes(\calV;x)$ according to $\prec_x$. 

We call the route $V_x^i$ the \defn{minimal route extension} of $P$ with respect to $\calV$ and denote it by \defn{$\minext_{\calV}(P)$}. 
Note that $\minext_{\calV}(x)=V_x^0$ for every $x\in X$.

There is a natural partial order on $\Vineyards(\hatG,\hatF)$ given by containment $\calV \subseteq \calW$. (It suffices to check that $\Routes(\calV)\subseteq \Routes(\calW)$). Denote by $\SatVineyards(\hatG,\hatF)$ the set of vineyards that are maximal with respect to this order, which we call \defn{saturated}.

Given a prefix $P\in \calV$, its set of \defn{covers}
\begin{equation}\label{equation:covers}
    \Covers(\calV;P):=\left\{Q_0,Q_1,\dots,Q_l\right\},
\end{equation}
is defined as the set of prefixes in $\calV$ that extend $P$ by exactly one edge. It inherits a total ordering from $\hat \outedge(v)$ at the last vertex $v$ of $P$. We call $Q_0$ the \defn{minimal continuation} or \defn{next step} of $P$ in 
$\calV$ and denote it by \defn{$\next_{\calV}(P)$}.

\begin{example}\label{eg.subvines}
Figure~\ref{fig:FSM_subvines} shows the Hasse diagram (read from left to right) of a vineyard $\calV = \{\calV_{x_1}, \ldots, \calV_{x_4}\}$, superimposed on the augmented graph $\hatG$.
This representation of $\calV$ is useful in visualizing the prefix of $\hatG$ that corresponds to a particular element of $\calV$.

The reader can see that three elements of $\calV$ are superimposed onto vertex $2$ corresponding to the prefixes $x_1t_2$, $x_2t_2$, and $x_1t_3t_5$.

The vineyard $\calV$ is completely determined by its routes:
\begin{align*}
\Routes(\calV; x_1) 
    &= \{V_{x_1}^0, \ldots, V_{x_1}^3\}=\{x_1s_0s_1t_7y,\ x_1t_3t_5t_6t_9y,\  x_1t_3t_4s_3y,\ x_1t_2s_2t_8y\},\\
\Routes(\calV; x_2) 
    &= \{V_{x_2}^0, \ldots, V_{x_2}^2\}=\{x_2t_2s_2t_9y,\  x_2t_2t_6t_9y,\  x_2t_1s_3y\},\\
\Routes(\calV; x_3) 
    &= \{V_{x_3}^{0}\} =\{ x_3s_1t_8t_9y\},\\
\Routes(\calV; x_4) 
    &= \{V_{x_4}^{0}\} =\{ x_4t_8t_9y\}.
\end{align*}

The prefix $P=x_2t_2$ has $\Routes(\calV;P) = \{V_{x_2}^0, V_{x_2}^1 \}$ so $\minext_{\calV}(P)=V_{x_2}^0=x_2t_2s_2t_9y$. Furthermore, $\Covers(\calV;P)=\{x_2t_2s_2, x_2t_2t_6\}$ so $\next_{\calV}(P)=x_2t_2s_2$.
\end{example}

\begin{figure}[ht!]
    \centering\vspace{-.5in}
    \begin{tikzpicture}
\begin{scope}[scale=1.5, yshift=60, yscale=1.2]
\node at (-1.5,0){$\calV$};

\vertex[fill, color=black!10, label=below:{\tiny\textcolor{black!30}{$0$}}](v0) at (0,0) {};
\vertex[fill, color=black!10, label=below:{\tiny\textcolor{black!30}{$1$}}](v1) at (1,0) {};
\vertex[fill, color=black!10, label=below:{\tiny\textcolor{black!30}{$2$}}](v2) at (2,0) {};
\vertex[fill, color=black!10, label=below:{\tiny\textcolor{black!30}{$3$}}](v3) at (3,0) {};
\vertex[fill, color=black!10, label=below:{\tiny\textcolor{black!30}{$4$}}](v4) at (4,0) {};
\vertex[fill, color=black!10, label=below:{\tiny\textcolor{black!30}{$5$}}](v5) at (5,0) {};

\node[] at (-0.6, -0.5){\scriptsize\textcolor{red}{$\calV_{x_1}$}};
\node[] at (-0.7, 0.2){\scriptsize\textcolor{darkyellow}{$\calV_{x_2}$}};
\node[] at (0.4, -0.5){\scriptsize\textcolor{cadmiumgreen}{$\calV_{x_3}$}};
\node[] at (2.5, -0.7){\scriptsize\textcolor{blue}{$\calV_{x_4}$}};
\node[] at (5.5, -0.45){\scriptsize\textcolor{black!30}{$y$}};

\node[] at (.5, .5){\scriptsize\textcolor{black!30}{$t_1$}};
\node[] at (.4, .3){\scriptsize\textcolor{black!30}{$t_2$}};
\node[] at (.3, .1){\scriptsize\textcolor{black!30}{$t_3$}};
\node[] at (1.4, .35){\scriptsize\textcolor{black!30}{$t_4$}};
\node[] at (1.3, .15){\scriptsize\textcolor{black!30}{$t_5$}};
\node[] at (2.4, .4){\scriptsize\textcolor{black!30}{$t_6$}};
\node[] at (3.6, .55){\scriptsize\textcolor{black!30}{$t_7$}};
\node[] at (3.6, -0.27){\scriptsize\textcolor{black!30}{$t_8$}};
\node[] at (4.3, .25){\scriptsize\textcolor{black!30}{$t_9$}};
\node[] at (.3, -.15){\scriptsize\textcolor{black!30}{$s_0$}};
\node[] at (1.5, -.5){\scriptsize\textcolor{black!30}{$s_1$}};
\node[] at (2.8, -.45){\scriptsize\textcolor{black!30}{$s_2$}};
\node[] at (3.7, -.5){\scriptsize\textcolor{black!30}{$s_3$}};
\node[] at (4.2, -.17){\scriptsize\textcolor{black!30}{$s_4$}};

\draw[thin, color=black!30] (v0) .. controls (1.2, 1.6) and (2.5, -0.3) .. (v3);
\draw[thin, color=black!30] (v0) .. controls (0.9, 1.0) and (1.5, -0.7) .. (v2);
\draw[thin, color=black!30] (v0) to [out=30,in=150] (v1);
\draw[thin, color=black!30] (v0) to [out=-30,in=-150] (v1);

\draw[thin, color=black!30] (v1) .. controls (1.9, 1.0) and (2.5, -0.7) .. (v3);
\draw[thin, color=black!30] (v1) to [out=30,in=150] (v2);
\draw[thin, color=black!30] (v1) .. controls (2.0, -1.2) and (2.5, 0.7) .. (v3);	

\draw[thin, color=black!30] (v2) to [out=45,in=135] (v4);
\draw[thin, color=black!30] (v2) .. controls (3.0, -1.0) and (3.1, 0.3) .. (v4);	

\draw[thin, color=black!30] (v3) to [out=60,in=120] (v5);
\draw[thin, color=black!30] (v3) to [out=-30,in=-150] (v4);
\draw[thin, color=black!30] (v3) .. controls (4.0, -1.0) and (4.5, 0.0) .. (v5);

\draw[thin, color=black!30] (v4) to [out=30,in=150] (5,0);
\draw[thin, color=black!30] (v4) to [out=-30,in=-150] (v5);

\draw[thin, color=black!30] (-0.5,0.2) .. controls (-0.4, 0.2) and (-0.15, 0.1) .. (v0);
\draw[thin, color=black!30] (0.5, -.4) .. controls (0.6, -.4) and (0.75, -.3) .. (v1);
\draw[thin, color=black!30] (2.5, -.5) .. controls (2.6, 0) and (2.7, 0.1) .. (v3);

\draw[thin, color=black!30] (v5) to [out=20, in=160] (5.5, 0);

\vnode[color=red](vv0) at (0,-0.05) {};
\vnode[color=red](vv12) at (1,0.08) {};
\vnode[color=red](vv11) at (1,0) {};
\vnode[color=red](vv22) at (2,0.08) {};
\vnode[color=red](vv21) at (2,-0.08) {};
\vnode[color=red](vv32) at (3,0.16) {};
\vnode[color=red](vv31) at (3,-0.16) {};
\vnode[color=red](vv42) at (4,0.24) {};
\vnode[color=red](vv41) at (4,-0.16) {};
\vnode[color=red](vv54) at (5,0.40) {};
\vnode[color=red](vv53) at (5,0.32) {};
\vnode[color=red](vv52) at (5,-0.08) {};
\vnode[color=red](vv51) at (5,-0.24) {};
\vnode[color=red](vr4) at (5.5,0.40) {};
\vnode[color=red](vr3) at (5.5,0.32) {};
\vnode[color=red](vr2) at (5.5,-0.08) {};
\vnode[color=red](vr1) at (5.5,-0.24) {};

\draw[very thick, dashed, color=red] (-0.5, -.4) .. controls (-0.4, -.4) and (-0.25, -.3) .. (vv0);
\draw[very thick, color=red] (vv0) .. controls (0.9, 1.0) and (1.5, -0.7) .. (vv21);
\draw[very thick, color=red] (vv0) to [out=30,in=150] (vv12);
\draw[very thick, color=red] (vv0) to [out=-30,in=-150] (vv11);
\draw[very thick, color=red] (vv12) .. controls (1.9, 1.0) and (2.5, -0.7) .. (vv31);
\draw[very thick, color=red] (vv12) to [out=30,in=150] (vv22);
\draw[very thick, color=red] (vv11) .. controls (2.0, -1.2) and (2.5, 0.7) .. (vv32);
\draw[very thick, color=red] (vv22) to [out=45,in=135] (vv42);
\draw[very thick, color=red] (vv21) .. controls (3.1, -1.1) and (3.1, 0.3) .. (vv41);	
\draw[very thick, color=red] (vv32) to [out=40,in=150] (vv54);
\draw[very thick, color=red] (vv31) .. controls (4.1, -1.1) and (4.5, 0.0) .. (vv51);
\draw[very thick, color=red] (vv42) to [out=30,in=150] (vv53);
\draw[very thick, color=red] (vv41) to [out=30,in=150] (vv52);
\draw[very thick, color=red] (vv54) to [out=20, in=160] (5.5, 0.4);
\draw[very thick, color=red] (vv53) to [out=20, in=160] (5.5, 0.32);
\draw[very thick, color=red] (vv52) to [out=20, in=160] (5.5, -0.08);
\draw[very thick, color=red] (vv51) to [out=20, in=160] (5.5, -0.24);

\vnode[color=darkyellow](vv0) at (0,0.05) {};
\vnode[color=darkyellow](vv2) at (2,0) {};
\vnode[color=darkyellow](vv3) at (3,-0.08) {};
\vnode[color=darkyellow](vv43) at (4,0.16) {};
\vnode[color=darkyellow](vv42) at (4.0,-0.08) {};
\vnode[color=darkyellow](vv54) at (5,0.24) {};
\vnode[color=darkyellow](vv53) at (5,0) {};
\vnode[color=darkyellow](vv52) at (5,-0.16) {};
\vnode[color=darkyellow](vy4) at (5.5,0.24) {};
\vnode[color=darkyellow](vy3) at (5.5,0) {};
\vnode[color=darkyellow](vy2) at (5.5,-0.16) {};

\draw[very thick, dashed, color=darkyellow] (-0.5,0.2) .. controls (-0.4, 0.2) and (-0.15, 0.1) .. (vv0);
\draw[very thick, color=darkyellow] (vv0) .. controls (1.2, 1.6) and (2.5, -0.3) .. (vv3);
\draw[very thick, color=darkyellow] (vv0) .. controls (0.9, 1.0) and (1.5, -0.6) .. (vv2);
\draw[very thick, color=darkyellow] (vv2) to [out=45,in=135] (vv43);
\draw[very thick, color=darkyellow] (vv2) .. controls (3.0, -1.0) and (3.1, 0.3) .. (vv42);	
\draw[very thick, color=darkyellow] (vv3) .. controls (4.0, -1.0) and (4.5, 0.0) .. (vv52);
\draw[very thick, color=darkyellow] (vv43) to [out=30,in=150] (vv54);
\draw[very thick, color=darkyellow] (vv42) to [out=30,in=150] (vv53);
\draw[very thick, color=darkyellow] (vv54) to [out=20, in=160] (vy4);
\draw[very thick, color=darkyellow] (vv53) to [out=20, in=160] (vy3);
\draw[very thick, color=darkyellow] (vv52) to [out=20, in=160] (vy2);

\vnode[color=cadmiumgreen](vv1) at (1,-0.08) {};
\vnode[color=cadmiumgreen](vv3) at (3,0.08) {};
\vnode[color=cadmiumgreen](vv4) at (4,0.08) {};
\vnode[color=cadmiumgreen](vv5) at (5,0.16) {};
\vnode[color=cadmiumgreen](vg) at (5.5,0.16) {};

\draw[very thick, dashed, color=cadmiumgreen] (0.5, -.4) .. controls (0.6, -.4) and (0.75, -.3) .. (vv1);
\draw[very thick, color=cadmiumgreen] (vv1) .. controls (2.1, -1.25) and (2.5, 0.7) .. (vv3);
\draw[very thick, color=cadmiumgreen] (vv3) to [out=-30,in=-150] (vv4);
\draw[very thick, color=cadmiumgreen] (vv4) to [out=30,in=150] (vv5);
\draw[very thick, color=cadmiumgreen] (vv5) to [out=20, in=160] (5.5, 0.16);

\vnode[color=blue](vv3) at (3,0) {};
\vnode[color=blue](vv4) at (4,0) {};
\vnode[color=blue](vv5) at (5,0.08) {};
\vnode[color=blue](vb) at (5.5,0.08) {};

\draw[very thick, dashed, color=blue] (2.5, -.5) .. controls (2.6, 0) and (2.7, 0.1) .. (vv3);
\draw[very thick, color=blue] (vv3) to [out=-30,in=-150] (vv4);
\draw[very thick, color=blue] (vv4) to [out=30,in=150] (vv5);
\draw[very thick, color=blue] (vv5) to [out=20, in=160] (5.5, 0.08);

\end{scope}
\end{tikzpicture}
    \vspace{-.2in}
    \caption{The Hasse diagram of a vineyard $\calV=\{\calV_{x_1},\ldots, \calV_{x_4}\}$, intertwined and superimposed on $\hatG$. See Example~\ref{eg.subvines}.}
    \label{fig:FSM_subvines}
\end{figure}

\begin{definition}[Vineyard split]   \label{def:vineyard_splits}
Let $P \in \calV$ be a prefix with \[\Covers(\calV;P)=\{Q_0,Q_1,\ldots, Q_l\}\] as in Equation~\eqref{equation:covers}. 
If $|\Covers(\calV;P)|\geq 2$, we say that $P$ \defn{has direct splits} and define the \defn{direct splits} of $P$  to be its non-minimal covers $Q_1,\ldots, Q_l$.
Recursively, we define the \defn{splits} of $P$ in $\calV$ to be $\Splits(\calV;P)=\emptyset$ whenever $\Covers(\calV;P)=\emptyset$, otherwise we define
\[
\Splits(\calV;P)= \{Q_1,\ldots, Q_l\} \sqcup \bigsqcup_{i=0}^l \Splits(\calV;Q_i).
\]
Denote by $\Splits(\calV) = \bigsqcup_{x\in X}\Splits(\calV;x)$ the set of splits of the vineyard $\calV$. 
\end{definition}

\begin{proposition}\label{proposition:minext_bijects_routes_to_X_splits}
The map
\[ \minext_{\calV}: X \sqcup \Splits(\calV)  \rightarrow \Routes(\calV) \]
is a bijection. As a consequence, each set $\Splits(\calV;x)$ is totally ordered by $\prec_xF$.
\end{proposition}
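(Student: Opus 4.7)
The plan is to reduce to proving the bijection one vine at a time. Since $\minext_\calV(P)$ starts at the inflow half-edge containing $P$, it suffices to show that for each $x \in X$, the restriction
\[
\minext_\calV : \{x\} \sqcup \Splits(\calV;x) \longrightarrow \Routes(\calV;x)
\]
is a bijection. The global statement and the total ordering of $\Splits(\calV;x)$ by $\prec_x$ then follow by transporting the $\prec_x$-order on routes back via $\minext_\calV^{-1}$.

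The key structural lemma I would establish first is: for every $P \in \calV_x$, the unique chain
\[
P = P_0 \subsetneq P_1 \subsetneq \cdots \subsetneq P_k = \minext_\calV(P)
\]
in the Hasse diagram of $\calV_x$ consists entirely of minimum-cover steps, i.e., $P_i = \next_\calV(P_{i-1})$ for all $i$. The argument is by contradiction: if $P_i \neq \next_\calV(P_{i-1})$ for the smallest such $i$, then $\next_\calV(P_{i-1})$ extends to some route $R'$ in $\Routes(\calV;P)$, and since $R'$ and $\minext_\calV(P)$ first diverge at the end vertex of $P_{i-1}$, with $R'$ taking a smaller edge in $\hat\outedge$, one gets $R' \prec_x \minext_\calV(P)$, contradicting minimality. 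Coherence is used here to ensure $\prec_x$ is a well-defined total order on all of $\Routes(\calV;x)$.

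Injectivity is then immediate: if $P,P' \in \{x\} \sqcup \Splits(\calV;x)$ both map to the same route $R$, then both lie on the unique $x$-to-$R$ chain in $\calV_x$; assuming $P \subsetneq P'$, the key lemma applied to $P$ forces $P'$ to be a minimum cover in that chain, contradicting that $P'$ is either $x$ or a split. For surjectivity, given $R \in \Routes(\calV;x)$, traverse the unique chain $x = R_0 \subsetneq \cdots \subsetneq R_k = R$ and let $j$ be the largest index such that $R_j$ is a non-minimum cover of $R_{j-1}$, or $j = 0$ if no such index exists. In the first case $R_j \in \Splits(\calV;x)$, in the second case $R_j = x$; in either case all covers from $R_j$ onward are minimum ones, so the key lemma gives $\minext_\calV(R_j) = R$.

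The main obstacle I anticipate is the key lemma, specifically making precise how the framing-derived order $\prec_x$ on suffixes interacts with cover-minimality at individual vertices. Once that comparison is nailed down---showing that $\prec_x$ is governed exactly by the first vertex of divergence and the framing order there---the remaining bookkeeping (injectivity, surjectivity, and the induced total order on $\Splits(\calV;x)$) falls out cleanly by walking along chains in the rooted tree $\calV_x$.
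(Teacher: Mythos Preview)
Your proposal is correct and follows essentially the same approach as the paper: both reduce to one vine at a time, establish injectivity by showing that any element on the chain from $P$ to $\minext_\calV(P)$ must be a minimum cover (hence not a split), and prove surjectivity by locating the appropriate split preimage. Your explicit ``key lemma'' (that the chain from $P$ to $\minext_\calV(P)$ consists entirely of minimum-cover steps) is precisely what the paper uses implicitly in its injectivity argument; the paper's surjectivity argument locates the preimage of $V_x^i$ as the divergence point from $V_x^{i-1}$, which is the same prefix you find by tracing back to the last non-minimum cover on the chain to $R$.
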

\begin{proof}
It suffices to show for each $x\in X$ that $\minext_{\calV}:\{x\} \sqcup \Splits(\calV;x) \rightarrow \Routes(\calV;x)$ is a bijection.

Let $\Routes(\calV;x) = \{V_x^0, \ldots, V_x^{l_x} \}$.
We note that (i) $\minext_{\calV}(x) = V_x^0$ and (ii) a prefix $P\in \Splits(\calV;x)$ cannot have $\minext_{\calV}(P) = V_x^0$ because it would mean that $P$ is a minimal cover of some other prefix in $\calV_x$ and could not have been a split in $\calV_x$.

If there exists $P,Q\in \Splits(\calV;x)$ such that $\minext_{\calV}(P) = \minext_{\calV}(Q)$, then one is a prefix of the other, so without loss of generality we assume $P\subseteq Q$.  
$Q$ is a prefix of $\minext_{\calV}(Q) = \minext_{\calV}(P)$, which implies that $Q$ is a minimal cover of some prefix $R$ such that $P \subseteq R \subset Q$.
This contradicts the fact that $Q\in \Splits(\calV;x)$ unless $P=Q$.
Therefore, $\minext_{\calV}$ is injective on $\{x\} \sqcup \Splits(\calV;x)$. 

For all $i\in [l_x]$, $V_x^{i-1}$ and $V_x^{i}$ coincide at a maximal prefix $P$ and there are prefixes $Q, Q'\in \Covers(\calV;P)$ that satisfy $P\subset Q\subseteq V_x^{i-1}$ and $P\subset Q'\subseteq V_x^{i}$. 
Further, $Q\prec Q'$ so $Q'$ is a split of $P$ and since $V_x^{i}$ is an extension of $Q'$ but $V_x^{i-1}$ is not, then $\minext_{\calV}(Q')=V_x^{i}$, from which the surjectivity of $\minext_{\calV}$ follows. 
\end{proof}

From Proposition~\ref{proposition:minext_bijects_routes_to_X_splits} we have that for each $x\in X$, the map 
\[\minext_{\calV}: \{x\}\sqcup \Splits(\calV;x) \rightarrow \{V_x^0,\ldots, V_x^{l_x} \}\] 
is a bijection. For $i\in [0,l_x]$, define the prefix
\begin{equation}\label{eqn:split_associated_to_a_route}
S_x^i:= \minext_{\calV}^{-1}(V_x^i)    
\end{equation}
to be the \defn{$i$-th split of $x$}. In particular, as a convention, $S_x^0 =x$ which is not a split. For $S_x^i\in \Splits(\calV;x)$, let $\next_\calV(S_x^i)= S_x^{i+1}$ be the next split in the vine $\calV_x$. These concepts will be used later, including in Proposition~\ref{prop:is_a_shuffle}.

The following lemma is straightforward from Definition \ref{def:vineyard_splits}. 
\begin{lemma}\label{lemma:inclusion_of_vineyards}
    For vineyards satisfying $\calV \subseteq \calW$, we have  $\Splits(\calV)\subseteq  \Splits(\calW)$.
\end{lemma}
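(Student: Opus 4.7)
The plan is to first establish a convenient non-recursive characterization of splits, and then apply it directly. Specifically, I would prove the following intermediate claim: a prefix $R$ belongs to $\Splits(\calV)$ if and only if $R \in \calV \setminus X$ and its unique predecessor $P$ in $\Prefixes(\hatG)$ (the prefix with $R = Pe$ for some edge $e$) admits a cover $Q \in \Covers(\calV; P)$ with $Q \prec R$ in the total order on $\Covers(\calV;P)$ inherited from $\hat\outedge$ at the last vertex of $P$. In other words, $R \in \Splits(\calV)$ precisely when $R$ is a non-minimal element of $\Covers(\calV; P)$, where $P$ is its predecessor.

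To prove the characterization, I would induct on $\length(R)$. Suppose $R$ extends $x\in X$ via the unique chain $x = P_0 \subsetdot P_1 \subsetdot \cdots \subsetdot P_k = R$ of coverings in $\Prefixes(\hatG)$. The base case $k=1$ reduces to the definition of direct splits of $x$. For $k\geq 2$, the recursive formula $\Splits(\calV;x) = \{\text{non-min.~covers of }x\} \sqcup \bigsqcup_{Q \in \Covers(\calV;x)} \Splits(\calV;Q)$ forces $R \in \Splits(\calV;x)$ to be equivalent to $R\in \Splits(\calV;P_1)$, since $R \neq P_1$. The inductive hypothesis applied to the shorter chain $P_1 \subsetdot \cdots \subsetdot P_k$ then yields the desired equivalence, namely that $R$ is a non-minimal cover of $P_{k-1}$ in $\calV$.

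Granting the characterization, the lemma follows immediately. Given $R \in \Splits(\calV)$, let $P$ be its predecessor and let $Q \in \Covers(\calV;P)$ be a cover with $Q \prec R$. Since $\calV \subseteq \calW$, the prefixes $P$, $Q$, and $R$ all lie in $\calW$, and $Q$ remains a cover of $P$ in $\calW$ strictly preceding $R$. Applying the characterization in the reverse direction to $\calW$ gives $R \in \Splits(\calW)$. No step here is delicate; the only minor subtlety to address is that a direct split $R$ in $\calV$ might conceivably become the minimum cover in $\calW$, but this is ruled out because the witness $Q$ also survives in $\calW$.
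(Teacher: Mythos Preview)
Your proof is correct and is essentially the explicit version of what the paper intends: the paper simply asserts that the lemma is ``straightforward from Definition~\ref{def:vineyard_splits}'' without further argument, and your non-recursive characterization of $\Splits(\calV)$ as the set of prefixes that are non-minimal covers of their unique predecessor is exactly the unwinding of that recursive definition. Your final step, observing that a witness $Q \prec R$ in $\Covers(\calV;P)$ persists in $\Covers(\calW;P)$, is the right way to rule out the only possible failure mode.
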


The \defn{support} of a vineyard $\calV$ is the subgraph of $\hatG$ with augmented edge set 
\[\supp(\calV) := \{e \in \hatE\mid e=\terminal(P)\text{ for some }P\in \calV\}.
\]

\begin{lemma}\label{lemma:proj_is_injective}
Let $\hatH = \supp(\calV)$.
The map $\terminal:\Splits(\calV)\rightarrow \Splits(\hatH)$ is injective.
\end{lemma}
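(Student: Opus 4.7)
The plan is first to verify that $\terminal$ maps into $\Splits(\hatH)$, and then to establish injectivity by a coherence argument applied to two route extensions in $\calV$.

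For well-definedness, I will use that any $P \in \Splits(\calV)$ is (by Definition~\ref{def:vineyard_splits}) a non-minimal element of $\Covers(\calV;P')$ for its immediate predecessor $P'$ in the Hasse diagram of $\calV$, namely the prefix obtained by dropping the terminal edge $e := \terminal(P)$. Hence there exists $e_0 \prec_{\outedge(\tail(e))} e$ with $P'e_0 \in \calV$, and both $e$ and $e_0$ are terminal edges of prefixes in $\calV$, so they lie in $\hatH$. This exhibits $e$ as a non-minimal outgoing edge at $\tail(e)$ in the inherited framing of $\hatH$, so $e \in \Splits(\hatH)$.

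For injectivity, suppose $P_1, P_2 \in \Splits(\calV)$ share the terminal edge $e = (v', v)$, and write $P_i = P_i'e$ for $i=1,2$. Assuming toward a contradiction that $P_1 \neq P_2$, we have $P_1' \neq P_2'$, and since both prefixes end at $v'$ they are strictly comparable in $\prec_{\Prefixes(v')}$. I will derive both $P_1' \prec P_2'$ and $P_2' \prec P_1'$. Because $P_1$ is a split of $P_1'$ in $\calV$, there is $e_0 \prec_{\outedge(v')} e$ with $P_1'e_0 \in \calV$; extend $P_1'e_0$ and $P_2$ to routes $R_1, R_2 \in \calV$ using Definition~\ref{def:vineyard}(d). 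Since $v'R_1$ and $v'R_2$ begin with $e_0$ and $e$ respectively and diverge immediately at $v'$, we have $v'R_1 \prec_{\Suffixes(v')} v'R_2$. Coherence of $\{R_1,R_2\}$ at $v'$ then forces the parallel inequality on prefixes: $P_1' = R_1v' \prec_{\Prefixes(v')} R_2v' = P_2'$. A symmetric argument, starting from the fact that $P_2$ is also a split and so there exists $e_0' \prec_{\outedge(v')} e$ with $P_2'e_0' \in \calV$, yields $P_2' \prec_{\Prefixes(v')} P_1'$, the desired contradiction, so $P_1' = P_2'$ and thus $P_1 = P_2$.

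The main subtle step will be the coherence reduction: each split provides a \emph{shadow} sibling extension $P_i'e_0 \in \calV$ via a smaller outgoing edge at $v'$, and this shadow is exactly what is needed to compare $P_1'$ and $P_2'$ at $v'$ via the no-conflict condition. Everything else is bookkeeping on the definitions of splits and vineyard coherence.
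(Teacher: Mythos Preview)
Your proof is correct and follows essentially the same approach as the paper's: both exploit that a split $P$ has a strictly smaller sibling cover at $\tail(\terminal(P))$, and use it to produce a coherence conflict with the other split (or its predecessor). The paper's version is slightly more compact---it assumes WLOG $P_1' \prec P_2'$ and directly exhibits a conflict between the prefixes $P_1$ and $\next_{\calV}(P_2')$ without extending to routes or invoking the symmetric direction---but your route-extension-plus-symmetry variant and your inclusion of the well-definedness check are perfectly fine.
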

\begin{proof}
Suppose two splits $P\neq Q\in \Splits(\calV)$ satisfy $\terminal(P)=\terminal(Q)=(v,w)$. If $Pv=Qv$ then $P=Q$, so assume without loss of generality that $Pv \prec_{\Prefixes(v)} Qv$. Then the prefixes $P$ and $\next_{\calV}(Qv)$ are in conflict, which contradicts $\calV$ being a vineyard. 
\end{proof}

The injectivity of the map $\terminal$ of Lemma \ref{lemma:proj_is_injective} leads to a concise way to label various elements of a vineyard. 
The lemma asserted that every split $P\in \Splits(\calV)$ maps injectively into the split $\terminal(P)\in \Splits(\hatH)$. Hence, we can use splits in $\hatH$ to label the splits of $\calV$ and by Proposition \ref{proposition:minext_bijects_routes_to_X_splits}, we can use this together with the set $X$ to label the routes of $\calV$.

\begin{definition}[Natural labeling of a vineyard]\label{def:natural_labeling_vineyard}
Let $\hatH=\supp(\calV)$. The \defn{natural labeling} of a vineyard $\calV$ is the map 
\[\lambda: \calV\rightarrow X\cup \Splits(\hatH)
\]
uniquely defined by assigning the label $\lambda(P):=\terminal(P)$ to every $P\in X \cup \Splits(\calV)$ and then extending the labeling to the entire set $\calV$ by letting $\lambda(P)=\lambda(Q)$ whenever $Q=\next_{\calV}(P)$. A particular byproduct of this construction is an injective labeling of routes
\[\lambda: \Routes(\calV)\rightarrow X\cup \Splits(\hatH).\]
\end{definition}

\begin{remark}
If $\calV$ is saturated in $\hatH$, then $\lambda$ is bijective. However, $\lambda$ is not bijective in general because there may be splits in $\Splits(\hatH)$ that are not in $\Splits(\calV)$. For example, consider the vine induced from two routes that converge and diverge multiple times. Each time they diverge at a split in $\hatH$, but only the first divergence is at a split in $\Splits(\calV)$. 
\end{remark}

\begin{example}\label{eg.splits}
The vineyard $\calV$ from Example~\ref{eg.subvines} has 
\begin{align*}
\Splits(\calV;{x_1}) &= \{S_{x_1}^1, S_{x_1}^2, S_{x_1}^3\} = \{x_1t_3,\  x_1t_3t_4,\  x_1t_2 \}, \\
\Splits(\calV;{x_2}) &= \{S_{x_2}^1, S_{x_2}^2\} = \{x_2t_2t_6,\  x_2t_1\},  \\
\Splits(\calV;{x_3}) &= \Splits(\calV;{x_4}) = \emptyset.
\end{align*}
Its support $\hatH = \supp(\calV)$ is $\hatG$ minus the single edge $s_4$.
The set of terminal edges of $X\sqcup \Splits(\calV)$ is $\{x_1, x_2, x_3, x_4, t_3, t_4, t_2, t_6, t_1 \}$, which gives the natural labeling of $\Routes(\calV)$.
For example, $\lambda(V_{x_2}^1) = \lambda(x_2t_2t_6t_9y)= t_6$.
The extension of $\lambda$ to all prefixes $P\in\calV$ is given in Figure~\ref{fig:FSM_unpacked_subvines}, where the label $\lambda(P)$ is shown beside each $P\in \calV$.
Prefixes labeled $t_j$ are depicted as $j$ in the figure.
\end{example}

\begin{remark}
As initially defined, the map $\minext_{\calV}: \calV \rightarrow \Routes(\calV)$ is a surjection.
However, Proposition~\ref{proposition:minext_bijects_routes_to_X_splits} states that it is a bijection when the domain is restricted to $D=X\sqcup \Splits(\calV)$, hence we may equivalently define the natural labeling of $\calV$ by letting $\lambda(R) = \terminal(\minext_{\calV}|_D^{-1}(R))$ for $R\in\Routes(\calV)$, then extending to all prefixes $P\in\calV$ by letting $\lambda(P) = \lambda(\minext_{\calV}(P))$.
\end{remark}

\begin{remark}
When $\ba=\be_0-\be_n$, there is only one vine in a vineyard.  
\end{remark}

\begin{figure}[ht!]
    \centering\vspace{-.5in}
    \input{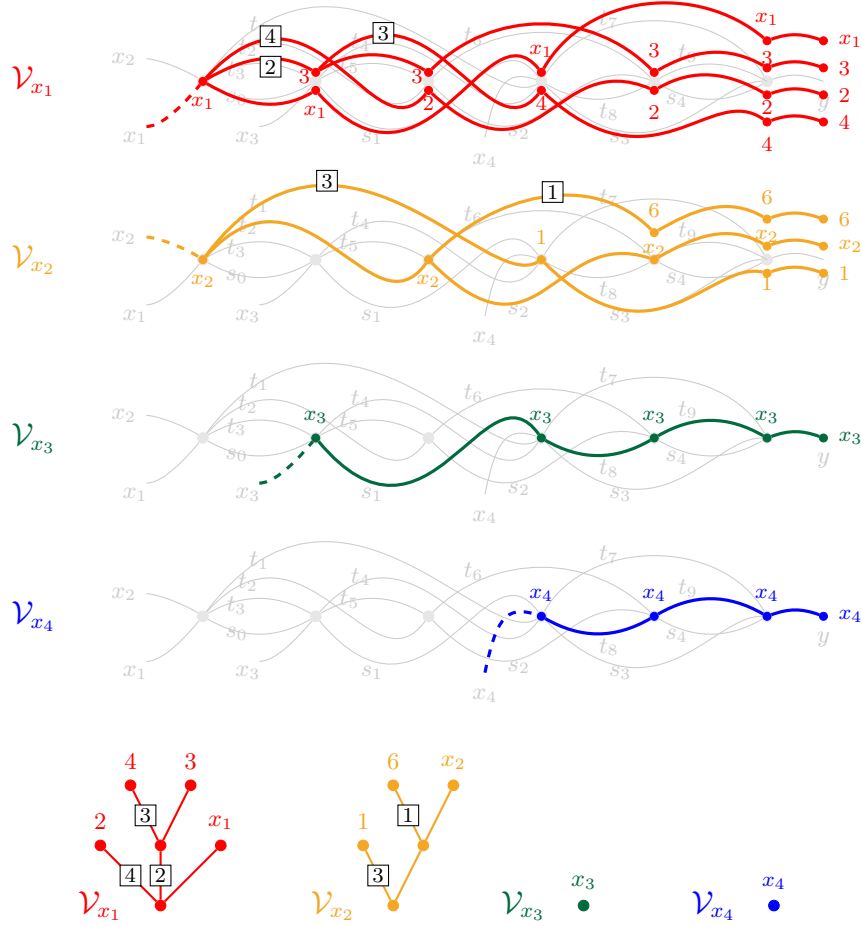}
    \caption{Two representations of a vineyard shuffle $(\calV, \sigma_\calV)$ corresponding to the multiclique $\calM$ from Figure~\ref{fig:cliques_and_multicliques}. Each prefix is the vineyard in labeled by its natural labeling $\lambda(P)$. See Example~\ref{eg.splits}.}
    \label{fig:FSM_unpacked_subvines}
\end{figure}

\begin{figure}[ht!]
    \centering\vspace{-.5in}
    \input{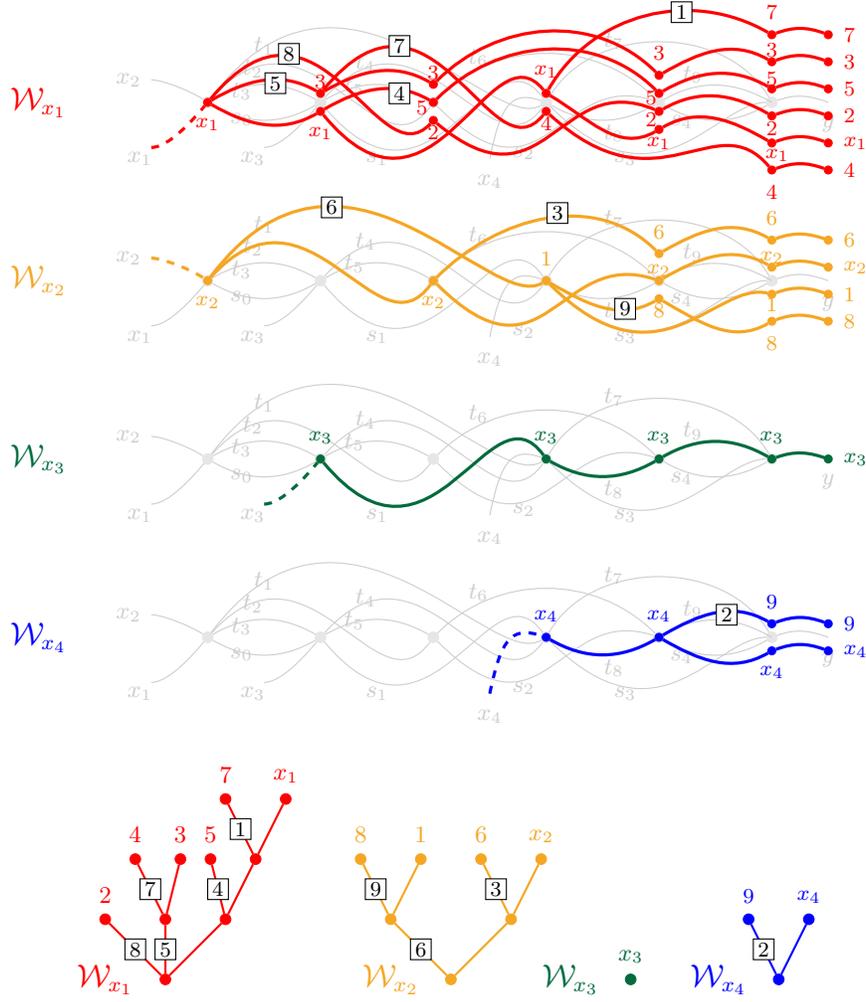}
    \caption{Two representations of the saturated vineyard shuffle $(\calW, \sigma_\calW)$ corresponding to the saturated multiclique $\calN$ from Figure~\ref{fig:SatCliques_and_multicliques}.
    }
    \label{fig:FSM_unpacked_vines}
\end{figure}

\subsubsection{Vineyard shuffles from multicliques}
\label{sec:vineyard_shuffles_from_multicliques}
\phantom{W}

We can condense the information contained in a multiclique using the concept of a vineyard plus an additional piece of information which keeps track of the multiplicity of routes, namely a shuffle surjection.

\begin{definition}\label{def:shuffle}
    For positive integers $n_1,\ldots, n_r, k$, an  \defn{$(n_1,\dots,n_r;k)$-shuffle} is a surjection $\sigma:[n_1+\dots + n_r]\rightarrow [k]$ satisfying
\begin{align*}
   \sigma(1)<&\dots <\sigma(n_1)\\
    \sigma(n_1+1) <&\dots <\sigma(n_1+n_2)\\
    &\,\,\,\,\vdots\\
    \sigma(n_1+\dots+n_{r-1}+1) <&\dots <\sigma(n_1+\dots+n_r).
\end{align*}
\end{definition}

\begin{remark} \label{remark:shuffle}
    The word ``shuffle'' is used in multiple ways in the English language and in the mathematics literature. We clarify the distinctions and its applicability here. 
    
    The colloquial use of the word shuffle corresponds to the shuffling of a deck of cards that has been split into two or more blocks (see \cite{DiaconisFulman2023}). A shuffle of $(1,2,3)$ and $(4,5)$ of this type is $(4,1,5,2,3)$ --- the elements of each block appear in their original order. This is the concept normally referred to as a shuffle in the combinatorics literature.
    
    In the algebra literature, however, this concept is called an {\em unshuffle} and a shuffle  frequently refers to its inverse function. (See, for example, \cite{LodayVallette2012}.)  Under this definition, an example of a shuffle of $(1,2,3)$ and $(4,5)$ is then $(2,4,5,1,3)$ --- where the first three entries are in increasing order as are the last two. 
    
    This concept of shuffle has been further extended to include surjections, as in \cite{PalaciosRonco2006}; this concept is sometimes referred to as a \textit{quasi-shuffle} (see \cite{Hoffman2000}). A shuffle surjection of $(1,2,3)$ and $(4,5)$  is for example $(1,3,4,2,3)$---again, the first three entries are in increasing order, as are the last two, but entries may repeat in different blocks. 
\end{remark}

Let $\calM$ be a multiclique of rank $k$ and recall that $\Routes(\calM)$ denotes the set of distinct routes in $\calM$. Since this set of routes is coherent, we can use it to induce the vineyard $\calV(\calM)$, and $\Routes(\calV(\calM))=\Routes(\calM)$. For each $x\in X$ denote the ordered set of distinct routes beginning on the inflow half-edge $x$ by $\Routes(\calV(\calM);x)=\{V_x^0,\ldots, V_x^{l_x}\}$. Recall that $\{\calM_x\mid x\in X\}$ is the canonical partition of $\calM$ where $\calM_x=\{M_x^0,M_x^1,\dots,M_x^{k}\}$ is the ordered multiset of routes beginning on the inflow half-edge $x$.

The multiplicity of the routes in $\calM$ can be recovered via the function 
\[
    \tau_{\calM}:\Routes(\calM)\rightarrow [0,k]
\]
defined by
\begin{equation}\label{equation:definition_shuffle_from_multiclique_on_routes}
    \tau_{\calM}(R) = \min \{j \mid M_{\initial(R)}^j=R \}.
\end{equation}
Given a route $V_x^i \in \Routes(\calM;x)$, define the \defn{interval of $V_x^i$ in $\calM_x$} by 
\[ [V_x^i]_\calM:=[\tau_\calM(V_x^i), \tau_\calM(V_x^{i+1})-1],
\]
where $\tau_\calM(V_x^{l_x+1})=k+1$ by convention.
Then the multiplicity of $V_x^i$ in $\calM_x$ is the length of the interval of $V_x^i$ in $\calM$, given by $\tau_\calM(V_x^{i+1}) - \tau_\calM(V_x^i)$.
As an example, consider the multiclique $\calM$ in Figure~\ref{fig:cliques_and_multicliques}. 
The interval of the route $V_{x_2}^1=x_2t_2t_6t_9y$ is $[V_{x_2}^1]_\calM=[1,2]$.

Recall from Section~\ref{sec:multicliques} that given multicliques $\calM$ and $\calN$ we have $\calM \leq \calN$ if and only if there exists a strictly increasing function $f:[0,\rank(\calM)] \rightarrow [0, \rank(\calN)]$ satisfying $M_x^j = N_x^{f(j)}$ for all $x\in X$ and $j\in [0, \rank(\calM)]$.
We give an alternative characterization of this order relation on multicliques.

\begin{proposition}\label{proposition:face_membership_rho}
Let $\calM, \calN\in \Multicliques(\hatG,\hatF)$. Then $\calM \le \calN$ if and  only if 
\begin{enumerate}
    \item $\Routes(\calM) \subseteq \Routes(\calN)$, and
    \item if $[P]_\calM \cap [Q]_\calM \neq \emptyset$, then $[P]_\calN\cap [Q]_\calN \neq\emptyset$, for $P,Q\in \Routes(\calM)$.
\end{enumerate}  
\end{proposition}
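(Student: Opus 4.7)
The plan is to prove the two implications separately, using the observation that the intervals $[V_x^i]_\calM$ record exactly the columns of the multiclique array (viewed as in Figure~\ref{fig:cliques_and_multicliques}) at which a given route appears in its row.

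For the forward direction, I would start from the strictly increasing witness $f:[0,\rank(\calM)]\to[0,\rank(\calN)]$ with $M_x^j=N_x^{f(j)}$. Then (1) is immediate: each $V_x^i\in\Routes(\calM_x)$ equals some $M_x^j$ and hence equals $N_x^{f(j)}$, which lies in $\Routes(\calN_x)$. For (2), suppose $j\in[P]_\calM\cap[Q]_\calM$, so $P=M_{x}^j$ and $Q=M_{x'}^j$ for the appropriate rows $x,x'\in X$. Then $P=N_{x}^{f(j)}$ and $Q=N_{x'}^{f(j)}$, giving $f(j)\in[P]_\calN\cap[Q]_\calN$.

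For the reverse direction I would \emph{construct} $f$ column by column. Fix a column $j\in[0,\rank(\calM)]$ and consider the intervals $\{[M_x^j]_\calN\mid x\in X\}$ of $\calN$, which are well defined by (1). Condition (2), applied to the pair $P=M_x^j,\;Q=M_{x'}^j$ using $j\in[P]_\calM\cap[Q]_\calM$, shows that these intervals pairwise intersect. Intervals in $\mathbb{Z}$ satisfy the Helly property, so there exists $k_j\in\bigcap_{x\in X}[M_x^j]_\calN$; set $f(j):=k_j$. By the definition of $[\cdot]_\calN$, we then have $N_x^{f(j)}=M_x^j$ for every $x\in X$, which is the required matching of route matchings.

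It remains to show $f$ is strictly increasing, which will be the most delicate point. Take $j<j'$ in $[0,\rank(\calM)]$. By property (c) of Definition~\ref{def:multicliques} (applied to some column strictly between $j$ and $j'$, or directly), there exists an $x\in X$ with $M_x^j\neq M_x^{j'}$; because both routes appear in the canonical partition $\calM_x$ in standard form, $M_x^j\prec_x M_x^{j'}$. The same relative order is preserved in $\Routes(\calN_x)$ by (1) and the total order $\prec_x$, so $\tau_\calN(M_x^j)<\tau_\calN(M_x^{j'})$ and the intervals $[M_x^j]_\calN$ and $[M_x^{j'}]_\calN$ are disjoint with the former strictly preceding the latter. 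Since $f(j)\in[M_x^j]_\calN$ and $f(j')\in[M_x^{j'}]_\calN$, we conclude $f(j)<f(j')$. The main obstacle to watch out for is confirming the ``distinct columns imply a strict separation in $\prec_x$'' step, which is why it is essential to invoke part (c) of the multiclique definition and the coherence of $\Routes(\calN)$ (guaranteeing $\prec_x$ is well defined on $\Routes(\calN_x)\supseteq\Routes(\calM_x)$); this is also where the Helly-type aggregation from pairwise to global intersection in the reverse direction crucially enters.
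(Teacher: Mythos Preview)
Your proof is correct and follows essentially the same approach as the paper's: the forward direction is immediate from the witness $f$, and the reverse direction constructs $f(j)$ by applying Helly's property to the family $\{[M_x^j]_\calN\}_{x\in X}$, then verifies strict monotonicity by finding a row $x$ where $M_x^j\prec_x M_x^{j'}$ so that the corresponding intervals in $\calN$ are strictly ordered. The paper additionally observes that $\bigcap_{x\in X}[M_x^j]_\calN$ is in fact a single point (since distinct columns of $\calN$ are distinct route matchings), making $f$ unique rather than merely existent, but this extra observation is not needed for the proposition and your argument goes through without it.
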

\begin{proof}
First suppose $\calM \leq \calN$, so that by definition there exists a strictly increasing function~$f$ that satisfies $M_x^j = N_x^{f(j)}$ for all $j\in [0, \rank(\calM)]$.  
It follows that $\Routes(\calM) \subseteq \Routes(\calN)$. 

Next, suppose $[P]_\calM \cap [Q]_\calM \neq \emptyset$. For any $j\in [P]_\calM \cap [Q]_\calM$, we have $P, Q \in \calM^j = \calN^{f(j)}$, so $[P]_\calN \cap [Q]_\calN \neq \emptyset$. Therefore, $\calM$ and $\calN$ satisfy conditions (1) and (2).

Conversely, suppose we have two multicliques $\calM, \calN \in \Multicliques(\hatG,\hatF)$ that satisfy conditions (1) and (2). We will show that there exists a unique strictly increasing function $f:[0,\rank(\calM)]\rightarrow [0, \rank(\calN)]$ such that $M_x^j = N_x^{f(j)}$ for all $x\in X$ and $j\in [0, \rank(\calM)]$.

Let $j\in [0,\rank(\calM)]$. For all $x\in X$, condition (1) implies that $[M_x^j]_\calN \neq \emptyset$. For any $x,x'\in X$, the intervals $[M_x^j]_\calM \cap [M_{x'}^j]_\calM \neq \emptyset$, so by condition (2), we have $[M_x^j]_\calN \cap [M_{x'}^j]_\calN \neq \emptyset$. By Helly's property, this implies
\begin{equation}\label{eqn.fj}
\bigcap_{x\in X} [M_x^j]_\calN \neq \emptyset.    
\end{equation}
But the columns of $\calN$ are distinct (because each column is a route matching in a coherent clique), therefore the intersection in Equation~\eqref{eqn.fj} is a unique integer. We define $f(j)$ to be that unique integer. From this definition, we have that $N_x^{f(j)}= M_x^j$ for all $j\in [0,\rank(\calM)]$.

To see that $f$ is strictly increasing, let $a,b\in [0,\rank(\calM)]$ with $a<b$. Since the columns of $\calM$ are distinct, then there exists at least one route in $\calM$ whose interval ends in the $a$-th column of $\calM$, say, $M_x^a$. Then $a<b$ implies $M_x^a \prec_x M_x^b$ and therefore $\tau_\calN(M_x^a) < \tau_\calN(M_x^b)$. This means that the interval $[M_x^a]_\calN$ ends before the interval $[M_x^b]_\calN$ begins in $\calN$, and since $f(a)\in [M_x^a]_\calN$ and $f(b)\in [M_x^b]_\calN$, then $f(a)< f(b)$ as desired. Therefore, $\calM \leq \calN$.
\end{proof}

Recall from Proposition~\ref{proposition:minext_bijects_routes_to_X_splits} that $\minext_{\calV(\calM)}: X \sqcup \Splits(\calV(\calM)) \rightarrow \Routes(\calV(\calM))$ is a bijection, so we may equivalently choose to represent the information of $\tau_{\calV(\calM)}$ from the perspective of the splits of the vineyard by the function
\[
\sigma_{\calV(\calM)}:X\sqcup\Splits(\calV(\calM))\rightarrow [0,k]
\] defined by 
\begin{equation}
\sigma_{_{\calV(\calM)}}=\tau_{_{\calV(\calM)}}\circ \minext_{\calV(\calM)}.   
\end{equation}
Observe that $\tau_{\calV(\calM)}(R) =0$ if and only if $R=V_x^0$ for some $x\in X$, and hence $\sigma_{\calV(\calM)}(P)=0$ if and only if $\minext_{\calV(\calM)}(P)=V_x^0$ for some $x\in X$.
The proof of Proposition~\ref{proposition:minext_bijects_routes_to_X_splits} shows that $\minext_{\calV(\calM)}$ restricts to the two bijective correspondences
\begin{align*}
X&\longleftrightarrow \{V_x^0\mid x\in X\}\\
\Splits(\calV(\calM))&\longleftrightarrow \Routes(\calV(\calM))\setminus\{V_x^0\mid x\in X\}.
\end{align*}
This means that the restriction $\sigma_{_{\calV(\calM)}}:\Splits(\calV(\calM))\rightarrow [k]$ still enables us to recover the multiplicity of routes in $\calM$.

\begin{proposition}\label{prop:is_a_shuffle}
 The function $\sigma_{\calV(\calM)}:\Splits(\calV(\calM))\rightarrow [k]$ defined by 
 \[\sigma_{\calV(\calM)}(P)=\min\{j\mid \minext_{\calV(\calM)}(P)=M_{\initial(P)}^j\}
 \]
 is an $(l_{x_1},l_{x_2},\dots;k)$-shuffle.
\end{proposition}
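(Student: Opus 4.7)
The plan is to verify the two defining properties of an $(l_{x_1},l_{x_2},\ldots;k)$-shuffle from Definition~\ref{def:shuffle}: first, that $\sigma_{\calV(\calM)}$ is a surjection onto $[k]$; second, that when restricted to each block $\Splits(\calV(\calM);x)$ (of size $l_x$, by Proposition~\ref{proposition:minext_bijects_routes_to_X_splits}), its values are strictly increasing with respect to the natural ordering $S_x^1\prec S_x^2\prec\cdots\prec S_x^{l_x}$ given by Equation~\eqref{eqn:split_associated_to_a_route}.

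For surjectivity, I would fix $j\in[k]$ and invoke Definition~\ref{def:multicliques}(c) to locate some $x\in X$ with $M_x^{j-1}\neq M_x^j$. Since the multiset $\calM_x=\{M_x^0,\ldots,M_x^k\}$ is written in standard form (increasing under $\prec_x$), we have $M_x^{j-1}\prec_x M_x^j$; writing $M_x^{j-1}=V_x^{i-1}$ and $M_x^j=V_x^i$ for some $i\geq 1$, column $j$ is precisely the first column of $\calM_x$ at which the route $V_x^i$ appears. By Equation~\eqref{eqn:split_associated_to_a_route}, $\minext_{\calV(\calM)}(S_x^i)=V_x^i$, and unwinding the definition of $\sigma_{\calV(\calM)}$ gives $\sigma_{\calV(\calM)}(S_x^i)=j$.

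For the monotonicity on each block, I would fix $x\in X$ and use the identification $\minext_{\calV(\calM)}(S_x^i)=V_x^i$ from Proposition~\ref{proposition:minext_bijects_routes_to_X_splits}. Because $\calM_x$ is a totally ordered multiset with respect to $\prec_x$ and $V_x^{i-1}\prec_x V_x^i$, all occurrences of $V_x^{i-1}$ in $\calM_x$ appear in earlier columns than any occurrence of $V_x^i$. Consequently the minimal column index at which $V_x^i$ appears is strictly greater than that at which $V_x^{i-1}$ appears, so $\sigma_{\calV(\calM)}(S_x^{i-1})<\sigma_{\calV(\calM)}(S_x^i)$ for every $2\leq i\leq l_x$. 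This establishes the block-monotonicity.

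Neither step poses a real obstacle; the only subtlety worth being careful about is the bookkeeping between the two natural indexings --- the intrinsic ordering of $\Splits(\calV(\calM);x)$ induced by $\minext_{\calV(\calM)}$ and the multiset ordering of $\calM_x$ --- and making sure that the $x$ supplied by Definition~\ref{def:multicliques}(c) actually delivers a split of $\calV(\calM)$ (rather than just an element of $X$), which is why we need $j\geq 1$ so that $i\geq 1$ and $V_x^i\neq V_x^0$.
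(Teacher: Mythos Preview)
Your proof is correct and follows essentially the same approach as the paper: surjectivity comes from Definition~\ref{def:multicliques}(c), and block-monotonicity comes from the fact that $\calM_x$ is ordered by $\prec_x$ so that the first appearance of $V_x^i$ is strictly later than that of $V_x^{i-1}$. The paper simply presents these two steps in the opposite order and with slightly less bookkeeping.
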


\begin{proof}
Since the order in $\calM_x$ is induced by $\preceq_x$, this definition implies that  whenever $i<j$ we have $\sigma_{\calV(\calM)}(S_x^i) < \sigma_{\calV(\calM)}(S_x^j)$, where $S_x^i=\minext_{\calV(\calM)}^{-1}(V_x^i)$ as in Equation \eqref{eqn:split_associated_to_a_route}.
Condition (c) of Definition~\ref{def:multicliques} says that for every $h\in [k]$ there is an $x\in X$ such that $M_x^{h-1}\prec_x M_x^{h}$ and so for the unique split $P$ such that $\minext_{\calV}(P)=M_x^{h}$ we have $\sigma_{\calV(\calM)}(P)=h$. 
So $\sigma_{\calV(\calM)}$ is a surjection and then ordering the sets $\Splits(\calV(\calM);x)$ in a compatible manner with the order on $X$, we can see that $\sigma_{\calV(\calM)}$ is a shuffle. 
\end{proof}

Proposition~\ref{prop:is_a_shuffle} motivates the following definition.

\begin{definition}[Vineyard shuffle]
\label{def:vineyard_shuffles}
A \defn{vineyard shuffle} of rank $k$ is a pair $(\calV, \sigma) $ where~$\calV$ is a vineyard and $\sigma_{\calV}:\Splits(\calV)\rightarrow [k]$ is a surjective function satisfying
\[
\sigma_\calV(S) < \sigma_\calV(S') \hbox{ for every pair } S \prec_x S' \hbox{ in } \Splits(\calV;x) \hbox{ for } x\in X.
\]
We denote by  $\VineyardShuffles(\hatG,\hatF)$ the set of vineyard shuffles of $(\hatG,\hatF)$.
\end{definition}

Note, as with the examples from multicliques, a vineyard shuffle can always be extended to a function $\sigma_{\calV}:X\sqcup \Splits(\calV)\rightarrow [0,k]$ by defining $\sigma_{\calV}(x)=0$ for every $x\in X$.

\begin{remark}
When $\ba=\be_0-\be_n$, there is only one vineyard shuffle associated to an underlying vineyard because $\Splits(\calV)$ is totally ordered. 
\end{remark}

\begin{example}\label{eg.vineshuffles}
Continuing the example of the vineyard $\calV$ from Examples~\ref{eg.subvines} and~\ref{eg.splits}, letting
\begin{gather*}
\sigma_\calV(x_1t_3) = 2,\  \sigma_\calV(x_1t_3t_4) = 3,\  \sigma_\calV(x_1t_2) = 4,\\
\sigma_\calV(x_2t_2t_6) = 1,\  \sigma_\calV(x_2t_1) = 3, 
\end{gather*}
(and $\sigma_\calV(x_1) = \sigma_\calV(x_2) = \sigma_\calV(x_3) = \sigma_\calV(x_4) = 0$) defines a $(3,2,0,0;4)$-shuffle $\sigma_\calV$.
The vineyard shuffle $(\calV, \sigma_\calV)$ is induced by the multiclique $\calM$ from Figure~\ref{fig:cliques_and_multicliques}. 

Figure~\ref{fig:FSM_unpacked_subvines} shows two representations of $(\calV, \sigma_\calV)$.
In the first representation, the four vines of $\calV$ from Figure~\ref{fig:FSM_subvines} are depicted separately.
For each $P\in \Splits(\calV)$, the value $\sigma_\calV(P)$ is shown on the cover relation of $P$ in its vine.

A second representation of a vineyard shuffle is shown in the bottom row of the figure. Since prefixes that are not splits of $\calV$ do not carry the shuffle information, a vineyard shuffle can be more compactly represented by keeping only elements in $\Routes(\calV)$ and their common prefixes. In this representation, we choose to draw the Hasse diagram of each vine so that the leaves (routes of $\calV$) from right to left are in the order of $\Routes(\calV;x)$. Taking a pre-order traversal of a vine (starting with the right subtree), we see that the values $\sigma_\calV(P)$ are encountered in increasing order.
\end{example}

We have discussed how to obtain a vine from a multiclique.
We can construct the inverse map
\[
\calM:\VineyardShuffles(\hatG,\hatF)\rightarrow\Multicliques(\hatG,\hatF)
\] 
by letting $\calM(\calV,\sigma_{\calV})$ be the multiset with underlying set $\Routes(\calV)$ and
\[
\sigma_{\calV}(S_x^{i+1})-\sigma_{\calV}(S_x^i)
\]  
copies of $R=\minext_{\calV}(S_x^i)$ for every $x\in X$ and $i\in [0,l_x]$, where as a notational convenience we define here $\sigma_{\calV}(S_x^{l_x+1}):=k+1$ if $(\calV, \sigma_\calV)$ is a vineyard shuffle of rank $k$.

We can determine the partial order $(\calV,\sigma_{\calV})\le (\calW,\sigma_{\calW})$ on $\VineyardShuffles(\hatG,\hatF)$ induced by the bijection $(\calV(\calM),\sigma_{\calV(\calM)})$ by translating the language of multicliques and $\tau$ in Proposition~\ref{proposition:face_membership_rho} to the language of vineyards and shuffles.

Recall that for $S\in \Splits(\calV;x)$, $\next_\calV(S)$ is the next split in the vine $\calV_x$.
\begin{definition}\label{def:uniformity_of_sigma}
We define $(\calV,\sigma_{\calV})\le (\calW,\sigma_{\calW})$ on $\VineyardShuffles(\hatG,\hatF)$ whenever
\begin{enumerate}
\item $\calV\subseteq \calW$ and
\item  for splits $S \in \Splits(\calV;x)$ and $S' \in \Splits(\calV;x')$, 
\[\hbox{if } \sigma_\calV(S) \leq 
\sigma_\calV(S') \leq \sigma_\calV(\next_\calV(S))-1, \hbox{ then }
\sigma_\calW(S) \leq \sigma_\calW(S') 
\leq \sigma_\calW(\next_\calW(S))-1.
\]
\end{enumerate} 
\end{definition}

We denote by $\SatVineyardShuffles(\hatG,\hatF)$ the set of maximal vineyard shuffles with respect to the partial order given in Definition~\ref{def:uniformity_of_sigma} which we call \defn{saturated}. 

\begin{theorem}\label{theorem:isomorphism_multicliques_vineyard_shuffles}
The map 
\[
(\calV,\sigma_{\calV}):\Multicliques(\hatG,\hatF) \rightarrow \VineyardShuffles(\hatG,\hatF)
\] 
is a poset isomorphism.
\qed
\end{theorem}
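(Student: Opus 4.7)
The plan is to establish the bijection at the level of sets first, then to upgrade it to a poset isomorphism by invoking the characterization of the multiclique order given in Proposition~\ref{proposition:face_membership_rho}. The forward map sends $\calM$ to $(\calV(\calM), \sigma_{\calV(\calM)})$, and the inverse map (defined just before the statement) sends $(\calV, \sigma_\calV)$ to the multiset $\calM(\calV, \sigma_\calV)$ whose underlying set is $\Routes(\calV)$ and whose multiplicity of $V_x^i = \minext_\calV(S_x^i)$ is $\sigma_\calV(S_x^{i+1}) - \sigma_\calV(S_x^i)$.

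First I would verify well-definedness of both maps. For the forward map, $\calV(\calM)$ is a vineyard because $\Routes(\calM)$ is coherent and closed under prefix containment, and Proposition~\ref{prop:is_a_shuffle} already established that $\sigma_{\calV(\calM)}$ is an $(l_{x_1}, l_{x_2}, \ldots; k)$-shuffle. For the inverse map, $\calM(\calV, \sigma_\calV)$ is a multiclique because (a) its underlying set $\Routes(\calV)$ is coherent by Definition~\ref{def:vineyard}(b); (b) for each $x\in X$ the total multiplicity of routes starting with $x$ telescopes to $\sigma_\calV(S_x^{l_x+1}) - \sigma_\calV(x) = (k+1) - 0 = k+1$, using the boundary conventions $\sigma_\calV(x)=0$ and $\sigma_\calV(S_x^{l_x+1}) = k+1$; and (c) condition (c) of Definition~\ref{def:multicliques} follows from the surjectivity of $\sigma_\calV$, since every $j\in[k]$ equals $\sigma_\calV(S)$ for a unique split $S\in\Splits(\calV;x)$, which forces the $(j-1)$-th and $j$-th columns to differ at $x$. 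That the two maps are mutually inverse is a direct unwinding: applying the forward then inverse map recovers the original multiplicities via Equation~\eqref{equation:definition_shuffle_from_multiclique_on_routes}, and the other composition is immediate from the definitions.

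The main obstacle, and the heart of the argument, is the order preservation in both directions. By Proposition~\ref{proposition:face_membership_rho}, $\calM \leq \calN$ if and only if (i) $\Routes(\calM) \subseteq \Routes(\calN)$ and (ii) $[P]_\calM \cap [Q]_\calM \neq \emptyset$ implies $[P]_\calN \cap [Q]_\calN \neq \emptyset$ for all $P, Q \in \Routes(\calM)$. Translating: condition (i) is equivalent to $\calV(\calM) \subseteq \calV(\calN)$, matching condition (1) of Definition~\ref{def:uniformity_of_sigma}. For condition (ii), I would identify each route $V_x^i$ with the split $S_x^i$ (or with $x$ when $i=0$) so that the interval $[V_x^i]_\calM$ equals $[\sigma_{\calV(\calM)}(S_x^i), \sigma_{\calV(\calM)}(\next_{\calV(\calM)}(S_x^i)) - 1]$. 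Two such intervals intersect if and only if one contains the starting point of the other; expressing this in terms of $\sigma_{\calV(\calM)}$ yields precisely the asymmetric implication appearing in condition (2) of Definition~\ref{def:uniformity_of_sigma}, which captures the symmetric intersection condition once applied to both ordered pairs $(S,S')$ and $(S',S)$.

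The subtle bookkeeping lies in the boundary cases at $i = 0$ (where $S_x^0 = x$ is not a genuine split but still anchors the leftmost interval via $\sigma_\calV(x)=0$) and at $i = l_x$ (where $\next_\calV(S_x^{l_x})$ must be interpreted via $\sigma_\calV(S_x^{l_x+1}) = k+1$); both are handled uniformly by the telescoping conventions adopted in the definition of the inverse map. Once these edge cases are dispatched and the asymmetric implication in both ordered directions is shown to be equivalent to the interval intersection condition, the bijection preserves and reflects the partial order, completing the proof that it is a poset isomorphism.
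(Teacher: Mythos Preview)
Your plan matches the paper's scaffolding exactly: the paper constructs the forward and inverse maps in the text, states Proposition~\ref{proposition:face_membership_rho}, writes down Definition~\ref{def:uniformity_of_sigma} as the ``translated'' order, and then records the theorem with a bare \qed. Your proposal simply makes that translation explicit.

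The gap is in your third paragraph, where you claim that expressing $[P]_\calN$ in shuffle language ``yields precisely'' the conclusion of condition~(2) in Definition~\ref{def:uniformity_of_sigma}. For $P=\minext_\calV(S)\in\Routes(\calV)$, the interval $[\sigma_\calW(S),\,\sigma_\calW(\next_\calW(S))-1]$ is by construction $[\minext_\calW(S)]_\calN$, not $[P]_\calN$; these coincide only when $\minext_\calW(S)=P$. That equality can fail: if $\calW$ contains a route $W$ with $S\subseteq W\prec P$, then $\minext_\calW(S)\preceq W\prec P$ strictly. Concretely, take $|X|=2$ with $\Routes(\calV;x_1)=\{A,B\}$, $\Routes(\calV;x_2)=\{C,D\}$, and $\sigma_\calV$ sending both splits to $1$; enlarge to $\calW$ by inserting one route $E$ with $A\prec E\prec B$ and $S_{x_1}^1\subseteq E$, choosing $\sigma_\calW(S_{x_1}^1)=1$, $\sigma_\calW(\text{new split})=2$, $\sigma_\calW(S_{x_2}^1)=2$. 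Then $\calM\le\calN$ (the columns $(A,C),(B,D)$ sit as columns $0$ and $2$ of $(A,C),(E,C),(B,D)$), yet condition~(2) fails for the ordered pair $(S_{x_1}^1,S_{x_2}^1)$ because $\sigma_\calW(S_{x_2}^1)=2\not\le\sigma_\calW(\next_\calW(S_{x_1}^1))-1=1$. So the interval-intersection condition of Proposition~\ref{proposition:face_membership_rho} and condition~(2) of Definition~\ref{def:uniformity_of_sigma} are not equivalent as written; either the conclusion should use $\next_\calV$ rather than $\next_\calW$, or a separate argument is needed to bridge $[\minext_\calV(S)]_\calN$ and $[\minext_\calW(S)]_\calN$. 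Your ``yields precisely'' step does not go through without addressing this.
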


\begin{example}\label{eg.vineyardshuffle}
Figure~\ref{fig:FSM_unpacked_vines} shows a saturated vineyard shuffle $(\calW,\sigma_\calW)$ of rank $9$.
This vineyard shuffle corresponds to the saturated multiclique $\calN$ from Figure~\ref{fig:SatCliques_and_multicliques} under the isomorphism of Theorem \ref{theorem:isomorphism_multicliques_vineyard_shuffles}.

Let $(\calV, \sigma_\calV)$ be the rank $4$ vineyard shuffle from Figure~\ref{fig:FSM_unpacked_subvines}.
We leave it to the reader to verify that $\Routes(\calV) \subset \Routes(\calW)$, so $\calV \subset \calW$.
To see that $(\calV,\sigma_\calV) \subset (\calW,\sigma_\calW)$, we check that condition (2) in Definition~\ref{def:uniformity_of_sigma} is satisfied by the splits in $\calV$.
Recall $\Splits(\calV)=\{S_{x_1}^1, S_{x_1}^2, S_{x_1}^3, S_{x_2}^1, S_{x_2}^2\}= \{x_1t_3, x_1t_3t_4, x_1t_2,x_2t_2t_6, x_2t_1 \}$, and from Figure~\ref{fig:FSM_unpacked_vines} we have
\begin{gather*}
\sigma_\calW(S_{x_1}^1) = 5,\  \sigma_\calW(S_{x_1}^2) = 7,\  \sigma_\calW(S_{x_1}^3) = 8,\\
\sigma_\calW(S_{x_2}^1) = 3,\  \sigma_\calW(S_{x_2}^2) = 6.
\end{gather*}
Obtaining the values of $\sigma_\calV$ from Example~\ref{eg.splits}, we see that condition (2) only needs to be verified on the pair of splits $S_{x_2}^1$ and $S_{x_1}^1$:
\[\sigma_\calV(S_{x_2}^1) =1 \leq \sigma_\calV(S_{x_1}^1) =2 \leq \sigma_\calV(S_{x_2}^{2})-1 =2,\]
and 
\[\sigma_\calW(S_{x_2}^1) =3 \leq \sigma_\calW(S_{x_1}^1) =5 \leq \sigma_\calW(S_{x_2}^{2})-1 =5,\]
Therefore, $(\calV, \sigma_\calV) \subset (\calW, \sigma_\calW)$.
\end{example}

\subsection{Grove shuffles} 
\label{sec:grove_shuffles}
\phantom{W}

Groves are a family of objects introduced in~\cite{BrunnerHanusa2024} based on the related concept of \textit{noncrossing bipartite trees} that was defined by M\'esz\'aros and Morales in~\cite{MeszarosMorales2019}. 
There, it was used to describe a decomposition process of $\calF_G(\ba)$ which extended (unpublished) work of Postnikov and Stanley for the case when $\ba=\be_0- \be_n$, and which allowed the authors to provide a geometric and combinatorial proof of the Lidskii volume formula of Theorem \ref{thm.genlidskii}. 
The same underlying idea behind noncrossing bipartite trees is also implicit in the concept of \textit{monotone correspondence} in the proof of the DKK triangulation~\cite{DanilovKarzanovKoshevoy2012}. 

The main idea behind a grove is the codification of the information present in a vineyard but from the perspective of observers standing on the vertices of the graph $\hatG$. The noncrossing property of the forests in the grove directly captures the coherence of the routes involved, so it can be used to construct a clique inductively across vertices from $v=0,\dots,n$.

\begin{definition}[Noncrossing bipartite forest]\label{def:noncrossing_bipartite_tree}
A \defn{noncrossing bipartite forest} $\gamma_v$ of $(\hatG,\hatF)$ on a vertex $v$ is a bipartite graph on a vertex set with bipartition denoted $V(\gamma_v)=L(\gamma_v)\sqcup R(\gamma_v)$ and edge set denoted $E(\gamma_v)$, where 
\begin{enumerate}[label=(\alph*)]
    \item The \defn{left vertices} $L(\gamma_v)\subseteq\Prefixes(v)$ are prefixes of $\hatG$ that end at $v$ and the \defn{right vertices} $R(\gamma_v)\subseteq\hat \outedge(v)$ are edges of $\hatG$; they inherit their corresponding total orders. 
    \item The edges of $\gamma_v$ are \defn{noncrossing}. 
    That is, whenever $P \prec_{L(\gamma_v)} Q$ and $e \prec_{R(\gamma_v)}e'$, then $(P,e')$ and $(Q,e)$ cannot both be in $E(\gamma_v)$.
    \item Every vertex of $\gamma_v$ is incident to at least one edge.
\end{enumerate}
\end{definition}

\begin{definition}[Grove]\label{def:grove}
 A \defn{grove} of $(\hatG,\hatF)$, when non-empty, is a sequence $\Gamma=(\gamma_v)_{v \in [0,n]}$ of $n+1$ noncrossing bipartite forests such that
 \begin{enumerate}[label=(\alph*)]
     \item $x \in L(\gamma_{v_x})$  for every $x\in X$.
     \item $R(\gamma_n)= \{y\}$.
     \item For every $Pe \in \Prefixes(\hatG)\setminus X$ with $e=(v,w)$, we have that $(P,e)\in E(\gamma_{v})$ if and only if $Pe \in L(\gamma_{w})$.
 \end{enumerate}
We denote by $\Groves(\hatG,\hatF)$ the set of groves of $(\hatG,\hatF)$.
\end{definition}

Given a grove $\Gamma$, we denote by $\Prefixes(\Gamma):=\bigsqcup_{v=0}^nL(\gamma_v)$ the set of all left vertices, $E(\Gamma):=\bigsqcup_{v=0}^nE(\gamma_v)$ the set of all grove edges, and note that $\bigsqcup_{v=0}^nR(\gamma_v) \subseteq E\cup\{y\}$. The \defn{support} of a grove $\Gamma$ is the subgraph $\supp(\Gamma)\subseteq \hatG$ induced by the augmented edge set $X\sqcup\bigsqcup_{v\in[0,n]}R(\gamma_v)$. 

There is a natural partial order on $\Groves(\hatG,\hatF)$ by the relation $\Gamma \subseteq \Theta$ whenever $\Gamma$ is a subgraph of $\Theta$. Denote by $\SatGroves(\hatG,\hatF)$ the set of groves that are maximal with respect to this order which we call \defn{saturated}.

\begin{example}\label{ex:partial_grove}
Figure~\ref{fig:partial_grove_definition} shows the noncrossing bipartite forests $\gamma_v$ that comprise a grove~$\Gamma$.  

Note that in $\gamma_2$ of $\Gamma$, the left vertices $L(\gamma_2)$ are $x_1t_2$, $x_2t_2$, and $x_1t_3t_5$. The prefixes $x_1t_2$ and $x_2t_2$ exist because $x_1$ and $x_2$ are adjacent to $t_2$ in $\gamma_0$ and the prefix $x_1t_3t_5$ exists since $x_1t_3$ is adjacent to $t_5$ in $\gamma_1$.  

$\Gamma$ is not maximal. For example, $R(\gamma_4)\subsetneq \hat\outedge(4)$ and $\gamma_1$ and $\gamma_3$ are not trees. 
The grove~$\Theta$ in Figure~\ref{fig:grove_definition} is maximal and satisfies $\Gamma\prec \Theta$ in the partial order on $\Groves(\hatG,\hatF)$. Maximality of $\Theta$ can be seen because for each $v\in [0,n]$, $\theta_v$ is a tree and $R(\theta_v)=\hat\outedge(v)$.
\end{example}

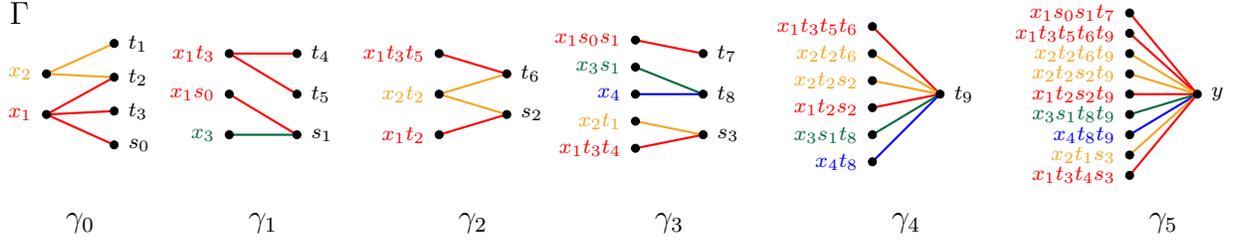
\begin{figure}[htb!]
    \vspace{-.1in}
    \begin{tikzpicture}[scale=1.8]
\begin{scope}[xshift=-5, yshift=-85, scale=0.5]
    \node[] at (-0.4,1.7) {$\Gamma$};
    \node[] at (0.5,-1.4) {$\gamma_0$};
    \node[] at (0.5+2.7,-1.4) {$\gamma_1$};
    \node[] at (0.5+5.8,-1.4) {$\gamma_2$};
     \node[] at (0.5+8.7,-1.4) {$\gamma_3$};
    \node[] at (0.5+12.2,-1.4) {$\gamma_4$};
     \node[] at (0.5+16,-1.4) {$\gamma_5$};

    \draw[-,darkyellow,thick] (0,0.8)--(1,1.25);
    \draw[-,darkyellow,thick] (0,0.8)--(1,0.75);
    \draw[-,red,thick] (0,0.2)--(1,0.75);
    \draw[-,red,thick] (0,0.2)--(1,0.25);
    \draw[-,red,thick] (0,0.2)--(1,-0.25);
    \vertex[fill=black, minimum size=3pt] at (0,0.8) {};
    \vertex[fill=black, minimum size=3pt] at (0,0.2) {};
    \vertex[fill=black, minimum size=3pt] at (1,1.25) {};
    \vertex[fill=black, minimum size=3pt] at (1,0.75) {};
    \vertex[fill=black, minimum size=3pt] at (1,0.25) {};
    \vertex[fill=black, minimum size=3pt] at (1,-0.25) {};
\node[anchor=east] at (-0.05,0.8) {\tiny\textcolor{darkyellow}{$x_2$}};
\node[anchor=east] at (-0.05,0.2) {\tiny\textcolor{red}{$x_1$}};
\node[anchor=west] at (1.05,1.25) {\tiny\textcolor{black}{$t_1$}};
\node[anchor=west] at (1.05,0.75) {\tiny\textcolor{black}{$t_2$}};
\node[anchor=west] at (1.05,0.25) {\tiny\textcolor{black}{$t_3$}};
\node[anchor=west] at (1.05,-0.25) {\tiny\textcolor{black}{$s_0$}};

\begin{scope}[xshift=2.7cm]
    \draw[-,red,thick] (0,1.1)--(1,1.1);
    \draw[-,red,thick] (0,1.1)--(1,0.5);	    
    \draw[-,red,thick] (0,0.5)--(1,-0.1);		
    \draw[-,cadmiumgreen,thick] (0,-0.1)--(1,-0.1);	
    \vertex[fill=black, minimum size=3pt] at (0,1.1) {};
    \vertex[fill=black, minimum size=3pt] at (0,0.5) {};
    \vertex[fill=black, minimum size=3pt] at (0,-0.1) {};
    \vertex[fill=black, minimum size=3pt] at (1,1.1) {};
    \vertex[fill=black, minimum size=3pt] at (1,0.5) {};
    \vertex[fill=black, minimum size=3pt] at (1,-0.1) {};
\node[anchor=east] at (-0.05,1.1) {\tiny\textcolor{red}{$x_1t_3$}};
\node[anchor=east] at (-0.05,0.5) {\tiny\textcolor{red}{$x_1s_0$}};
\node[anchor=east] at (-0.05,-0.1) {\tiny\textcolor{cadmiumgreen}{$x_3$}};
\node[anchor=west] at (1.05,1.1) {\tiny\textcolor{black}{$t_4$}};
\node[anchor=west] at (1.05,0.5) {\tiny\textcolor{black}{$t_5$}};
\node[anchor=west] at (1.05,-0.1) {\tiny\textcolor{black}{$s_1$}};
\end{scope}

\begin{scope}[xshift=5.8cm]
    \draw[-,red,thick] (0,1.1)--(1,0.8);
\draw[-,darkyellow,thick] (0,0.5)--(1,0.8);     
\draw[-,darkyellow,thick] (0,0.5)--(1,0.2);      
\draw[-,red,thick] (0,-0.1)--(1,0.2);    
\vertex[fill=black, minimum size=3pt] at (0,1.1) {};
\vertex[fill=black, minimum size=3pt] at (0,0.5) {};
\vertex[fill=black, minimum size=3pt] at (0,-0.1) {};
\vertex[fill=black, minimum size=3pt] at (1.0,0.8) {};
\vertex[fill=black, minimum size=3pt] at (1.0,0.2) {};
\node[anchor=east] at (-0.05,1.1) {\tiny\textcolor{red}{$x_1t_3t_5$}};
\node[anchor=east] at (-0.05,0.5) {\tiny\textcolor{darkyellow}{$x_2t_2$}};
\node[anchor=east] at (-0.05,-0.1) {\tiny\textcolor{red}{$x_1t_2$}};
\node[anchor=west] at (1.05,0.8) {\tiny\textcolor{black}{$t_6$}};
\node[anchor=west] at (1.05,0.2) {\tiny\textcolor{black}{$s_2$}};
\end{scope}

\begin{scope}[xshift=8.7cm]
    \draw[-,red,thick] (0,1.3)--(1,1.1);
    \draw[-,cadmiumgreen,thick] (0,0.9)--(1,0.5);	    
    \draw[-,blue,thick] (0,0.5)--(1,0.5);	    
    \draw[-,darkyellow,thick] (0,0.1)--(1,-0.1);		
    \draw[-,red,thick] (0,-0.3)--(1,-0.1);		
    \vertex[fill=black, minimum size=3pt] at (0,1.3) {};
    \vertex[fill=black, minimum size=3pt] at (0,0.9) {};
    \vertex[fill=black, minimum size=3pt] at (0,0.5) {};
    \vertex[fill=black, minimum size=3pt] at (0,0.1) {};
    \vertex[fill=black, minimum size=3pt] at (0,-0.3) {};
    \vertex[fill=black, minimum size=3pt] at (1,1.1) {};
    \vertex[fill=black, minimum size=3pt] at (1,0.5) {};
    \vertex[fill=black, minimum size=3pt] at (1,-0.1) {};
\node[anchor=east] at (-0.05,1.3) {\tiny\textcolor{red}{$x_1s_0s_1$}};
\node[anchor=east] at (-0.05,0.9) {\tiny\textcolor{cadmiumgreen}{$x_3s_1$}};
\node[anchor=east] at (-0.05,0.5) {\tiny\textcolor{blue}{$x_4$}};
\node[anchor=east] at (-0.05,0.1) {\tiny\textcolor{darkyellow}{$x_2t_1$}};
\node[anchor=east] at (-0.05,-0.3) {\tiny\textcolor{red}{$x_1t_3t_4$}};
\node[anchor=west] at (1.05,1.1) {\tiny\textcolor{black}{$t_7$}};
\node[anchor=west] at (1.05,0.5) {\tiny\textcolor{black}{$t_8$}};
\node[anchor=west] at (1.05,-0.1) {\tiny\textcolor{black}{$s_3$}};
\end{scope}

\begin{scope}[xshift=12.2cm]
\draw[-,red,thick] (0.0,1.5)--(1.0,0.5); 
\draw[-,darkyellow,thick] (0.0,1.1)--(1.0,0.5);  
\draw[-,darkyellow,thick] (0.0,0.7)--(1.0,0.5);  
\draw[-,red,thick] (0.0,0.3)--(1.0,0.5);  
\draw[-,cadmiumgreen,thick] (0.0,-0.1)--(1.0,0.5);  
\draw[-,blue,thick] (0.0,-0.5)--(1.0,0.5);  
\vertex[fill=black, minimum size=3pt] at (0.0,1.5) {};
\vertex[fill=black, minimum size=3pt] at (0.0,1.1) {};
\vertex[fill=black, minimum size=3pt] at (0.0,0.7) {};
\vertex[fill=black, minimum size=3pt] at (0.0,0.3) {};
\vertex[fill=black, minimum size=3pt] at (0.0,-0.1) {};
\vertex[fill=black, minimum size=3pt] at (0.0,-0.5) {};
\vertex[fill=black, minimum size=3pt] at (1.0,0.5) {};
\node[anchor=east] at (-0.05,1.5) {\tiny\textcolor{red}{$x_1t_3t_5t_6$}};
\node[anchor=east] at (-0.05,1.1) {\tiny\textcolor{darkyellow}{$x_2t_2t_6$}};
\node[anchor=east] at (-0.05,0.7) {\tiny\textcolor{darkyellow}{$x_2t_2s_2$}};
\node[anchor=east] at (-0.05,0.3) {\tiny\textcolor{red}{$x_1t_2s_2$}};
\node[anchor=east] at (-0.05,-0.1) {\tiny\textcolor{cadmiumgreen}{$x_3s_1t_8$}};
\node[anchor=east] at (-0.05,-0.5) {\tiny\textcolor{blue}{$x_4t_8$}};
\node[anchor=west] at (1.05,0.5) {\tiny\textcolor{black}{$t_9$}};
\end{scope}

\begin{scope}[xshift=16cm]
\draw[-,red,thick] (0.0,1.7)--(1.0,0.5); 
\draw[-,red,thick] (0.0,1.4)--(1.0,0.5); 
\draw[-,darkyellow,thick] (0.0,1.1)--(1.0,0.5);  
\draw[-,darkyellow,thick] (0.0,0.8)--(1.0,0.5);  
\draw[-,red,thick] (0.0,0.5)--(1.0,0.5);  
\draw[-,cadmiumgreen,thick] (0.0,0.2)--(1.0,0.5);  
\draw[-,blue,thick] (0.0,-0.1)--(1.0,0.5);  
\draw[-,darkyellow,thick] (0.0,-0.4)--(1.0,0.5);  
\draw[-,red,thick] (0.0,-0.7)--(1.0,0.5);  
\vertex[fill=black, minimum size=3pt] at (0.0,1.7) {};
\vertex[fill=black, minimum size=3pt] at (0.0,1.4) {};
\vertex[fill=black, minimum size=3pt] at (0.0,1.1) {};
\vertex[fill=black, minimum size=3pt] at (0.0,0.8) {};
\vertex[fill=black, minimum size=3pt] at (0.0,0.5) {};
\vertex[fill=black, minimum size=3pt] at (0.0,0.2) {};
\vertex[fill=black, minimum size=3pt] at (0.0,-0.1) {};
\vertex[fill=black, minimum size=3pt] at (0.0,-0.4) {};
\vertex[fill=black, minimum size=3pt] at (0.0,-0.7) {};
\vertex[fill=black, minimum size=3pt] at (1.0,0.5) {};
\node[anchor=east] at (-0.05,1.7) {\tiny\textcolor{red}{$x_1s_0s_1t_7$}};
\node[anchor=east] at (-0.05,1.4) {\tiny\textcolor{red}{$x_1t_3t_5t_6t_9$}};
\node[anchor=east] at (-0.05,1.1) {\tiny\textcolor{darkyellow}{$x_2t_2t_6t_9$}};
\node[anchor=east] at (-0.05,0.8) {\tiny\textcolor{darkyellow}{$x_2t_2s_2t_9$}};
\node[anchor=east] at (-0.05,0.5) {\tiny\textcolor{red}{$x_1t_2s_2t_9$}};
\node[anchor=east] at (-0.05,0.2) {\tiny\textcolor{cadmiumgreen}{$x_3s_1t_8t_9$}};
\node[anchor=east] at (-0.05,-0.1) {\tiny\textcolor{blue}{$x_4t_8t_9$}};
\node[anchor=east] at (-0.05,-0.4) {\tiny\textcolor{darkyellow}{$x_2t_1s_3$}};
\node[anchor=east] at (-0.05,-0.7) {\tiny\textcolor{red}{$x_1t_3t_4s_3$}};
\node[anchor=west] at (1.05,0.5) {\tiny\textcolor{black}{$y$}};
\end{scope}

\end{scope}
\end{tikzpicture}
    \centering\vspace{-.2in}

\caption{A grove $\Gamma=(\gamma_0,\gamma_1, \ldots,\gamma_5)$. See Example~\ref{ex:partial_grove}. }
\label{fig:partial_grove_definition}
\end{figure}

\begin{figure}[htb!]
    \vspace{-.1in}
    \begin{tikzpicture}[scale=1.8]
\begin{scope}[xshift=-5, yshift=-85, scale=0.5]
    \node[] at (-0.4,2.0) {$\Theta$};
    \node[] at (0.5,-1.7) {$\theta_0$};
    \node[] at (0.5+2.7,-1.7) {$\theta_1$};
    \node[] at (0.5+5.8,-1.7) {$\theta_2$};
     \node[] at (0.5+8.7,-1.7) {$\theta_3$};
    \node[] at (0.5+12.2,-1.7) {$\theta_4$};
     \node[] at (0.5+16,-1.7) {$\theta_5$};

    \draw[-,darkyellow,thick] (0,0.8)--(1,1.25);
    \draw[-,darkyellow,thick] (0,0.8)--(1,0.75);
    \draw[-,red,thick] (0,0.2)--(1,0.75);
    \draw[-,red,thick] (0,0.2)--(1,0.25);
    \draw[-,red,thick] (0,0.2)--(1,-0.25);
    \vertex[fill=black, minimum size=3pt] at (0,0.8) {};
    \vertex[fill=black, minimum size=3pt] at (0,0.2) {};
    \vertex[fill=black, minimum size=3pt] at (1,1.25) {};
    \vertex[fill=black, minimum size=3pt] at (1,0.75) {};
    \vertex[fill=black, minimum size=3pt] at (1,0.25) {};
    \vertex[fill=black, minimum size=3pt] at (1,-0.25) {};
\node[anchor=east] at (-0.05,0.8) {\tiny\textcolor{darkyellow}{$x_2$}};
\node[anchor=east] at (-0.05,0.2) {\tiny\textcolor{red}{$x_1$}};
\node[anchor=west] at (1.05,1.25) {\tiny\textcolor{black}{$t_1$}};
\node[anchor=west] at (1.05,0.75) {\tiny\textcolor{black}{$t_2$}};
\node[anchor=west] at (1.05,0.25) {\tiny\textcolor{black}{$t_3$}};
\node[anchor=west] at (1.05,-0.25) {\tiny\textcolor{black}{$s_0$}};

\begin{scope}[xshift=2.7cm]
    \draw[-,red,thick] (0,1.1)--(1,1.1);
    \draw[-,red,thick] (0,1.1)--(1,0.5);	    
    \draw[-,red,thick] (0,0.5)--(1,0.5);		
    \draw[-,red,thick] (0,0.5)--(1,-0.1);		
    \draw[-,cadmiumgreen,thick] (0,-0.1)--(1,-0.1);	
    \vertex[fill=black, minimum size=3pt] at (0,1.1) {};
    \vertex[fill=black, minimum size=3pt] at (0,0.5) {};
    \vertex[fill=black, minimum size=3pt] at (0,-0.1) {};
    \vertex[fill=black, minimum size=3pt] at (1,1.1) {};
    \vertex[fill=black, minimum size=3pt] at (1,0.5) {};
    \vertex[fill=black, minimum size=3pt] at (1,-0.1) {};
\node[anchor=east] at (-0.05,1.1) {\tiny\textcolor{red}{$x_1t_3$}};
\node[anchor=east] at (-0.05,0.5) {\tiny\textcolor{red}{$x_1s_0$}};
\node[anchor=east] at (-0.05,-0.1) {\tiny\textcolor{cadmiumgreen}{$x_3$}};
\node[anchor=west] at (1.05,1.1) {\tiny\textcolor{black}{$t_4$}};
\node[anchor=west] at (1.05,0.5) {\tiny\textcolor{black}{$t_5$}};
\node[anchor=west] at (1.05,-0.1) {\tiny\textcolor{black}{$s_1$}};
\end{scope}

\begin{scope}[xshift=5.8cm]
    \draw[-,red,thick] (0,1.25)--(1,0.8);
\draw[-,red,thick] (0,0.75)--(1,0.8);     
\draw[-,darkyellow,thick] (0,0.25)--(1,0.8);     
\draw[-,darkyellow,thick] (0,0.25)--(1,0.2);      
\draw[-,red,thick] (0,-0.25)--(1,0.2);    
\vertex[fill=black, minimum size=3pt] at (0,1.25) {};
\vertex[fill=black, minimum size=3pt] at (0,0.75) {};
\vertex[fill=black, minimum size=3pt] at (0,0.25) {};
\vertex[fill=black, minimum size=3pt] at (0,-0.25) {};
\vertex[fill=black, minimum size=3pt] at (1.0,0.8) {};
\vertex[fill=black, minimum size=3pt] at (1.0,0.2) {};
\node[anchor=east] at (-0.05,1.25) {\tiny\textcolor{red}{$x_1t_3t_5$}};
\node[anchor=east] at (-0.05,0.75) {\tiny\textcolor{red}{$x_1s_0t_5$}};
\node[anchor=east] at (-0.05,0.25) {\tiny\textcolor{darkyellow}{$x_2t_2$}};
\node[anchor=east] at (-0.05,-0.25) {\tiny\textcolor{red}{$x_1t_2$}};
\node[anchor=west] at (1.05,0.8) {\tiny\textcolor{black}{$t_6$}};
\node[anchor=west] at (1.05,0.2) {\tiny\textcolor{black}{$s_2$}};
\end{scope}

\begin{scope}[xshift=8.7cm]
    \draw[-,red,thick] (0,1.3)--(1,1.1);
    \draw[-,red,thick] (0,1.3)--(1,0.5);	    
    \draw[-,cadmiumgreen,thick] (0,0.9)--(1,0.5);	    
    \draw[-,blue,thick] (0,0.5)--(1,0.5);	    
    \draw[-,darkyellow,thick] (0,0.1)--(1,0.5);		
    \draw[-,darkyellow,thick] (0,0.1)--(1,-0.1);		
    \draw[-,red,thick] (0,-0.3)--(1,-0.1);		
    \vertex[fill=black, minimum size=3pt] at (0,1.3) {};
    \vertex[fill=black, minimum size=3pt] at (0,0.9) {};
    \vertex[fill=black, minimum size=3pt] at (0,0.5) {};
    \vertex[fill=black, minimum size=3pt] at (0,0.1) {};
    \vertex[fill=black, minimum size=3pt] at (0,-0.3) {};
    \vertex[fill=black, minimum size=3pt] at (1,1.1) {};
    \vertex[fill=black, minimum size=3pt] at (1,0.5) {};
    \vertex[fill=black, minimum size=3pt] at (1,-0.1) {};
\node[anchor=east] at (-0.05,1.3) {\tiny\textcolor{red}{$x_1s_0s_1$}};
\node[anchor=east] at (-0.05,0.9) {\tiny\textcolor{cadmiumgreen}{$x_3s_1$}};
\node[anchor=east] at (-0.05,0.5) {\tiny\textcolor{blue}{$x_4$}};
\node[anchor=east] at (-0.05,0.1) {\tiny\textcolor{darkyellow}{$x_2t_1$}};
\node[anchor=east] at (-0.05,-0.3) {\tiny\textcolor{red}{$x_1t_3t_4$}};
\node[anchor=west] at (1.05,1.1) {\tiny\textcolor{black}{$t_7$}};
\node[anchor=west] at (1.05,0.5) {\tiny\textcolor{black}{$t_8$}};
\node[anchor=west] at (1.05,-0.1) {\tiny\textcolor{black}{$s_3$}};
\end{scope}

\begin{scope}[xshift=12.2cm]
\draw[-,red,thick] (0.0,1.7)--(1.0,1); 
\draw[-,red,thick] (0.0,1.4)--(1.0,1);  
\draw[-,darkyellow,thick] (0.0,1.1)--(1.0,1);  
\draw[-,darkyellow,thick] (0.0,0.8)--(1.0,1);  
\draw[-,red,thick] (0.0,0.5)--(1.0,1);  
\draw[-,red,thick] (0.0,0.2)--(1.0,1);  
\draw[-,cadmiumgreen,thick] (0.0,-0.1)--(1.0,1.0);  
\draw[-,blue,thick] (0.0,-0.4)--(1.0,1.0);  
\draw[-,blue,thick] (0.0,-0.4)--(1.0,0);  
\draw[-,darkyellow,thick] (0.0,-0.7)--(1.0,0);  
\vertex[fill=black, minimum size=3pt] at (0.0,1.7) {};
\vertex[fill=black, minimum size=3pt] at (0.0,1.4) {};
\vertex[fill=black, minimum size=3pt] at (0.0,1.1) {};
\vertex[fill=black, minimum size=3pt] at (0.0,0.8) {};
\vertex[fill=black, minimum size=3pt] at (0.0,0.5) {};
\vertex[fill=black, minimum size=3pt] at (0.0,0.2) {};
\vertex[fill=black, minimum size=3pt] at (0.0,-0.1) {};
\vertex[fill=black, minimum size=3pt] at (0.0,-0.4) {};
\vertex[fill=black, minimum size=3pt] at (0.0,-0.7) {};
\vertex[fill=black, minimum size=3pt] at (1.0,1) {};
\vertex[fill=black, minimum size=3pt] at (1.0,0) {};
\node[anchor=east] at (-0.05,1.7) {\tiny\textcolor{red}{$x_1t_3t_5t_6$}};
\node[anchor=east] at (-0.05,1.4) {\tiny\textcolor{red}{$x_1s_0t_5t_6$}};
\node[anchor=east] at (-0.05,1.1) {\tiny\textcolor{darkyellow}{$x_2t_2t_6$}};
\node[anchor=east] at (-0.05,0.8) {\tiny\textcolor{darkyellow}{$x_2t_2s_2$}};
\node[anchor=east] at (-0.05,0.5) {\tiny\textcolor{red}{$x_1t_2s_2$}};
\node[anchor=east] at (-0.05,0.2) {\tiny\textcolor{red}{$x_1s_0s_1t_8$}};
\node[anchor=east] at (-0.05,-0.1) {\tiny\textcolor{cadmiumgreen}{$x_3s_1t_8$}};
\node[anchor=east] at (-0.05,-0.4) {\tiny\textcolor{blue}{$x_4t_8$}};
\node[anchor=east] at (-0.05,-0.7) {\tiny\textcolor{darkyellow}{$x_2t_1t_8$}};
\node[anchor=west] at (1.05,1) {\tiny\textcolor{black}{$t_9$}};
\node[anchor=west] at (1.05,0) {\tiny\textcolor{black}{$s_4$}};
\end{scope}

\begin{scope}[xshift=16cm]
\draw[-,red,thick] (0.0,2.3)--(1.0,0.5); 
\draw[-,red,thick] (0.0,2.0)--(1.0,0.5); 
\draw[-,red,thick] (0.0,1.7)--(1.0,0.5);  
\draw[-,darkyellow,thick] (0.0,1.4)--(1.0,0.5);  
\draw[-,darkyellow,thick] (0.0,1.1)--(1.0,0.5);  
\draw[-,red,thick] (0.0,0.8)--(1.0,0.5);  
\draw[-,red,thick] (0.0,0.5)--(1.0,0.5);  
\draw[-,cadmiumgreen,thick] (0.0,0.2)--(1.0,0.5);  
\draw[-,blue,thick] (0.0,-0.1)--(1.0,0.5);  
\draw[-,darkyellow,thick] (0.0,-0.4)--(1.0,0.5);  
\draw[-,red,thick] (0.0,-0.7)--(1.0,0.5);  
\draw[-,blue,thick] (0.0,-1.0)--(1.0,0.5);  
\draw[-,darkyellow,thick] (0.0,-1.3)--(1.0,0.5);  
\vertex[fill=black, minimum size=3pt] at (0.0,2.3) {};
\vertex[fill=black, minimum size=3pt] at (0.0,2.0) {};
\vertex[fill=black, minimum size=3pt] at (0.0,1.7) {};
\vertex[fill=black, minimum size=3pt] at (0.0,1.4) {};
\vertex[fill=black, minimum size=3pt] at (0.0,1.1) {};
\vertex[fill=black, minimum size=3pt] at (0.0,0.8) {};
\vertex[fill=black, minimum size=3pt] at (0.0,0.5) {};
\vertex[fill=black, minimum size=3pt] at (0.0,0.2) {};
\vertex[fill=black, minimum size=3pt] at (0.0,-0.1) {};
\vertex[fill=black, minimum size=3pt] at (0.0,-0.4) {};
\vertex[fill=black, minimum size=3pt] at (0.0,-0.7) {};
\vertex[fill=black, minimum size=3pt] at (0.0,-1.0) {};
\vertex[fill=black, minimum size=3pt] at (0.0,-1.3) {};
\vertex[fill=black, minimum size=3pt] at (1.0,0.5) {};
\node[anchor=east] at (-0.05,2.3) {\tiny\textcolor{red}{$x_1s_0s_1t_7$}};
\node[anchor=east] at (-0.05,2.0) {\tiny\textcolor{red}{$x_1t_3t_5t_6t_9$}};
\node[anchor=east] at (-0.05,1.7) {\tiny\textcolor{red}{$x_1s_0t_5t_6t_9$}};
\node[anchor=east] at (-0.05,1.4) {\tiny\textcolor{darkyellow}{$x_2t_2t_6t_9$}};
\node[anchor=east] at (-0.05,1.1) {\tiny\textcolor{darkyellow}{$x_2t_2s_2t_9$}};
\node[anchor=east] at (-0.05,0.8) {\tiny\textcolor{red}{$x_1t_2s_2t_9$}};
\node[anchor=east] at (-0.05,0.5) {\tiny\textcolor{red}{$x_1s_0s_1t_8t_9$}};
\node[anchor=east] at (-0.05,0.2) {\tiny\textcolor{cadmiumgreen}{$x_3s_1t_8t_9$}};
\node[anchor=east] at (-0.05,-0.1) {\tiny\textcolor{blue}{$x_4t_8t_9$}};
\node[anchor=east] at (-0.05,-0.4) {\tiny\textcolor{darkyellow}{$x_2t_1s_3$}};
\node[anchor=east] at (-0.05,-0.7) {\tiny\textcolor{red}{$x_1t_3t_4s_3$}};
\node[anchor=east] at (-0.05,-1.0) {\tiny\textcolor{blue}{$x_4t_8s_4$}};
\node[anchor=east] at (-0.05,-1.3) {\tiny\textcolor{darkyellow}{$x_2t_1t_8s_4$}};
\node[anchor=west] at (1.05,0.5) {\tiny\textcolor{black}{$y$}};
\end{scope}

\end{scope}
\end{tikzpicture}
    \centering\vspace{-.2in}
\caption{A saturated grove $\Theta=(\theta_0,\theta_1, \ldots,\theta_5)$ satisfying $\Gamma \subset \Theta$. See Example~\ref{ex:partial_grove}.  }
\label{fig:grove_definition}
\end{figure}
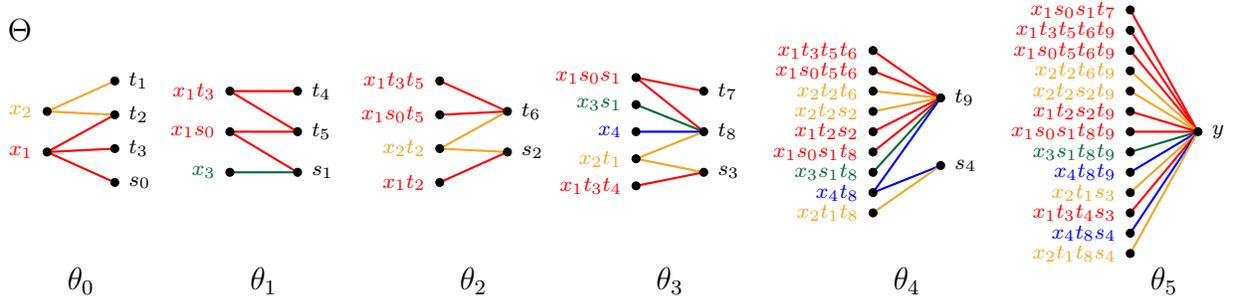

Vineyards and groves are intimately related. Define the grove $\Gamma(\calV)=(\gamma_0(\calV),\ldots,\gamma_n(\calV))$ of a vineyard $\calV$ as follows.  
For every $v\in [0,n]$, $\gamma_v(\calV)$ has the left vertex set
\[L(\gamma_v(\calV))=\{P \in \calV \mid P=Pv\},
\]
with the order induced from $\Prefixes(v)$, the right vertex set 
\[R(\gamma_v(\calV))=\{(v,w)\mid P\in \calV\text{ and }\terminal(P)=(v,w)\},
\]
with the order induced from $\hat\outedge(v)$, and edges $(P,e)$ whenever $Pe\in \calV$. 

\begin{proposition}\label{proposition:VineyardstoGroves}
    The map \[\Gamma: \Vineyards(\hatG,\hatF) \rightarrow \Groves(\hatG,\hatF)\] is a poset isomorphism.
\end{proposition}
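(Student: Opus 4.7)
The plan is to verify well-definedness of $\Gamma$, construct an explicit inverse $\calV:\Groves(\hatG,\hatF)\to\Vineyards(\hatG,\hatF)$, and show that both maps preserve the containment order in each direction.

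First I would check that $\Gamma(\calV)$ is a grove. Conditions (a) and (b) of Definition~\ref{def:grove} are immediate from $X\subseteq\calV$ (Definition~\ref{def:vineyard}(a)) and from the fact that the only outgoing incidence at $n$ is the half-edge $y$. Condition (c) is exactly the compatibility between the definitions of edges of $\gamma_v(\calV)$ and left vertices of $\gamma_w(\calV)$, combined with closure of $\calV$ under containment (Definition~\ref{def:vineyard}(c)) and extendability to routes (Definition~\ref{def:vineyard}(d)) to ensure that no ``dangling'' $Pe$ arises. The genuinely substantive condition is that each $\gamma_v(\calV)$ is \emph{noncrossing} in the sense of Definition~\ref{def:noncrossing_bipartite_tree}(b); this is where the coherence of~$\calV$ enters. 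If $P\prec_{L(\gamma_v(\calV))} Q$ and $e\prec_{R(\gamma_v(\calV))}e'$ with both $(P,e'),(Q,e)\in E(\gamma_v(\calV))$, then $Pe'$ and $Qe$ are paths in $\calV$ that coincide at the vertex $v$ with $Pv\prec_{\Prefixes(v)} Qv$ but $vP e'\succ_{\Suffixes(v)} vQe$, which is precisely a conflict at $v$. Extending $Pe'$ and $Qe$ to routes in $\calV$ (using Definition~\ref{def:vineyard}(d)) then contradicts coherence. Condition (c) of Definition~\ref{def:noncrossing_bipartite_tree} is guaranteed by construction since left vertices come from prefixes in $\calV$ that have an extension, and right vertices are precisely those incident to some such extension.

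Next I would construct the inverse by $\calV(\Gamma):=\Prefixes(\Gamma)=\bigsqcup_{v}L(\gamma_v)$ and check that it satisfies Definition~\ref{def:vineyard}. Parts (a) and (c) follow directly from Definition~\ref{def:grove}(a) and (c): every $x\in X$ is in some $L(\gamma_{v_x})$, and the ``only if'' direction of (c) forces closure under taking sub-prefixes as we walk back along the grove. Part (d), extendability to routes, is proved by finite induction moving to the right: at any vertex $v<n$, each $P\in L(\gamma_v)$ is incident to some right vertex $e=(v,w)$ by Definition~\ref{def:noncrossing_bipartite_tree}(c), and then $Pe\in L(\gamma_w)$ by Definition~\ref{def:grove}(c); iterating, and using $R(\gamma_n)=\{y\}$, produces a route through $P$ that stays in $\Prefixes(\Gamma)$. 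For coherence (part (b)) I would argue contrapositively: a conflict between two routes of $\calV(\Gamma)$ forces a pair of crossing edges in some $\gamma_v$, contradicting Definition~\ref{def:noncrossing_bipartite_tree}(b). This is essentially the reverse of the coherence-to-noncrossing argument above and is the step I expect to require the most care, because one must identify the correct vertex $v$ at which the crossing materializes (take the first vertex of the maximal common subpath where the two routes conflict and read off the two edges of $\gamma_v$ that the routes determine there).

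Once both directions are established, checking $\Gamma\circ\calV=\mathrm{id}$ and $\calV\circ\Gamma=\mathrm{id}$ is straightforward: the left vertices of $\gamma_v(\calV(\Gamma))$ are exactly the prefixes in $\calV(\Gamma)$ ending at $v$, which are $L(\gamma_v)$ by construction, and the edge sets agree by Definition~\ref{def:grove}(c); in the other composition, $\calV(\Gamma(\calV))$ lists exactly the prefixes of $\calV$. Finally, both maps are manifestly inclusion-preserving: $\calV\subseteq\calW$ immediately gives $L(\gamma_v(\calV))\subseteq L(\gamma_v(\calW))$ and the analogous containment on edges, hence $\Gamma(\calV)\subseteq\Gamma(\calW)$, and $\Gamma\subseteq\Theta$ yields $\Prefixes(\Gamma)\subseteq\Prefixes(\Theta)$. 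Thus $\Gamma$ is a poset isomorphism. The main obstacle throughout is the equivalence \emph{coherence of prefixes} $\Leftrightarrow$ \emph{noncrossing condition on each $\gamma_v$}; everything else is bookkeeping based on Definitions~\ref{def:vineyard} and~\ref{def:grove}.
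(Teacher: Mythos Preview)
Your approach is essentially the paper's: construct the explicit inverse $\calV(\Gamma)$ and observe that the two constructions undo each other, with order-preservation being automatic from containment. The paper's proof is far terser than yours---it simply names the inverse and leaves all verification implicit---so your identification of the coherence~$\Leftrightarrow$~noncrossing equivalence as the only substantive point is exactly right and more informative than what the paper writes.

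One small bookkeeping slip: your inverse $\calV(\Gamma):=\Prefixes(\Gamma)=\bigsqcup_v L(\gamma_v)$ omits the routes themselves. The elements of $L(\gamma_n)$ are prefixes ending at vertex~$n$, not routes; to satisfy Definition~\ref{def:vineyard}(d) you must also include $\{Py\mid P\in L(\gamma_n)\}$, exactly as the paper does. Your extendability argument in fact produces these $Py$, so the fix is just to adjoin them to the set you call $\calV(\Gamma)$.
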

\begin{proof}
 The inverse map is given by the function $$\calV:\Groves(\hatG,\hatF)\rightarrow\Vineyards(\hatG,\hatF)$$ where $\calV(\Gamma)$ is the vineyard formed by the set of prefixes \[\bigcup_{v\in[0,n]}L(\gamma_v)\cup\{Py\mid P \in L(\gamma_n)\},\] or, equivalently, induced by the set $\Routes(\calV(\Gamma))=\{Py\mid P \in L(\gamma_n)\}$.
\end{proof}

The notion of splits carries over from vineyards to groves. We define $\Splits(\cdot)$ on both prefixes $P$ and (grove) edges $(P,e)$.
Because of the correspondence in Definition~\ref{def:grove}(c),
splits are now more easily represented as edges in a forest $\gamma_v$. 

\begin{definition}[Grove split]
\label{def:grove_splits}
Let $\Gamma = (\gamma_v)_{v\in [0,n]}$ be a grove of $(\hatG,\hatF)$.
For $P \in L(\gamma_{v})$ let $I(P)$ be \defn{incidences} of $P$. That is, 
\begin{align}
  I(P):&=\{(P,e) \in E(\gamma_v) \mid e \in R(\gamma_v)\}\nonumber\\
  &=\{(P,e_0),(P,e_1),\dots,(P,e_l)\}.\label{equation:incident_edges}
\end{align}
If $|I(P)|\geq 2$, we say that $P$ \defn{has direct splits} and define the \defn{direct splits} of $P$ to be the edges $(P,e_i)$ for $i\in[l]$. We call $(P,e_0)$ the \defn{minimal continuation} or \defn{next step} of $P$ in $\Gamma$ and denote it by \defn{$\next_{\Gamma}(P)$}

For edges $(P,e)$ and left vertices $P$, we define the associated \defn{sets of splits} $\Splits(\Gamma;(P,e))$ and $\Splits(\Gamma;P)$, recursively as follows. Initialize the process by defining $\Splits(\Gamma;(P,y))=\emptyset$ for every edge in $\gamma_n$.

In $\gamma_v$ where $\Splits(\Gamma;(P,e))$ is defined on all its edges, we define $\Splits(\Gamma;P)$ on its left vertices by conditioning on $I(P)$ as in Equation~\eqref{equation:incident_edges}.
\begin{itemize}
    \item If $|I(P)|=1$, define $\Splits(\Gamma;P)=\Splits(\Gamma;(P, e_0))$.
    \item If $|I(P)|\geq 2$, define 
$\displaystyle\Splits(\Gamma;P):=\{(P,e_1),\dots,(P,e_l)\}\sqcup \bigsqcup_{i=0}^l \Splits(\Gamma;(P,e_i)).$
\end{itemize}
Finally, for an edge $(P,e)\in \gamma_u$, we define
\[
\Splits(\Gamma;(P,e)) = \Splits(\Gamma; Pe)
\]
if $e=(u,v)$ and $\Splits(\Gamma; Pe)$ is already defined. 
This process is well defined by starting at $v=n$ and proceeding right to left.

We denote by $\displaystyle\Splits(\Gamma)=\bigsqcup_{x\in X} \Splits(\Gamma;x)$ the set of splits in $\Gamma$. 
\end{definition}

The set $\Splits(\Gamma)$ has an order. 
Let $P\in L(\gamma_v)$ with $\initial(P)=x$ and $I(P)$ as in \eqref{equation:incident_edges}.
If $(P, e_i)$ is a direct split, then any  $(Q,e')\in \Splits(\Gamma;(P,e_i))$ and any $(R,e'')\in \Splits(\Gamma;(P,e_{i-1}))$, satisfy $(R,e'')\prec_x (P,e_i)\preceq_x (Q, e')$. 
This  order separates $\Splits(\Gamma)$ into the totally ordered chains $\Splits(\Gamma;x) = \{S_x^1,\ldots, S_x^{l_x}\}$ for every $x\in X$. 

\begin{definition}[Grove shuffle]
    A \defn{grove shuffle} of rank $k$ is a pair $(\Gamma,\sigma_{\Gamma})$ where $\Gamma$ is a grove and $\sigma_{\Gamma}:\Splits(\Gamma)\rightarrow [k]$ is a surjective function satisfying 
       \textup{$\sigma_{\Gamma}((P,e))<\sigma_{\Gamma}((P',e'))$
       for every pair $(P,e)\prec_x (P',e')$ in $\Splits(\Gamma; x)$ for some $x\in X$.}
    
    We denote by $\GroveShuffles(\hatG,\hatF)$ the set of grove shuffles of $(\hatG,\hatF)$.
\end{definition}

\begin{remark}
When $\ba=\be_0-\be_n$, there is a unique compatible shuffle associated to an underlying grove because the set $\Splits(\Gamma)$ is totally ordered.    
\end{remark}

For $S\in \Splits(\Gamma;x)$, let $\next_\Gamma(S)$ be the next split in $\Splits(\Gamma;x)$, with the understanding that if $S$ is the last split in $\Splits(\Gamma;x)$, then $\sigma_\Gamma(\next(S)) := \rank(\Gamma)+1$ for notational convenience.
\begin{definition}\label{def:grove_shuffles}
We define $(\Gamma, \sigma_\Gamma) \leq (\Theta; \sigma_\Theta)$ on $\GroveShuffles(\hatG,\hatF)$ whenever
\begin{enumerate}
    \item $\Gamma \subseteq \Theta$, and
\item  for splits $S, S' \in \Splits(\Gamma;x)$, 
\[\hbox{if } \sigma_\Gamma(S) \leq 
\sigma_\Gamma(S') \leq \sigma_\Gamma(\next_\Gamma(S))-1, \hbox{ then }
\sigma_\Theta(S) \leq \sigma_\Theta(S') 
\leq \sigma_\Theta(\next_\Theta(S))-1.
\]
\end{enumerate}
\end{definition}

We denote by $\SatGroveShuffles(\hatG,\hatF)$ the set of grove shuffles maximal with respect to the partial order given in Definition~\ref{def:grove_shuffles}, which we call \defn{saturated}.

\begin{theorem}\label{thm:VtoG}
    The map \[(\Gamma,\textup{id}): \VineyardShuffles(\hatG,\hatF) \rightarrow \GroveShuffles(\hatG,\hatF)\] is a poset isomorphism.
\end{theorem}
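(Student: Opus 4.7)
The plan is to build on Proposition~\ref{proposition:VineyardstoGroves}, which already establishes the poset isomorphism $\Gamma: \Vineyards(\hatG,\hatF) \rightarrow \Groves(\hatG,\hatF)$, and extend it by showing that the shuffle data is transported faithfully. The essential work is to identify a canonical bijection $\iota_\calV: \Splits(\calV) \rightarrow \Splits(\Gamma(\calV))$ that respects the per-$x$ chain orderings and the ``next'' operation, so that the meaning of ``id'' in the statement of the theorem is well-defined.

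First, I will define $\iota_\calV$ by $\iota_\calV(Pe) = (P,e)$, sending a prefix split $Pe \in \calV$ to the corresponding grove edge $(P,e) \in \gamma_{\tail(e)}(\calV)$. That this is well-defined and bijective follows from unwinding Definitions~\ref{def:vineyard_splits} and \ref{def:grove_splits} in parallel: the covers $\Covers(\calV;P) = \{Q_0,\dots,Q_l\}$ match the incidences $I(P)=\{(P,e_0),\dots,(P,e_l)\}$ of the corresponding left vertex in the grove via $Q_i=Pe_i$, with $Q_0$ and $(P,e_0)$ simultaneously being minimal. Thus the direct splits on the vineyard side are $Q_1,\dots,Q_l$ and on the grove side are $(P,e_1),\dots,(P,e_l)$, and $\iota_\calV$ identifies them. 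Then a simple induction, starting from vertex $n$ and moving backward as in Definition~\ref{def:grove_splits}, lifts the identification of direct splits to an identification of the full recursive sets $\Splits(\calV;P)$ and $\Splits(\Gamma(\calV);P)$.

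Second, I will verify that $\iota_\calV$ is an isomorphism of per-$x$ chains, i.e.\ that for each $x\in X$ the restriction $\iota_\calV:\Splits(\calV;x)\rightarrow \Splits(\Gamma(\calV);x)$ is order-preserving with respect to the two orderings $\prec_x$ described after Proposition~\ref{proposition:minext_bijects_routes_to_X_splits} and after Definition~\ref{def:grove_splits}. Both orderings come from the same inherited total order on suffixes at the first vertex where paths diverge, so this is essentially a matter of comparing the two descriptions side by side. As a consequence $\iota_\calV$ commutes with $\next$: $\iota_\calV(\next_\calV(S)) = \next_{\Gamma(\calV)}(\iota_\calV(S))$.

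With this identification in hand, the definition of the map is $(\calV,\sigma_\calV) \mapsto (\Gamma(\calV),\sigma_\calV\circ\iota_\calV^{-1})$, and the inverse is induced analogously by the inverse grove-to-vineyard map from Proposition~\ref{proposition:VineyardstoGroves}. The surjectivity of $\sigma$ and the per-$x$ monotonicity condition in the two definitions of shuffle are clearly preserved, because $\iota_\calV$ is a bijection that preserves each per-$x$ chain order. Finally, to upgrade this to a poset isomorphism I will compare Definition~\ref{def:uniformity_of_sigma}(2) with Definition~\ref{def:grove_shuffles}(2) term by term: condition (1) in each definition corresponds under the vineyard-to-grove isomorphism, and condition (2) is literally the same condition after applying $\iota_\calV$ and $\iota_\calW$ and using that $\next$ is preserved. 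The main (mild) obstacle is bookkeeping the two different but parallel recursive definitions of splits carefully enough to confirm that $\iota_\calV$ is indeed a bijection and that its restrictions to the chains $\Splits(\cdot;x)$ are order-preserving; everything else then reduces to transporting structures across this identification.
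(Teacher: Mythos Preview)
Your proposal is correct and takes essentially the same approach as the paper: identify $\Splits(\calV)$ with $\Splits(\Gamma(\calV))$ via $Pe \leftrightarrow (P,e)$, transport the shuffle across this identification, and invoke Proposition~\ref{proposition:VineyardstoGroves} for the underlying poset map. The paper's proof is simply a two-sentence version of what you wrote, stating that the bijection of Proposition~\ref{proposition:VineyardstoGroves} induces an order-preserving bijection on split sets and that $\sigma_{\Gamma(\calV)}((P,e)):=\sigma_{\calV}(Pe)$ is valid because $\Splits(\Gamma(\calV))=\{(P,e)\mid Pe\in\Splits(\calV)\}$; your version fills in the bookkeeping that the paper leaves implicit.
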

\begin{proof}
The bijection from Proposition~\ref{proposition:VineyardstoGroves} induces an order-preserving bijection between the ordered sets $\Splits(\calV)$ and $\Splits(\Gamma)$.
The shuffle \(\sigma_{\Gamma(\calV)}((P,e)):=\sigma_{\calV}(Pe)\) is valid because \[\Splits(\Gamma(\calV))=\{(P,e) \mid Pe \in \Splits(\calV)\}.\qedhere\]
\end{proof}

We are now able to establish that maximal objects are all of the same rank.

\begin{lemma}\label{lemma:max_grove_bijection}
$\Gamma$ is a saturated grove if and only if the map $\Splits(\Gamma)\rightarrow\Splits(\hatG)$ that takes $(P,e)\mapsto e$ is a bijection. Moreover, $(\Gamma,\sigma_\Gamma)$ is a saturated grove shuffle if and only if $\Gamma$ is a saturated grove and $\sigma_\Gamma$ is a bijection.
\end{lemma}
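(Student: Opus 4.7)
The plan is to prove the lemma in three steps, leveraging the vineyard--grove isomorphism and a structural characterization of saturated groves.

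\emph{Step 1 (Injectivity).} The map $(P,e)\mapsto e$ is always injective. Via Proposition~\ref{proposition:VineyardstoGroves}, each split $(P,e)\in \Splits(\Gamma)$ corresponds to the prefix $Pe \in \Splits(\calV(\Gamma))$, and the grove map $(P,e)\mapsto e$ agrees with the vineyard map $\terminal$, which is injective by Lemma~\ref{lemma:proj_is_injective} applied to $\hatH=\supp(\Gamma)\subseteq \hatG$.

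\emph{Step 2 ($\Rightarrow$).} I will show that every saturated grove $\Gamma$ satisfies, at each vertex $v$: (a) $R(\gamma_v)=\hat \outedge(v)$, and (b) $\gamma_v$ is connected, hence a non-crossing bipartite spanning tree. Given (a) and (b), $|E(\gamma_v)|=|L(\gamma_v)|+|\hat \outedge(v)|-1$, so the number of direct splits at $v$ is $|E(\gamma_v)|-|L(\gamma_v)|=|\hat \outedge(v)|-1$. Summing over $v$ yields $|\Splits(\Gamma)|=\sum_v(|\hat\outedge(v)|-1)=|\Splits(\hatG)|$, so by Step~1 the map is a bijection. Both (a) and (b) are proven by explicit extension: if either fails at some $v$, I add a suitable edge at $v$---attaching it to an extremal prefix in order to preserve non-crossing---and propagate the resulting new prefix forward through minimal continuations all the way to the sink, producing a valid grove $\Theta\supsetneq \Gamma$, contradicting saturation.

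\emph{Step 3 ($\Leftarrow$) and moreover.} For the converse, suppose the map is bijective but $\Gamma \subsetneq \Theta$. Since adding edges to a grove can only turn incidences into splits and never the reverse, $\Splits(\Gamma)\subseteq \Splits(\Theta)$; combined with injectivity for $\Theta$ and the bijectivity hypothesis for $\Gamma$, the two split sets coincide as subsets of $\Splits(\hatG)$. Analyzing a putative extra edge $(P,e)\in E(\theta_v)\setminus E(\gamma_v)$, I will trace the new prefix $Pe$ backward through $\Theta$ via its unique incidences at each upstream vertex until it reaches its root in $X\subseteq L(\gamma_{v_x})$; at some such step an element of $L(\gamma_u)$ must gain a new incidence, which necessarily creates a new direct split (either the added edge itself, or the displaced previous minimum), a contradiction. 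For the \emph{moreover} statement, saturation of $(\Gamma,\sigma_\Gamma)$ forces $\Gamma$ to be saturated, since any proper extension of $\Gamma$ lifts trivially to a larger grove shuffle in the sense of Definition~\ref{def:grove_shuffles}; and if $\sigma_\Gamma$ were not a bijection, some value $j\in[k]$ would have multiple preimages, and splitting $j$ into two adjacent values yields a strictly larger shuffle of rank $k+1$ still satisfying Definition~\ref{def:grove_shuffles}(2), again contradicting saturation.

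The main obstacle is the cascade argument in Step~2: the explicit addition of an edge and its propagation through $\gamma_{\head(e)}, \gamma_{\head(\head(e))}, \ldots$ must preserve non-crossing at every downstream vertex and respect the consistency condition~(c) of Definition~\ref{def:grove}. This requires a judicious choice of which prefix to attach the new edge to and careful selection of minimal continuations at each successive vertex so that the extended structure remains a valid grove.
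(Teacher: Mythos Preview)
Your proposal is correct and follows the same overall architecture as the paper: establish injectivity of $(P,e)\mapsto e$, characterize saturated groves as those whose forests $\gamma_v$ are spanning trees with $R(\gamma_v)=\hat\outedge(v)$, and count the direct splits via the tree formula $|E(\gamma_v)|=|L(\gamma_v)|+|R(\gamma_v)|-1$. Your Step~1 routes injectivity through Lemma~\ref{lemma:proj_is_injective} rather than arguing directly from noncrossing as the paper does, which is fine. Your Step~2 supplies the explicit extension argument for (a) and (b) that the paper simply asserts; this is a genuine addition.

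The one place you diverge substantively is Step~3. The paper's converse is a one-line reversal of the count: the number of direct splits in $\gamma_v$ is $|R(\gamma_v)|-c_v$ where $c_v$ is the number of components, so $|\Splits(\Gamma)|=\sum_v(|R(\gamma_v)|-c_v)\le\sum_v(|\hat\outedge(v)|-1)=|\Splits(\hatG)|$, with equality forcing $c_v=1$ and $R(\gamma_v)=\hat\outedge(v)$ for every $v$, whence $\Gamma$ is saturated by the characterization. Your backward-tracing argument is valid but considerably more work, and it only shows that $\Gamma$ is maximal under inclusion; you still implicitly rely on the tree characterization to conclude. Since you already have the counting inequality from Step~2, reversing it gives the converse for free.
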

\begin{proof}
First, we verify that the map $\Splits(\Gamma) \rightarrow \Splits(\hatG): (P,e) \mapsto e$ is well-defined, because $(P,e)\in \Splits(\Gamma)$ means it is an edge in some forest $\gamma_v$ whose left vertex $P$ has incidence $I(P)\geq2$, and $e$ is not the smallest edge in $\hat\outedge(v)$ that $P$ is incident to.
That is, $e\in\Splits(\hatG)$.

We next show that this map is injective. 
Suppose for contradiction that the grove splits $(P,e)$ and $(P',e)$ map to $e\in \Splits(\hatG)$.
Then $P$ and $P'$ are left vertices in a forest $\gamma_v$.
Without loss of generality, we assume that $P \prec P'$.
Since $\gamma_v$ is noncrossing, then $(P,e)$ and $(P',e)$ cannot both be splits in the grove.
Therefore, the map is injective.

Now, $\Gamma$ is a saturated grove if and only if each noncrossing bipartite forest $\gamma_v \in \Gamma$ is a tree and its right vertex set $R(\theta_v) = \hat\outedge(v)$.
Since $\gamma_v$ is a tree, then it has $|L(\gamma_v)|+|R(\gamma_v)|-1$ edges.
But each vertex in $L(\gamma_v)$ is incident to at least one vertex in $R(\gamma_v)$, hence there are $|R(\gamma_v)|-1$ splits in $\gamma_v$.
By definition, there are precisely $|\hat\outedge(v)|-1$ splits in $\Splits(\hatG)$ whose tail is at $v$. Therefore, $\Gamma$ is a saturated grove implies 
\[|\Splits(\Gamma)| = \sum_v  (|R(\gamma_v)|-1)
= \sum_v (|\hat\outedge(v)|-1) = |\Splits(\hatG)|,
\]
so the inclusion $\Splits(\Gamma) \rightarrow \Splits(\hatG): (P,e) \mapsto e$ is a bijection.

Conversely, if the injection $\Splits(\Gamma) \rightarrow \Splits(\hatG): (P,e) \mapsto e$ is a bijection, then by our discussion above, this forces each noncrossing bipartite forest $\gamma_v$ to be a tree with $R(\gamma_v) = \hat\outedge(v)$, and $\Gamma$ is a saturated grove.

Lastly, given a grove shuffle $(\Gamma, \sigma_\Gamma)$, the surjective function $\sigma_\Gamma: \Splits(\Gamma) \rightarrow [d]$ is a bijection if and only if 
$|\Splits(\Gamma)| = |\Splits(\hatG)| = d$, which holds if and only if $(\Gamma,\sigma_\Gamma)$ is a saturated grove shuffle.
\end{proof}

\begin{corollary}
\label{cor:dimension_d}
Saturated cliques are all of cardinality $d+1$.
As a consequence, the simplicial complex $\Cliques(\hatG,\hatF)$ is pure of dimension $d$. 
\end{corollary}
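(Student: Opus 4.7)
The plan is to transfer the saturation statement across the chain of rank-preserving poset isomorphisms developed in this section, landing in the family of grove shuffles where Lemma~\ref{lemma:max_grove_bijection} does the heavy lifting. Recall the composition
\[
\Cliques(\hatG,\hatF) \xrightarrow{\calM} \Multicliques(\hatG,\hatF) \xrightarrow{(\calV,\sigma_\calV)} \VineyardShuffles(\hatG,\hatF) \xrightarrow{(\Gamma,\mathrm{id})} \GroveShuffles(\hatG,\hatF),
\]
which are poset isomorphisms by Theorem~\ref{proposition:CtoM}, Theorem~\ref{theorem:isomorphism_multicliques_vineyard_shuffles}, and Theorem~\ref{thm:VtoG}. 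I first want to observe that each map preserves rank: the rank of a clique $\calC$ is $|\calC|-1$, which equals the common multiplicity parameter $k$ of the associated multiclique, which in turn equals the rank of the shuffle in both the vineyard and grove descriptions. Since isomorphisms of posets send maximal elements to maximal elements, saturated cliques correspond bijectively to saturated grove shuffles under this composition, with matching ranks.

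The main step is then to read the rank of a saturated grove shuffle from Lemma~\ref{lemma:max_grove_bijection}. That lemma asserts that $(\Gamma,\sigma_\Gamma)$ is saturated precisely when $\Gamma$ is a saturated grove \emph{and} $\sigma_\Gamma$ is a bijection onto $[k]$, while simultaneously forcing the inclusion $\Splits(\Gamma)\hookrightarrow\Splits(\hatG)$ sending $(P,e)\mapsto e$ to be a bijection. Combining these gives $k = |\Splits(\Gamma)| = |\Splits(\hatG)|$, and I would then verify by a direct count (or by appealing to the convention of splits in Section~\ref{sec:splits_and_slacks}, where $|\Splits(\hatG)| = m - n = d$ since splits are indexed by the non-minimal outgoing edges at each vertex and slacks are one per vertex) that this common value is the dimension $d$ of $\calF_G(\ba)$.

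Transporting this back along the isomorphisms, every saturated clique $\calC$ satisfies $|\calC| - 1 = d$, so $|\calC| = d+1$, which is the first claim. The second claim follows immediately: $\Cliques(\hatG,\hatF)$ is already a simplicial complex by Definition~\ref{def:cliques}, and having just shown that all of its facets have the same cardinality $d+1$, it is pure of dimension $d$.

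I do not anticipate any genuine obstacle here, since the isomorphisms and Lemma~\ref{lemma:max_grove_bijection} are already in place; the only piece worth writing out carefully is the equality $|\Splits(\hatG)| = d$, which I would justify once and for all by the edge partition $\hat E = (X\cup Y)\sqcup \Splits(\hatG) \sqcup \Slacks(\hatG)$, together with the fact that $|\Slacks(\hatG)| = n$ (exactly one slack $s_v$ per vertex $v\in[0,n-1]$, plus the outflow half-edge $y$ serving as the slack at $v=n$ in the augmented setting) and the edge count $|E| = m$.
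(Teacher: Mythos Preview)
Your proposal is correct and follows essentially the same approach as the paper, which leaves the corollary unproved as an immediate consequence of Lemma~\ref{lemma:max_grove_bijection} together with the chain of rank-preserving poset isomorphisms in Theorems~\ref{proposition:CtoM}, \ref{theorem:isomorphism_multicliques_vineyard_shuffles}, and \ref{thm:VtoG}. Your only minor imprecision is in the final paragraph: the outflow half-edge $y$ is not called a slack in the paper's conventions, but since $\hat\outedge(v)=\outedge(v)$ for $v<n$ and $\hat\outedge(n)=\{y\}$ contributes no splits, you still correctly arrive at $|\Splits(\hatG)|=|\Splits(G)|=m-n=d$.
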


\subsubsection{Natural labeling  of a grove}
\label{subsec.natlabel_grove}
\phantom{W}

The isomorphism of Theorem~\ref{thm:VtoG} shows that the notion of a natural labeling carries over from vineyards to groves as well. We can define $\lambda(\cdot)$ on both prefixes $P$ and (grove) edges $(P,e)$ using labels from $X\cup \Splits(\supp(\Gamma))$. This labeling coincides with $\lambda$ from vineyards in prefixes and can be easily extended to edges by defining
\begin{itemize}
    \item $\lambda(x)=x$ for every $x\in X$,
    \item $\lambda((P,e)) := \lambda(Pe) := \begin{cases}
        e, & \hbox{if } (P,e)\in \Splits(\Gamma)\\
        \lambda(P), & \hbox{if } (P,e)\not \in \Splits(\Gamma).
    \end{cases}$   
\end{itemize}

Using the natural labeling $\lambda$ we can encode all the information from $\Gamma$ into a seemingly simplified object $\lambda(\Gamma)=(\lambda(\gamma_0),\dots, \lambda(\gamma_n))$, which we call a \defn{labeled grove}, where $\lambda(\gamma_v)$ is a  noncrossing bipartite labeled forest, defined with the same conditions as in Definition \ref{def:noncrossing_bipartite_tree} except that  $$L(\lambda(\gamma_v))\subseteq \{e\in X\cup E\mid \exists P\in \Prefixes(v)\text{ such that }e\in P\},$$
whose right vertex set $R(\lambda(\gamma_n))=R(\gamma_v)$, left vertex set $$L(\lambda(\gamma_v))=\{\lambda(P)\mid P \in L(\gamma_v)\}$$ where the labels come with the same relative order as its corresponding prefixes, that is, $\lambda(P)<_{L(\lambda(\gamma_v))}\lambda(Q)$ whenever $P<_{L(\gamma_v)}Q$, and with edge set 
$$E(\lambda(\gamma_v))=\{(\lambda(P),e)\mid (P,e)\in \gamma_v\}.$$
In other words, as a forest $\lambda(\Gamma)$, is isomorphic  to $\Gamma$ except that the labels of the vertices $L(\gamma_v)$ have been replaced by $L(\lambda(\gamma_v))$ for all $v$. See Figures~\ref{fig:partial_grove_shuffle} and \ref{fig:max_grove_shuffle} for the examples of $\lambda(\Gamma)$ and $\lambda(\Theta)$ for $\Gamma$ and $\Theta$ of Figures \ref{fig:partial_grove_definition} and \ref{fig:grove_definition} respectively.

\begin{figure}[htb!]
    \input{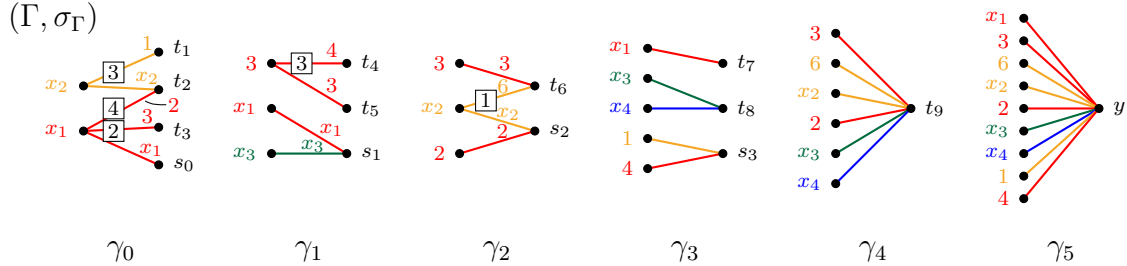}
    \caption{The grove shuffle $(\Gamma,\sigma_\Gamma)$ corresponding to the clique $\calC$ in Example~\ref{fig:cliques_and_multicliques}. The numbers in the boxes define $\sigma_\Gamma$ on $\Splits(\Gamma)$. Left vertices and edges in $\Gamma$ are labeled by the natural labeling $\lambda$. For legibility, labels on edges $(P,e)$ are suppressed in $\gamma_3$, $\gamma_4$, and $\gamma_5$; in those cases, $\lambda((P,e))=\lambda(P)$.}
\label{fig:partial_grove_shuffle}
\end{figure}

\begin{figure}[htb!]
    \input{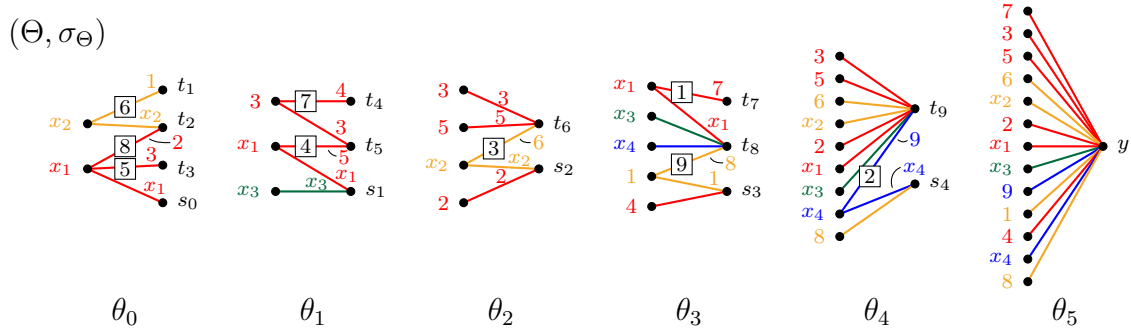}
    \caption{The saturated grove shuffle $(\Theta,\sigma_\Theta)$ corresponding to the saturated clique~$\calD$ in Example~\ref{fig:SatCliques_and_multicliques}. The numbers in the boxes define $\sigma_\Theta$ on $\Splits(\Theta)$. For legibility, labels on edges $(P,e)$ are suppressed in $\theta_3$, $\theta_4$, and $\theta_5$ when the incident left vertex $P$ has degree one; in those cases, $\lambda((P,e))=\lambda(P)$.}
\label{fig:max_grove_shuffle}
\end{figure}

\begin{lemma}\label{lemma:labeled_grove} Given $\Gamma \in \Groves(\hatG,\hatF)$, $\lambda(\Gamma)=(\lambda(\gamma_0),\dots, \lambda(\gamma_n))$ is a set  of noncrossing bipartite labeled forests, that when are not all empty, satisfy that $e \in L(\gamma_w)$ if and only if either 
    \begin{itemize}
        \item $e=x$ and $w=v_x$ for $x\in X$, or
        \item $e=(v,w)$ and there is a split edge of the form $(e',e)\in E(\lambda(\gamma_v))$, or
        \item there is an edge $e'=(v,w)$ and a split edge of the form $(e,e') \in E(\lambda(\gamma_v))$.
    \end{itemize}
\end{lemma}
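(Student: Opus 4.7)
The proof will have two parts: first showing that each $\lambda(\gamma_v)$ really is a noncrossing bipartite labeled forest, and second, verifying the three-case characterization of when an edge $e$ appears as a left vertex label in $\lambda(\gamma_w)$.

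The crucial preliminary step is to show that $\lambda|_{L(\gamma_v)}$ is injective for every $v$, since $L(\lambda(\gamma_v))$ is defined as a set (not a multiset) and inherits its total order from $L(\gamma_v)$ under this correspondence. I would prove this by induction on $v$. Suppose $P,Q\in L(\gamma_v)$ both carry the label $\ell\in X\cup\Splits(\supp(\Gamma))$. Tracing the chain of non-split (i.e.\ minimal) continuations backwards from $P$ and $Q$ shows that each is rooted at the unique ancestor that first acquired the label $\ell$: either $\ell$ itself if $\ell\in X$, or the head of the unique split edge that introduced $\ell$ if $\ell$ is a split (uniqueness of the source follows from the fact that the natural labeling rule assigns $\ell$ at precisely one location before propagating it minimally). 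Since the minimal continuation at each vertex is uniquely determined inside $\Gamma$, the chains coincide and $P=Q$. With injectivity in hand, the bijection $\lambda\times\mathrm{id}:E(\gamma_v)\to E(\lambda(\gamma_v))$ preserves the orders on both vertex classes, so conditions (b) (noncrossing) and (c) (no isolated vertices) of Definition~\ref{def:noncrossing_bipartite_tree} transfer verbatim from $\gamma_v$ to $\lambda(\gamma_v)$.

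For the characterization, I would argue both directions. ($\Leftarrow$) Case~1 is immediate from Definition~\ref{def:grove}(a) together with $\lambda(x)=x$. In Case~2, a split edge $(e',e)\in E(\lambda(\gamma_v))$ pulls back to a grove split $(P,e)\in\Splits(\Gamma)$ with $\lambda(P)=e'$; by Definition~\ref{def:grove}(c) we have $Pe\in L(\gamma_w)$, and by the definition of $\lambda$ on splits, $\lambda(Pe)=e$. In Case~3, the relevant edge $(e,e')\in E(\lambda(\gamma_v))$ must be interpreted as a non-split (minimal) incidence at the left vertex $e$ --- I will note that this is how labels propagate forward; with that reading, the corresponding grove edge $(P,e')\notin\Splits(\Gamma)$ satisfies $\lambda(P)=e$ and Definition~\ref{def:grove}(c) places $Pe'\in L(\gamma_w)$ with $\lambda(Pe')=\lambda(P)=e$.

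($\Rightarrow$) Given $e\in L(\lambda(\gamma_w))$, pick $P\in L(\gamma_w)$ with $\lambda(P)=e$. If $P\in X$ then $P=e=x$ and $w=v_x$, giving Case~1. Otherwise $P=Qe''$ with $Q\in L(\gamma_{v''})$, $e''=(v'',w)$, and $(Q,e'')\in E(\gamma_{v''})$ by Definition~\ref{def:grove}(c). The recursive rule $\lambda(Qe'')=e''$ or $\lambda(Q)$ forces a dichotomy: if $(Q,e'')\in\Splits(\Gamma)$ then $e=e''=(v'',w)$ and $(\lambda(Q),e)$ witnesses Case~2; if $(Q,e'')\notin\Splits(\Gamma)$ then $\lambda(Q)=e$ and $(\lambda(Q),e'')=(e,e'')$ with $e''=(v'',w)$ witnesses Case~3. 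The main obstacle is the injectivity argument and, relatedly, making precise the one-to-one correspondence between the splits/sources of the natural labeling and the labels that appear at each vertex, since the rest of the proof is a routine case analysis driven by Definition~\ref{def:grove}(c) and the recursive definition of $\lambda$.
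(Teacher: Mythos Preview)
The paper does not supply a proof of this lemma; it is stated and then the text immediately moves on with ``It is not difficult to verify that the conditions in Lemma~\ref{lemma:labeled_grove} completely characterize the image $\lambda(\Groves(\hatG,\hatF))$.'' Your proof is correct and is exactly the natural argument, driven by Definition~\ref{def:grove}(c) and the recursive definition of $\lambda$.

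Two remarks. First, you are right that Case~3 cannot work as literally stated: for a label $e$ to propagate forward to $L(\lambda(\gamma_w))$ along $(e,e')$, the underlying grove edge must be a \emph{non}-split (minimal continuation), since a split would relabel the extension by $e'$ rather than $e$. The paper's own Equation~\eqref{equation:labels_to_prefixes_groves}, which is the inverse recursion, uses ``minimal continuation'' in precisely this spot, confirming your reading. Second, for the injectivity of $\lambda|_{L(\gamma_v)}$ you can streamline your argument by citing the first paragraph of the proof of Lemma~\ref{lemma:max_grove_bijection}: that paragraph shows (for any grove, not only saturated ones) that the map $\Splits(\Gamma)\to\Splits(\hatG):(P,e)\mapsto e$ is injective, which gives you uniqueness of the ``source'' of each label and then forward uniqueness follows since the minimal continuation at each left vertex is unique.
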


It is not difficult to verify that the conditions in Lemma \ref{lemma:labeled_grove} completely characterize the image $\lambda(\Groves(\hatG,\hatF))$. Indeed, if $\widetilde\Gamma=(\tilde\gamma_0,\dots,\tilde\gamma_n)$ is in the image, for every $e\in L(\tilde\gamma_v)$ we can recover its associated sets of prefixes using the function $$P:\{(e,w)\mid e \in L(\tilde\gamma_w)\text{ for }w\in [0,n]\}\rightarrow \Prefixes(\hatG)$$
defined recursively as
\begin{equation}\label{equation:labels_to_prefixes_groves}
    P(e,w)=\begin{cases}
        x&\text{ if }e=x\in X\\
        P(e',v)e&\text{ if } (e',e) \text{ is a split in } \tilde \gamma_{v=\tail(e)}\\
        P(e,v)e'&\text{ if } (e,e') \text{ is a minimal continuation in } \tilde \gamma_{v=\tail(e')}.
    \end{cases}
\end{equation}

We can verify inductively on $v=0,\dots,n$ the following lemma.

\begin{lemma}\label{lemma:labels_to_prefixes}
    For every $\widetilde \Gamma \in \lambda(\Groves(\hatG,\hatF))$ and  $e\in L(\tilde \gamma_v)$ we have $\lambda(P(e,w))=e$. On the other hand for every $\Gamma \in \Groves(\hatG,\hatF)$ and $P\in L(\gamma_v)$ we have $P(\lambda(P),\tail(\terminal(P)))$.
\end{lemma}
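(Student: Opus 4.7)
The plan is to establish both statements by a parallel induction on the vertex $v$, exploiting the fact that the natural labeling $\lambda$ and the map $P(\cdot,\cdot)$ are defined by mutually mirroring recursions. First I would verify that Equation~\eqref{equation:labels_to_prefixes_groves} unambiguously defines $P(e,w)$: for each label $e \in L(\tilde\gamma_w)$, the three cases provided by Lemma~\ref{lemma:labeled_grove} are mutually exclusive and exhaustive, so exactly one reduction step applies, and that step strictly decreases the vertex index. Termination and well-definedness then follow by induction on $v$.

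To prove the first statement $\lambda(P(e,w))=e$, I would induct on $w$. In the base case $e = x \in X$, we have $P(x,v_x) = x$ and $\lambda(x) = x$ by construction. For the inductive step in the split case, when $e=(u,w)$ is introduced at $L(\tilde\gamma_w)$ via a split $(e',e) \in E(\tilde\gamma_u)$, we have $P(e,w) = P(e',u)\cdot e$, and by induction $\lambda(P(e',u)) = e'$; since this places $(P(e',u),e)$ as a split in the grove $\Gamma = \lambda^{-1}(\widetilde\Gamma)$, we conclude $\lambda(P(e,w)) = e$. In the propagation case, when $e$ appears in $L(\tilde\gamma_w)$ via a non-split edge $(e,e') \in E(\tilde\gamma_u)$ with $e' = (u,w)$, we have $P(e,w) = P(e,u) \cdot e'$, and by induction $\lambda(P(e,u)) = e$; because $(P(e,u),e')$ is then a minimal continuation in $\Gamma$, the labeling rule yields $\lambda(P(e,w)) = \lambda(P(e,u)) = e$.

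The second statement---that $P(\lambda(P),v) = P$ for every $P \in L(\gamma_v)$ (interpreting $\tail(\terminal(P))$ as the natural vertex target of the recursion, i.e., $v$)---follows by a completely parallel induction on $v$. The base case $P = x \in X$ is immediate, as $\lambda(x)=x$ and $P(x,v_x)=x$. For the inductive step write $P = Qe$ with $Q \in L(\gamma_u)$ and $e=(u,v)$. If $(Q,e)$ is a split of $\Gamma$, then $\lambda(P) = e$, and the second clause of~\eqref{equation:labels_to_prefixes_groves} together with the induction hypothesis applied to $Q$ yields $P(e,v) = P(\lambda(Q),u)\cdot e = Q \cdot e = P$. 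If instead $(Q,e)$ is the minimal continuation at its split class, then $\lambda(P)=\lambda(Q)$, and the third clause together with induction gives $P(\lambda(Q),v) = P(\lambda(Q),u) \cdot e = Q \cdot e = P$.

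The main obstacle is purely notational: one must track carefully, at each recursive step, whether the edge in question is a split or a minimal continuation, and verify that this dichotomy is preserved by the bijection $\Gamma \leftrightarrow \lambda(\Gamma)$ established by Lemma~\ref{lemma:labeled_grove}. Once that bookkeeping is in place, the two recursions simply unwind into each other, and both identities follow immediately.
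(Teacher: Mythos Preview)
Your proposal is correct and follows exactly the approach the paper indicates: immediately before the lemma the paper simply states ``We can verify inductively on $v=0,\dots,n$ the following lemma'' and gives no further detail, so your fleshed-out induction on $v$ with the three-case analysis (half-edge base case, split case, minimal-continuation case) is precisely what is intended. Your observation that the second assertion should be read as $P(\lambda(P),v)=P$ for $P\in L(\gamma_v)$ (rather than the literally printed $\tail(\terminal(P))$) is a reasonable correction of what appears to be a typo in the statement, since the domain of $P(\cdot,\cdot)$ requires $\lambda(P)\in L(\tilde\gamma_w)$ and this forces $w=v$.
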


So Lemma \ref{lemma:labels_to_prefixes} induces a bijection between the set of groves in $\Groves(\hatG,\hatF)$ and the set of labeled groves of the form $\lambda(\Groves(\hatG,\hatF))$. Hence, the elements in $\lambda(\Groves(\hatG,\hatF))$ are in practice a more convenient way to construct and store a grove. With the added caveat that the process of recovering a prefix in an element in $\Groves(\hatG,\hatF)$  is by a direct retrieval while from $\lambda(\Groves(\hatG,\hatF))$ we need to recursively construct it using Equation \eqref{equation:labels_to_prefixes_groves}.

In particular, the set of routes $\Routes(\Gamma)$ can be obtained from $\lambda(\Gamma)$ as
$$\Routes(\lambda(\Gamma))=\{P(e,n)y\mid e\in L(\lambda(\gamma_n))\}.$$

\subsection{Permutation flow shuffles}
\label{sec:permutation_flow_shuffles}
\phantom{W}

The natural labeling $\lambda$ (of a grove (Section~\ref{subsec.natlabel_grove}) or equivalently of a vineyard (Definition~\ref{def:natural_labeling_vineyard})) leads to the definition of the final combinatorial object we study.  This object acts as a bookkeeper of the pertinent features (routes, splits, and multiplicities) we have discussed so far, and is the easiest to construct while all the previous information can be extracted by performing simple recursive calculations.

The set of \defn{partial permutations} of a set $A$ is $\perm_A:=\bigcup_{B\subseteq A}\sym_B$. Note that $\perm_A$ also includes the empty partial permutation denoted by $\emptyset$. For a function $\pi:\hat{E}\rightarrow \perm_{X \cup \Splits(\hat{G})}$  we define its  \defn{support} to be the subgraph $\supp(\pi)$ for which $\pi(e) \neq \emptyset$.

At every vertex $v$ of $\hatG$, with ordered lists of incoming edges $\hat\inedge(v)=(e_0, \hdots, e_i)$ and outgoing edges $\hat\outedge(v)=(e_0', \hdots, e_j')$, we define the \defn{incoming} and \defn{outgoing} \defn{summary permutations} at $v$ to be the concatenations  
\[\InPerm(v)=\pi({e_0})\cdots \pi({e_i})\text{ and }\OutPerm(v)=\pi(e_0')\cdots \pi(e_j'),
\] respectively.

\begin{definition}[Permutation flow]
\label{def:permuflow_a}
A \defn{permutation flow} on  $(\hatG,\hatF)$ is a function \[\pi:\hat{E}\rightarrow
\perm_{X \cup \Splits(\hat{G})}\]
that, if it is nonempty, satisfies conditions (i) through (iii) below:
\begin{itemize}
    \item[(i)]  For each $x\in X$, $\pi(x)=x$.
    \item[(ii)] At an inner vertex $v$ with ordered lists of incoming edges $\hat\inedge(v) = (e_0,\ldots,e_i)$ and outgoing edges $\hat\outedge(v) = (e_0',e_1',\ldots, e_j')$, $\OutPerm(v)$ is an unshuffle\footnote[2]{Remark~\ref{remark:shuffle} discusses this language.} of $\InPerm(v)$  and a (possibly empty) subword of $e_1' \cdots e_j'$. 
    \item[(iii)] 
    If $e_h' \in \OutPerm(v)$ for some $1\leq h \leq j$, then $e_h'$ is the first letter of $\pi(e_h')$ and $e_h'$ is a split in $\supp(\pi)$.
\end{itemize}
We denote by $\PermutationFlows(\hatG,\hatF)$ the set of permutation flows on $(\hatG,\hatF)$.
\end{definition}

Note that edges $e \in \hatE$ play two different roles in this definition: as edges in $\hatE$ and as letters in the words of $\perm_{X \cup \Splits(\hat{G})}$. 
In order to clarify this distinction, whenever $e\in\pi(e')$, we will refer to the edge $e'$ as the \defn{carrier} of the \defn{letter} $e$. For a carrier $e'\in X\sqcup E$ and a letter~$e$ in $\pi(e')$, we denote  $e^*=\next_\pi(e,e')$ the carrier of $e$ such that $\head(e')=\tail(e^*)$, we call it the \defn{next carrier} of $e$ after $e'$. 

Figure~\ref{fig:FSM_subpermuflo} shows an example of a permutation flow $\pi$.
For each edge $e\in \hatE$, $\pi(e)$ is pictured on the corresponding edge.
In this example, the only edge with empty $\pi(e)$ is $s_4$, so $\supp(\pi) = \hatE \backslash \{s_4\}$.

\begin{figure}[ht!]
    \centering\vspace{-.3in}
\begin{tikzpicture}
\begin{scope}[scale=1.5, yscale=1.0]
\node() at (-1,.75) {$\pi$};

\vertex[fill=orange, minimum size=4pt, label=below:{\tiny\textcolor{orange}{$0$}}](v0) at (0,0) {};
\vertex[fill=orange, minimum size=4pt, label=below:{\tiny\textcolor{orange}{$1$}}](v1) at (1,0) {};
\vertex[fill=orange, minimum size=4pt, label=below:{\tiny\textcolor{orange}{$2$}}](v2) at (2,0) {};
\vertex[fill=orange, minimum size=4pt, label=below:{\tiny\textcolor{orange}{$3$}}](v3) at (3,0) {};
\vertex[fill=orange, minimum size=4pt, label=below:{\tiny\textcolor{orange}{$4$}}](v4) at (4,0) {};
\vertex[fill=orange, minimum size=4pt, label=below:{\tiny\textcolor{orange}{$5$}}](v5) at (5,0) {};		

\draw[-stealth, thick, color=black!30, color=black!30] (v0) .. controls (1.2, 1.6) and (2.5, -0.3) .. (v3);
\draw[-stealth, thick, color=black!30] (v0) .. controls (0.9, 1.0) and (1.5, -0.7) .. (v2);
\draw[-stealth, thick, color=black!30] (v0) to [out=30,in=150] (v1);
\draw[-stealth, thick, color=black!30] (v0) to [out=-30,in=-150] (v1);

\draw[-stealth, thick, color=black!30] (v1) .. controls (1.9, 1.0) and (2.5, -0.7) .. (v3);
\draw[-stealth, thick, color=black!30] (v1) to [out=30,in=150] (v2);
\draw[-stealth, thick, color=black!30] (v1) .. controls (2.0, -1.2) and (2.5, 0.7) .. (v3);	

\draw[-stealth, thick, color=black!30] (v2) to [out=45,in=135] (v4);
\draw[-stealth, thick, color=black!30] (v2) .. controls (3.0, -1.0) and (3.1, 0.3) .. (v4);	

\draw[-stealth, thick, color=black!30] (v3) to [out=60,in=120] (v5);
\draw[-stealth, thick, color=black!30] (v3) to [out=-30,in=-150] (v4);
\draw[-stealth, thick, color=black!30] (v3) .. controls (4.0, -1.0) and (4.5, 0.0) .. (v5);

\draw[-stealth, thick, color=black!30] (v4) to [out=30,in=150] (v5);
\draw[-stealth, thick, color=black!30] (v4) to [out=-30,in=-150] (v5);

\draw[-stealth, thick, color=black!30] (-0.5,0.2) .. controls (-0.4, 0.2) and (-0.15, 0.1) .. (v0);
\draw[-stealth, thick, color=black!30] (-0.5, -.4) .. controls (-0.4, -.4) and (-0.25, -.3) .. (v0);
\draw[-stealth, thick, color=black!30] (0.5, -.4) .. controls (0.6, -.4) and (0.75, -.3) .. (v1);
\draw[-stealth, thick, color=black!30] (2.5, -.5) .. controls (2.6, 0) and (2.7, 0.1) .. (v3);
\draw[-stealth, thick, color=black!30] (v5) to [out=20, in=160] (5.5, 0);

\node[] at (-0.6, -0.4){\scriptsize\textcolor{red}{$x_1$}};
\node[] at (-0.6, 0.2){\scriptsize\textcolor{darkyellow}{$x_2$}};
\node[] at (0.4, -0.4){\scriptsize\textcolor{cadmiumgreen}{$x_3$}};
\node[] at (2.5, -0.6){\scriptsize\textcolor{blue}{$x_4$}};
\node[] at (.6, .6){\scriptsize\textcolor{black}{$1$}};
\node[] at (.55, .35){\scriptsize\textcolor{black}{$2x_2$}};
\node[] at (.4, .15){\scriptsize\textcolor{black}{$3$}};
\node[] at (1.5, .4){\scriptsize\textcolor{black}{$4$}};
\node[] at (1.5, .15){\scriptsize\textcolor{black}{$3$}};
\node[] at (2.8, .45){\scriptsize\textcolor{black}{$63$}};
\node[] at (3.7, .55){\scriptsize\textcolor{black}{$x_1$}};
\node[] at (3.6, -0.2){\scriptsize\textcolor{black}{$x_4x_3$}};
\node[] at (4.48, .2){\scriptsize\textcolor{black}{$x_4x_32x_263$}};
\node[] at (.3, -.15){\scriptsize\textcolor{black}{$x_1$}};
\node[] at (1.5, -.43){\scriptsize\textcolor{black}{$x_3x_1$}};
\node[] at (2.8, -.43){\scriptsize\textcolor{black}{$2x_2$}};
\node[] at (3.7, -.45){\scriptsize\textcolor{black}{$41$}};
\node[] at (6.2, -.1){\scriptsize\textcolor{black}{$41x_4x_32x_263x_1$}};
\end{scope}
\end{tikzpicture}
    \vspace{-.3in}
    \caption{A permutation flow $\pi$ corresponding to the grove $\Gamma$ in Figure~\ref{fig:partial_grove_definition}.
    $\pi(e)$ is pictured on each edge $e$.
     }
    \label{fig:FSM_subpermuflo}
\end{figure}
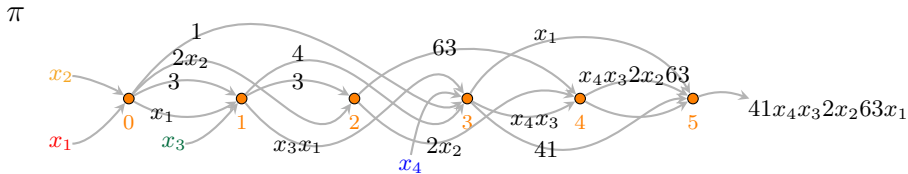

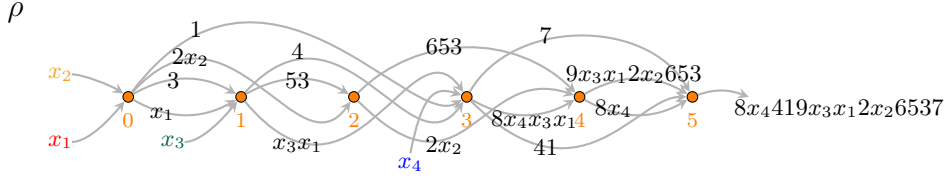
\begin{figure}[ht!]
    \centering\vspace{-.5in}
\begin{tikzpicture}
\begin{scope}[scale=1.5, yscale=1.0]
\node() at (-1,.75) {$\rho$};

\vertex[fill=orange, minimum size=4pt, label=below:{\tiny\textcolor{orange}{$0$}}](v0) at (0,0) {};
\vertex[fill=orange, minimum size=4pt, label=below:{\tiny\textcolor{orange}{$1$}}](v1) at (1,0) {};
\vertex[fill=orange, minimum size=4pt, label=below:{\tiny\textcolor{orange}{$2$}}](v2) at (2,0) {};
\vertex[fill=orange, minimum size=4pt, label=below:{\tiny\textcolor{orange}{$3$}}](v3) at (3,0) {};
\vertex[fill=orange, minimum size=4pt, label=below:{\tiny\textcolor{orange}{$4$}}](v4) at (4,0) {};
\vertex[fill=orange, minimum size=4pt, label=below:{\tiny\textcolor{orange}{$5$}}](v5) at (5,0) {};		

\draw[-stealth, thick, color=black!30, color=black!30] (v0) .. controls (1.2, 1.6) and (2.5, -0.3) .. (v3);
\draw[-stealth, thick, color=black!30] (v0) .. controls (0.9, 1.0) and (1.5, -0.7) .. (v2);
\draw[-stealth, thick, color=black!30] (v0) to [out=30,in=150] (v1);
\draw[-stealth, thick, color=black!30] (v0) to [out=-30,in=-150] (v1);

\draw[-stealth, thick, color=black!30] (v1) .. controls (1.9, 1.0) and (2.5, -0.7) .. (v3);
\draw[-stealth, thick, color=black!30] (v1) to [out=30,in=150] (v2);
\draw[-stealth, thick, color=black!30] (v1) .. controls (2.0, -1.2) and (2.5, 0.7) .. (v3);	

\draw[-stealth, thick, color=black!30] (v2) to [out=45,in=135] (v4);
\draw[-stealth, thick, color=black!30] (v2) .. controls (3.0, -1.0) and (3.1, 0.3) .. (v4);	

\draw[-stealth, thick, color=black!30] (v3) to [out=60,in=120] (v5);
\draw[-stealth, thick, color=black!30] (v3) to [out=-30,in=-150] (v4);
\draw[-stealth, thick, color=black!30] (v3) .. controls (4.0, -1.0) and (4.5, 0.0) .. (v5);

\draw[-stealth, thick, color=black!30] (v4) to [out=30,in=150] (v5);
\draw[-stealth, thick, color=black!30] (v4) to [out=-30,in=-150] (v5);

\draw[-stealth, thick, color=black!30] (-0.5,0.2) .. controls (-0.4, 0.2) and (-0.15, 0.1) .. (v0);
\draw[-stealth, thick, color=black!30] (-0.5, -.4) .. controls (-0.4, -.4) and (-0.25, -.3) .. (v0);
\draw[-stealth, thick, color=black!30] (0.5, -.4) .. controls (0.6, -.4) and (0.75, -.3) .. (v1);
\draw[-stealth, thick, color=black!30] (2.5, -.5) .. controls (2.6, 0) and (2.7, 0.1) .. (v3);
\draw[-stealth, thick, color=black!30] (v5) to [out=20, in=160] (5.5, 0);

\node[] at (-0.6, -0.4){\scriptsize\textcolor{red}{$x_1$}};
\node[] at (-0.6, 0.2){\scriptsize\textcolor{darkyellow}{$x_2$}};
\node[] at (0.4, -0.4){\scriptsize\textcolor{cadmiumgreen}{$x_3$}};
\node[] at (2.5, -0.6){\scriptsize\textcolor{blue}{$x_4$}};
\node[] at (.6, .6){\scriptsize\textcolor{black}{$1$}};
\node[] at (.55, .35){\scriptsize\textcolor{black}{$2x_2$}};
\node[] at (.4, .15){\scriptsize\textcolor{black}{$3$}};
\node[] at (1.5, .4){\scriptsize\textcolor{black}{$4$}};
\node[] at (1.5, .15){\scriptsize\textcolor{black}{$53$}};
\node[] at (2.8, .45){\scriptsize\textcolor{black}{$653$}};
\node[] at (3.7, .55){\scriptsize\textcolor{black}{$7$}};
\node[] at (3.6, -0.2){\scriptsize\textcolor{black}{$8x_4x_3x_1$}};
\node[] at (4.48, .2){\scriptsize\textcolor{black}{$9x_3x_12x_2653$}};
\node[] at (.3, -.15){\scriptsize\textcolor{black}{$x_1$}};
\node[] at (1.5, -.43){\scriptsize\textcolor{black}{$x_3x_1$}};
\node[] at (2.8, -.43){\scriptsize\textcolor{black}{$2x_2$}};
\node[] at (3.7, -.45){\scriptsize\textcolor{black}{$41$}};
\node[] at (4.3, -.1){\scriptsize\textcolor{black}{$8x_4$}};
\node[] at (6.3, -.1){\scriptsize\textcolor{black}{$8x_4419x_3x_12x_26537$}};
\end{scope}
\end{tikzpicture}
    \vspace{-.3in}
    \caption{A maximal permutation flow $\rho$ corresponding to the grove $\Theta$ in Figure~\ref{fig:grove_definition}.
    This maximal permutation flow contains $\pi$ of Figure~\ref{fig:FSM_subpermuflo} as a subpermutation flow.
    }
    \label{fig:FSM_maxpermuflo}
\end{figure}

The notion of splits carried over from vineyards to groves, and now to permutation flows.
Splits are now represented as edges in $E$.

\begin{definition}[Permutation flow split]
\label{def:permuflow_splits}
Let $\pi$ be a permutation flow of $(\hatG,\hatF)$. 
An edge $t\in E$ is a \defn{split} in $\pi$ if $t$ is the first letter of $\pi(t)$. 
We denote by \[\Splits(\pi)=\{t\in E\mid \textup{$t$ is a split in $\pi$}\}\] the \defn{set of splits} in $\pi$.

If $t\in \Splits(\pi)$ with  $v=\tail(t)$, let $e^*\in\hat\outedge(v)$ be the largest element satisfying $e^* \prec_{\hat{\outedge}(v)} t$, $\pi(e^*)\neq e^*$, and $\pi(e^*) \neq \emptyset$. Let $e$ be the last letter of $\pi(e^*)$.  In this situation, we say that \defn{$t$ is a direct split of $e$ at $v$}.
\end{definition}

\begin{definition}[Sets of splits]
Given a letter $e\in \pi(e')$, suppose $\{t_1,\ldots, t_l\}$ is the set of direct splits of the letter $e$ at $v=\head(e')$. 
We define the \defn{set of splits of $e$ at $e'$ in $\pi$}, denoted $\Splits(\pi; e,e')$, recursively as follows.

First, define $\Splits(\pi;e,y)=\emptyset$ for all $e\in \hat{E}$. Then define 
\begin{equation}\label{def:splitsofe} 
\Splits(\pi;e,e')=\Splits(\pi;e,\next_{\pi}(e,e'))\,\sqcup \bigsqcup_{i\in[l]}\Big(\{t_i\}\sqcup\Splits(\pi;t_i,t_i)\Big).
\end{equation}
Moreover, for convenience we also denote $\Splits(\pi;x):=\Splits(\pi;x,x)$ for $x\in X$ and note that $\Splits(\pi)=\bigsqcup_{x\in X}\Splits(\pi;x)$. The sets $\Splits(\pi;x)$ come with a total order $\prec_x$ that respects the framing $\hatF$ and the following conditions using the notation from Equation~\eqref{def:splitsofe}. For all $a\in\Splits(\pi,e,\next_{\pi}(e,e'))$ and for all $b_i\in\Splits(\pi,t_i,t_i)$, 
\[a\prec_x t_1\prec_x b_1 \prec_x t_2 \prec_x b_2 \prec_x \cdots \prec_x t_l \prec_x b_l.\]
\end{definition}

Note that for every $e\in\pi(e')$, neither $e$ nor $e'$ are elements of $\Splits(\pi;e,e')$. 

\begin{example}\label{eg.directsplit}
Referring to Figure~\ref{fig:direct_split_example}, suppose that $\inedge(v) = \{e_0,e_1 \} = \{ t_2,t_1\}$ and $\outedge(v) = \{e_0', \ldots, e_4'\} = \{s_v, t_6,\ldots, t_3 \}$.
The values $\pi(e)$ are pictured on the corresponding edge.
We have the incoming summary permutation $\InPerm(v) = abcdef$, the outgoing summary permutation $abct_2t_4def$, which is an unshuffle of $abcdef$ with $t_5t_3$, and the splits of $\pi$ whose tail is $v$ are $t_5$ and $t_3$.
For example, we see that the edge $t_5$ is a direct split of $c$ at the carrier $e_0=t_2$, and $t_3$ is also a direct split of $c$ at the carrier $e_0=t_2$.
Therefore, 
\[\Splits(\pi; c, t_2) =\{ t_5, t_3\}\sqcup \Splits(\pi; c, t_6) \sqcup \Splits(\pi; c, t_5) \sqcup \Splits(\pi; c, t_3).
\]
\end{example}
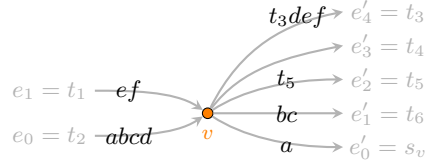
\begin{figure}[ht!]
\begin{tikzpicture}
\begin{scope}[scale=1.5, yscale=1.0]

\vertex[fill=orange, minimum size=4pt, label=below:{\tiny\textcolor{orange}{$v$}}](v0) at (0,0) {};
	
\draw[-stealth, thick, color=black!30] (-1,.2) .. controls (-.7, .2) and (-0.3,.2) .. (v0);
\draw[-stealth, thick, color=black!30] (-1, -.2) .. controls (-.7, -.2) and (-0.3, -.2) .. (v0);
\draw[-stealth, thick, color=black!30, color=black!30] (v0) .. controls (.3, .6) and (.7, .8) .. (1.2,.9);
\draw[-stealth, thick, color=black!30, color=black!30] (v0) .. controls (.3, .4) and (.7, .5) .. (1.2,.6);
\draw[-stealth, thick, color=black!30] (v0) .. controls (0.3, .2) and (.7, .3) .. (1.2,.3);
\draw[-stealth, thick, color=black!30] (v0) to (1.2,0);
\draw[-stealth, thick, color=black!30] (v0) .. controls (0.3,-.2) and (.7,-.3) .. (1.2,-.3);

\node[] at (-1.4,.2) {\scriptsize \textcolor{black!30}{$e_1=t_1$}};
\node[] at (-1.4,-.2) {\scriptsize \textcolor{black!30}{$e_0=t_2$}};
\node[] at (1.6,.9) {\scriptsize \textcolor{black!30}{$e_4'=t_3$}};
\node[] at (1.6,.6) {\scriptsize \textcolor{black!30}{$e_3'=t_4$}};
\node[] at (1.6,.3) {\scriptsize \textcolor{black!30}{$e_2'=t_5$}};
\node[] at (1.6,0) {\scriptsize \textcolor{black!30}{$e_1'=t_6$}};
\node[] at (1.6,-.3) {\scriptsize \textcolor{black!30}{$e_0'=s_v$}};

\node[] at (-.7, .2){\scriptsize$ef$};
\node[] at (-.7, -.2){\scriptsize$abcd$};
\node[] at (.8, .85){\scriptsize$t_3def$};
\node[] at (.7, .3){\scriptsize$t_5$};
\node[] at (.7, 0){\scriptsize$bc$};
\node[] at (.7, -.3){\scriptsize$a$};

\end{scope}
\end{tikzpicture}
    \caption{Illustrating direct splits and sets of splits in a permutation flow. See Example~\ref{eg.directsplit}.
    }
    \label{fig:direct_split_example}
\end{figure}

We can construct permutation flows from groves.
Given a grove $\Gamma$, define the permutation flow $\pi(\Gamma)$ as follows.
For each vertex $v\in [0,n]$, and $e\in \hat\outedge(v)$, let
\[
(\pi(\Gamma))(e) = \lambda((P_0,e))\cdots \lambda((P_l,e)),
\]
where $I(e)=\{P\in L(\gamma_v)\mid (P,e) \in E(\gamma_v)\}=\{P_0,\ldots, P_l\}$ is the ordered set of edges in the bipartite noncrossing tree $\gamma_v$ which are incident to $e\in R(\gamma_v)$, and $\lambda$ is the natural labeling of the grove.

It is easy to check inductively on $v=0,1,\dots,n$ that this map indeed gives a valid permutation flow due to the fact that every tree $\gamma_v$ is non-crossing and that the only possible grove edge in $I(e)$ which can be a split is $(P_0,e)$, which is the only case where we have that $\lambda(P_0,e)=e$.

Using the map given by $\pi(\Gamma)$ we can also translate the order relation $\Gamma \subseteq \Theta$ in $\Groves(\hatG,\hatF)$ as follows.
We say that $t\in X\cup\Splits(\pi)$ is a \defn{split ancestor} of $t'$ whenever $\emptyset \neq \Splits(\pi;t')\subset \Splits(\pi,t)$. We define for every $t \in \Splits(\pi)$,
$$\SplitAncestors(\pi,t):=\{e \mid e \text{ a split ancestor of }t\},$$
the \defn{set of split ancestors} of $t$. Note that $\SplitAncestors(\pi,x)=\emptyset$ for every $x\in X$.

\begin{definition}[Split reduction]\label{definition:split_reduction} Let $\pi,\rho \in \PermutationFlows(\hatG,\hatF)$. For $e\in \hatE$, we say that the word $\pi(e)$ is \defn{obtained by split reduction} from the word $\rho(e)$, whenever $\pi(e)$ is obtained from $\rho(e)$ by performing in order to every letter $t$ in $\rho(e)$ exactly one of the following actions: 
\begin{itemize}
    \item keeping $t$, or
    \item deleting $t$, or
    \item replacing $t$ by the letter $\max \left( \SplitAncestors(\rho,t)\cap\Splits(\pi)\right)$.
\end{itemize}
we say that $\pi$ is a \defn{split reduction} of $\rho$ if for every $e\in E$ the word $\pi(e)$ is obtained by split reduction from $\rho(e)$.
\end{definition}

\begin{lemma} We have that $\Gamma \subseteq \Theta$ in $\Groves(\hatG,\hatF)$ if and only if for every $e\in\hatE$,
the word $\pi(\Gamma)(e)$ is obtained from $\pi(\Theta)(e)$ by  split reduction.
\end{lemma}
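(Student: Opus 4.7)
The plan is to use the bijective correspondence $\Gamma\mapsto\pi(\Gamma)$ between groves and permutation flows established in this section, and to translate the subgrove relation $\Gamma\subseteq\Theta$ directly into the split reduction relation on the associated permutation flows, processing each edge $e\in\hatE$ one at a time. The central structural observation is that $\Gamma\subseteq\Theta$ implies $L(\gamma_v)\subseteq L(\theta_v)$ and $E(\gamma_v)\subseteq E(\theta_v)$ for every $v$, so that $\Splits(\pi(\Gamma))\subseteq\Splits(\pi(\Theta))$ and each incidence set $I_\Gamma(e)=\{P\in L(\gamma_v):(P,e)\in E(\gamma_v)\}$ is a subset of $I_\Theta(e)$, where $v=\tail(e)$.

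For the forward direction, I would fix $e\in\hat{\outedge}(v)$ and write $\pi(\Theta)(e)=\lambda_\Theta((Q_0,e))\cdots\lambda_\Theta((Q_k,e))$ using the order on $I_\Theta(e)$, then process each letter: if $(Q_j,e)\notin E(\gamma_v)$, the letter is deleted; otherwise the corresponding letter of $\pi(\Gamma)(e)$ is $\lambda_\Gamma((Q_j,e))$, which either coincides with $\lambda_\Theta((Q_j,e))$ (keep) or differs (replace). The crux is to show that in the replace case, $\lambda_\Gamma((Q_j,e))$ is precisely the maximum split ancestor of $\lambda_\Theta((Q_j,e))$ that survives in $\Splits(\pi(\Gamma))\cup X$. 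This follows from the recursive nature of the natural labeling: $\lambda(P)$ records the latest grove-split along the path from an inflow half-edge to $P$, so removing grove edges when passing from $\Theta$ to $\Gamma$ causes the recorded label to retreat to the latest split that still exists in $\Gamma$, which is exactly the maximum split ancestor prescribed by the split reduction rule.

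For the converse, suppose each $\pi(\Gamma)(e)$ is a split reduction of $\pi(\Theta)(e)$. Since $\Gamma$ is determined by $\pi(\Gamma)$ via the inverse of the bijection, I would check by induction on $v=0,1,\ldots,n$ that each left vertex and each grove edge of $\gamma_v$ is already present in $\theta_v$. The key observation is that the three split reduction operations correspond exactly to local changes in the grove structure: deletion removes an incidence (and possibly a prefix) from $\theta_v$, replacement by a split ancestor absorbs an intermediate split back into its parent without disturbing the underlying prefix structure, and keeping preserves both the prefix and its incidence with $e$. Processing the operations layer by layer from $v=0$ upward builds an explicit subgraph of $\Theta$ which by construction equals $\Gamma$.

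The main obstacle is the label propagation bookkeeping in the forward direction. Verifying the maximum-split-ancestor identity requires a careful induction tracking how $\lambda_\Gamma(P)$ and $\lambda_\Theta(P)$ evolve simultaneously as the prefix $P$ grows through successive vertices, together with an analysis of how the linear order $\prec_x$ on each $\Splits(\pi(\Theta);x)$ identifies which split ancestor is ``maximum'' at each stage. A secondary subtlety is handling the boundary case where the relevant surviving ancestor is an inflow half-edge $x\in X$ rather than a proper split, which must be reconciled with the precise form of the split reduction rule inherited from Definition~\ref{definition:split_reduction}.
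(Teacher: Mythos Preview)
The paper states this lemma without proof (this is a research announcement, and the lemma appears immediately before the definition of the partial order on $\PermutationFlows(\hatG,\hatF)$ with no accompanying argument). So there is no paper proof to compare against.

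Your outline is the natural approach and is essentially correct. The forward direction rests on exactly the right structural fact: since $\Gamma\subseteq\Theta$ forces $E(\gamma_v)\subseteq E(\theta_v)$ and $L(\gamma_v)\subseteq L(\theta_v)$ at every vertex, the incidence set $I_\Gamma(e)$ is an order-preserving subset of $I_\Theta(e)$, and the natural labeling $\lambda_\Gamma$ differs from $\lambda_\Theta$ only by retreating along the prefix to the most recent split that survives in $\Gamma$. That retreat is precisely the ``replace by maximum surviving split ancestor'' rule. The converse via vertex-by-vertex reconstruction is also correct; the global consistency of the letter-by-letter keep/delete/replace operations is enforced by the hypothesis that both $\pi(\Gamma)$ and $\pi(\Theta)$ are valid permutation flows, so conditions (i)--(iii) of Definition~\ref{def:permuflow_a} already tie the choices together across edges sharing a vertex.

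The boundary case you flag is a genuine wrinkle in the paper's Definition~\ref{definition:split_reduction}: when the only surviving ancestor of $t$ is an inflow half-edge $x\in X$, the set $\SplitAncestors(\rho,t)\cap\Splits(\pi)$ is empty and the ``max'' is undefined as written. The intended reading is surely $\max\bigl(\SplitAncestors(\rho,t)\cap(X\cup\Splits(\pi))\bigr)$, and with that adjustment your argument goes through. This is a definitional gap in the paper rather than a gap in your proof strategy.
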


We define an order on $\PermutationFlows(\hatG,\hatF)$ by saying that $\pi\subseteq \rho$ if $\pi$ is obtained from $\rho$ by split reduction. We denote by $\SatPermutationFlows(\hatG,\hatF)$ the set of permutation flows that are maximal with respect to this order. We then have the following theorem.

\begin{proposition}\label{prop.GtoPF}
The map
\[\pi : \Groves( \hatG, \hatF) \rightarrow \PermutationFlows( \hatG, \hatF )\]
is a poset isomorphism.    
\end{proposition}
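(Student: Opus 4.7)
The plan is to prove the proposition in three stages: (1) verify that $\pi(\Gamma)$ is indeed a valid permutation flow; (2) construct an inverse map $\Gamma : \PermutationFlows(\hatG,\hatF) \rightarrow \Groves(\hatG,\hatF)$; and (3) verify that both directions respect the partial orders. Throughout, the key technical bridge is the natural labeling $\lambda$ of a grove introduced in Section~\ref{subsec.natlabel_grove}, which is designed precisely so that labels propagate through carriers in the same way letters propagate through edges in a permutation flow.

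First, I would verify the three defining conditions of Definition~\ref{def:permuflow_a}. Condition (i) follows because $\lambda(x)=x$ for every $x\in X$ by construction of the natural labeling. For conditions (ii) and (iii), the essential observation is that at each vertex $v$, the outgoing word $\OutPerm(v) = \pi(\Gamma)(e_0')\cdots \pi(\Gamma)(e_j')$ is obtained by reading the labels $\lambda((P,e))$ of the tree edges of $\gamma_v$ in left-to-right, top-to-bottom order. The noncrossing property of $\gamma_v$ ensures that the labels inherited from the left vertices (which correspond exactly to the letters in $\InPerm(v)$, by the inductive hypothesis across vertices and Lemma~\ref{lemma:labels_to_prefixes}) appear in the outgoing word as an unshuffle with exactly those split labels $e_h'$ for which $(P_0, e_h')$ is a split grove edge, and each such split letter appears as the first letter of $\pi(\Gamma)(e_h')$. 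This is exactly conditions (ii) and (iii).

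Second, I would construct the inverse $\Gamma(\pi)$ vertex by vertex. Given $\pi$, at vertex $v$ we have the ordered list of letters in $\InPerm(v)$; using the recovery formula Equation~\eqref{equation:labels_to_prefixes_groves} inductively, each letter determines a unique prefix ending at $v$, and these prefixes form $L(\gamma_v)$ with inherited order. The right vertex set $R(\gamma_v)$ consists of those $e\in\hat\outedge(v)$ with $\pi(e)\ne\emptyset$, and an edge $(P,e) \in E(\gamma_v)$ is placed precisely when the label corresponding to $P$ appears as a letter of $\pi(e)$. Conditions (ii) and (iii) on $\pi$ translate exactly into the noncrossing property (via the unshuffle condition) and into the grove compatibility requirement (c) of Definition~\ref{def:grove} (via the split-first condition on outgoing edges). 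Checking that $\pi(\Gamma(\pi)) = \pi$ and $\Gamma(\pi(\Gamma)) = \Gamma$ is then a straightforward traversal argument.

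Finally, for order preservation, suppose $\Gamma \subseteq \Theta$. Any grove edge $(P,t)$ of $\Theta$ not in $\Gamma$ either lies below a split of $\Theta$ that has been pruned or corresponds to replacing a split in $\Theta$ by the nearest surviving ancestor split in $\Gamma$. On the permutation flow side this is precisely the split reduction operation of Definition~\ref{definition:split_reduction}: letters below pruned splits disappear, while a letter whose label is a pruned split is replaced by $\max(\SplitAncestors(\pi(\Theta), t) \cap \Splits(\pi(\Gamma)))$, because the natural labeling passes a split's label down the chain of minimal continuations. The converse direction uses the same dictionary to reconstruct the subgrove from the reduced permutation flow.

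The main obstacle will be the bookkeeping needed to show that the natural labeling is \emph{exactly} compatible with the carrier-letter structure of permutation flows. In particular, when a label $\lambda(P) = e^*$ propagates along the unique minimal-continuation chain starting at a right vertex $e^*$ of some $\gamma_u$, one must verify by induction across vertices that the letter $e^*$ in $\pi(\Gamma)$ travels precisely along the same chain of carriers, and that splits along the way introduce new letters at the correct positions. This careful alignment is what underlies both the well-definedness of the inverse and the compatibility of the partial orders.
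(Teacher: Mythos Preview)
Your approach is essentially the paper's: build the inverse through the labeled-grove bridge (Lemma~\ref{lemma:labels_to_prefixes}) and verify inductively across vertices, with order preservation handled by the split-reduction lemma immediately preceding the proposition. One concrete gap to fix in your inverse construction: the edge rule ``place $(P,e)\in E(\gamma_v)$ precisely when the label corresponding to $P$ appears as a letter of $\pi(e)$'' misses the split edges. If $(P,e)$ is a split in the grove then $\lambda((P,e)) = e$, not $\lambda(P)$, so $\lambda(P)$ does \emph{not} occur in $\pi(e)$; under your rule the split edge would never be placed and the right vertex $e$ could end up disconnected from its lowest incident prefix. The paper's rule (and the correct fix) is: place $(e,e'')$ in the labeled grove when either $e\in\pi(e'')$ \emph{or} $e''$ is a direct split of $e$ at $v$ in $\pi$.
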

\begin{proof}
The inverse map is given by the function
\[
\Gamma: \PermutationFlows(\hatG,\hatF) \rightarrow \Groves(\hatG,\hatF)
\]
where $\Gamma(\pi)=(\gamma_v)_{v\in [0,n]}$ is the corresponding grove associated to the labeled grove with noncrossing bipartite labeled forests such that
\begin{align*}
L(\tilde \gamma_v) 
    &= \{e \mid e\in\pi(e') \hbox{ for some } e'\in X\cup E\text{ with }  \head(e') =v\},\\
R(\tilde \gamma_v)
    &= \{e'' \mid e''\in \hat\outedge(v) \hbox{ and } \pi(e'') \neq \emptyset\},
\end{align*}
and $(e,e'')$ is an edge in $\gamma_v$ if $e\in \pi(e'')$ or $e''$ is a direct split of $e$ at $v$ in $\pi$. 

The conditions for $\widetilde \Gamma$ for being a valid labeled grove are verified inductively on $v=0,1,\dots,n$.
\end{proof}

We can use Theorem \ref{prop.GtoPF} together with the prefix function of Equation \eqref{equation:labels_to_prefixes_groves} to recursively define a function $$P:\{(e,w)\mid e \in \InPerm(w)\text{ for }w\in [0,n]\}\rightarrow \Prefixes(\hatG)$$
by
\begin{equation}\label{equation:labels_to_prefixes_permutation_flows}
    P(e,w)=\begin{cases}
        x&\text{ if }e=x\in X\\
        P(e',v)e&\text{ if } e=(v,w) \text{ is a split of } e' \text{ at }v\\
        P(e,v)e'&\text{ if } e\neq e'=(v,w) \text{ and } e\in \pi(e').
    \end{cases}
\end{equation}
This function can be extended to give the associated set of routes
$$\Routes(\pi):=\{P(e,n)y\mid e\in \pi(y)\}.$$

\begin{remark}
In practice the prefixes of $\pi\in \PermutationFlows(\hatG,\hatF)$ can be constructed using the recursion of Equation \eqref{equation:labels_to_prefixes_permutation_flows} as follows:

For $v\in [0,n]$ and for each letter $l\in \InPerm(v)$, the prefix of $l$ at $v$ is constructed as follows.
First form the path $P_l$ by tracing backward from $v$ and selecting all edges $e$ such that $l\in \pi(e)$.
If $P_l$ is a prefix of $\hatG$, then $P_l$ is the prefix of $l$ at $v$.
Otherwise, suppose the path $P_l$ has the initial edge $e_l$ with $\tail(e_l)=u$.  
Note that $e_l$ is a split in $\pi$ (for otherwise, $l$ would be in the incoming summary permutation at $u$ and $P_l$ could have been further extended backward), so suppose $l$ is a direct split of $l'$ (at its carrier $e'$).  
Recursively, we define the prefix of $l$ at $v$ to be the concatenation of the prefix of $l'$ at $u$ with the path $P_l$.
We also define for $x\in X$, the prefix of $x$ at $v$ to be $x$.
See Example~\ref{eg.recovering_prefixes}.
\end{remark}

\begin{example}\label{eg.recovering_prefixes}
Let $\pi$ be the permutation flow in Figure~\ref{fig:FSM_subpermuflo}.
We will construct the prefix of the letter $4$ at the vertex $3$.
First, tracing the letter $4$ backward from vertex $3$ gives the path $P_4= t_4$.
Its initial edge $t_4$ is a direct split of the letter $3$ at the vertex $1$, so tracing the letter $3$ backward from vertex $1$ in $\pi$ yields the concatenated path $t_3t_4$.
Its initial edge $t_3$ is a direct split of $x_1$ at the vertex $0$, so finally tracing $x_1$ backwards from vertex $0$ yields the concatenated path $x_1t_3t_4$, which is the prefix of the letter $4$ at vertex $3$.
See Figure~\ref{fig:partial_permuflow_route} where the prefix just constructed is highlighted in black.

The reader can verify that $\Routes(\pi) = \Routes(\calV)$, where $\calV$ is the vineyard in Figure~\ref{fig:FSM_subvines}.
It can also be verified that $\Routes(\pi) \subset \Routes(\rho)$ where $\rho$ is the permutation flow in Figure~\ref{fig:FSM_maxpermuflo}.
\end{example}

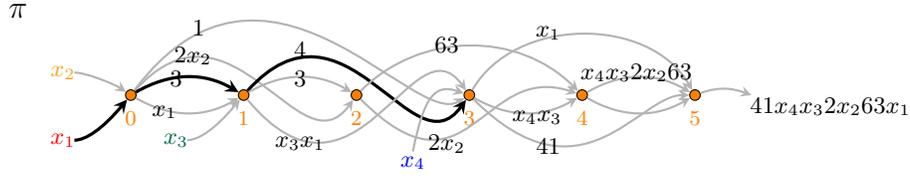
\begin{figure}[ht!]
    \centering\vspace{-.3in}
\begin{tikzpicture}
\begin{scope}[scale=1.5, yscale=1.0]
\node() at (-1,.75) {$\pi$};

\vertex[fill=orange, minimum size=4pt, label=below:{\tiny\textcolor{orange}{$0$}}](v0) at (0,0) {};
\vertex[fill=orange, minimum size=4pt, label=below:{\tiny\textcolor{orange}{$1$}}](v1) at (1,0) {};
\vertex[fill=orange, minimum size=4pt, label=below:{\tiny\textcolor{orange}{$2$}}](v2) at (2,0) {};
\vertex[fill=orange, minimum size=4pt, label=below:{\tiny\textcolor{orange}{$3$}}](v3) at (3,0) {};
\vertex[fill=orange, minimum size=4pt, label=below:{\tiny\textcolor{orange}{$4$}}](v4) at (4,0) {};
\vertex[fill=orange, minimum size=4pt, label=below:{\tiny\textcolor{orange}{$5$}}](v5) at (5,0) {};		

\draw[-stealth, thick, color=black!30, color=black!30] (v0) .. controls (1.2, 1.6) and (2.5, -0.3) .. (v3);
\draw[-stealth, thick, color=black!30] (v0) .. controls (0.9, 1.0) and (1.5, -0.7) .. (v2);
\draw[-stealth, very thick, color=black] (v0) to [out=30,in=150] (v1);
\draw[-stealth, thick, color=black!30] (v0) to [out=-30,in=-150] (v1);

\draw[-stealth, very thick, color=black] (v1) .. controls (1.9, 1.0) and (2.5, -0.7) .. (v3);
\draw[-stealth, thick, color=black!30] (v1) to [out=30,in=150] (v2);
\draw[-stealth, thick, color=black!30] (v1) .. controls (2.0, -1.2) and (2.5, 0.7) .. (v3);	

\draw[-stealth, thick, color=black!30] (v2) to [out=45,in=135] (v4);
\draw[-stealth, thick, color=black!30] (v2) .. controls (3.0, -1.0) and (3.1, 0.3) .. (v4);	

\draw[-stealth, thick, color=black!30] (v3) to [out=60,in=120] (v5);
\draw[-stealth, thick, color=black!30] (v3) to [out=-30,in=-150] (v4);
\draw[-stealth, thick, color=black!30] (v3) .. controls (4.0, -1.0) and (4.5, 0.0) .. (v5);

\draw[-stealth, thick, color=black!30] (v4) to [out=30,in=150] (v5);
\draw[-stealth, thick, color=black!30] (v4) to [out=-30,in=-150] (v5);

\draw[-stealth, thick, color=black!30] (-0.5,0.2) .. controls (-0.4, 0.2) and (-0.15, 0.1) .. (v0);
\draw[-stealth, very thick, color=black] (-0.5, -.4) .. controls (-0.4, -.4) and (-0.25, -.3) .. (v0);
\draw[-stealth, thick, color=black!30] (0.5, -.4) .. controls (0.6, -.4) and (0.75, -.3) .. (v1);
\draw[-stealth, thick, color=black!30] (2.5, -.5) .. controls (2.6, 0) and (2.7, 0.1) .. (v3);
\draw[-stealth, thick, color=black!30] (v5) to [out=20, in=160] (5.5, 0);

\node[] at (-0.6, -0.4){\scriptsize\textcolor{red}{$x_1$}};
\node[] at (-0.6, 0.2){\scriptsize\textcolor{darkyellow}{$x_2$}};
\node[] at (0.4, -0.4){\scriptsize\textcolor{cadmiumgreen}{$x_3$}};
\node[] at (2.5, -0.6){\scriptsize\textcolor{blue}{$x_4$}};
\node[] at (.6, .6){\scriptsize\textcolor{black}{$1$}};
\node[] at (.55, .35){\scriptsize\textcolor{black}{$2x_2$}};
\node[] at (.4, .15){\scriptsize\textcolor{black}{$3$}};
\node[] at (1.5, .4){\scriptsize\textcolor{black}{$4$}};
\node[] at (1.5, .15){\scriptsize\textcolor{black}{$3$}};
\node[] at (2.8, .45){\scriptsize\textcolor{black}{$63$}};
\node[] at (3.7, .55){\scriptsize\textcolor{black}{$x_1$}};
\node[] at (3.6, -0.2){\scriptsize\textcolor{black}{$x_4x_3$}};
\node[] at (4.48, .2){\scriptsize\textcolor{black}{$x_4x_32x_263$}};
\node[] at (.3, -.15){\scriptsize\textcolor{black}{$x_1$}};
\node[] at (1.5, -.43){\scriptsize\textcolor{black}{$x_3x_1$}};
\node[] at (2.8, -.43){\scriptsize\textcolor{black}{$2x_2$}};
\node[] at (3.7, -.45){\scriptsize\textcolor{black}{$41$}};
\node[] at (6.2, -.1){\scriptsize\textcolor{black}{$41x_4x_32x_263x_1$}};
\end{scope}
\end{tikzpicture}
    \vspace{-.3in}
    \caption{Recovering a prefix from a permutation flow.  See Example~\ref{eg.recovering_prefixes}.
     }
    \label{fig:partial_permuflow_route}
\end{figure}

Saturated permutation flows have a number of nice properties. For $\rho\in \SatPermutationFlows$, the first letter of $\pi(t)$ is $t$ for every split $t\in \Splits(\hatG)$.
Moreover, since the first letter of $\OutPerm(v)$ is the first letter of $\InPerm(v)$ for every $v\in [0,n]$, then $\pi(s_v)\neq \emptyset$ for every $v\in [0, n-1]$, and therefore $\supp(\rho) = \hatG$.

Furthermore, the outgoing summary permutation at $n$, which we call the \defn{final summary}~$\zeta$, always has $|X|+d$ letters and is unique for every $\rho\in\SatPermutationFlows(\hatG,\hatF)$, as we prove below. Define $\FinalSummaries(\hatG,\hatF)$ to be the set of these final summaries. 

\begin{proof}[Proof of Theorem \ref{thm:finalsummary}]
It is possible to recover $\rho$ from $\zeta$ by making use of the ordered sets $\inedge(v)$ and $\outedge(v)$. The first step is to identify the first letter of $\rho(e)$ for all $e\in\hatE$ using Definition~\ref{def:permuflow_a}. Condition~(i) implies that $\rho(x)=x$ for all $x\in X$. Condition~(ii) implies that the first letter of $\pi(t)$ is $t$ for all $t\in\Splits(\hatG)$. For every slack $s_v$ where $\inedge(v)=(e_1,\ldots,e_h)$, condition~(iii) implies that the first letter of $\rho(s_v)$ is the first letter of $\rho(e_1)$. Applying this recursive definition from left to right completes the identification of first letters of $\rho$ in the entire graph.

From this you can determine $\rho(e)$ for all $e\in\hatE$ recursively from right to left. From $\OutPerm(v)$, determine the words $\rho(e)$ for $e\in\inedge(v)$ by partitioning the summary $\zeta_v$ into subwords that start with each $e$. 
\end{proof}

\subsubsection{Permutation flow shuffles from grove shuffles}
\label{sec:permuflow_shuffles_from_grove_shuffles}
\phantom{W}

\begin{definition}[Permutation flow shuffle]
A \defn{permutation flow shuffle} of rank $k$ is a pair $(\pi,\sigma_{\pi})$ where $\pi$ is a permutation flow and $\sigma_{\pi}:\Splits(\pi)\rightarrow [k]$ is a surjective function satisfying 
\begin{equation}
\textup{$\sigma_{\pi}(e)<\sigma_{\pi}(e')$ for every pair $e\prec_x e'$ in $\Splits(\pi;x)$ for some $x\in X$.}
\end{equation}
We denote by $\PermutationFlowShuffles(\hatG,\hatF)$ the set of permutation flow shuffles of $(\hatG,\hatF)$. 
\end{definition}

See Figures \ref{fig:FSM_subpermuflo_shuffle} and \ref{fig:FSM_maxpermuflo_shuffle} for two example of permutation flow shuffles.

\begin{remark}
When $\ba=\be_0-\be_n$, there is only one permutation flow shuffle associated to an underlying permutation flow because $\Splits(\pi)$ is totally ordered.    
\end{remark}

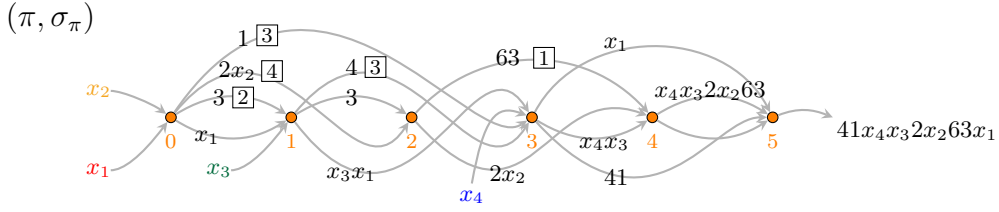
\begin{figure}[ht!]
    \centering\vspace{-.3in}
\begin{tikzpicture}
\begin{scope}[scale=1.6, yscale=1.1]
\node() at (-1,.75) {$(\pi,\sigma_\pi)$};

\vertex[fill=orange, minimum size=4pt, label=below:{\tiny\textcolor{orange}{$0$}}](v0) at (0,0) {};
\vertex[fill=orange, minimum size=4pt, label=below:{\tiny\textcolor{orange}{$1$}}](v1) at (1,0) {};
\vertex[fill=orange, minimum size=4pt, label=below:{\tiny\textcolor{orange}{$2$}}](v2) at (2,0) {};
\vertex[fill=orange, minimum size=4pt, label=below:{\tiny\textcolor{orange}{$3$}}](v3) at (3,0) {};
\vertex[fill=orange, minimum size=4pt, label=below:{\tiny\textcolor{orange}{$4$}}](v4) at (4,0) {};
\vertex[fill=orange, minimum size=4pt, label=below:{\tiny\textcolor{orange}{$5$}}](v5) at (5,0) {};		

\draw[-stealth, thick, color=black!30, color=black!30] (v0) .. controls (1.2, 1.6) and (2.5, -0.3) .. (v3);
\draw[-stealth, thick, color=black!30] (v0) .. controls (0.9, 1.0) and (1.5, -0.7) .. (v2);
\draw[-stealth, thick, color=black!30] (v0) to [out=30,in=150] (v1);
\draw[-stealth, thick, color=black!30] (v0) to [out=-30,in=-150] (v1);

\draw[-stealth, thick, color=black!30] (v1) .. controls (1.9, 1.0) and (2.5, -0.7) .. (v3);
\draw[-stealth, thick, color=black!30] (v1) to [out=30,in=150] (v2);
\draw[-stealth, thick, color=black!30] (v1) .. controls (2.0, -1.2) and (2.5, 0.7) .. (v3);	

\draw[-stealth, thick, color=black!30] (v2) to [out=45,in=135] (v4);
\draw[-stealth, thick, color=black!30] (v2) .. controls (3.0, -1.0) and (3.1, 0.3) .. (v4);	

\draw[-stealth, thick, color=black!30] (v3) to [out=60,in=120] (v5);
\draw[-stealth, thick, color=black!30] (v3) to [out=-30,in=-150] (v4);
\draw[-stealth, thick, color=black!30] (v3) .. controls (4.0, -1.0) and (4.5, 0.0) .. (v5);

\draw[-stealth, thick, color=black!30] (v4) to [out=30,in=150] (v5);
\draw[-stealth, thick, color=black!30] (v4) to [out=-30,in=-150] (v5);

\draw[-stealth, thick, color=black!30] (-0.5,0.2) .. controls (-0.4, 0.2) and (-0.15, 0.1) .. (v0);
\draw[-stealth, thick, color=black!30] (-0.5, -.4) .. controls (-0.4, -.4) and (-0.25, -.3) .. (v0);
\draw[-stealth, thick, color=black!30] (0.5, -.4) .. controls (0.6, -.4) and (0.75, -.3) .. (v1);
\draw[-stealth, thick, color=black!30] (2.5, -.5) .. controls (2.6, 0) and (2.7, 0.1) .. (v3);
\draw[-stealth, thick, color=black!30] (v5) to [out=20, in=160] (5.5, 0);

\node[] at (-0.6, -0.4){\scriptsize\textcolor{red}{$x_1$}};
\node[] at (-0.6, 0.2){\scriptsize\textcolor{darkyellow}{$x_2$}};
\node[] at (0.4, -0.4){\scriptsize\textcolor{cadmiumgreen}{$x_3$}};
\node[] at (2.5, -0.6){\scriptsize\textcolor{blue}{$x_4$}};
\node[] at (.6, .6){\scriptsize\textcolor{black}{$1$}};
    \node[rectangle, fill=white, draw, label=center:{\tiny$3$}] at ($(0,0.62)!0.8!(1,0.62)$){};
\node[] at (.55, .35){\scriptsize\textcolor{black}{$2x_2$}};
    \node[rectangle, fill=white, draw, label=center:{\tiny$4$}] at ($(0,0.35)!0.42!(2,0.35)$){};
\node[] at (.4, .15){\scriptsize\textcolor{black}{$3$}};
    \node[rectangle, fill=white, draw, label=center:{\tiny$2$}] at ($(0,0.15)!0.6!(1,0.15)$){};
\node[] at (1.5, .38){\scriptsize\textcolor{black}{$4$}};
    \node[rectangle, fill=white, draw, label=center:{\tiny$3$}] at ($(1,0.38)!0.7!(2,0.38)$){};
\node[] at (1.5, .15){\scriptsize\textcolor{black}{$3$}};
\node[] at (2.8, .45){\scriptsize\textcolor{black}{$63$}};
    \node[rectangle, fill=white, draw, label=center:{\tiny$1$}] at ($(2,0.45)!0.55!(4,0.45)$){};
\node[] at (3.7, .55){\scriptsize\textcolor{black}{$x_1$}};
\node[] at (3.6, -0.2){\scriptsize\textcolor{black}{$x_4x_3$}};
\node[] at (4.48, .2){\scriptsize\textcolor{black}{$x_4x_32x_263$}};
\node[] at (.3, -.15){\scriptsize\textcolor{black}{$x_1$}};
\node[] at (1.5, -.43){\scriptsize\textcolor{black}{$x_3x_1$}};
\node[] at (2.8, -.43){\scriptsize\textcolor{black}{$2x_2$}};
\node[] at (3.7, -.45){\scriptsize\textcolor{black}{$41$}};
\node[] at (6.2, -.1){\scriptsize\textcolor{black}{$41x_4x_32x_263x_1$}};
\end{scope}
\end{tikzpicture}
    \vspace{-.3in}
    \caption{A permutation flow shuffle $(\pi,\sigma_\pi)$ corresponding with the grove shuffle $(\Gamma, \sigma_\Gamma)$ in Figure~\ref{fig:partial_grove_shuffle}.
The numbers in the boxes define $\sigma_\pi$ on $\Splits(\pi)$.
     }
    \label{fig:FSM_subpermuflo_shuffle}
\end{figure}

\begin{figure}[ht!]
    \centering\vspace{-.3in}
\begin{tikzpicture}
\begin{scope}[scale=1.6, yscale=1.1]
\node() at (-1,.75) {$(\rho,\sigma_\rho)$};

\vertex[fill=orange, minimum size=4pt, label=below:{\tiny\textcolor{orange}{$0$}}](v0) at (0,0) {};
\vertex[fill=orange, minimum size=4pt, label=below:{\tiny\textcolor{orange}{$1$}}](v1) at (1,0) {};
\vertex[fill=orange, minimum size=4pt, label=below:{\tiny\textcolor{orange}{$2$}}](v2) at (2,0) {};
\vertex[fill=orange, minimum size=4pt, label=below:{\tiny\textcolor{orange}{$3$}}](v3) at (3,0) {};
\vertex[fill=orange, minimum size=4pt, label=below:{\tiny\textcolor{orange}{$4$}}](v4) at (4,0) {};
\vertex[fill=orange, minimum size=4pt, label=below:{\tiny\textcolor{orange}{$5$}}](v5) at (5,0) {};		

\draw[-stealth, thick, color=black!30, color=black!30] (v0) .. controls (1.2, 1.6) and (2.5, -0.3) .. (v3);
\draw[-stealth, thick, color=black!30] (v0) .. controls (0.9, 1.0) and (1.5, -0.7) .. (v2);
\draw[-stealth, thick, color=black!30] (v0) to [out=30,in=150] (v1);
\draw[-stealth, thick, color=black!30] (v0) to [out=-30,in=-150] (v1);

\draw[-stealth, thick, color=black!30] (v1) .. controls (1.9, 1.0) and (2.5, -0.7) .. (v3);
\draw[-stealth, thick, color=black!30] (v1) to [out=30,in=150] (v2);
\draw[-stealth, thick, color=black!30] (v1) .. controls (2.0, -1.2) and (2.5, 0.7) .. (v3);	

\draw[-stealth, thick, color=black!30] (v2) to [out=45,in=135] (v4);
\draw[-stealth, thick, color=black!30] (v2) .. controls (3.0, -1.0) and (3.1, 0.3) .. (v4);	

\draw[-stealth, thick, color=black!30] (v3) to [out=60,in=120] (v5);
\draw[-stealth, thick, color=black!30] (v3) to [out=-30,in=-150] (v4);
\draw[-stealth, thick, color=black!30] (v3) .. controls (4.0, -1.0) and (4.5, 0.0) .. (v5);

\draw[-stealth, thick, color=black!30] (v4) to [out=30,in=150] (v5);
\draw[-stealth, thick, color=black!30] (v4) to [out=-30,in=-150] (v5);

\draw[-stealth, thick, color=black!30] (-0.5,0.2) .. controls (-0.4, 0.2) and (-0.15, 0.1) .. (v0);
\draw[-stealth, thick, color=black!30] (-0.5, -.4) .. controls (-0.4, -.4) and (-0.25, -.3) .. (v0);
\draw[-stealth, thick, color=black!30] (0.5, -.4) .. controls (0.6, -.4) and (0.75, -.3) .. (v1);
\draw[-stealth, thick, color=black!30] (2.5, -.5) .. controls (2.6, 0) and (2.7, 0.1) .. (v3);
\draw[-stealth, thick, color=black!30] (v5) to [out=20, in=160] (5.5, 0);

\node[] at (-0.6, -0.4){\scriptsize\textcolor{red}{$x_1$}};
\node[] at (-0.6, 0.2){\scriptsize\textcolor{darkyellow}{$x_2$}};
\node[] at (0.4, -0.4){\scriptsize\textcolor{cadmiumgreen}{$x_3$}};
\node[] at (2.5, -0.6){\scriptsize\textcolor{blue}{$x_4$}};
\node[] at (.6, .6){\scriptsize\textcolor{black}{$1$}};
    \node[rectangle, fill=white, draw, label=center:{\tiny$6$}] at ($(0,0.62)!0.8!(1,0.62)$){};
\node[] at (.55, .35){\scriptsize\textcolor{black}{$2x_2$}};
    \node[rectangle, fill=white, draw, label=center:{\tiny$8$}] at ($(0,0.35)!0.42!(2,0.35)$){};
\node[] at (.4, .15){\scriptsize\textcolor{black}{$3$}};
    \node[rectangle, fill=white, draw, label=center:{\tiny$5$}] at ($(0,0.15)!0.6!(1,0.15)$){};
\node[] at (1.5, .4){\scriptsize\textcolor{black}{$4$}};
    \node[rectangle, fill=white, draw, label=center:{\tiny$7$}] at ($(1,0.38)!0.7!(2,0.38)$){};
\node[] at (1.5, .15){\scriptsize\textcolor{black}{$53$}};
    \node[rectangle, fill=white, draw, label=center:{\tiny$4$}] at ($(1,0.15)!0.75!(2,0.15)$){};
\node[] at (2.8, .45){\scriptsize\textcolor{black}{$653$}};
    \node[rectangle, fill=white, draw, label=center:{\tiny$3$}] at ($(2,0.45)!0.55!(4,0.45)$){};
\node[] at (3.7, .55){\scriptsize\textcolor{black}{$7$}};
    \node[rectangle, fill=white, draw, label=center:{\tiny$1$}] at ($(3,0.55)!0.45!(5,0.55)$){};
\node[] at (3.42, -0.2){\scriptsize\textcolor{black}{$8x_4x_3x_1$}};
    \node[rectangle, fill=white, draw, label=center:{\tiny$9$}] at ($(3,-.2)!0.85!(4,-.2)$){};
\node[] at (4.48, .2){\scriptsize\textcolor{black}{$9x_3x_12x_2653$}};
    \node[rectangle, fill=white, draw, label=center:{\tiny$2$}] at ($(5,0.22)!0.33!(5.5,0.22)$){};
\node[] at (.3, -.15){\scriptsize\textcolor{black}{$x_1$}};
\node[] at (1.5, -.43){\scriptsize\textcolor{black}{$x_3x_1$}};
\node[] at (2.8, -.43){\scriptsize\textcolor{black}{$2x_2$}};
\node[] at (3.7, -.45){\scriptsize\textcolor{black}{$41$}};
\node[] at (4.3, -.1){\scriptsize\textcolor{black}{$8x_4$}};
\node[] at (6.3, -.1){\scriptsize\textcolor{black}{$8x_4419x_3x_12x_26537$}};
\end{scope}
\end{tikzpicture}
    \vspace{-.3in}
    \caption{A permutation flow shuffle $(\rho,\sigma_\rho)$ corresponding with the grove shuffle $(\Theta, \sigma_\Theta)$ in Figure~\ref{fig:max_grove_shuffle}.
The numbers in the boxes define $\sigma_\rho$ on $\Splits(\rho)$.
     }
    \label{fig:FSM_maxpermuflo_shuffle}
\end{figure}
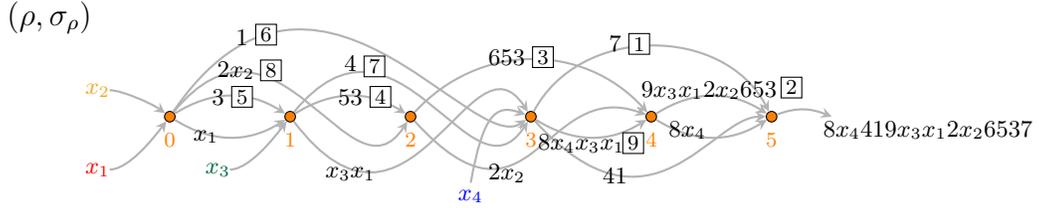

For $S\in \Splits(\pi;x)$, let $\succSplit_\pi(S)$ be the succesor $S^{i+1}$ of a split $S^i\in \Splits(\pi;x)$, with the understanding that if $S$ is the last split in $\Splits(\pi;x)$, then $\sigma_\pi(\succSplit_{\pi}(S)) := \rank(\pi)+1$ for notational convenience.
\begin{definition}
We define $(\pi, \sigma_\pi) \leq (\rho, \sigma_\rho)$ on $\PermutationFlowShuffles(\hatG,\hatF)$ whenever
\begin{itemize}
\item $\pi \subseteq \rho$ and
\item  for splits $S \in \Splits(\pi;x)$ and $S' \in \Splits(\rho;x')$, 
\[\hbox{if } \sigma_\pi(S) \leq 
\sigma_\pi(S') \leq \sigma_\pi(\succSplit_\pi(S))-1, \hbox{ then }
\sigma_\rho(S) \leq \sigma_\rho(S') 
\leq \sigma_\rho(\succSplit_\rho(S))-1.
\]
\end{itemize}
\end{definition}

We denote by $\SatPermutationFlowShuffles(\hatG,\hatF)$ the set of saturated permutation flow shuffles with respect to this partial order.

We can encode a grove shuffle $(\Gamma,\sigma_{\Gamma})$ using a a permutation flow shuffle $(\pi(\Gamma),\sigma_{\pi(\Gamma)})$ where  $\sigma_{\pi(\Gamma)}(t)=\sigma_{\Gamma}((P,t))$ for every $(P,t)\in \Splits(\Gamma)$.

\begin{theorem}
The map
\[(\pi,\textup{id}) : \GroveShuffles( \hatG, \hatF) \rightarrow \PermutationFlowShuffles( \hatG, \hatF )\]
is a poset isomorphism.    
\end{theorem}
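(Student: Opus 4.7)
The plan is to build the proof on top of the already-established poset isomorphism $\pi:\Groves(\hatG,\hatF)\to\PermutationFlows(\hatG,\hatF)$ from Proposition~\ref{prop.GtoPF}, so that only the shuffle data must be matched. First, I will establish the underlying bijection on splits. Given $\Gamma\in\Groves(\hatG,\hatF)$, I want to show that the map $\Splits(\Gamma)\to\Splits(\pi(\Gamma))$ sending a grove split $(P,t)$ to its terminal edge $t$ is an order-preserving bijection between the totally ordered chains $\Splits(\Gamma;x)$ and $\Splits(\pi(\Gamma);x)$ for each $x\in X$. Injectivity is exactly Lemma~\ref{lemma:proj_is_injective} (via the vineyard identification of Proposition~\ref{proposition:VineyardstoGroves}). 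For the reverse direction, the construction $\Gamma\mapsto \pi(\Gamma)$ places the letter $t$ as the first letter of $\pi(\Gamma)(t)$ precisely when $(P_0,t)$ is \emph{not} the minimal continuation at some $P\in L(\gamma_{\tail(t)})$, i.e.\ precisely when $(P,t)\in\Splits(\Gamma)$; this shows surjectivity. Order preservation within each chain follows by unwinding both orderings from the framing $\hatF$ and tracing how the natural labeling $\lambda$ propagates from $\gamma_v$ into the word $\pi(\Gamma)(e)$.

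With this chain-by-chain bijection $\Splits(\Gamma;x)\cong\Splits(\pi(\Gamma);x)$ in hand, defining
\[
\sigma_{\pi(\Gamma)}(t):=\sigma_\Gamma((P,t))
\]
yields a surjection $\Splits(\pi(\Gamma))\to[\rank(\Gamma)]$ whose restriction to each $\Splits(\pi(\Gamma);x)$ is strictly increasing, so the pair $(\pi(\Gamma),\sigma_{\pi(\Gamma)})$ is indeed a permutation flow shuffle. Bijectivity of $(\pi,\mathrm{id})$ on the level of shuffles is then immediate from the bijectivity of $\pi$ and the identification of the split sets: an inverse is obtained from $\Gamma:\PermutationFlows(\hatG,\hatF)\to\Groves(\hatG,\hatF)$ together with $\sigma_\Gamma((P,t)):=\sigma_\pi(t)$.

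The remaining content is to verify that $(\pi,\mathrm{id})$ is a poset map in both directions. One direction unpacks as follows: if $(\Gamma,\sigma_\Gamma)\le(\Theta,\sigma_\Theta)$, then $\Gamma\subseteq\Theta$ gives $\pi(\Gamma)\subseteq\pi(\Theta)$ by Proposition~\ref{prop.GtoPF}, and the compatibility condition on $\sigma_\Gamma,\sigma_\Theta$ translates directly to the compatibility condition on $\sigma_{\pi(\Gamma)},\sigma_{\pi(\Theta)}$ once one identifies $\next_\Gamma$ and $\next_\Theta$ (successors in the ordered chain $\Splits(\Gamma;x)$ and $\Splits(\Theta;x)$) with $\succSplit_{\pi(\Gamma)}$ and $\succSplit_{\pi(\Theta)}$ via the bijection above. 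The converse direction is symmetric.

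The main obstacle I anticipate is the careful bookkeeping required to check that $\next$ on the grove side is carried to $\succSplit$ on the permutation flow side by the bijection $(P,t)\mapsto t$. In the grove, $\next_\Gamma$ is defined via the recursive split decomposition in Definition~\ref{def:grove_splits} using the minimal continuations in each $I(P)$; in the permutation flow, $\succSplit_\pi$ is defined via the recursion in Equation~\eqref{def:splitsofe}, which sweeps down through next carriers and sibling direct splits. These two recursions must be shown to produce the same linear order on $\Splits(\cdot;x)$, and this is essentially a structural induction on $v\in[0,n]$: at each vertex one checks that the interleaving prescribed by the noncrossing incident edges in $\gamma_v$ matches the interleaving prescribed by the unshuffle condition (ii) of Definition~\ref{def:permuflow_a}. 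Once this compatibility is in place, both the order-preservation and the full poset isomorphism follow formally.
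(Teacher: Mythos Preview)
Your approach is correct and matches the paper's intended argument: the paper states this theorem without an explicit proof, but the sentence immediately preceding it gives exactly your shuffle transfer $\sigma_{\pi(\Gamma)}(t)=\sigma_\Gamma((P,t))$, and the structure mirrors the brief proof of Theorem~\ref{thm:VtoG} (the vineyard-to-grove analogue), which likewise reduces everything to the underlying poset isomorphism plus an order-preserving identification of split sets. Your plan simply fills in the details of that identification and the compatibility of $\next_\Gamma$ with $\succSplit_\pi$, which is the right thing to check.
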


In Figures \ref{fig:FSM_subpermuflo_shuffle} and \ref{fig:FSM_maxpermuflo_shuffle} we have the permutation flow shuffles which are images of the grove shuffles in Figures \ref{fig:partial_grove_shuffle} and \ref{fig:max_grove_shuffle}.

In Figure \ref{fig:permutation_flow_shuffles_PS_111} we list the $16$ permutation flow shuffles corresponding to $PS_3(1,1,1,-3)$ with a the given planar framing.

\begin{figure}
    \centering
\includegraphics{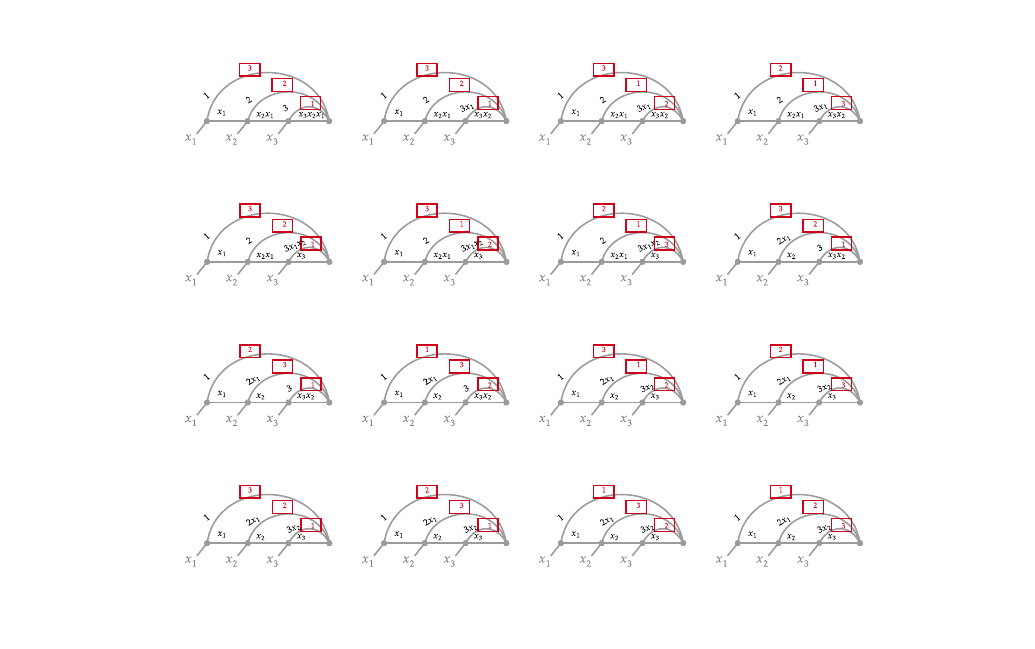}   
    \caption{Permutation flow shuffles for $PS_3$ with $\ba=(1,1,1,-3)$ and the given planar framing}
    \label{fig:permutation_flow_shuffles_PS_111}
\end{figure}

\clearpage 
\section{Permutation flow triangulations on a positive netflow}
\label{sec:triangulation_flow_polytopes}

In this section we prove Theorem \ref{theorem:triangulation} which gives a unimodular triangulation of the flow polytope $\calF_G(\ba)$.
This triangulation generalizes the triangulation of Danilov, Karzanov, and Koshevoy in \cite{DanilovKarzanovKoshevoy2012} for the special case with netflow $\ba=\be_0- \be_n$, and hence as a special case we provide a new proof of the triangulation in \cite{DanilovKarzanovKoshevoy2012}. 

\subsection{The triangulation}
Recall from Proposition \ref{proposition:matchings_are_integer_flows} 
that the vertices of $\calF_G(\ba)$ are in correspondence with route matchings. Recall that for a route matching $\calP \in \RouteMatchings(\hatG,\hatF)$ we denote by $\bz(\calP)\in \calF_G(\ba)\cap \bbZ^E$ its indicator flow, which provides the desired bijection.

For a set $\calC \subseteq \RouteMatchings(\hatG,\hatF)$ denote by $\triangle_{\calC}$ the convex hull
\[\triangle_{\calC}:=\conv\{\bz(\calP)\mid \calP \in \calC\}.
\]

We will show that the set formed by the $\triangle_\calC$ for $\calC\in\SatCliques(\hatG,\hatF)$ defines a triangulation of $\calF_G(\ba)$. The proof will show that every point $\bw\in \calF_G(\ba)$ can be represented uniquely as a positive convex combination of indicator flows
\begin{equation}\label{eq:convex_combination_clique}
\bw = \alpha_0 \bz(\calP^0)+\dots + \alpha_k\bz(\calP^k),\end{equation}
for a clique $\calC=\{\calP^0,\dots,\calP^k\}$ and positive coefficients satisfying $\alpha_0+\dots + \alpha_k=1$.  

Suppose that $\bw$, $\calC$, and $\alpha_i$ are as in Equation~\eqref{eq:convex_combination_clique} and let $\Gamma$ be the grove that corresponds to $\calC$. Viewing $\bw$ as a function from edges $e\in E(G)$ to $\bbR$, we now extend $\bw$ to a function from prefixes $P\in\Prefixes(\hatG)$ to $\bbR$ by defining it on vertices and edges of $\Gamma$ as follows.

In every forest $\gamma_v\in\Gamma$, we will define $\bw(P)$, $\bw(e)$, and $\bw((P,e))$. For right vertices $e\ne y$, $\bw(e)$ is inherited directly from $\bw$ and we define $\bw(y)=|a_n|$. For a prefix $P$, define \[\bw(P):=\sum_{i=0}^k \alpha_i\delta ( \calP^i\cap\Routes(\Gamma;P)\neq \emptyset),
\]
where $\Routes(\Gamma;P)$ is the set of routes in $\Gamma$ which contain the prefix $P$.
Lastly for a grove edge $(P,e)$, define $\bw((P,e))=\bw(Pe)$. 

Setting $e=y$, we obtain the following definition of $\bw(R)$ whenever $R$ is 
a route in any of the route matchings $\calP^i$ for $i\in [0,k]$,
\begin{align}\label{eqn:bw_on_routes}
\bw(R)=\sum_{i=0}^{k}\alpha_i\delta(R\in \calP^i).   
\end{align}

Recall that given a grove $\Gamma=\{\gamma_v\}_{v\in [0,d]}$, if $P$ is a left vertex and $e$ is a right vertex of a bipartite noncrossing tree $\gamma_v$, then $I(P)$ is the set of edges in $\gamma_v$ incident with $P$ and $I(e)$ is the set of edges in $\gamma_v$ incident with $e$.
We have the following two lemmas.

\begin{lemma}\label{lemma:defining_equations}
The extension of $\bw$ to $\Gamma$ satisfies the defining equations
\begin{align}
    \bw(x)
        &=1 
        &\text{for all } x\in X \label{eqn:defining1}\\
    \bw(P)
        &=\sum_{(P,e)\in I(P)}\bw(P,e) 
        &\text{for all }P\in \Prefixes(\Gamma)\label{eqn:defining2} \\
    \bw(e)
        &=\sum_{(P,e)\in I(e)}\bw(P,e) 
        &\text{for all }e\in E(\Gamma)\cup Y\label{eqn:defining3}\\
    \bw((P,e))
        &=\bw(Pe) 
        &\text{for all }(P,e) \in E(\Gamma)\textup{ with }e\neq y.\label{eqn:defining4}
\end{align}
\end{lemma}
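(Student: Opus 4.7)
The four equations are all essentially bookkeeping identities, so the plan is to unpack the definitions and match terms; the only genuinely structural input is that the grove $\Gamma$ records, at each vertex, all the prefixes of all the routes appearing in all the route matchings of $\calC$. Equation \eqref{eqn:defining4} is immediate, since $\bw((P,e))=\bw(Pe)$ is how the extension was defined on grove edges in the first place.

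For \eqref{eqn:defining1}, I would observe that by Definition~\ref{def:route_matchings} each route matching $\calP^i$ contains exactly one route starting at each inflow half-edge $x\in X$, so $\calP^i\cap \Routes(\Gamma;x)\neq\emptyset$ for every $i\in[0,k]$. Then
\[
\bw(x)=\sum_{i=0}^{k}\alpha_i\,\delta(\calP^i\cap\Routes(\Gamma;x)\neq\emptyset)=\sum_{i=0}^{k}\alpha_i=1.
\]

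For \eqref{eqn:defining2}, the key point is that for a fixed prefix $P\in L(\gamma_v)$ the set $\Routes(\Gamma;P)$ of routes in $\Gamma$ that extend $P$ decomposes as the disjoint union $\bigsqcup_{(P,e)\in I(P)}\Routes(\Gamma;Pe)$, since every route extending $P$ uses exactly one edge $e$ out of $v$ (Definition~\ref{def:grove}(c)). Because each $\calP^i$ contains at most one route starting at $\initial(P)$, the indicator $\delta(\calP^i\cap \Routes(\Gamma;P)\neq\emptyset)$ equals $\sum_{(P,e)\in I(P)}\delta(\calP^i\cap \Routes(\Gamma;Pe)\neq\emptyset)$. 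Multiplying by $\alpha_i$ and summing over $i$ gives $\bw(P)=\sum_{(P,e)\in I(P)}\bw(Pe)=\sum_{(P,e)\in I(P)}\bw((P,e))$.

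The last identity \eqref{eqn:defining3} is where the structural input is used. For $e=(v,w)\in E$, expand $\bw(e)=\sum_i\alpha_i\bz(\calP^i)(e)=\sum_i\alpha_i\bigl|\{R\in\calP^i : e\in R\}\bigr|$. Since $\calC$ corresponds to the grove $\Gamma$, every route $R\in \calP^i$ has all its prefixes in $\Prefixes(\Gamma)$; so each $R\in\calP^i$ that contains $e$ determines a unique prefix $Pe$ of $R$ ending at $w$ with $(P,e)\in E(\gamma_v)$, hence $(P,e)\in I(e)$. Reindexing the count of routes by this prefix turns the right-hand side of \eqref{eqn:defining3} into $\sum_{(P,e)\in I(e)}\sum_i\alpha_i\,\delta(\calP^i\cap\Routes(\Gamma;Pe)\neq\emptyset)=\sum_{(P,e)\in I(e)}\bw(Pe)$, which equals $\bw(e)$. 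The case $e=y$ is handled the same way, using $|\calP^i|=|X|=|a_n|=\bw(y)$ and the fact that $R\in\calP^i$ corresponds bijectively to the prefix $Py\in L(\gamma_n)$ obtained by appending $y$ to $R$. The main (and only) obstacle is keeping the bookkeeping between routes, prefixes, and grove edges straight; I would handle this by doing the internal-edge case and the $e=y$ case separately to avoid confusing the two boundary conventions.
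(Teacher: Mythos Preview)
Your proof is correct and follows essentially the same route as the paper: both verify \eqref{eqn:defining1}--\eqref{eqn:defining3} by unpacking the indicator-sum definitions and using, respectively, that each $\calP^i$ contains a unique route at each $x$, the disjoint decomposition $\Routes(\Gamma;P)=\bigsqcup_{(P,e)\in I(P)}\Routes(\Gamma;Pe)$, and the expansion $\bw(e)=\sum_i\alpha_i|\{R\in\calP^i:e\in R\}|$ reindexed by prefixes. The only addition in the paper is a final sentence noting that these equations are \emph{defining}, i.e., they determine the extension uniquely by induction on $v=0,\ldots,n$; you may want to add that remark, since it is used in the uniqueness half of Proposition~\ref{proposition:triangulation}.
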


\begin{proof}
For all $x\in X$, there exists exactly one route in a route matching that extends $x$, so
\[\bw(x)=\sum_{i=0}^k \alpha_i\delta ( \calP^i\cap\Extensions(x)\neq \emptyset)=\sum_{i=0}^k \alpha_i=1.\]
Furthermore, for all prefixes $P$, 
\begin{align*}
    \bw(P) &=\sum_{i=0}^k \alpha_i\delta ( \calP^i\cap\Routes(\Gamma;P)\neq \emptyset)\\    
    &=\sum_{i=0}^k \alpha_i\delta\left( \calP^i\cap\bigsqcup_{\substack{(P,e)\in I(P)}}\Extensions(Pe)\right)\\    
    &=\sum_{i=0}^k\alpha_i\sum_{(P,e)\in I(P)} \delta(\calP^i \cap \Extensions(Pe)) \\
    &=\sum_{(P,e)\in I(P)}\sum_{i=0}^k \alpha_i\delta(\calP^i \cap \Extensions(Pe)) \\
    &=\sum_{(P,e)\in I(P)}\bw(P,e).
\end{align*}
Finally, from Equation~\eqref{eq:convex_combination_clique},
\begin{align*}
    \bw(e)&=\sum_{i=0}^k\alpha_i|\{R\in \calP^i\mid e \in R\}|\\
    &=\sum_{i=0}^k\alpha_i|\{Q\in \Prefixes(\Gamma) \mid \terminal(Q)=e \text{ and } R\in \calP^i \cap \Extensions(Q) \}|\\
    &=\sum_{Pe \in \Prefixes(\Gamma)}\sum_{i=0}^k\alpha_i\delta(\calP^i\cap \Extensions(Pe)\neq \emptyset)\\
    &=\sum_{(P,e)\in I(e)}\bw(Pe)\\
     &=\sum_{(P,e)\in I(e)}\bw(P,e).
\end{align*}
That these equations are defining follows from using this set of equations inductively from vertex $v=0$ to $v=n$ of $\hatG$. 
\end{proof}

\begin{lemma} \label{lemma:coefficients}
We have that 
\begin{gather*}
\alpha_0=\min\left(\left\{\bw\left(P_x^0\right) \mid P_x^0 \in \calP^0\right\}\right)\label{eqn:defining5}\\
P^i_x\neq P_x^ {i+1} \text{ if and only if } \sum_{\substack{R_x\in \calP^j_x\\ \text{ for some }j\le i}}\bw(R_x)=\sum_{j=0}^{i}\alpha_j \label{eqn:defining6}
\end{gather*}
\end{lemma}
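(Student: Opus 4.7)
The plan is to prove both formulas by direct computation using the standard form of the clique together with Equation \eqref{eqn:bw_on_routes}. Write the clique in standard form $\calC = \{\calP^0, \ldots, \calP^k\}$, so that for each $x \in X$, the multiclique row $\calM_x = \{M_x^0, \ldots, M_x^k\}$ satisfies $M_x^i \preceq_x M_x^j$ whenever $i < j$. By Equation \eqref{eqn:bw_on_routes}, for any route $R_x$ starting at $x$,
\[
\bw(R_x) \;=\; \sum_{i=0}^{k} \alpha_i \, \delta(M_x^i = R_x),
\]
so $\bw(R_x)$ equals the sum of the coefficients $\alpha_i$ over those columns $i$ in which $R_x$ appears in row $x$.

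For the first identity, I would apply this to $R_x = P_x^0 = M_x^0$. The non-decreasing order along row $x$ forces $M_x^0 = M_x^1 = \cdots = M_x^{j-1}$ for some $j \geq 1$, hence $\bw(P_x^0) = \alpha_0 + \cdots + \alpha_{j-1} \geq \alpha_0$. To see that equality is attained for some $x$, I would invoke condition (c) of Definition \ref{def:multicliques} applied to $i = 1$: there exists $x \in X$ with $M_x^0 \neq M_x^1$, and for this $x$ the run of repetitions has length $j = 1$, giving $\bw(P_x^0) = \alpha_0$. This yields the claimed minimum.

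For the second identity, fix $i$ and $x$. The set of distinct routes appearing in $\calM_x$ up to column $i$ is $\{P_x^0, \ldots, P_x^i\}$ with duplicates collapsed, and each such distinct route $R_x$ contributes $\bw(R_x) = \sum_{\ell: M_x^\ell = R_x} \alpha_\ell$. Summing over these distinct routes and interchanging the order of summation,
\[
\sum_{\substack{R_x \in \calP_x^j \\ \text{for some } j \leq i}} \bw(R_x) \;=\; \sum_{\ell : M_x^\ell \in \{P_x^0,\ldots,P_x^i\}} \alpha_\ell.
\]
The index set on the right always contains $\{0, 1, \ldots, i\}$. By the monotonicity of the standard form, $M_x^\ell$ lies in $\{P_x^0, \ldots, P_x^i\}$ for some $\ell > i$ if and only if $M_x^{i+1} = P_x^{i+1}$ coincides with $P_x^i$; that is, precisely when $P_x^i = P_x^{i+1}$. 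Thus the sum equals $\alpha_0 + \cdots + \alpha_i$ exactly when $P_x^i \neq P_x^{i+1}$, establishing the equivalence.

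The main technical point to handle carefully is the direction of the equivalence in the second identity: one must verify that the ordering in standard form is strict across positions $\ell > i$ once the first discrepancy past $i$ occurs, which again reduces to the non-decreasing property of $\prec_x$ along each row together with condition (c). No deeper machinery beyond Equation \eqref{eqn:bw_on_routes} and the definitions from Section \ref{sec:multicliques} appears to be required.
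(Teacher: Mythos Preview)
Your proposal is correct and takes essentially the same approach as the paper, which proves the lemma in a single sentence: ``These follow directly from Equations \eqref{eq:convex_combination_clique} and \eqref{eqn:bw_on_routes}.'' You have simply unpacked that sentence, using the multiclique standard form and condition~(c) of Definition~\ref{def:multicliques} exactly where they are needed. One minor remark: in your closing paragraph, condition~(c) is not actually required for the second identity---the equivalence there follows from monotonicity of each row together with the positivity of the coefficients $\alpha_\ell$ (so that the extra term $\alpha_{i+1}$ genuinely increases the sum when $P_x^i = P_x^{i+1}$); condition~(c) is only needed in the first identity to witness an $x$ achieving the minimum.
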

\begin{proof}
    These follow directly from Equations \eqref{eq:convex_combination_clique} and \eqref{eqn:bw_on_routes}.
\end{proof}

\begin{proposition}\label{proposition:triangulation}
	Let $(\hatG,\hatF)$ be a framed $\ba$-augmented graph of $G$. The set
    $$\triangle(\hatG,\hatF)=\{\triangle_{\calC}\mid \calC \in \SatCliques(\hatG,\hatF)\}$$
    is a triangulation of $\calF_G(\ba)$.
\end{proposition}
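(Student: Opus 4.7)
The plan is to show that every point $\bw\in \calF_G(\ba)$ admits a unique representation as a positive convex combination of indicator flows supported on some saturated clique. Coverage and pairwise disjointness of relative interiors will then yield that $\triangle(\hatG,\hatF)$ is a triangulation.

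For existence, given $\bw$ I would construct a grove $\Gamma$ inductively across vertices $v=0,\ldots,n$ together with auxiliary weights $\bw(P)$ on prefixes and $\bw((P,e))$ on grove-edges satisfying the defining equations of Lemma~\ref{lemma:defining_equations}. At each vertex, having the prefix weights $\bw(P)$ for $P\in L(\gamma_v)$ and the outgoing weights $\bw(e)$ for $e\in\hat{\outedge}(v)$ (which sum to a common total by flow conservation), the bipartite forest $\gamma_v$ is built by the greedy ``northwest-corner'' allocation: traverse the left vertices and right vertices in their respective framing orders and successively fill each grove-edge with the minimum of the two remaining weights, advancing on whichever side is first exhausted. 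The result is automatically noncrossing, and the weights pass to the prefixes $Pe$ via $\bw(Pe):=\bw((P,e))$ to drive the induction. Via the bijective correspondences of Section~\ref{sec:combinatorial_families_on_augmented_graphs} the resulting grove yields a clique $\calC$ and positive coefficients $\alpha_i$ extracted from the cut points of the values $\bw(P)$ using Lemma~\ref{lemma:coefficients}, and Lemma~\ref{lemma:max_grove_bijection} shows that any saturation of $\Gamma$ produces a saturated clique $\calC'\supseteq \calC$ with $\bw\in\triangle_{\calC'}$.

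For uniqueness, suppose that $\bw=\sum_i\alpha_i\bz(\calP^i)$ for some $\calC=\{\calP^0,\ldots,\calP^k\}\in\SatCliques(\hatG,\hatF)$ with positive $\alpha_i$. Lemmas~\ref{lemma:defining_equations} and~\ref{lemma:coefficients} force the values $\bw(P)$ and $\bw((P,e))$ on the associated grove $\Gamma$ to be determined entirely by $\bw$. The noncrossing constraint together with the prescribed marginals admits only the greedy allocation as a nonnegative solution, since any two edges $(P,e)$ and $(P',e')$ with $P\prec P'$ and $e\succ e'$ both carrying positive weight would directly contradict the noncrossing property of $\gamma_v$. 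Hence $\gamma_v$, and by induction $\Gamma$, is uniquely determined by $\bw$, and the $\alpha_i$ are forced as the gaps between successive levels of the prefix weights. When $\bw$ lies on a face of $\triangle_\calC$, the same argument identifies a minimal subclique $\calE\subseteq\calC$ with $\bw\in\triangle_\calE$, and $\triangle_\calE$ is a common face of any two top-dimensional simplices containing $\bw$, giving the face-intersection property.

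The hard part will be the rigidity claim underlying uniqueness: that the greedy allocation is the unique nonnegative edge-weighting of a noncrossing bipartite forest with prescribed row and column sums. This is a planar-transportation fact provable by a direct exchange argument on any hypothetical pair of crossing positive edges, but one must carefully propagate the resulting forced choices from vertex to vertex, since the left-vertex weights at $v'$ are themselves determined by the edge-weights at earlier vertices via $\bw(Pe)=\bw((P,e))$. Once this rigidity is established, existence and uniqueness together give the triangulation property.
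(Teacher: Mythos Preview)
Your proposal is correct and follows essentially the same approach as the paper. The paper phrases the greedy allocation at each vertex via overlapping intervals (connecting $P_i$ to $e_j'$ when the corresponding partial-sum intervals have positive-measure intersection, with weight equal to that measure), which is exactly your northwest-corner rule, and it invokes the same rigidity claim---that noncrossing plus prescribed marginals forces a unique nonnegative edge-weighting---to obtain uniqueness.
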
 

\begin{proof}
Starting with a point $\bw=(\bw(e))_{e\in E}\in \calF_G(\ba)$ we build a sequence $\Gamma=(\gamma_0,\hdots,\gamma_n)$ of non-crossing bipartite forests. In each forest $\gamma_i$, all left vertices $P$, right vertices $e$, and edges $(P,e)$ will be given a weight $\bw(P)$, $\bw(e)$, or $\bw(P,e)$ satisfying the defining equations \eqref{eqn:defining1}, \eqref{eqn:defining2}, \eqref{eqn:defining3}, and \eqref{eqn:defining4} of Lemma \ref{lemma:defining_equations}. 
    
Start at vertex $v=0$; suppose there are $a_0$ inflow half-edges to $v$, $\hat{\inedge}(v)=(x_1,\ldots,x_{a_0})$, and $l$ outgoing edges from $v$, $\hat{\outedge}(v)=\{e_1',\ldots,e_l'\}$, both ordered by $\hatF$. We will create a non-crossing bipartite forest $\gamma_0$ assuming that to each $x_i$ we have assigned the weight $\bw(x_i)=1$ according to \eqref{eqn:defining1} and for each $e_j'$ the weight $\bw(e_j')$ coming from the original point~$\bw$.     Equation~\eqref{eqn:defining_equations} ensures that the sums of the weights of the left vertices and of the right vertices are equal. To determine the set of edges of $\gamma_0$ which makes it possible to satisfy Equations \eqref{eqn:defining2} and \eqref{eqn:defining3}, we have to consider the list of \defn{left intervals} $I_1$ through $I_{a_0}$ \[\left(\big[0,\bw(x_1)\big],\big[\bw(x_1),\bw(x_1)+\bw(x_2)\big],\ldots,\left[\sum_{i=1}^{a_0-1}\bw(x_i),\sum_{i=1}^{a_0}\bw(x_i)\right]\right)\] and the list of \defn{right intervals} $J_1$ through $J_l$
\[\left(\big[0,\bw(e_1')\big],\big[\bw(e_1'),\bw(e_1')+\bw(e_2')\big],\ldots,\left[\sum_{j=1}^{l-1}\bw(e_j'),\sum_{j=1}^{l}\bw(e_j')\right]\right),\]
determined respectively by the contributions of the weights of the left and right vertices of $\gamma_0$ in their respective orders.

There is a unique way to satisfy \eqref{eqn:defining2} and \eqref{eqn:defining3} while preserving the noncrossing condition on $\gamma_0$: we have to connect vertex $x_i$ to vertex $e_j'$ by the edge $(x_i,e_j')$ when the sets $I_i$ and $J_j$ have an intersection of positive measure and give $(x_i,e_j')$ the weight $\bw((x_i,e_j'))$ equal to the measure of this intersection. We then, according to Definition \ref{def:grove}, create a new prefix $x_ie_j'$ as a left vertex in $\gamma_{\head(e_j')}$ with $\bw(x_ie_j')=\bw((x_i,e_j'))$ by \eqref{eqn:defining4}. Every left vertex will connect to at least one right vertex. Remove any isolated right vertices, which correspond to edges~$e$ such that $\bw(e)=0$.

At a vertex $v>0$, build the non-crossing forest $\gamma_v$ as follows. Suppose $v$ has incoming incidences $\hat\inedge(v)=\{e_1,\dots,e_r\}$, incoming prefixes $\Prefixes(\Gamma, v)=(P_1,\ldots,P_h)$ and outgoing incidences $\hat{\outedge}(v)=\{e_1',\ldots,e_l'\}$. (At vertex $v=n$, we have $\hat\outedge(v)=\{y\}$ and $\bw(y)=|a_n|$) Equations \eqref{eqn:defining_equations} and  \eqref{eqn:defining3} imply
    \begin{align*}
\sum_{i=1}^h\bw(P_i)&=\sum_{s=1}^r\sum_{\substack{i\in [h]\\\terminal(P_i)=e_s}}\bw(P_i)\\
    &=\sum_{s=1}^r\bw(e_s)\\
    &=\sum_{j=1}^l\bw(e_j')
    \end{align*}

The same procedure as above involving intervals is followed to determine that the edges of~$\gamma_v$ are of the form $(P_i,e_j')$  when the sets $I_i$ and $J_j$ have an intersection of positive measure, giving $(P_i,e_j')$ the weight $\bw((P_i,e_j'))$ equal to the measure of the intersection and creating a new prefix $P_ie_j'$ as a left vertex in $\gamma_{\head(e_j')}$ with $\bw(x_ie_j')=\bw((x_i,e_j'))$.

Again, here it is important to notice that the noncrossing condition on $\gamma_v$ implies that the set of edges $(P_i,e_j')$ and weights $\bw((P_i,e_j'))$ are the unique configuration that will satisfy \eqref{eqn:defining2} and \eqref{eqn:defining3}.
    
From $\Gamma$, we then consider the set of routes  $\calR=\{Py\mid P\in L(\gamma_n)\}=\Routes(\calV(\Gamma))$ and define $\bw(Py):=\bw((P,y))$ for $Py \in \calR$. By the bijective correspondence $\Groves(\hatG,\hatF)\cong\Vineyards(\hatG,\hatF)$ of Proposition \ref{proposition:VineyardstoGroves} we have that $\calR$ is coherent. We further partition $\calR$ into the totally ordered sets $\calR_x$ depending on the inflow half-edge $x$ where each route $R$ starts. This construction also ensures inductively that $\sum_{R\in\calR_x}\bw(R)=1$ and therefore $|\calR_x|\geq 1$ for all $x \in X$.
    
From $\calR$ we will define a sequence of route matchings $\calC=\{\calP^0, \calP^1, \ldots, \calP^k\}$, and a weight~$\alpha_i$ assigned to each one. Define $\calP^0=\{P_x^0 \mid x \in X\}$ where $P_x^0$ is the lowest route in $\calR_x$ for all $x \in X$. Note that $|\calP^0|=|X|$. Find the minimum value $\alpha^0=\min({\bw(P_x^0) \mid P_x^0\in\calP^0})$ and assign $\alpha_0=\alpha^0$. 
    
Create the next route matching $\calP^1$ by removing from $\calP^0$ every route $P^0_x$ where $\bw(P^0_x)-\alpha^0=0$ and replacing it by the next higher route in $\calR_x$. Continuing in this fashion, having created the route matching $\calP^i$ we create $\calP^{i+1}$ by removing from $\calP^i$ every route $P^i_x$ where $\sum_{\substack{R_x\in \calR_x\\R_x\preceq_x P^i_x}}\bw(R_x)-\sum_{j=0}^{i}\alpha^j=0$ and replacing it by the next higher route in $\calR_x$. The process finalizes then with a sequence of coherent route matchings $\calC=\{\calP^0, \calP^1, \ldots, \calP^k\}$ such that by construction, $\sum_{i=1}^k \alpha_i\bz(\calP^i)=\bw$, as desired. 
    
We remark here that the maximum number of routes that appear in $\calR$ is when each non-crossing forest $\gamma_v$ is a tree that spans all left vertices and all right vertices. In this situation, the number of distinct routes in $\calR$ is exactly $m-n+|X|$ because there are $|X|$ total inflow half-edges and the number of prefixes at vertex $v$ increases by the number of outgoing edges from $v$ minus one at every vertex $v$. 

Assume there is another $\calD=\{\calQ^0,\calQ^1,\ldots,\calQ^l\}$ and coefficients $\beta_j$ such that $\sum_{j=1}^l \beta_j\bz(\calQ^j)=\bw$. By Lemma \ref{lemma:defining_equations} we have that the extension of $\bw$ satisfies the Equations~\eqref{eqn:defining1}--\eqref{eqn:defining4}.
Since the procedure outlined above shows that at every step there is a unique grove $\Gamma$ that can be build to satisfy these equations, we conclude that $\calD=\calC$. Then
Lemma \ref{lemma:coefficients} and the procedure used to construct the values  $\alpha_i$ imply that $\alpha_i=\beta(\calP^i)$ for $i=0,\dots,k$.
\end{proof}

\begin{lemma}
\label{lem:projection_integral_equiv}
    The natural projection $\bbR^E\rightarrow \bbR^{\Splits(\hatG)}$ is an integral equivalence between $\calF_G(\ba)$ and its image.
\end{lemma}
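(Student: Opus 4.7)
The plan is to show that the projection is the restriction of a bijective affine map whose inverse is defined by integer formulas, which immediately yields integral equivalence. The key numerical observation is that $|\Splits(\hatG)| = \sum_{v=0}^{n-1}(|\hat\outedge(v)| -1) = m - n = d = \dim \calF_G(\ba)$, since every non-sink vertex $v$ contributes one slack $s_v$ and $|\hat\outedge(v)|-1$ splits. Thus the projection $\rho:\bbR^E \to \bbR^{\Splits(\hatG)}$ maps between spaces of the same affine dimension as $\calF_G(\ba)$, so it suffices to construct an integer-affine left inverse on the affine hull of $\calF_G(\ba)$.

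I would define the left inverse $\rho^{-1}$ recursively, processing vertices from $v=0$ to $v=n-1$. For each vertex $v$, the flow conservation condition \eqref{eqn:defining_equations} isolates the slack value as
\[
\phi(s_v) = a_v + \sum_{e\in\inedge(v)}\phi(e) - \sum_{\substack{t\in\outedge(v)\\t\ne s_v}}\phi(t).
\]
Every split $t$ on the right-hand side is among the coordinates we know from the projection; and by induction on $v$, every $e\in\inedge(v)$ is either a split (known) or a slack $s_u$ for some $u<v$ (already recovered). The recursion therefore expresses each $\phi(s_v)$ as an integer $\bbZ$-linear combination of the projection coordinates and the entries of $\ba$, so $\rho^{-1}$ is affine with integer coefficients. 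Since $\ba\in \bbZ^{n+1}$, this also shows $\rho^{-1}$ sends the integer lattice of $\rho(\calF_G(\ba))$ into $\bbZ^E$.

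To conclude, I would verify two points. First, $\rho$ restricted to $\calF_G(\ba)$ is injective: any two points with the same split coordinates are equal on the slacks by the above recursion, since both satisfy conservation of flow. Second, $\rho\circ\rho^{-1}$ is the identity on $\rho(\calF_G(\ba))$ by construction, and $\rho^{-1}\circ\rho$ is the identity on $\calF_G(\ba)$ since the recursion recovers the only slack values compatible with flow conservation. Combining these, $\rho$ is a $\bbZ$-affine bijection between $\calF_G(\ba)$ and its image, hence an integral equivalence.

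The argument is essentially straightforward; the only subtle point is to make sure the inductive recovery is well-defined, i.e.\ that when processing vertex $v$ every element of $\inedge(v)$ is either a split of $\hatG$ or a slack leaving some vertex $u<v$. This follows from the assumption that all edges $e=(u,w)$ satisfy $u<w$ together with the canonical identification of a unique slack at each vertex in $[0,n-1]$, so the computation can genuinely be carried out left to right with no circularity. That is the one place where I would write the bookkeeping carefully; everything else is formal.
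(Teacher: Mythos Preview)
Your argument is correct. The paper states this lemma without proof (it is a research announcement, and this is one of the standard facts left unproved), so there is no approach to compare against. Your recursive recovery of the slack coordinates from the flow-conservation equations is exactly the right idea: since edges are oriented from smaller to larger vertices and each non-sink vertex has a unique slack, processing vertices left to right expresses each $\phi(s_v)$ as a $\bbZ$-affine function of the split coordinates and the entries of $\ba$, giving the required integer-affine inverse.

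One small remark on presentation: you could streamline by noting that the $n$ flow-conservation equations at $v=0,\ldots,n-1$ form a unitriangular $\bbZ$-linear system in the $n$ slack variables $\phi(s_0),\ldots,\phi(s_{n-1})$ (once the splits are treated as constants), which is a one-line way of packaging your inductive argument. But as written your proof is complete.
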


\begin{proposition}\label{proposition:unimodularity}
	For every $\calC \in \SatCliques(\hatG,\hatF)$, $\triangle_{\calC}$ 
    is a unimodular simplex.
\end{proposition}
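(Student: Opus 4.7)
The plan is to reduce unimodularity of $\triangle_\calC$ to the claim that a certain $d\times d$ integer matrix has determinant $\pm 1$, which we verify by exhibiting a triangular structure induced by the saturated grove shuffle associated to $\calC$.

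First, by Lemma \ref{lem:projection_integral_equiv}, unimodularity of $\triangle_\calC$ in the affine lattice of $\calF_G(\ba)$ is equivalent to unimodularity of its image under the natural projection $\bbR^E \to \bbR^{\Splits(\hatG)}$. Using the bijections of Section \ref{sec:combinatorial_families_on_augmented_graphs}, associate to $\calC$ the saturated grove shuffle $(\Gamma, \sigma_\Gamma)$. By Lemma \ref{lemma:max_grove_bijection}, both $\sigma_\Gamma \colon \Splits(\Gamma) \to [d]$ and the terminal-edge map $\Splits(\Gamma) \to \Splits(\hatG)$, $(P,e)\mapsto e$, are bijections, providing a canonical identification of $\Splits(\hatG)$ with $[d]$.

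Next, write the clique in standard form $\calC = \{\calP^0 \prec_\calC \cdots \prec_\calC \calP^d\}$ as ordered by the shuffle, and define the edge vectors $v_j := (\bz(\calP^j) - \bz(\calP^{j-1}))|_{\Splits(\hatG)}$ for $j \in [d]$. Then unimodularity reduces to showing that the matrix $M \in \bbZ^{d\times d}$ whose columns are the $v_j$'s (with rows indexed by $\Splits(\hatG)$ identified with $[d]$ via the composition above) has determinant $\pm 1$. The plan is to show that $M$ is lower triangular with $+1$'s on the diagonal.

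For the diagonal entries, observe that the $j$-th step activates the unique grove split $S_j$ with $\sigma_\Gamma(S_j) = j$; this modifies exactly one route in the matching, replacing $V_{x_j}^{k-1}$ by $V_{x_j}^k = \minext_\calV(S_j)$ in the vine $\calV_{x_j}$ containing $S_j$. These two routes share the common prefix $P_j$ ending at $\tail(t_j)$ and diverge there, with $V_{x_j}^k$ taking $t_j$ and $V_{x_j}^{k-1}$ taking a smaller edge. Acyclicity of $G$ prevents any revisit to $\tail(t_j)$, so $t_j \in V_{x_j}^k \setminus V_{x_j}^{k-1}$, which gives $M_{jj} = +1$.

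The main obstacle is the off-diagonal claim $M_{hj} = 0$ for $h>j$. Writing the two routes as $V_{x_j}^k = P_j \cdot t_j \cdot (\text{minimal continuations})$ and $V_{x_j}^{k-1} = P_j \cdot (\text{tail determined by } S_{x_j}^{k-1})$, the grove splits of $\calV_{x_j}$ visited by $V_{x_j}^k$ are exactly $S_j$ together with ancestors of $S_j$ (the latter all having $\sigma_\Gamma < j$ by the in-order traversal structure of Definition \ref{def:grove_splits}), while those visited by $V_{x_j}^{k-1}$ all have $\sigma_\Gamma < j$. The subtle point is when $t^{(h)}$ appears in one of the tails as a minimal continuation rather than a grove split of $\calV_{x_j}$, so that its associated grove split $(Q^{(h)}, t^{(h)})$ lives in another vine $\calV_y$; here we will argue, using the noncrossing structure of each $\gamma_w$ together with global coherence of the clique, that if $\sigma_\Gamma(Q^{(h)}, t^{(h)}) > j$, then the edge $t^{(h)}$ must appear equally in both tails, so it cancels in $v_j$. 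Formalizing this coherence argument across vines is the main technical step.
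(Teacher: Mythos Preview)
Your overall strategy matches the paper's: project to $\bbR^{\Splits(\hatG)}$, form the consecutive-difference vectors $v_j = \bz(\calP^j) - \bz(\calP^{j-1})$, and show the resulting $d \times d$ matrix has determinant $\pm 1$ via a triangular structure. The gap is in your choice of row ordering and the triangularity claim that follows from it.

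You index rows by the shuffle $\sigma_\Gamma$ (row $h$ is the edge $t^{(h)}$ with $\sigma_\Gamma(\,\cdot\,, t^{(h)}) = h$) and claim $M_{hj} = 0$ for $h > j$. This is false. Take $G$ on vertices $\{0,1,2\}$ with edges $s_0, t_1 \colon 0 \to 1$ (framing $s_0 \prec t_1$) and $s_1, t_3, t_2 \colon 1 \to 2$ (framing $s_1 \prec t_3 \prec t_2$), and $\ba = (2,0,-2)$ with $x_1 \prec x_2$ at vertex $0$. Choose the saturated grove with $\gamma_0$-edges $\{(x_1,s_0),(x_2,s_0),(x_2,t_1)\}$ and $\gamma_1$-edges $\{(x_1 s_0, s_1),(x_1 s_0, t_3),(x_2 s_0, t_3),(x_2 s_0, t_2),(x_2 t_1, t_2)\}$, together with the shuffle $\sigma((x_2 s_0, t_2)) = 1$, $\sigma((x_2, t_1)) = 2$, $\sigma((x_1 s_0, t_3)) = 3$. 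At step $j=1$ the $x_2$-route changes from $x_2 s_0 t_3 y$ to $x_2 s_0 t_2 y$, dropping the edge $t_3$; but $t_3$ is row $3$ in your ordering, so $M_{31} = -1 \neq 0$. The coherence argument you sketch cannot rescue this: the grove split for $t_3$ lives in the \emph{other} vine $\calV_{x_1}$ with high shuffle value $3$, yet $t_3$ appears in the tail of the old $x_2$-route as that prefix's minimal continuation in $\gamma_1$ and is genuinely lost at step $1$. (Aside: your text says ``lower triangular'' but your inequality $h>j$ describes upper triangular; neither holds with the shuffle ordering.)

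The paper sidesteps this entirely by indexing the \emph{columns} by the canonical ordering of $\Splits(\hatG)$ (first by tail vertex, then descending in the framing at that vertex), not by $\sigma_\Gamma$. With that ordering, if the two exchanged routes diverge at the split $t_k$, then every $t_j$ with $j < k$ either has $\tail(t_j) < \tail(t_k)$, where the two routes coincide, or has $\tail(t_j) = \tail(t_k)$ but sits strictly above $t_k$ in the framing, where neither route uses it. Hence row $i$ of the consecutive-difference matrix has zeros in columns $1,\ldots,k-1$ and $\pm 1$ in column $k$; since each split occurs exactly once (Lemma~\ref{lemma:max_grove_bijection}), a row permutation gives an upper triangular matrix with $\pm 1$ diagonal. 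No cross-vine coherence analysis is needed.
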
 
\begin{proof}
Using Lemma~\ref{lem:projection_integral_equiv}, we will show that for every simplex $\triangle_{\calC}$, the direction vectors $\bz(\calP^i)-\bz(\calP^0)$ projected onto $\bbR^{\Splits(\hatG)}$ form an integral basis of $\bbZ^{\Splits(\hatG)}$. For this, create the $d\times d$ matrix whose rows consist of the difference of indicator flows $\bz(\calP^i)-\bz(\calP^0)$ for $i\in[d]$ and whose columns correspond in order to the edges $\{t_1,\ldots,t_d\}$. 

Apply the matrix transformation to $A$ that replaces row $i$ by row $i$ minus row $i-1$ for $i\in[2,d]$ to form the matrix $A'$. The determinants $\det A$ and $\det A'$ are equal and the rows of $A'$ consist in the difference of indicator flows $\bz(\calP^i)-\bz(\calP^{i-1})$ for $i\in[d]$.

Since $\calC$ is a saturated clique, two consecutive route matchings $\calP^{i-1}$ and $\calP^i$ differ by exactly one route ($P\in\calP^{i-1}$ and $P'\in\calP^i$) that share a maximal prefix. At the first place where they diverge, $P$ follows an edge $e$  and $P'$ follows an edge $e'$. The edge $e'$ is a split (and therefore of the form $t_k$ for some $k$) and by the canonical edge ordering in $G$, the multiplicity of every edge $t_j$ for $j<k$ is the same in $\calP^{i-1}$ as it is in $\calP^i$. Therefore row $i$ of $A'$ consists of zeroes in columns $1$ through $k-1$, a $\pm1$ in column $k$, and some collection of integers in later columns.

Lemma~\ref{lemma:max_grove_bijection} establishes that each split occurs exactly once in $\Gamma(\calC)$. As a consequence, the rows of $A'$ can be permuted to form an upper triangular matrix with all diagonal entries equal to $\pm 1$. The determinant of this matrix is $\pm 1$, completing the proof. 
\end{proof}

Notice that this triangulation depends on the particular framing $\hatF$ of the augmented graph $\hatG$ of $G$. As such, this construction defines many different unimodular triangulations of $\calF_G(\ba)$.

\begin{example}
We illustrate Theorem \ref{proposition:triangulation} with an example of the triangulation of Figure~\ref{fig:PS_polytope_with_route_matchings_versionM} with the planar framing that is illustrated. For example the simplex of corresponding to the permutation flow shuffle in Figure \ref{fig:simplex_triangulation_PS_111} is constructed as follows: we start with the route matching $$\calP^0=\{P_{x_1}, P_{x_2}, P_{x_3}\}=\{x_1s_0s_1t_3, x_2s_1t_3, x_3s_2\}$$ indexed by $x_1$, $x_2$, and $x_3$. Following the instructions in the shuffle the next vertex changes $P_{x_1}$ to $P_{t_2}$, so $$\calP^1=\{P_{t_2}, P_{x_2}, P_{x_3}\}=\{x_1s_0t_2, x_2s_1t_3, x_3s_2\},$$then  $P_{x_3}$ switches to $P_{t_3}$ to get $$\calP^2=\{P_{t_2}, P_{x_2}, P_{t_3}\}=\{x_1s_0t_2, x_2s_1t_3, x_3t_3\},$$and finally $P_{t_2}$ switches to $P_{t_1}$ to get $$\calP^3=\{P_{t_1}, P_{x_2}, P_{t_3}\}=\{x_1t_1, x_2s_1t_3, x_3t_3\}.$$
Following the same procedure with the $16$ permutation flow shuffles that were given in Figure~\ref{fig:permutation_flow_shuffles_PS_111} we obtain the triangulation in Figure \ref{fig:triangulation_PS_111} whose dual graph is illustrated.
\end{example}

\begin{figure}
    \centering
\resizebox{.6\textwidth}{!}{  
\includegraphics{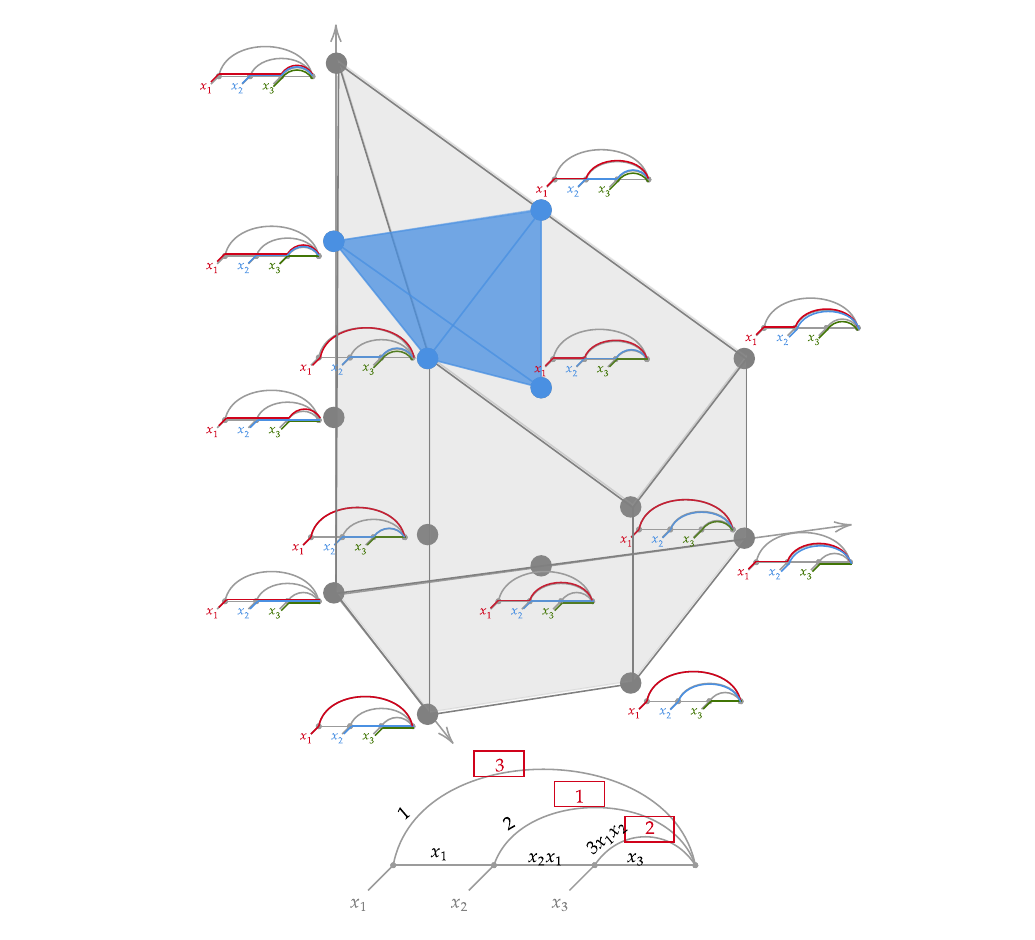}}
    \caption{Permutation flow shuffle and its associated simplex in the triangulation of $\calF_{\PS_3}(1,1,1,-3)$ with the given framing.}
    \label{fig:simplex_triangulation_PS_111}
\end{figure}

\begin{figure}
    \centering
    
\resizebox{\textwidth}{!}{  
\includegraphics{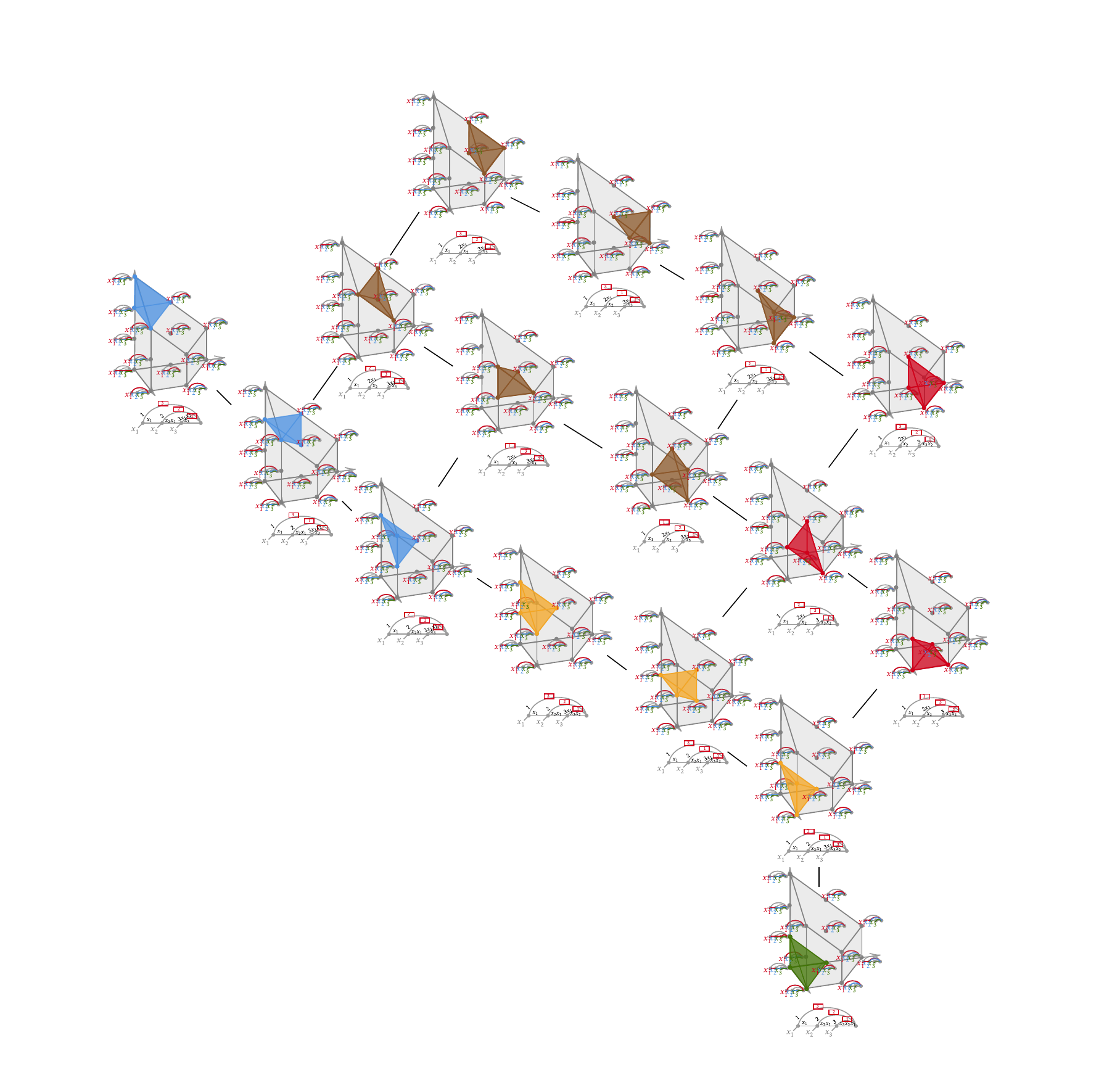}}
    \caption{Permutation flow triangulation of $\calF_{\PS_3}(1,1,1,-3)$ with the given framing. Simplices of the same color have the same underlying permutation flow.}
    \label{fig:triangulation_PS_111}
\end{figure}

\clearpage
\section{A new proof of a Lidskii volume formula of Baldoni-Vergne}
\label{sec:Lidskii}

Baldoni and Vergne \cite[Theorem 38]{BaldoniVergne2008} prove a generalization of Lidskii's volume formula~\cite{Lidskii1984} for flow polytopes (Theorem \ref{thm.genlidskii}), which is calculated explicitly when $\ba$ has a unique negative entry $a_n<0$. The technique used in \cite{BaldoniVergne2008} involves residue computations that hid the combinatorial nature of the resulting formula. A proof of the formula was given by M\'esz\'aros and Morales in \cite{MeszarosMorales2019} using a geometric decomposition which generalizes an (unpublished) decomposition method by Postnikov and Stanley. Postnikov and Stanley also observed that a proof of the generalized Lidskii formula can be obtained using the Elliott–MacMahon algorithm.

As an application of Theorem \ref{theorem:triangulation} we provide a new independent proof of the Lidskii volume formula. Our theorem gives an intermediate restatement of Theorem \ref{thm.genlidskii} which appeals to the structure of the combinatorial objects involved. This new proof relies on a triangulation that refines the subdivision used in \cite{MeszarosMorales2019} and hence provides an interpretation of each term of the formula in terms of an enumeration of a set of simplices.

\subsection{The outdegree formula}
\label{sec:outdegree_formula}
\phantom{W}

Given $\Gamma \in  \Groves(\hatG,\hatF)$ recall, from Section \ref{sec:grove_shuffles}, that there is a set valued function \[\Splits:\Prefixes(\Gamma)\cup E(\Gamma)\rightarrow 2^{\Splits(\Gamma)}.\] We can associate the integer valued function 
\[\bj(\Gamma):\Prefixes(\Gamma)\cup E(\Gamma) \rightarrow \bbZ_{\ge 0}\]
defined for every $P\in \Prefixes(\Gamma)$ and every $(P,e)\in E(\Gamma)$ as 
$$\bj(\Gamma)(P):=|\Splits(\Gamma;P)| \text{ and }\bj(\Gamma)((P,e)):=|\Splits(\Gamma;(P,e))|.$$
In particular, if we only consider the elements $x\in X$ we obtain a weak composition 
$$\bj(\Gamma)=\big(\bj(\Gamma)(x)\big)_{x\in X}$$ of $d$ into $|X|$ parts.
In addition, to every weak composition $\bj:X\rightarrow \bbZ_{\ge 0}$ we will associate its coarsening $\hat \bj:[0,n]\rightarrow \bbZ_{\ge 0}$ defined by
$$\hat \bj_v=\sum_{\substack{x\in X\\v_x=v}}\bj(x),$$
which is then a weak composition of $d$ into $n$ parts.

We aim to determine the cardinality of $\SatGroveShuffles(\hatG,\hatF)$. First consider a fixed $\Gamma \in \SatGroves(\hatG,\hatF)$ and note that there are $\binom{d}{\bj(\Gamma)}$ shuffles $\sigma_{\Gamma}$ of type $\bj(\Gamma)$ such that $(\Gamma,\sigma_{\Gamma}) \in  \SatGroveShuffles(\hatG,\hatF)$. Now for a fixed weak composition~$\bj$ of $d$ with parts indexed by $X$, we need to determine the number of $\Gamma \in \SatGroves(\hatG,\hatF)$ such that $\bj(\Gamma)=\bj$.  

For this we extend the function $\Splits(\Gamma)$ to the set $E\cup \{y\}$ of right vertices of $\Gamma$ as follows. Let $v\in[0,n]$ and $e\in R(\gamma_v)$ where $I(e)$ is the set of incident grove edges to $e$,
\begin{align*}
 I(e):&=\{(P,e)\in E(\gamma_{\tail(e)})\mid P\in L(\gamma_{\tail(e)})\}\\
 &=\{(P_0,e),(P_1,e),\dots,(P_l,e)\}.
\end{align*}
We define $\bj(\Gamma)(e):=\lvert\Splits(\Gamma;e)\rvert$ where
$$\Splits(\Gamma;e):=\bigsqcup_{i=0}^l\Splits(\Gamma;(P_i,e)).$$

\begin{lemma}\label{lemma:bijection}
For every fixed weak composition $\bj$ of $d$ with parts in $X$,
the map 
\begin{equation}\label{equation:map_j_on_SatGroves}
   \bj:\left\{\Gamma \in \SatGroves(\hatG,\hatF)\mid\bj(\Gamma)(x)=\bj(x) \textup{ for all } x\in X \right\}\rightarrow \calF_G^{\bbZ}(\hat \bj-\bo) 
\end{equation}
that associates to every $\Gamma \in \SatGroves(\hatG,\hatF)$ the integer flow $\big(\bj(\Gamma)(e)\big)_{e\in E(G)}$ is a bijection.
\end{lemma}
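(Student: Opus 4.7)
The proof consists of two parts: verifying the map is well-defined (that is, $(\bj(\Gamma)(e))_{e\in E}$ is a non-negative integer flow with netflow $\hat\bj-\bo$) and then constructing its inverse.

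For the first part, my plan is to derive the identity
\[
|\Splits(\Gamma;P)| \;=\; (|I(P)| - 1) \;+\; \sum_{(P,e)\in I(P)} |\Splits(\Gamma;(P,e))|
\]
for each left vertex $P \in L(\gamma_v)$, which follows directly from Definition~\ref{def:grove_splits} by case analysis on whether $|I(P)|=1$ or $|I(P)|\geq 2$. Summing this identity over all $P \in L(\gamma_v)$, the left-hand side splits into contributions from inflow half-edges at $v$ (totaling $\hat\bj_v$) and extended prefixes $Qe$ for $e \in \inedge(v)$ (totaling $\sum_{e \in \inedge(v)} \bj(\Gamma)(e)$ after swapping summation). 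On the right-hand side, since $\Gamma$ is saturated, Lemma~\ref{lemma:max_grove_bijection} ensures $\gamma_v$ is a tree with $R(\gamma_v) = \hat\outedge(v)$, so $\sum_P (|I(P)|-1) = |R(\gamma_v)|-1$, which equals $o_v$ (using $|\Splits(\Gamma;y)|=0$ to handle the case $v=n$). This yields the flow conservation equation
\[
\hat\bj_v + \sum_{e \in \inedge(v)} \bj(\Gamma)(e) \;=\; o_v + \sum_{e \in \outedge(v)} \bj(\Gamma)(e),
\]
confirming that $\bj(\Gamma)$ has netflow $\hat\bj-\bo$.

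For the second part, I would construct the inverse inductively on $v = 0, 1, \ldots, n$, adapting the interval-matching construction from the proof of Proposition~\ref{proposition:triangulation} to the integer setting. At each vertex $v$, the left vertex set $L(\gamma_v)$ and target weights $w(P)$ are determined from earlier steps: $w(x) = \bj(x)$ for $x \in X$ with $v_x = v$, and $w(Qe)$ equals the weight $m(Q,e)$ of the corresponding grove edge $(Q,e)$ in $\gamma_{\tail(e)}$. Given the flow values $\phi(e)$ on $\outedge(v)$, I would construct the unique noncrossing bipartite tree $\gamma_v$ with $R(\gamma_v) = \hat\outedge(v)$ and edge weights $m(P,e)\geq 0$ satisfying
\[
\sum_{(P,e)\in I(P)} m(P,e) + (|I(P)|-1) = w(P), \qquad \sum_{(P,e)\in I(e)} m(P,e) = \phi(e).
\]
This is done greedily by connecting left and right vertices in order, consistent with the noncrossing constraint, and the flow conservation equation established in part (a) ensures the matching closes consistently on both sides.

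The main obstacle is making the inductive construction in part (b) canonical in the presence of degeneracies: integer constraints can force grove edges with weight $m(P,e)=0$, whose placement must be dictated by the noncrossing convention to make the inverse well-defined. Handling these zero-weight edges canonically—so that each integer flow yields a unique saturated grove—is the delicate point, and amounts to a discrete analogue of the real-valued interval matching used in Proposition~\ref{proposition:triangulation}. Once existence, uniqueness, and the compatibility of successive inductive steps are established, the two constructions are inverse to each other and the bijection follows.
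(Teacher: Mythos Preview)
Your proposal is correct and follows essentially the same approach as the paper. For well-definedness, both you and the paper verify conservation of flow at each vertex by unpacking the recursive definition of $\Splits(\Gamma;\cdot)$; your organization via the local identity $|\Splits(\Gamma;P)| = (|I(P)|-1) + \sum_{(P,e)\in I(P)} |\Splits(\Gamma;(P,e))|$ is a clean repackaging of the paper's direct computation of $\sum_{\outedge}-\sum_{\inedge}$. For the inverse, the paper simply says the construction ``follows a similar argument to the one in the proof of Proposition~\ref{proposition:triangulation},'' whereas you spell out the integer interval-matching more explicitly and flag the zero-weight-edge degeneracy as the point requiring care---this is a genuine subtlety the paper absorbs into its reference, so your version is in fact more detailed than the paper's, not less.
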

\begin{proof}
We first show that $\bj(\Gamma)\in \calF_G^{\bbZ}(\hat \bj(\Gamma)-\bo)$ when $\bj(\Gamma)$ is restricted to $E$. Indeed, for every vertex $v\in [0,n]$ we have
that 
\begin{align*}
\sum_{e'\in  \outedge(v)} \bj(\Gamma)(e')-\sum_{e\in  \inedge(v)} \bj(\Gamma)(e)&=\sum_{e'\in  \outedge(v)} \bj(\Gamma)(e')-\sum_{e\in  \inedge(v)}\sum_{(P,e)\in  E(\gamma_{\tail(e)})} \bj(\Gamma)((P,e))\\
&=\sum_{e'\in  \outedge(v)} \bj(\Gamma)(e')-\sum_{e\in  \inedge(v)} \sum_{(P,e)\in  E(\gamma_{\tail(e)}) }\bj(\Gamma)(Pe)\\
\intertext{The rightmost term is the sum over all prefixes that are not elements of $X$, so }
&=\sum_{e'\in  \outedge(v)} \bj(\Gamma)(e')-\sum_{P'\in L(\gamma_{v})\setminus X}\bj(\Gamma)(P')\\
\intertext{Because the set of splits of all incoming prefixes and the splits of all incoming half-edges is the same as the set of splits of all outgoing edges and the set of splits that occur at $v$,}
&=\sum_{\substack{x\in X\\v_{x}=v}}|\Splits(\Gamma;x)|-|\{j \in [m-n] \mid \tail(t_j)=v \}|   \\
&=\sum_{\substack{x\in X\\v_{x}=v}}\bj(x) - o_v\\
&=\hat \bj_v-o_v.
\end{align*}

The map $\bj(\Gamma)$ of Equation \eqref{equation:map_j_on_SatGroves} has an inverse function that to every $\by \in \calF_G^{\bbZ}(\hat \bj-\bo)$ associates a graph $\Gamma(\by)$ such that $\bj(\Gamma(\by))=\bj$ on $X$ and $\bj(\Gamma(\by))=\by$ on $E$.
The proof of this follows a similar argument to the one in the proof of Proposition \ref{proposition:triangulation} showing, inductively on vertices $v$ from $0$ to $n$, that there is a unique $\Gamma$ satisfying both of these conditions.
\end{proof}

\begin{remark}
  The special case of Lemma \ref{lemma:bijection} when $\ba=\be_0- \be_n$ gives a new bijection,  $\bj\circ \Gamma:\DKK(G,F)\rightarrow \calF_G^{\bbZ}(\tilde\bd)$. This bijection is different from the one in \cite{MeszarosMorales2019}.
\end{remark}

\begin{proof}[Proof of Theorem \ref{theorem:reformulated_lidskii_volume}]
By Theorem \ref{theorem:triangulation} we have that
\begin{align*}
    \vol \calF_G(a)&=|\SatGroveShuffles(\hatG,\hatF)|\\
    &=\sum_{(\Gamma,\sigma)\in \SatGroveShuffles(\hatG,\hatF)}1\\
     &=\sum_{\bj}\sum_{ \substack{(\Gamma,\sigma)\in\SatGroveShuffles(\hatG,\hatF)\\\bj(\Gamma)=\bj}}1\\
     &=\sum_{\bj}\sum_{ \substack{\Gamma\in\SatGroves(\hatG,\hatF)\\\bj(\Gamma)=\bj}}\binom{d}{\bj(\Gamma)}\\
     &=\sum_{\bj}\binom{d}{\bj}K_G(\hat \bj-\bo). \qedhere
\end{align*}
\end{proof}

Theorem \ref{thm.genlidskii} is then a direct corollary of Theorem \ref{theorem:reformulated_lidskii_volume}.
\begin{proof}[Proof of Theorem \ref{thm.genlidskii}]
Let $\bs$ be a fixed weak composition of $d$ in $n$ parts and $\bj$ indicate weak compositions of $d$ in $|X|$ parts. We establish that
$$\sum_{\substack{\bj\\\hat \bj=\bs}}\binom{d}{\bj}=\binom{d}{\bs}\ba^{\bs}.$$
Indeed, on the left we enumerate  $\bj$-shuffles satisfying  $\hat \bj=\bs$. We can also obtain such a shuffle by first selecting a $\bs$-shuffle and then deciding to associate each of the $s_v$ numbers to an element of the set $\{i \mid v_{x_i}=v\}$ for every $v\in [0,n-1]$, which is enumerated by the right hand side of the equation.
\end{proof}

\subsection{Enumerating unshuffled objects}
\label{enumerating_unshuffled_objects}
\phantom{W}

We are going to determine the cardinality of saturated unshuffled objects in $(\hatG,\hatF)$ by applying a counting argument in two different ways.

\begin{corollary}\label{corollary:number_of_permutation_flows} The number of saturated (unshuffled) objects determined by $(\hatG,\hatF)$, i.e., any of the cardinalities $$|\SatVineyards(\hatG,\hatF)|=|\SatGroves(\hatG,\hatF)|=|\SatPermutationFlows(\hatG,\hatF)|,$$ is given by any one of the following
\begin{enumerate}[label=(\alph*)]
    \item $\displaystyle\sum_{\substack{\bj\\\hat\bj \triangleright \bo}}K_G(\hat \bj-\bo).$
    \item $K_{G^*}(d,-o_0,-o_1,\dots,-o_{n-1},0)$.
    \item $\displaystyle\vol \calF_{G^{\star}}(\be_0- \be_n),$
\end{enumerate}
where $G^{\star}$ is obtained from $G$ by adding a new source vertex $-1$ and adding $a_v$ edges of the form $(-1,v)$ for $v\in [0,n-1]$.
\end{corollary}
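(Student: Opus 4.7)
The three cardinalities in the statement are equal because Proposition \ref{proposition:VineyardstoGroves} and Proposition \ref{prop.GtoPF} establish poset isomorphisms between $\Vineyards(\hatG,\hatF)$, $\Groves(\hatG,\hatF)$, and $\PermutationFlows(\hatG,\hatF)$; order isomorphisms send maximal elements to maximal elements, so saturated objects are in bijection across the three families. It therefore suffices to enumerate $\SatGroves(\hatG,\hatF)$.

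To prove (a), I will partition $\SatGroves(\hatG,\hatF)$ according to the weak composition $\bj(\Gamma) \in \bbZ_{\geq 0}^X$ with $|\bj(\Gamma)| = d$. For each fixed $\bj$, Lemma \ref{lemma:bijection} provides a bijection between $\{\Gamma \in \SatGroves(\hatG,\hatF) : \bj(\Gamma) = \bj\}$ and $\calF_G^{\bbZ}(\hat\bj - \bo)$. Summing the cardinalities yields $\sum_{\bj} K_G(\hat\bj - \bo)$, and the only nonvanishing contributions come from $\bj$ with $\hat\bj \triangleright \bo$, giving exactly (a).

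For the equality (a) $=$ (b), I will give a bijection between integer flows on $G^{\star}$ with netflow $(d,-o_0,\ldots,-o_{n-1},0)$ and pairs $(\bc,\phi)$, where $\bc = (c^{(x)})_{x \in X}$ records the flow values on the new edges $(-1,v_x)$ for $x \in X$ and $\phi \in \calF_G^{\bbZ}(\hat\bc - \bo)$ is the restriction to the original edges. Conservation at $-1$ in $G^{\star}$ forces $|\bc| = d$; conservation at each $v \in [0,n-1]$ forces the $G$-restricted flow to have netflow $\hat\bc_v - o_v$ at $v$; and the overall balance makes its netflow at $n$ equal to zero, matching $\hat\bc_n - o_n = 0$. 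Summing over $\bc$ gives $K_{G^{\star}}(d,-o_0,\ldots,-o_{n-1},0) = \sum_{\bc} K_G(\hat\bc - \bo)$, recovering (a).

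For (b) $=$ (c), I will apply the Postnikov--Stanley outdegree formula \eqref{eqn:lidski_outdegree_formula} to the graph $G^{\star}$, which is acyclic with unique source $-1$ and unique sink $n$, obtaining $\vol \calF_{G^{\star}}(\be_{-1} - \be_n) = K_{G^{\star}}(\tilde\bd^{\star})$ with $\tilde\bd^{\star} = d^{\star}\be_{-1} - \bo^{\star}$. The main obstacle, which is mostly bookkeeping, is verifying the numerics: $d^{\star} = (m+|X|)-(n+1) = d + |X| - 1$; $o^{\star}_{-1} = |X|-1$, $o^{\star}_v = o_v$ for $v \in [0,n-1]$, and $o^{\star}_n = 0$. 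These combine to give $\tilde\bd^{\star} = (d,-o_0,\ldots,-o_{n-1},0)$, exactly the netflow in (b), completing the proof.
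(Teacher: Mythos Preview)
Your proof is correct and follows essentially the same approach as the paper's: (a) via Lemma \ref{lemma:bijection} (which is exactly what the paper's proof of Theorem \ref{theorem:reformulated_lidskii_volume} uses), (a) $=$ (b) by decomposing flows on $G^{\star}$ according to the values on the new edges, and (b) $=$ (c) via the Postnikov--Stanley special case of the Lidskii formula. Your additional bookkeeping for $d^{\star}$ and $\bo^{\star}$ is helpful and correct; note that the paper's $\be_0-\be_n$ in (c) should be read as source-to-sink on $G^{\star}$, which is your $\be_{-1}-\be_n$.
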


\begin{proof}
Note first that (a) is a direct corollary of the proof of Theorem \ref{theorem:reformulated_lidskii_volume}.

    To show that (a) and (b) are equal note that part (b) is the number of integer flows on $G^*$ with netflow $(d,-o_0,-o_1,\dots,-o_{n-1},0)$. We can enumerate these in a different way. Consider a labeling of $G^*$ where the edges of the form $(-1,v)$ are labeled $x_1,x_2,\dots,x_{|a_n|}$ consistent with the order of the vertices $v\in [0,n-1]$. The distribution of the netflow $d$ on these edges gives a weak composition $\bj=(j(x))_{x\in X}$ of $d$ in $|X|$ parts. When we vary these assignment in all possible $\bj$ we obtain the formula in (a).
    
    That (b) and (c) are equal is an application of Theorem \ref{thm.genlidskii} in the case $\ba=\be_0- \be_n$. 
\end{proof}

A simpler formula for the number of unshuffled objects can be computed from the bijection between permutation flows on $(\hatG,\hatF)$ with netflow $\ba$ and integer flows on $G$ with netflow \[\hat{\bd}:=\bd+\ba-(\be_0-\be_n).\]

\begin{theorem}
\label{thm:integer_flows_a}
    Saturated permutation flows on $(\hatG,\hatF)$ are in bijection with integer $\hat\bd$-flows on $G$. As a consequence, 
    $|\SatPermutationFlows(\hatG,\hatF)|=|\calF_G^{\bbZ}(\hat\bd)|=K_G(\hat\bd)$.
\end{theorem}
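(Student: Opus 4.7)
The plan is to establish the bijection through the natural ``length-minus-one'' map. Define $\Phi: \SatPermutationFlows(\hatG,\hatF) \to \calF_G^{\bbZ}(\hat\bd)$ by
\[
\Phi(\pi)(e) := |\pi(e)| - 1 \qquad \textup{for every } e \in E.
\]
First I would verify that $\Phi(\pi)$ is a nonnegative integer flow on $G$ with netflow $\hat\bd$. Since $\pi$ is saturated, condition (ii) of Definition~\ref{def:permuflow_a} forces $\OutPerm(v)$ to be an unshuffle of $\InPerm(v)$ with the \emph{entire} word $e_1'\cdots e_j'$ at each inner vertex $v$, giving $|\OutPerm(v)| = |\InPerm(v)| + \outdeg(v) - 1$. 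A direct length count using $\pi(x)=x$ for every inflow half-edge then yields
\begin{align*}
|\InPerm(v)| &= \sum_{e\in\inedge(v)}\Phi(\pi)(e) + \indeg(v) + a_v,\\
|\OutPerm(v)| &= \sum_{e\in\outedge(v)}\Phi(\pi)(e) + \outdeg(v),
\end{align*}
and subtracting produces $\sum_{e\in\outedge(v)}\Phi(\pi)(e) - \sum_{e\in\inedge(v)}\Phi(\pi)(e) = \indeg(v)-1+a_v = \hat\bd_v$ for $v<n$. At $v=n$ the identity $|\pi(y)| = |X|+d = -a_n+d$, which holds because $\zeta(\pi)$ is a permutation of $X \sqcup \Splits(G)$, delivers the required netflow $\hat\bd_n$ by the analogous calculation.

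For the inverse $\Psi: \calF_G^{\bbZ}(\hat\bd)\to \SatPermutationFlows(\hatG,\hatF)$, I would construct $\pi := \Psi(\psi)$ inductively from $v=0$ to $v=n$. Initialize with $\pi(x) := x$ for each $x \in X$. At an inner vertex $v$, with $\InPerm(v)$ already determined, build $\OutPerm(v)$ by inserting the splits $e_1', \ldots, e_j'$ into $\InPerm(v)$ at the positions uniquely prescribed by condition (iii): each $e_h'$ must be the first letter of the $h$-th subword, forcing it to occupy position $1 + \sum_{i=0}^{h-1}(\psi(e_i')+1)$. Then partition $\OutPerm(v)$ into consecutive subwords of lengths $\psi(e_0')+1, \ldots, \psi(e_j')+1$ to define $\pi(e_0'), \ldots, \pi(e_j')$. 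Flow conservation at $v$ guarantees that the lengths match and that all insertion gaps are nonnegative (since $\psi(e)\ge 0$ everywhere). At $v=n$ set $\pi(y) := \InPerm(n)$, whose length correctly equals $|X|+d$ by the netflow equation.

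The main obstacle will be verifying that the inductively constructed $\pi$ is indeed a valid \emph{saturated} permutation flow in the sense of Definition~\ref{def:permuflow_a}. Condition (i) is immediate, condition (ii) follows from the insertion-then-partition procedure, and condition (iii) holds because each split $e_h'$ is deliberately placed as the first letter of $\pi(e_h')$; saturation comes from always using the full word $e_1'\cdots e_j'$. Showing that letters never repeat within a single $\pi(e)$ requires an inductive argument tracking each letter of $X \sqcup \Splits(G)$ through the vertices, using that each such letter is introduced exactly once (either as $\pi(x)=x$ or as the first letter of $\pi(t)$) and then propagated to exactly one outgoing edge at each subsequent vertex. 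Both $\Psi\circ\Phi = \mathrm{id}$ and $\Phi\circ\Psi = \mathrm{id}$ then follow from the uniqueness of the insertion positions and of the length-specified partition at each vertex, and the enumerative identity $|\SatPermutationFlows(\hatG,\hatF)| = K_G(\hat\bd)$ is immediate.
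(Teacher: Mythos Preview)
Your proposal is correct and follows the same approach as the paper: the map $\pi \mapsto (|\pi(e)|-1)_{e\in E}$ together with the netflow verification via the length identity $|\OutPerm(v)| = |\InPerm(v)| + \outdeg(v)-1$. The paper's proof in fact only spells out the forward direction and leaves the inverse implicit (relying on the sketch in Section~\ref{sec:integerflows_10001}); your explicit construction of $\Psi$ via the insert-then-partition procedure, together with the argument that each letter is introduced once and propagated forward uniquely, is a welcome elaboration of what the paper takes for granted.
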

\begin{proof}
Let $\rho\in\SatPermutationFlows(\hatG,\hatF)$. We show that the flow $\psi$ defined by $\psi(\rho)(e)=|\rho(e)|-1$ satisfies Equation~\eqref{eqn:defining_equations} with netflow vector $\hat{\bd}=(\hat d_0,\ldots,\hat d_n)$.
    
At $v=0$, $a_0$ is the number of inflow half-edges; each one except the first contributes $1$ to $\psi(e)$ for all $e\in\outedge(0)$, so $\hat d_0=a_0-1$.

At an inner vertex $v$, conservation of flow of $\psi$ implies 
\begin{align*}
\sum_{e\in\outedge(v)}\psi(e)  &=\sum_{e\in\inedge(v)}\psi(e) + \hat{d}_v \\
&=\sum_{e\in\inedge(v)}(|\rho(e)|-1)+\hat{d}_v\\ 
&=|\InPerm(v)|-a_v-|\inedge(v)|+ \hat{d}_v\\ 
&=|\OutPerm(v)|-(|\outedge(v)|-1)-a_v-|\inedge(v)|+ \hat{d}_v\\
&=\sum_{e\in\outedge(v)}(|\rho(e)|-1)-a_v-(|\inedge(v)|-1)+\hat{d}_v\\
&=\sum_{e\in\outedge(v)}\psi(e)-a_v-d_v+\hat{d}_v,
\end{align*}
solve for  $\hat{d}_v$ to find $\hat{d}_v=a_v+d_v$.

Conservation of flow must also be satisfied at vertex $n$, so $\hat{d}_n=-\sum_{v<n}\hat{d}_v$, completing the proof.
\end{proof}

\begin{example}
    For the graph $G$ in Figure~\ref{fig:auggraph} and the netflow vector $\ba=(2,1,0,1,0,-4)$, we have \[\hat\bd=(0,1,1,2,2,-6)+(2,1,0,1,0,-4)-(1,0,0,0,0,-1)=(1,2,1,3,2,-9)\] The integer flow $\hat{\psi}$ in Figure~\ref{fig:FSM_integerflow} corresponds to the saturated permutation flow $\rho$ in Figure~\ref{fig:FSM_maxpermuflo}.
\end{example}

\begin{figure}[h!]
    \centering\vspace{-.5in}
\begin{tikzpicture}
\begin{scope}[scale=1.5, yscale=1.0]
\node() at (-.5,.5) {\textcolor{purple}{$\hat\psi$}};

\vertex[fill=orange, minimum size=4pt, label=below:{\tiny\textcolor{orange}{$0$}}](v0) at (0,0) {};
\vertex[fill=orange, minimum size=4pt, label=below:{\tiny\textcolor{orange}{$1$}}](v1) at (1,0) {};
\vertex[fill=orange, minimum size=4pt, label=below:{\tiny\textcolor{orange}{$2$}}](v2) at (2,0) {};
\vertex[fill=orange, minimum size=4pt, label=below:{\tiny\textcolor{orange}{$3$}}](v3) at (3,0) {};
\vertex[fill=orange, minimum size=4pt, label=below:{\tiny\textcolor{orange}{$4$}}](v4) at (4,0) {};
\vertex[fill=orange, minimum size=4pt, label=below:{\tiny\textcolor{orange}{$5$}}](v5) at (5,0) {};		

\draw[-stealth, thick, color=black!30, color=black!30] (v0) .. controls (1.2, 1.6) and (2.5, -0.3) .. (v3);
\draw[-stealth, thick, color=black!30] (v0) .. controls (0.9, 1.0) and (1.5, -0.7) .. (v2);
\draw[-stealth, thick, color=black!30] (v0) to [out=30,in=150] (v1);
\draw[-stealth, thick, color=black!30] (v0) to [out=-30,in=-150] (v1);

\draw[-stealth, thick, color=black!30] (v1) .. controls (1.9, 1.0) and (2.5, -0.7) .. (v3);
\draw[-stealth, thick, color=black!30] (v1) to [out=30,in=150] (v2);
\draw[-stealth, thick, color=black!30] (v1) .. controls (2.0, -1.2) and (2.5, 0.7) .. (v3);	

\draw[-stealth, thick, color=black!30] (v2) to [out=45,in=135] (v4);
\draw[-stealth, thick, color=black!30] (v2) .. controls (3.0, -1.0) and (3.1, 0.3) .. (v4);	

\draw[-stealth, thick, color=black!30] (v3) to [out=60,in=120] (v5);
\draw[-stealth, thick, color=black!30] (v3) to [out=-30,in=-150] (v4);
\draw[-stealth, thick, color=black!30] (v3) .. controls (4.0, -1.0) and (4.5, 0.0) .. (v5);

\draw[-stealth, thick, color=black!30] (v4) to [out=30,in=150] (v5);
\draw[-stealth, thick, color=black!30] (v4) to [out=-30,in=-150] (v5);


\node[] at (.6, .6){\scriptsize\textcolor{purple}{$0$}};
\node[] at (.55, .35){\scriptsize\textcolor{purple}{$1$}};
\node[] at (.4, .15){\scriptsize\textcolor{purple}{$0$}};
\node[] at (1.5, .4){\scriptsize\textcolor{purple}{$0$}};
\node[] at (1.5, .15){\scriptsize\textcolor{purple}{$1$}};
\node[] at (2.8, .45){\scriptsize\textcolor{purple}{$2$}};
\node[] at (3.7, .55){\scriptsize\textcolor{purple}{$0$}};
\node[] at (3.6, -0.2){\scriptsize\textcolor{purple}{$3$}};
\node[] at (4.48, .2){\scriptsize\textcolor{purple}{$7$}};
\node[] at (.3, -.15){\scriptsize\textcolor{purple}{$0$}};
\node[] at (1.5, -.43){\scriptsize\textcolor{purple}{$1$}};
\node[] at (2.8, -.43){\scriptsize\textcolor{purple}{$1$}};
\node[] at (3.7, -.45){\scriptsize\textcolor{purple}{$1$}};
\node[] at (4.3, -.1){\scriptsize\textcolor{purple}{$1$}};
\end{scope}
\end{tikzpicture}
    \vspace{-.3in}
    \caption{The integer $\hat\bd$-flow $\hat\psi$ that corresponds to the saturated permutation flow $\rho$ in Figure~\ref{fig:FSM_maxpermuflo}.
    }
    \label{fig:FSM_integerflow}
\end{figure}

\section*{Acknowledgments}
We wholeheartedly acknowledge the fundamental contribution of our collaborator Alejandro Morales. This project has been pushed forward by his constant encouragement, support, and the many facets 
of flow polytopes that we have learned directly from him and his work. We also thank very specially our wonderful collaborators Carolina Benedetti and Pamela Harris whose conversations and amazing work have also informed and contributed to this project.

We thank the Institute for Computational and Experimental Research in Mathematics (ICERM) for funding two Collaborate grants in 2023 and 2024, that advanced this project greatly. This project has grown out of a ten year collaboration that started at the American Institute of Mathematics, for which we are also grateful. 

We also want to thank Matias von Bell, Benjamin Braun, Rachel Brunner, Cesar Ceballos, Luis Alfonso Contreras Ordo\~nez, William Dugan, Aaron Lauve, Eva Philippe, Vincent Pilaud, Viviane Pons, Maria Ronco, Khrystyna Serhiyenko, Daniel Tamayo Jim\'enez, and Yannic Vargas for various helpful conversations and insights about flow polytopes and the objects related to this project. 

R.\ S.\ Gonz\'alez D'Le\'on was partially supported by an AMS-Simons Research Enhancement Grant for Primarily Undergraduate Institution Faculty. C.\ R.\ H.\ Hanusa acknowledges the support of a PSC-CUNY grant and travel funding from the Department of Mathematics at Queens College. M.\ Yip was partially supported by Simons Collaboration Grant 964456.
   
\appendix

\section{Dictionary of Symbols}

\noindent
$\leq$, $<$,  $\lessdot$ - weak order relation \newline
$\preceq$, $\prec$, $\precdot$ - path order relation or framing relation \newline
$\subseteq$, $\subset$, $\subsetdot$ - containment order relation \newline
$\unlhd$, $\lhd$ - dominance order relation


\noindent
$a_v$ - netflow vector entry \newline
$d_v, \hat{d}_v$ - netflow vector entry \newline
$d=m-n$ - dimension of flow polytope \newline
$e, e', e''$ - an edge in $G$ \newline
$f$ - injective function \newline
$h^*$ - polynomial \newline
$i_v$ - indegree netflow vector entry \newline
$h,i,j,k,l$ - indices \newline
$k$ - rank of a combinatorial object \newline
$l$ - letter in a word \newline
$m$ - number of edges \newline
$n$ - number of vertices minus 1. \newline
$o_v$ - outdegree netflow vector entry \newline
$s_v$ - slack (edge in $G$) \newline
$t_j$ - split (edge in $G$) \newline
$u, v, w$ - vertex \newline
$v_x$ - the unique vertex associated to the inflow half-edge $x$ \newline
$x, x_i$ - inflow half-edge \newline
$x_0, y_0$ - paths of length $0$, at vertices $0$ and $n$, respectively\newline
$y$ - outflow half-edge 

\noindent
$A, A'$ - integer matrix \newline
$E, \hatE$ - edge set / augmented edge set \newline
$F, \hatF$ - framing of a graph / augmented graph  \newline
$G, \hatG$ - graph / augmented graph \newline
$H, \hatH$ - subgraph, support subgraph \newline
$I_w, I_i,J_j$ - intervals \newline
$I(P), I(e)$ - incidences to grove vertices  \newline
$K_G(\ba)$ - Kostant partition function for $\ba$ \newline
$L_t$ - lowering operator \newline
$L(\gamma_v)$ - set of left vertices (prefixes) of a grove forest \newline
$M, N$ - routes in a multiclique \newline
$O, P, Q, R$ - paths, routes, prefixes, suffixes \newline
$(P,e)$ - a grove edge \newline
$R_t$ - raising operator \newline
$R(\gamma_v)$ - set of right vertices (graph edges) of a grove forest \newline
$S_x^i$ - the $i$-th split of $x$ in $\calV$ \newline
$V$ - vertex set \newline
$V_x$ $V_x^i$ - route in a vine \newline
$W(G,F)$ - weak order \newline 
$X$ - set of inflow half-edges \newline
$Y$ - set of outflow half-edges 

\noindent
$\ba$ - netflow vector \newline
$\bd, \tilde{\bd}, \hat{\bd}$ - in-degree vector \newline
$\be_i$ - $i$-th basis unit vector \newline 
$\bi$ - in-degree netflow vector \newline 
$\bj$ - integer flow corresponding to splits. \newline 
$\bj(\Gamma)$ - weak composition \newline 
$\hat\bj$ - coarsening of $\bj$ \newline 
$\bo$ - out-degree netflow vector \newline 
$\bw=(\bw(e))_{e\in E}$ - a (real-valued) point in $\calF$ \newline
$\bw(e)$ - coordinate of a point in $\calF$ \newline
$\bw(P), \bw(e), \bw(R)$ - weight function derived from $\bw$ \newline
$\bz$ - integer vertex of $\calF$ \newline
$\bz(P)$, $\bz(\calP)$ - indicator flow of $P$, $\calP$ 

\noindent
$\alpha_i, \beta_j$ - positive coefficients for the convex combination giving $\bw$ \newline
$\gamma$, $\gamma_v$, $\theta$, $\theta_v$ - a non-crossing bipartite forest in a grove \newline
$\delta$ - indicator function---equals 1 or 0\newline
$\zeta_v, \zeta$ - $v$-th summary, final summary \newline
$\lambda$ - natural labeling of prefixes and groves \newline
$\pi$, $\rho$ - permutation flow \newline
$\pi^0$, $\pi^1$ - bottom and top saturated permutation flow \newline
$(\pi,\sigma_\pi), (\rho,\sigma_\rho)$ - permutation flow shuffle \newline
$\sigma, \tau$ - shuffle \newline
$\phi$ - flow \newline
$\psi$ - integer flow 

\noindent
$\Gamma, \Theta$ - grove \newline
$\underline{\Gamma},\overline{\Gamma}$ - minimal and maximal saturation of $\Gamma$ \newline
$(\Gamma,\sigma_\Gamma), (\Theta,\sigma_\Theta)$ - grove shuffle  

\noindent
$\calC,\calD$ - clique of route matchings \newline
$\calF_G,\calF_G(\ba)$ - flow polytope with netflow vector $\be_0- \be_n$ or  $\ba$ \newline
$\calF_G^\bbZ,\calF_G^\bbZ(\ba)$,  - integer lattice points of a flow polytope \newline
$\calM, \calN$ - multiclique of routes \newline
$\calP, \calQ$ - route matchings \newline
$\calR$ - coherent collection of routes \newline
$\calV_x, \calW_x$ - a vine in a vineyard \newline
$\calV, \calW$ - a vineyard (or a vine when $\ba=\be_0- \be_n$)\newline
$(\calV,\sigma_\calV), (\calW,\sigma_\calW)$ - a vineyard shuffle 

\noindent
$\perm_A$ - partial permutations of $A$ \newline
$\sym_A$ - permutations of $A$ \newline
$\triangle_\calC$ - simplex corresponding to a clique $\calC$ 

\printbibliography

@article {BaldoniVergne2008,
    AUTHOR = {Baldoni, Welleda and Vergne, Mich\`ele},
     TITLE = {Kostant partitions functions and flow polytopes},
   JOURNAL = {Transform. Groups},
  FJOURNAL = {Transformation Groups},
    VOLUME = {13},
      YEAR = {2008},
    NUMBER = {3-4},
     PAGES = {447--469},
      ISSN = {1083-4362,1531-586X},
   MRCLASS = {52B12 (05A17 11D45 52C35)},
  MRNUMBER = {2452600},
MRREVIEWER = {Jes\'us\ A.\ De Loera},
       DOI = {10.1007/s00031-008-9019-8},
       URL = {https://doi.org/10.1007/s00031-008-9019-8},
}

@article {BenedettiGonzalezDleonHanusaHarrisKhareMoralesYip2019,
    AUTHOR = {Benedetti, Carolina and Gonz\'alez D'Le\'on, Rafael S. and
              Hanusa, Christopher R. H. and Harris, Pamela E. and Khare,
              Apoorva and Morales, Alejandro H. and Yip, Martha},
     TITLE = {A combinatorial model for computing volumes of flow polytopes},
   JOURNAL = {Trans. Amer. Math. Soc.},
  FJOURNAL = {Transactions of the American Mathematical Society},
    VOLUME = {372},
      YEAR = {2019},
    NUMBER = {5},
     PAGES = {3369--3404},
      ISSN = {0002-9947,1088-6850},
   MRCLASS = {05A15 (05A19 05C20 05C21 52A38 52B05 52B11)},
  MRNUMBER = {3988614},
MRREVIEWER = {Ruriko\ Yoshida},
       DOI = {10.1090/tran/7743},
       URL = {https://doi.org/10.1090/tran/7743},
}

@book {BB05,
    AUTHOR = {Bj\"{o}rner, Anders and Brenti, Francesco},
     TITLE = {Combinatorics of {C}oxeter groups},
    SERIES = {Graduate Texts in Mathematics},
    VOLUME = {231},
 PUBLISHER = {Springer, New York},
      YEAR = {2005},
     PAGES = {xiv+363},
      ISBN = {978-3540-442387; 3-540-44238-3},
   MRCLASS = {05-01 (05E15 20F55)},
  MRNUMBER = {2133266},
MRREVIEWER = {Jian-yi Shi},
}

@article{Hille2003,
title = {Quivers, cones and polytopes},
journal = {Linear Algebra and its Applications},
volume = {365},
pages = {215-237},
year = {2003},
note = {Special Issue on Linear Algebra Methods in Representation Theory},
issn = {0024-3795},
doi = {https://doi.org/10.1016/S0024-3795(02)00406-8},
url = {https://www.sciencedirect.com/science/article/pii/S0024379502004068},
author = {Lutz Hille},
keywords = {Quivers, Reflexive polytopes, Toric geometry},
}

@incollection {DanilovKarzanovKoshevoy2012,
    AUTHOR = {Danilov, Vladimir I. and Karzanov, Alexander V. and Koshevoy,
              Gleb A.},
     TITLE = {Coherent fans in the space of flows in framed graphs},
 BOOKTITLE = {24th {I}nternational {C}onference on {F}ormal {P}ower {S}eries
              and {A}lgebraic {C}ombinatorics ({FPSAC} 2012)},
    SERIES = {Discrete Math. Theor. Comput. Sci. Proc.},
     PAGES = {481--490},
 PUBLISHER = {},
      YEAR = {2012},
   MRCLASS = {05C21 (05E18)},
  MRNUMBER = {2958022},
}

@article {MezsarosMorales2015,
    AUTHOR = {M\'{e}sz\'{a}ros, Karola and Morales, Alejandro H.},
     TITLE = {Flow polytopes of signed graphs and the {K}ostant partition
              function},
   JOURNAL = {Int. Math. Res. Not. IMRN},
  FJOURNAL = {International Mathematics Research Notices. IMRN},
      YEAR = {2015},
    NUMBER = {3},
     PAGES = {830--871},
      ISSN = {1073-7928},
   MRCLASS = {52B12 (05A15 05C22 52B05)},
  MRNUMBER = {3340339},
MRREVIEWER = {Alexander I. Barvinok},
       DOI = {10.1093/imrn/rnt212}
}

@article {MeszarosMoralesStriker2019,
    AUTHOR = {M\'{e}sz\'{a}ros, Karola and Morales, Alejandro H. and Striker,
              Jessica},
     TITLE = {On flow polytopes, order polytopes, and certain faces of the alternating sign matrix polytope},
   JOURNAL = {Discrete Comput. Geom.},
  FJOURNAL = {Discrete \& Computational Geometry. An International Journal
              of Mathematics and Computer Science},
    VOLUME = {62},
      YEAR = {2019},
    NUMBER = {1},
     PAGES = {128--163},
      ISSN = {0179-5376},
   MRCLASS = {52B12 (05A05 05A19 05A20 05C10 05C21 52B05 52B22)},
  MRNUMBER = {3959924},
MRREVIEWER = {Robert Davis},
       DOI = {10.1007/s00454-019-00073-2}
}

@article{BrunnerHanusa2024,
  title={A triangulation of the flow polytope of the zigzag graph},
  author={Brunner, Rachel and Hanusa, Christopher R. H.},
  journal={arXiv preprint arXiv:2406.20048},
  year={2024},
  note={preprint}
}

@article {MeszarosMorales2019,
    AUTHOR = {M\'{e}sz\'{a}ros, Karola and Morales, Alejandro H.},
     TITLE = {Volumes and {E}hrhart polynomials of flow polytopes},
   JOURNAL = {Math. Z.},
  FJOURNAL = {Mathematische Zeitschrift},
    VOLUME = {293},
      YEAR = {2019},
    NUMBER = {3-4},
     PAGES = {1369--1401},
      ISSN = {0025-5874,1432-1823},
   MRCLASS = {52B20 (05C90)},
  MRNUMBER = {4024590},
MRREVIEWER = {Matthias Beck},
       DOI = {10.1007/s00209-019-02283-z},
       URL = {https://doi.org/10.1007/s00209-019-02283-z},
}

@article {PalaciosRonco2006,
    AUTHOR = {Palacios, Patricia and Ronco, Mar\'ia O.},
     TITLE = {Weak {B}ruhat order on the set of faces of the permutohedron
              and the associahedron},
   JOURNAL = {J. Algebra},
  FJOURNAL = {Journal of Algebra},
    VOLUME = {299},
      YEAR = {2006},
    NUMBER = {2},
     PAGES = {648--678},
      ISSN = {0021-8693,1090-266X},
   MRCLASS = {06A07 (16W30)},
  MRNUMBER = {2228332},
MRREVIEWER = {Julianna\ Tymoczko},
       DOI = {10.1016/j.jalgebra.2005.09.042},
       URL = {https://doi.org/10.1016/j.jalgebra.2005.09.042},
}

@book {LodayVallette2012,
    AUTHOR = {Loday, Jean-Louis and Vallette, Bruno},
     TITLE = {Algebraic operads},
    SERIES = {Grundlehren der mathematischen Wissenschaften [Fundamental
              Principles of Mathematical Sciences]},
    VOLUME = {346},
 PUBLISHER = {Springer, Heidelberg},
      YEAR = {2012},
     PAGES = {xxiv+634},
      ISBN = {978-3-642-30361-6},
   MRCLASS = {18D50 (16E99)},
  MRNUMBER = {2954392},
MRREVIEWER = {Andrey\ Yu.\ Lazarev},
       DOI = {10.1007/978-3-642-30362-3},
       URL = {https://doi.org/10.1007/978-3-642-30362-3},
}

@article {Hoffman2000,
    AUTHOR = {Hoffman, Michael E.},
     TITLE = {Quasi-shuffle products},
   JOURNAL = {J. Algebraic Combin.},
  FJOURNAL = {Journal of Algebraic Combinatorics. An International Journal},
    VOLUME = {11},
      YEAR = {2000},
    NUMBER = {1},
     PAGES = {49--68},
      ISSN = {0925-9899,1572-9192},
   MRCLASS = {05E05},
  MRNUMBER = {1747062},
MRREVIEWER = {Jason\ Fulman},
       DOI = {10.1023/A:1008791603281},
       URL = {https://doi.org/10.1023/A:1008791603281},
}

@book {DiaconisFulman2023,
    AUTHOR = {Diaconis, Persi and Fulman, Jason},
     TITLE = {The mathematics of shuffling cards},
 PUBLISHER = {American Mathematical Society, Providence, RI},
      YEAR = {[2023] \copyright 2023},
     PAGES = {xii+346},
      ISBN = {[9781470463038]},
   MRCLASS = {60-02 (05Axx 60B15 60J10 91A60)},
  MRNUMBER = {4565368},
MRREVIEWER = {Martin\ V.\ Hildebrand},
}

@article {Lidskii1984,
    AUTHOR = {Lidski\u i, B. V.},
     TITLE = {The {K}ostant function of the system of roots {$A\sb{n}$}},
   JOURNAL = {Funktsional. Anal. i Prilozhen.},
  FJOURNAL = {Akademiya Nauk SSSR. Funktsional\cprime ny\u i\ Analiz i ego
              Prilozheniya},
    VOLUME = {18},
      YEAR = {1984},
    NUMBER = {1},
     PAGES = {76--77},
      ISSN = {0374-1990},
   MRCLASS = {17B10 (22E46)},
  MRNUMBER = {739099},
MRREVIEWER = {James\ E.\ Humphreys},
}

@article {GonzalezDleonMoralesPhilippeTamayoYip2025,
    AUTHOR = {Gonz\'alez D'Le\'on, Rafael S. and Morales, Alejandro H. and
              Philippe, Eva and Tamayo Jim\'enez, Daniel and Yip, Martha},
     TITLE = {Realizing the {$s$}-permutahedron via flow polytopes},
   JOURNAL = {Trans. Amer. Math. Soc.},
  FJOURNAL = {Transactions of the American Mathematical Society},
    VOLUME = {378},
      YEAR = {2025},
    NUMBER = {11},
     PAGES = {7625–7666},
      ISSN = {0002-9947,1088-6850},
   MRCLASS = {52B12 52C07 06B05 (05C21 52B22)},
       DOI = {10.1090/tran/9461},
       URL = {https://doi.org/10.1090/tran/9461},
}

@unpublished{PostnikovStanley2000,
      title={Acyclic flow polytopes and {K}ostant’s partition function}, 
      author={Postnikov, Alexander and Stanley, Richard P},
      year={2000},
      note = {Conference transparencies},
      url={http://www-math.mit.edu/~rstan/transparencies/kostant.ps},
}

@article {PitmanStanley2002,
    AUTHOR = {Stanley, Richard P. and Pitman, Jim},
     TITLE = {A polytope related to empirical distributions, plane trees,
              parking functions, and the associahedron},
   JOURNAL = {Discrete Comput. Geom.},
  FJOURNAL = {Discrete \& Computational Geometry. An International Journal
              of Mathematics and Computer Science},
    VOLUME = {27},
      YEAR = {2002},
    NUMBER = {4},
     PAGES = {603--634},
      ISSN = {0179-5376,1432-0444},
   MRCLASS = {52B11 (05A15 60C05)},
  MRNUMBER = {1902680},
MRREVIEWER = {Igor\ Rivin},
       DOI = {10.1007/s00454-002-2776-6},
       URL = {https://doi.org/10.1007/s00454-002-2776-6},
}

@article {GalloSodini1978,
    AUTHOR = {Gallo, G. and Sodini, C.},
     TITLE = {Extreme points and adjacency relationship in the flow polytope},
   JOURNAL = {Calcolo},
  FJOURNAL = {Calcolo. A Quarterly on Numerical Analysis and Theory of
              Computation},
    VOLUME = {15},
      YEAR = {1978},
    NUMBER = {3},
     PAGES = {277--288},
      ISSN = {0008-0624},
   MRCLASS = {90B10 (90C25)},
  MRNUMBER = {555751},
MRREVIEWER = {Kailash\ C.\ Kapur},
       DOI = {10.1007/BF02575918},
       URL = {https://doi.org/10.1007/BF02575918},
}

@misc{Dugan2024,
      title={On the $f$-vectors of flow polytopes for the complete graph}, 
      author={William T. Dugan},
      year={2024},
      eprint={2409.15519},
      archivePrefix={arXiv},
      primaryClass={math.CO},
      url={https://arxiv.org/abs/2409.15519}, 
}

@book {Roy1969,
    AUTHOR = {Roy, Bernard},
     TITLE = {Alg\`ebre moderne et th\'eorie des graphes (orient\'ees vers
              les sciences \'economiques et sociales). {T}ome 1: {N}otions
              et r\'esultats fondamentaux},
    SERIES = {Finance et \'Economie Appliqu\'ee},
    VOLUME = {Vol. 31},
      NOTE = {Contribution de Michel Horps},
 PUBLISHER = {Dunod, Paris},
      YEAR = {1969},
     PAGES = {xvi+502},
   MRCLASS = {05.40 (90.00)},
  MRNUMBER = {250927},
MRREVIEWER = {F.\ Harary},
}

@article {FlorianRossin-ArthiatdeWerra1971,
    AUTHOR = {Florian, Michael and Rossin-Arthiat, Maurice and de Werra,
              Dominique},
     TITLE = {A property of minimum concave cost flows in capacitated
              networks},
   JOURNAL = {INFOR---Canad. J. Operational Res. and Information Processing},
  FJOURNAL = {INFOR. Canadian Journal of Operational Research and
              Information Processing},
    VOLUME = {9},
      YEAR = {1971},
     PAGES = {293--304},
      ISSN = {0315-5986,1916-0615},
   MRCLASS = {94.30},
  MRNUMBER = {289209},
MRREVIEWER = {W.-K.\ Chen},
       DOI = {10.1080/03155986.1971.11731485},
       URL = {https://doi.org/10.1080/03155986.1971.11731485},
}

@article {Zangwill1968,
    AUTHOR = {Zangwill, Willard I.},
     TITLE = {Minimum concave cost flows in certain networks},
   JOURNAL = {Management Sci.},
  FJOURNAL = {Management Science. Journal of the Institute of Management
              Science. Application and Theory Series},
    VOLUME = {14},
      YEAR = {1968},
     PAGES = {429--450},
      ISSN = {0025-1909},
   MRCLASS = {94.30},
  MRNUMBER = {241192},
       DOI = {10.1287/mnsc.14.7.429},
       URL = {https://doi.org/10.1287/mnsc.14.7.429},
}

@article {BellCeballos2024,
    AUTHOR = {von Bell, Matias and Ceballos, Cesar},
     TITLE = {Framing lattices and flow polytopes},
   JOURNAL = {S\'em. Lothar. Combin.},
  FJOURNAL = {S\'eminaire Lotharingien de Combinatoire},
    VOLUME = {91B},
      YEAR = {2024},
     PAGES = {Art. 98, 12},
      ISSN = {1286-4889},
   MRCLASS = {05C21 (05E18)},
  MRNUMBER = {4818730},
}

@article {BerggrenSerhiyenko2024,
    AUTHOR = {Berggren, Jonah and Serhiyenko, Khrystyna},
     TITLE = {Wilting theory of flow polytopes},
   JOURNAL = {S\'em. Lothar. Combin.},
  FJOURNAL = {S\'eminaire Lotharingien de Combinatoire},
    VOLUME = {91B},
      YEAR = {2024},
     PAGES = {Art. 21, 12},
      ISSN = {1286-4889},
   MRCLASS = {05C21},
  MRNUMBER = {4818653},
}

@article {CeballosPons2024-1,
    AUTHOR = {Ceballos, Cesar and Pons, Viviane},
     TITLE = {The {$s$}-weak order and {$s$}-permutahedra {I}:
              {C}ombinatorics and lattice structure},
   JOURNAL = {SIAM J. Discrete Math.},
  FJOURNAL = {SIAM Journal on Discrete Mathematics},
    VOLUME = {38},
      YEAR = {2024},
    NUMBER = {4},
     PAGES = {2855--2895},
      ISSN = {0895-4801,1095-7146},
   MRCLASS = {06B05 (06B10 52B05)},
  MRNUMBER = {4823177},
       DOI = {10.1137/23M1605818},
       URL = {https://doi.org/10.1137/23M1605818},
}

@article {CeballosPons2024-2,
    AUTHOR = {Ceballos, Cesar and Pons, Viviane},
     TITLE = {The {$s$}-weak order and {$s$}-permutahedra {II}: {T}he
              combinatorial complex of pure intervals},
   JOURNAL = {Electron. J. Combin.},
  FJOURNAL = {Electronic Journal of Combinatorics},
    VOLUME = {31},
      YEAR = {2024},
    NUMBER = {3},
     PAGES = {Paper No. 3.12, 59},
      ISSN = {1077-8926},
   MRCLASS = {20F55 (06B05 06B10 52B05)},
  MRNUMBER = {4783498},
       DOI = {10.37236/12438},
       URL = {https://doi.org/10.37236/12438},
}

@article {Ehrhart1962,
    AUTHOR = {Ehrhart, Eug\`ene},
     TITLE = {Sur les poly\`edres rationnels homoth\'etiques \`a{} {$n$}\
              dimensions},
   JOURNAL = {C. R. Acad. Sci. Paris},
  FJOURNAL = {Comptes Rendus Hebdomadaires des S\'eances de l'Acad\'emie des
              Sciences},
    VOLUME = {254},
      YEAR = {1962},
     PAGES = {616--618},
      ISSN = {0001-4036},
   MRCLASS = {10.25 (52.10)},
  MRNUMBER = {130860},
}

@book {BeckRobins2015,
    AUTHOR = {Beck, Matthias and Robins, Sinai},
     TITLE = {Computing the continuous discretely},
    SERIES = {Undergraduate Texts in Mathematics},
   EDITION = {Second},
      NOTE = {Integer-point enumeration in polyhedra,
              With illustrations by David Austin},
 PUBLISHER = {Springer, New York},
      YEAR = {2015},
     PAGES = {xx+285},
      ISBN = {978-1-4939-2968-9; 978-1-4939-2969-6},
   MRCLASS = {11P21 (05A15 05B15 11-02 11H06 52B05 52B20)},
  MRNUMBER = {3410115},
       DOI = {10.1007/978-1-4939-2969-6},
       URL = {https://doi.org/10.1007/978-1-4939-2969-6},
}

@article {Stanley1980,
    AUTHOR = {Stanley, Richard P.},
     TITLE = {Decompositions of rational convex polytopes},
   JOURNAL = {Ann. Discrete Math.},
  FJOURNAL = {Annals of Discrete Mathematics},
    VOLUME = {6},
      YEAR = {1980},
     PAGES = {333--342},
   MRCLASS = {52A43},
  MRNUMBER = {593545},
MRREVIEWER = {P.\ McMullen},
}

@book {Ziegler1995,
    AUTHOR = {Ziegler, G\"unter M.},
     TITLE = {Lectures on polytopes},
    SERIES = {Graduate Texts in Mathematics},
    VOLUME = {152},
 PUBLISHER = {Springer-Verlag, New York},
      YEAR = {1995},
     PAGES = {x+370},
      ISBN = {0-387-94365-X},
   MRCLASS = {52Bxx},
  MRNUMBER = {1311028},
MRREVIEWER = {Margaret\ M.\ Bayer},
       DOI = {10.1007/978-1-4613-8431-1},
       URL = {https://doi.org/10.1007/978-1-4613-8431-1},
}

@article{Kreweras1965,
  title={Sur une classe de problemes de d{\'e}nombrement li{\'e}s au treillis des partitions des entiers},
  author={Kreweras, Germain},
  journal={Cahiers du Bureau universitaire de recherche op{\'e}rationnelle S{\'e}rie Recherche},
  volume={6},
  pages={9--107},
  year={1965}
}

@article {CeballosGonzalezDLeon2019,
    AUTHOR = {Ceballos, Cesar and Gonz\'alez D'Le\'on, Rafael S.},
     TITLE = {Signature {C}atalan combinatorics},
   JOURNAL = {J. Comb.},
  FJOURNAL = {Journal of Combinatorics},
    VOLUME = {10},
      YEAR = {2019},
    NUMBER = {4},
     PAGES = {725--773},
      ISSN = {2156-3527,2150-959X},
   MRCLASS = {05A19 (05A10 05A15 05E05)},
  MRNUMBER = {3983746},
MRREVIEWER = {Michael\ Wallner},
       DOI = {10.4310/JOC.2019.v10.n4.a6},
       URL = {https://doi.org/10.4310/JOC.2019.v10.n4.a6},
}

@incollection {Stanley2010,
    AUTHOR = {Stanley, Richard P.},
     TITLE = {A survey of alternating permutations},
 BOOKTITLE = {Combinatorics and graphs},
    SERIES = {Contemp. Math.},
    VOLUME = {531},
     PAGES = {165--196},
 PUBLISHER = {Amer. Math. Soc., Providence, RI},
      YEAR = {2010},
      ISBN = {978-0-8218-4865-4},
   MRCLASS = {05-02 (05A05 05E05 05E10)},
  MRNUMBER = {2757798},
MRREVIEWER = {Domenico\ Senato},
       DOI = {10.1090/conm/531/10466},
       URL = {https://doi.org/10.1090/conm/531/10466},
}

@article {DuganHegartyMoralesRaymond2025,
    AUTHOR = {Dugan, William T. and Hegarty, Maura and Morales, Alejandro H.
              and Raymond, Annie},
     TITLE = {Generalized {P}itman-{S}tanley polytope: vertices and faces},
   JOURNAL = {Discrete Comput. Geom.},
  FJOURNAL = {Discrete \& Computational Geometry. An International Journal
              of Mathematics and Computer Science},
    VOLUME = {74},
      YEAR = {2025},
    NUMBER = {2},
     PAGES = {492--543},
      ISSN = {0179-5376,1432-0444},
   MRCLASS = {05C21 (05A15 05A19 06A07 52B05)},
  MRNUMBER = {4961368},
       DOI = {10.1007/s00454-024-00704-3},
       URL = {https://doi.org/10.1007/s00454-024-00704-3},
}

@article {MoralesShi2021,
    AUTHOR = {Morales, Alejandro H. and Shi, William},
     TITLE = {Refinements and symmetries of the {M}orris identity for
              volumes of flow polytopes},
   JOURNAL = {C. R. Math. Acad. Sci. Paris},
  FJOURNAL = {Comptes Rendus Math\'ematique. Acad\'emie des Sciences. Paris},
    VOLUME = {359},
      YEAR = {2021},
     PAGES = {823--851},
      ISSN = {1631-073X,1778-3569},
   MRCLASS = {52B05 (05C21 33C67 42C05)},
  MRNUMBER = {4322983},
MRREVIEWER = {Marko\ Pe\v sovi\'c},
       DOI = {10.5802/crmath.218},
       URL = {https://doi.org/10.5802/crmath.218},
}

@article {BenedettiHanusaHarrisMoralesSimpson2020,
    AUTHOR = {Benedetti, Carolina and Hanusa, Christopher R. H. and Harris,
              Pamela E. and Morales, Alejandro H. and Simpson, Anthony},
     TITLE = {Kostant's partition function and magic multiplex juggling
              sequences},
   JOURNAL = {Ann. Comb.},
  FJOURNAL = {Annals of Combinatorics},
    VOLUME = {24},
      YEAR = {2020},
    NUMBER = {3},
     PAGES = {439--473},
      ISSN = {0218-0006,0219-3094},
   MRCLASS = {05A15 (00A08 05A18 17B22)},
  MRNUMBER = {4151625},
MRREVIEWER = {Matthias\ Beck},
       DOI = {10.1007/s00026-020-00498-0},
       URL = {https://doi.org/10.1007/s00026-020-00498-0},
}

@article {LiuMeszarosMorales2019,
    AUTHOR = {Liu, Ricky Ini and Morales, Alejandro H. and M\'esz\'aros,
              Karola},
     TITLE = {Flow polytopes and the space of diagonal harmonics},
   JOURNAL = {Canad. J. Math.},
  FJOURNAL = {Canadian Journal of Mathematics. Journal Canadien de
              Math\'ematiques},
    VOLUME = {71},
      YEAR = {2019},
    NUMBER = {6},
     PAGES = {1495--1521},
      ISSN = {0008-414X,1496-4279},
   MRCLASS = {05E10 (05C21 52B20)},
  MRNUMBER = {4028467},
MRREVIEWER = {Sam\ Hopkins},
       DOI = {10.4153/cjm-2018-007-3},
       URL = {https://doi.org/10.4153/cjm-2018-007-3},
}

@article {MeszarosMoralesRhoades2017,
    AUTHOR = {M\'esz\'aros, Karola and Morales, Alejandro H. and Rhoades,
              Brendon},
     TITLE = {The polytope of {T}esler matrices},
   JOURNAL = {Selecta Math. (N.S.)},
  FJOURNAL = {Selecta Mathematica. New Series},
    VOLUME = {23},
      YEAR = {2017},
    NUMBER = {1},
     PAGES = {425--454},
      ISSN = {1022-1824,1420-9020},
   MRCLASS = {52B12 (05A05 05E10 52B05)},
  MRNUMBER = {3595898},
MRREVIEWER = {Alexander\ I.\ Barvinok},
       DOI = {10.1007/s00029-016-0241-2},
       URL = {https://doi.org/10.1007/s00029-016-0241-2},
}

@article {KapoorMeszarosSetiabrata2021,
    AUTHOR = {Kapoor, Kabir and M\'esz\'aros, Karola and Setiabrata, Linus},
     TITLE = {Counting integer points of flow polytopes},
   JOURNAL = {Discrete Comput. Geom.},
  FJOURNAL = {Discrete \& Computational Geometry. An International Journal
              of Mathematics and Computer Science},
    VOLUME = {66},
      YEAR = {2021},
    NUMBER = {2},
     PAGES = {723--736},
      ISSN = {0179-5376,1432-0444},
   MRCLASS = {52B20},
  MRNUMBER = {4292761},
MRREVIEWER = {Andr\'es\ R.\ Vindas-Mel\'endez},
       DOI = {10.1007/s00454-021-00289-1},
       URL = {https://doi.org/10.1007/s00454-021-00289-1},
}

@article {CorteelKimMeszaros2021,
    AUTHOR = {Corteel, Sylvie and Kim, Jang Soo and M\'esz\'aros, Karola},
     TITLE = {Volumes of generalized {C}han-{R}obbins-{Y}uen polytopes},
   JOURNAL = {Discrete Comput. Geom.},
  FJOURNAL = {Discrete \& Computational Geometry. An International Journal
              of Mathematics and Computer Science},
    VOLUME = {65},
      YEAR = {2021},
    NUMBER = {2},
     PAGES = {510--530},
      ISSN = {0179-5376,1432-0444},
   MRCLASS = {52A38},
  MRNUMBER = {4212976},
MRREVIEWER = {Denghui\ Wu},
       DOI = {10.1007/s00454-019-00066-1},
       URL = {https://doi.org/10.1007/s00454-019-00066-1},
}

@article {LiuMeszarosStDizier2019,
    AUTHOR = {Liu, Ricky I. and M\'esz\'aros, Karola and St. Dizier, Avery},
     TITLE = {Gelfand-{T}setlin polytopes: a story of flow and order
              polytopes},
   JOURNAL = {SIAM J. Discrete Math.},
  FJOURNAL = {SIAM Journal on Discrete Mathematics},
    VOLUME = {33},
      YEAR = {2019},
    NUMBER = {4},
     PAGES = {2394--2415},
      ISSN = {0895-4801,1095-7146},
   MRCLASS = {05E10},
  MRNUMBER = {4039518},
MRREVIEWER = {Robert\ Davis},
       DOI = {10.1137/19M1251242},
       URL = {https://doi.org/10.1137/19M1251242},
}

@article {MeszarosSimpsonWellner2019,
    AUTHOR = {M\'esz\'aros, Karola and Simpson, Connor and Wellner, Zoe},
     TITLE = {Flow polytopes of partitions},
   JOURNAL = {Electron. J. Combin.},
  FJOURNAL = {Electronic Journal of Combinatorics},
    VOLUME = {26},
      YEAR = {2019},
    NUMBER = {1},
     PAGES = {Paper No. 1.47, 12},
      ISSN = {1077-8926},
   MRCLASS = {52B20 (05C21)},
  MRNUMBER = {3934378},
MRREVIEWER = {Ruriko\ Yoshida},
       DOI = {10.37236/8114},
       URL = {https://doi.org/10.37236/8114},
}

@article {Meszaros2015,
    AUTHOR = {M\'esz\'aros, Karola},
     TITLE = {{$h$}-polynomials via reduced forms},
   JOURNAL = {Electron. J. Combin.},
  FJOURNAL = {Electronic Journal of Combinatorics},
    VOLUME = {22},
      YEAR = {2015},
    NUMBER = {4},
     PAGES = {Paper 4.18, 17},
      ISSN = {1077-8926},
   MRCLASS = {05E45 (05E15 52B05 52B12)},
  MRNUMBER = {3425609},
MRREVIEWER = {Gabriel\ K.\ Cunningham},
       DOI = {10.37236/5172},
       URL = {https://doi.org/10.37236/5172},
}

@article {MeszarosStDizier2020,
    AUTHOR = {M\'esz\'aros, Karola and St. Dizier, Avery},
     TITLE = {From generalized permutahedra to {G}rothendieck polynomials
              via flow polytopes},
   JOURNAL = {Algebr. Comb.},
  FJOURNAL = {Algebraic Combinatorics},
    VOLUME = {3},
      YEAR = {2020},
    NUMBER = {5},
     PAGES = {1197--1229},
      ISSN = {2589-5486},
   MRCLASS = {05E10 (05C21 52B20)},
  MRNUMBER = {4166815},
MRREVIEWER = {Andrew\ Berget},
       DOI = {10.5802/alco.136},
       URL = {https://doi.org/10.5802/alco.136},
}

@article {BellBraunBrueggeHanelyPetersonSerhiyenkoYip2024,
    AUTHOR = {Bell, Matias von and Braun, Benjamin and Bruegge, Kaitlin and
              Hanely, Derek and Peterson, Zachery and Serhiyenko, Khrystyna
              and Yip, Martha},
     TITLE = {Triangulations of flow polytopes, ample framings, and gentle
              algebras},
   JOURNAL = {Selecta Math. (N.S.)},
  FJOURNAL = {Selecta Mathematica. New Series},
    VOLUME = {30},
      YEAR = {2024},
    NUMBER = {3},
     PAGES = {Paper No. 55, 34},
      ISSN = {1022-1824,1420-9020},
   MRCLASS = {52B20 (05C20 05C21 05E45 16G20 52B05)},
  MRNUMBER = {4754385},
MRREVIEWER = {Andr\'es\ R.\ Vindas-Mel\'endez},
       DOI = {10.1007/s00029-024-00942-6},
       URL = {https://doi.org/10.1007/s00029-024-00942-6},
}

@article {ChanRobbinsYuen2000,
    AUTHOR = {Chan, Clara S. and Robbins, David P. and Yuen, David S.},
     TITLE = {On the volume of a certain polytope},
   JOURNAL = {Experiment. Math.},
  FJOURNAL = {Experimental Mathematics},
    VOLUME = {9},
      YEAR = {2000},
    NUMBER = {1},
     PAGES = {91--99},
      ISSN = {1058-6458,1944-950X},
   MRCLASS = {52B20 (05A10 52A38 52A40)},
  MRNUMBER = {1758803},
       URL = {http://projecteuclid.org/euclid.em/1046889594},
}

@article {BaldoniDeLoeraVergne2004,
    AUTHOR = {Baldoni-Silva, W. and De Loera, J. A. and Vergne, M.},
     TITLE = {Counting integer flows in networks},
   JOURNAL = {Found. Comput. Math.},
  FJOURNAL = {Foundations of Computational Mathematics. The Journal of the
              Society for the Foundations of Computational Mathematics},
    VOLUME = {4},
      YEAR = {2004},
    NUMBER = {3},
     PAGES = {277--314},
      ISSN = {1615-3375,1615-3383},
   MRCLASS = {05C20 (05A15 32S22 52C35)},
  MRNUMBER = {2078665},
MRREVIEWER = {Gerard\ Sierksma},
       DOI = {10.1007/s10208-003-0088-8},
       URL = {https://doi.org/10.1007/s10208-003-0088-8},
}

@phdthesis{TamayoJimenez2023,
    author = {Tamayo Jim{\'e}nez, Daniel},
    title = {Combinatorics of Permutreehedra and Geometry of $s$-Permutahedra},
    school = {Universit{\'e} Paris-Saclay},
    year = {2023},
    note = {Ph.D. Thesis},
    URL = {https://arxiv.org/abs/2310.19732},
}

@article {BellGonzalezMayorgaYip2023,
    AUTHOR = {von Bell, Matias and Gonz\'alez D'Le\'on, Rafael S. and
              Mayorga Cetina, Francisco A. and Yip, Martha},
     TITLE = {A unifying framework for the {$\nu$}-{T}amari lattice and
              principal order ideals in {Y}oung's lattice},
   JOURNAL = {Combinatorica},
  FJOURNAL = {Combinatorica. An International Journal on Combinatorics and
              the Theory of Computing},
    VOLUME = {43},
      YEAR = {2023},
    NUMBER = {3},
     PAGES = {479--504},
      ISSN = {0209-9683,1439-6912},
   MRCLASS = {52B20 (05A19 05C21 06A07)},
  MRNUMBER = {4630465},
MRREVIEWER = {Mihai\ Cipu},
       DOI = {10.1007/s00493-023-00022-x},
       URL = {https://doi.org/10.1007/s00493-023-00022-x},
}

@unpublished{GonzalezHerreraLauveNair2025,
  author = {Gonz\'alez D'Le\'on, Rafael S. and Herrera, Jose L and Lauve, Aaron and Nair, Anuraj},
  title = {On polytopes with vertices given by linear recurrences and flow polytopes},
  note = {In preparation},
  year = {2025}
}

@article {GonzalezHanusaMoralesYip2023,
    AUTHOR = {Gonz\'alez D'Le\'on, Rafael S. and Hanusa, Christopher R. H.
              and Morales, Alejandro H. and Yip, Martha},
     TITLE = {Column-convex matrices, {$G$}-cyclic orders, and flow
              polytopes},
   JOURNAL = {Discrete Comput. Geom.},
  FJOURNAL = {Discrete \& Computational Geometry. An International Journal
              of Mathematics and Computer Science},
    VOLUME = {70},
      YEAR = {2023},
    NUMBER = {4},
     PAGES = {1593--1631},
      ISSN = {0179-5376,1432-0444},
   MRCLASS = {52B05 (05A19 05C50 06A07 11B83 15B48)},
  MRNUMBER = {4670370},
MRREVIEWER = {Ruriko\ Yoshida},
       DOI = {10.1007/s00454-023-00518-9},
       URL = {https://doi.org/10.1007/s00454-023-00518-9},
}

@incollection {Zeilberger1999,
    AUTHOR = {Zeilberger, Doron},
     TITLE = {Proof of a conjecture of {C}han, {R}obbins, and {Y}uen},
      NOTE = {Orthogonal polynomials: numerical and symbolic algorithms
              (Legan\'es, 1998)},
   JOURNAL = {Electron. Trans. Numer. Anal.},
  FJOURNAL = {Electronic Transactions on Numerical Analysis},
    VOLUME = {9},
      YEAR = {1999},
     PAGES = {147--148},
      ISSN = {1068-9613},
   MRCLASS = {05A15 (52B05)},
  MRNUMBER = {1749805},
}

@incollection {Kirillov2001,
    AUTHOR = {Kirillov, Anatol N.},
     TITLE = {Ubiquity of {K}ostka polynomials},
 BOOKTITLE = {Physics and combinatorics 1999 ({N}agoya)},
     PAGES = {85--200},
 PUBLISHER = {World Sci. Publ., River Edge, NJ},
      YEAR = {2001},
      ISBN = {981-02-4578-5},
   MRCLASS = {05E10 (17B10)},
  MRNUMBER = {1865038},
MRREVIEWER = {Michael\ A.\ Zabrocki},
       DOI = {10.1142/9789812810199\_0006},
       URL = {https://doi.org/10.1142/9789812810199_0006},
}

@article {AltmannHille1999,
    AUTHOR = {Altmann, Klaus and Hille, Lutz},
     TITLE = {Strong exceptional sequences provided by quivers},
   JOURNAL = {Algebr. Represent. Theory},
  FJOURNAL = {Algebras and Representation Theory},
    VOLUME = {2},
      YEAR = {1999},
    NUMBER = {1},
     PAGES = {1--17},
      ISSN = {1386-923X,1572-9079},
   MRCLASS = {16G20 (14M25 52B20)},
  MRNUMBER = {1688469},
MRREVIEWER = {Christof\ Gei\ss},
       DOI = {10.1023/A:1009990727521},
       URL = {https://doi.org/10.1023/A:1009990727521},
}

@book {Morris1982,
    AUTHOR = {Morris, II, Walter Garfield},
     TITLE = {C{ONSTANT} {TERM} {IDENTITIES} {FOR} {FINITE} {AND} {AFFINE}
              {ROOT} {SYSTEMS}: {CONJECTURES} {AND} {THEOREMS}},
      NOTE = {Thesis (Ph.D.)--The University of Wisconsin - Madison},
 PUBLISHER = {ProQuest LLC, Ann Arbor, MI},
      YEAR = {1982},
     PAGES = {148},
   MRCLASS = {99-05},
  MRNUMBER = {2631899},
       URL =
              {http://gateway.proquest.com/openurl?url_ver=Z39.88-2004&rft_val_fmt=info:ofi/fmt:kev:mtx:dissertation&res_dat=xri:pqdiss&rft_dat=xri:pqdiss:8208316},
}

@article {CeballosPadrolSarmiento2019,
    AUTHOR = {Ceballos, Cesar and Padrol, Arnau and Sarmiento, Camilo},
     TITLE = {Geometry of {$\nu$}-{T}amari lattices in types {$A$} and
              {$B$}},
   JOURNAL = {Trans. Amer. Math. Soc.},
  FJOURNAL = {Transactions of the American Mathematical Society},
    VOLUME = {371},
      YEAR = {2019},
    NUMBER = {4},
     PAGES = {2575--2622},
      ISSN = {0002-9947,1088-6850},
   MRCLASS = {05E45 (05E10 14T05 52B22)},
  MRNUMBER = {3896090},
MRREVIEWER = {Evgeny\ Smirnov},
       DOI = {10.1090/tran/7405},
       URL = {https://doi.org/10.1090/tran/7405},
}

@article {CeballosPadrolSarmiento2020,
    AUTHOR = {Ceballos, Cesar and Padrol, Arnau and Sarmiento, Camilo},
     TITLE = {The {$\nu$}-{T}amari lattice via {$\nu$}-trees,
              {$\nu$}-bracket vectors, and subword complexes},
   JOURNAL = {Electron. J. Combin.},
  FJOURNAL = {Electronic Journal of Combinatorics},
    VOLUME = {27},
      YEAR = {2020},
    NUMBER = {1},
     PAGES = {Paper No. 1.14, 31},
      ISSN = {1077-8926},
   MRCLASS = {06A07 (05A05 05A19 05E10 05E45)},
  MRNUMBER = {4057170},
MRREVIEWER = {Nik\ Lygeros},
       DOI = {10.37236/8000},
       URL = {https://doi.org/10.37236/8000},
}

@article {Preville-RatelleViennot2017,
    AUTHOR = {Pr\'eville-Ratelle, Louis-Fran\c cois and Viennot, Xavier},
     TITLE = {The enumeration of generalized {T}amari intervals},
   JOURNAL = {Trans. Amer. Math. Soc.},
  FJOURNAL = {Transactions of the American Mathematical Society},
    VOLUME = {369},
      YEAR = {2017},
    NUMBER = {7},
     PAGES = {5219--5239},
      ISSN = {0002-9947,1088-6850},
   MRCLASS = {05A15 (05C30 05E10 05E40 06A07)},
  MRNUMBER = {3632566},
MRREVIEWER = {Konrad\ P.\ Pi\'oro},
       DOI = {10.1090/tran/7004},
       URL = {https://doi.org/10.1090/tran/7004},
}

@article {AbramChapelier-LagetReutenauer2021,
    AUTHOR = {Abram, Antoine and Chapelier-Laget, Nathan and Reutenauer,
              Christophe},
     TITLE = {An order on circular permutations},
   JOURNAL = {Electron. J. Combin.},
  FJOURNAL = {Electronic Journal of Combinatorics},
    VOLUME = {28},
      YEAR = {2021},
    NUMBER = {3},
     PAGES = {Paper No. 3.31, 43},
      ISSN = {1077-8926},
   MRCLASS = {06A07 (05A05)},
  MRNUMBER = {4295654},
MRREVIEWER = {Thomas\ Britz},
       DOI = {10.37236/9982},
       URL = {https://doi.org/10.37236/9982},
}

@unpublished{GonzalezHanusaMoralesYip2026,
  author = {Gonz\'alez D'Le\'on, Rafael S. and Hanusa, Christopher R. H. and Morales, Alejandro H. and Yip, Martha},
  title = {Permutation Flows II: The weak order},
  note = {In preparation},
  year = {2026}
}

@article{CeballosCheneviere2023,
  title={On linear intervals in the alt $nu$-Tamari lattices},
  author={Ceballos, Cesar and Chenevi{\`e}re, Cl{\'e}ment},
  journal={arXiv preprint arXiv:2305.02250},
  year={2023}
}

@article {Reading2006,
    AUTHOR = {Reading, Nathan},
     TITLE = {Cambrian lattices},
   JOURNAL = {Adv. Math.},
  FJOURNAL = {Advances in Mathematics},
    VOLUME = {205},
      YEAR = {2006},
    NUMBER = {2},
     PAGES = {313--353},
      ISSN = {0001-8708,1090-2082},
   MRCLASS = {05E25 (06B10 20F55 52C07)},
  MRNUMBER = {2258260},
MRREVIEWER = {Axel\ Hultman},
       DOI = {10.1016/j.aim.2005.07.010},
       URL = {https://doi.org/10.1016/j.aim.2005.07.010},
}

@article {SantosStumpWelker2017,
    AUTHOR = {Santos, Francisco and Stump, Christian and Welker, Volkmar},
     TITLE = {Noncrossing sets and a {G}rassmann associahedron},
   JOURNAL = {Forum Math. Sigma},
  FJOURNAL = {Forum of Mathematics. Sigma},
    VOLUME = {5},
      YEAR = {2017},
     PAGES = {Paper No. e5, 49},
      ISSN = {2050-5094},
   MRCLASS = {52B20 (06A07 06D05)},
  MRNUMBER = {3610869},
MRREVIEWER = {Geir\ Agnarsson},
       DOI = {10.1017/fms.2017.1},
       URL = {https://doi.org/10.1017/fms.2017.1},
}

@article {McConville2017,
    AUTHOR = {McConville, Thomas},
     TITLE = {Lattice structure of grid-{T}amari orders},
   JOURNAL = {J. Combin. Theory Ser. A},
  FJOURNAL = {Journal of Combinatorial Theory. Series A},
    VOLUME = {148},
      YEAR = {2017},
     PAGES = {27--56},
      ISSN = {0097-3165,1096-0899},
   MRCLASS = {06A07 (05E45 52B12)},
  MRNUMBER = {3603315},
MRREVIEWER = {Christian\ Stump},
       DOI = {10.1016/j.jcta.2016.12.001},
       URL = {https://doi.org/10.1016/j.jcta.2016.12.001},
}

@article {PilaudPons2018,
    AUTHOR = {Pilaud, Vincent and Pons, Viviane},
     TITLE = {Permutrees},
   JOURNAL = {Algebr. Comb.},
  FJOURNAL = {Algebraic Combinatorics},
    VOLUME = {1},
      YEAR = {2018},
    NUMBER = {2},
     PAGES = {173--224},
      ISSN = {2589-5486},
   MRCLASS = {05E18 (05A05 05C05 06A07 16T05 20E08 52B12)},
  MRNUMBER = {3856522},
MRREVIEWER = {Paula\ M. Machado Cruz Catarino},
       DOI = {10.5802/alco},
       URL = {https://doi.org/10.5802/alco},
}

@article{Gessel2020,
  title={A note on Stirling permutations},
  author={Gessel, Ira M},
  journal={arXiv preprint arXiv:2005.04133},
  year={2020}
}

\end{document}